\DeclareSymbolFontAlphabet{\mathbb}{AMSb}
\DeclareSymbolFontAlphabet{\mathbbl}{bbold}
\newtheorem{thm}{Theorem}[section]
\newtheorem{thm2}{Theorem}
\newtheorem{conj}[thm]{Conjecture}
\newtheorem{conj2}[thm2]{Conjecture}
\newtheorem{conv}[thm]{Convention}
\newtheorem{prop}[thm]{Proposition}
\newtheorem{prop2}[thm2]{Proposition}
\newtheorem{cor}[thm]{Corollary}
\newtheorem{lem}[thm]{Lemma}
\theoremstyle{definition}
\newtheorem{defi}[thm]{Definition}
\newtheorem{defi2}[thm2]{Definition}
\newtheorem{rmk}[thm]{Remark}
\newtheorem{exa}[thm]{Example}
\newtheorem{const}[thm]{Construction}
\newtheorem{ass}[thm]{Assumption}
\numberwithin{equation}{section}
\newcommand{\shear}{{\mathbin{\mkern-6mu\fatslash}}}
\newcommand{\bb}[1]{\mathbb{#1}}
\newcommand{\cl}[1]{{\mathcal{#1}}}
\newcommand{\mfr}[1]{{\mathfrak{#1}}}
\newcommand{\mrm}[1]{{\mathrm{#1}}}
\newcommand{\ov}[1]{{\overline{#1}}}
\newcommand{\und}[1]{{\underline{#1}}}
\newcommand{\Ql}{{\mathbb{Q}_\ell}}
\newcommand{\Qla}{{\overline{\mathbb{Q}}_\ell}}
\newcommand{\Qlax}{{\overline{\mathbb{Q}}_\ell^\times}}
\newcommand{\LT}{{{}^LT}}
\newcommand{\LH}{{{}^LH}}
\newcommand{\LM}{{{}^LM}}
\newcommand{\LP}{{{}^LP}}
\newcommand{\LG}{{{}^LG}}
\newcommand{\LGX}{{{}^LG_X}}
\newcommand{\Gm}{{\mathbb{G}_m}}
\newcommand{\vStack}{{\operatorname{vStack}}}
\newcommand{\qcqsvShf}{{\operatorname{vShf}}^{\mathrm{qcqs}}}
\newcommand{\arcStack}{{\operatorname{arcStack}'}}
\newcommand{\CAlg}{{\operatorname{CAlg}}}
\newcommand{\qcqs}{{\operatorname{qcqs}}}
\newcommand{\Perf}{{\operatorname{Perf}}}
\newcommand{\Perft}{{\operatorname{Perf}'}}
\newcommand{\Perfa}{{\operatorname{Perf}}^{\mathrm{aff}}}
\newcommand{\Perfs}{{\operatorname{Perf}}^{\mathrm{std}}}
\newcommand{\lis}{{\operatorname{lis}}}
\newcommand{\mot}{{\operatorname{mot}}}
\newcommand{\oc}{{\operatorname{oc}}}
\newcommand{\arc}{{\operatorname{arc}}}
\newcommand{\alg}{{\operatorname{alg}}}
\newcommand{\Ind}{{\operatorname{Ind}}}
\newcommand{\cInd}{{\operatorname{cInd}}}
\newcommand{\Gal}{{\operatorname{Gal}}}
\newcommand{\Bun}{{\operatorname{Bun}}}
\newcommand{\Corr}{{\operatorname{Corr}}}
\newcommand{\Pres}{{\operatorname{Pr}}}
\newcommand{\GL}{{\operatorname{GL}}}
\newcommand{\fin}{{\operatorname{fin}}}
\newcommand{\std}{{\operatorname{std}}}
\newcommand{\colim}{{\operatorname{colim}}}
\newcommand{\sm}{{\operatorname{sm}}}
\newcommand{\Par}{{\operatorname{Par}}}
\newcommand{\Rep}{{\operatorname{Rep}}}
\newcommand{\univ}{{\operatorname{univ}}}
\newcommand{\triv}{{\operatorname{triv}}}
\newcommand{\norm}{{\operatorname{norm}}}
\newcommand{\ren}{{\operatorname{ren}}}
\newcommand{\tp}{{\operatorname{top}}}
\newcommand{\BC}{{\mathcal{BC}}}
\newcommand{\Sec}{{\operatorname{Sec}}}
\newcommand{\Spd}{{\operatorname{Spd}}}
\newcommand{\Spec}{{\operatorname{Spec}}}
\newcommand{\Spa}{{\operatorname{Spa}}}
\newcommand{\Spf}{{\operatorname{Spf}}}
\newcommand{\Div}{{\operatorname{Div}}}
\newcommand{\Cat}{{\operatorname{Cat}}}
\newcommand{\ev}{{\operatorname{ev}}}
\newcommand{\pr}{{\operatorname{pr}}}
\newcommand{\st}{{\operatorname{st}}}
\newcommand{\id}{{\operatorname{id}}}
\newcommand{\Aut}{{\operatorname{Aut}}}
\newcommand{\fib}{{\operatorname{fib}}}
\newcommand{\Mir}{{\operatorname{Mir}}}
\newcommand{\spec}{{\operatorname{spec}}}
\newcommand{\Fr}{{\operatorname{Fr}}}
\newcommand{\sss}{{\operatorname{ss}}}
\newcommand{\Lie}{{\operatorname{Lie}}}
\newcommand{\Irr}{{\operatorname{Irr}}}
\newcommand{\Eis}{{\operatorname{Eis}}}
\newcommand{\Tot}{{\operatorname{Tot}}}
\newcommand{\Sym}{{\operatorname{Sym}}}
\newcommand{\ord}{{\operatorname{ord}}}
\newcommand{\Sp}{{\operatorname{Sp}}}
\newcommand{\Ani}{{\operatorname{Ani}}}
\newcommand{\ID}{{\operatorname{ID}}}
\newcommand{\WD}{{\operatorname{WD}}}
\newcommand{\End}{{\operatorname{End}}}
\newcommand{\Map}{{\operatorname{Map}}}
\newcommand{\Hom}{{\operatorname{Hom}}}
\newcommand{\Ad}{{\operatorname{Ad}}}
\newcommand{\cyc}{{\operatorname{cyc}}}
\newcommand{\Shv}{\operatorname{Shv}}
\newcommand{\IndCoh}{{\operatorname{IndCoh}}}
\newcommand{\Coh}{{\operatorname{Coh}}}
\newcommand{\QCoh}{{\operatorname{QCoh}}}
\newcommand{\grp}{{\operatorname{grp}}}
\newcommand{\der}{{\operatorname{der}}}
\newcommand{\ad}{{\operatorname{ad}}}
\newcommand{\Res}{{\operatorname{Res}}}
\newcommand{\Eq}{{\operatorname{Eq}}}
\newcommand{\bas}{{\operatorname{bas}}}
\newcommand{\FS}{{\operatorname{FS}}}
\newcommand{\op}{{\operatorname{op}}}
\newcommand{\clas}{{}^{\operatorname{cl}}}
\newcommand{\DStk}{{\operatorname{DStk}}}
\newcommand{\LLKE}{{}^L{\mathrm{LKE}}}
\newcommand{\heart}{\heartsuit}
\newcommand{\Ext}{\mathrm{Ext}}
\newcommand{\xra}[1]{\xrightarrow{#1}}
\newcommand{\anbr}[1]{\left\langle #1 \right\rangle}
\newcommand{\crbr}[1]{\left \{ #1 \right \} }
\newcommand{\smbr}[1]{\left( #1 \right) }
\newcommand{\sqbr}[1]{\left [ #1 \right ] }
\newcommand{\hra}{\hookrightarrow}
\newcommand{\thra}{\twoheadrightarrow}
\newcommand{\ra}{\rightarrow}
\newcommand{\Der}{\mathrm{Der}}
\title{Categorification of local relative Langlands duality}
\author{Yuta Takaya}
\address{Graduate School of Mathematical Sciences, The University of Tokyo, 3-8-1 Komaba, Meguro-ku, Tokyo 153-8914, Japan}
\email{takaya@ms.u-tokyo.ac.jp}
\author{Milton Lin}
\address{Department of Mathematics, Johns Hopkins University, Baltimore, MD, U.S.A.}
\email{clin130@jh.edu}
\begin{document}
\begin{abstract}
    We formulate the normalized period conjecture proposed by Ben-Zvi, Sakellaridis and Venkatesh in the framework of the categorical local Langlands correspondence and study its relation to distinction problems. Motivated by the work of Feng and Wang in the geometric setting, we verify the conjecture for the Iwasawa-Tate and Hecke periods, assuming the existence of the categorical local Langlands correspondence for $\GL_2$ with the Eisenstein compatibility. 
\end{abstract}

\maketitle
\setcounter{tocdepth}{2}

\begingroup
\tableofcontents
\endgroup

\section*{Introduction}

\subsection{Background}

In the relative Langlands program, the relation between periods and $L$-functions is studied in several contexts: the numerical relation between automorphic period integrals and special $L$-values over global fields, and distinction problems, the characterization of distinguished representations in terms of $L$-parameters, over local fields. 

In \cite{BZSV}, Ben-Zvi, Sakellaridis and Venkatesh proposed \textit{relative Langlands duality}, aiming at unifying this relation in the categorical framework. 
For a connected reductive group $G$ and its dual $\widehat{G}$, they speculated that there should be an equivalence
\[
    \text{automorphic theory } \cl{A}_{G} \cong \text{spectral theory } \cl{B}_{\widehat{G}}
\]
of $4$-dimensional arithmetic quantum field theories, and the relation between periods and $L$-functions should be an instance of an equivalence of boundary theories
\[
    \Theta_{M} \in \cl{A}_G \leftrightarrow \cl{L}_{\widehat{M}} \in \cl{B}_{\widehat{G}}
\]
associated to dual hyperspherical varieties $(G, M) \leftrightarrow (\widehat{G}, \widehat{M})$. They applied this philosophy to global fields and derived general numerical conjectures on period integrals and $L$-values. 

In the present paper, we carry out analogous constructions for non-archimedean local fields 
in the geometric framework of \cite{FS24}, which can be seen as the boundary theory attached to polarized dual hyperspherical varieties\footnote{It is often \textit{not} the case that dual hyperspherical varieties are polarized.} $(G,T^*X) \leftrightarrow (\widehat{G},T^*\widehat{X})$. We find that the resulting framework provides a natural geometric setting for studying distinction problems. 

\subsection{Formulation}

Let $F$ be a non-archimedean local field with residual characteristic $p$. Let $G$ be a connected quasi-split reductive group over $F$. Take another prime $\ell \neq p$. Throughout the introduction, we fix a coefficient ring to $\Qla$ and choose $\sqrt{q} \in \Qla$. Let $\LG$ be the $L$-group over $\Qla$. In \cite{FS24}, the categorical local Langlands correspondence is formulated as a categorical equivalence
\[
    \bb{L}_G \colon \cl{D}_\lis(\Bun_G, \Qla) \cong \IndCoh(\Par_{\LG}). 
\]
Here, $\Bun_G$ is a $v$-stack classifying $G$-torsors on the Fargues-Fontaine curve, and $\Par_{\LG}$ is the stack of $L$-parameters with values in $\LG$. The existence of such an equivalence remains widely open; the only fully established case in the literature is that of tori (see \cite{Zou24}). 

\subsubsection{The $\cl{A}$-side}

First, we explain the story on the $\cl{A}$-side. Let $k$ be an algebraic closure of the residue field of $F$. Let $\Perf$ be the $v$-site of perfectoid spaces over $k$ and let $\Perfs \subset \Perf$ be the sub-site consisting of strictly totally disconnected spaces. For each $S \in \Perfs$, the algebraic Fargues-Fontaine curve $X_S^\alg$ is defined in \cite[p.67]{FS24}. Then, we have a description of $\Bun_G$ as a mapping stack
\[
    \Bun_G(S) = \Map(X_S^\alg, [\ast/G]), \quad S \in \Perfs. 
\]
Now, let $X$ be a normal quasi-projective $G$-variety. 
Following the original definition \cite[(10.6)]{BZSV}, we define the prestack $\Bun_G^X$ of $G$-bundles with an $X$-section as a mapping stack
\[
    \Bun_G^X(S) = \Map(X_S^\alg, [X/G]), \quad S \in \Perf^\std. 
\]
This coincides with what is denoted by $\cl{M}_{[X/G]}$ in \cite{Ham22}, but we change the notation to match with \cite{BZSV}. Let $\Bun_G^1 \subset \Bun_G$ be the trivial locus, which is an open substack isomorphic to $[\ast/\und{G(F)}]$. Then, we have the following basic properties of $\Bun_G^X$. 

\begin{prop2}\textup{(\Cref{prop:BunGXArtin} and \Cref{prop:trivfib})} \label{prop2:BunGX}
    The prestack $\Bun_G^X$ is a $v$-stack on $\Perfs$ and can be extended to an Artin $v$-stack on $\Perf$. There is an open and closed immersion
    \[
        i_X^1\colon [\underline{X(F)}/\underline{G(F)}] \hookrightarrow \Bun^X_G\times_{\Bun_G} \Bun_G^1
    \]
    over $\Bun_G^1 \cong [\ast/\underline{G(F)}]$ and it is an isomorphism if $X$ is quasi-affine. 
\end{prop2}

The first part was proved in \cite{Ham22} when $X$ is smooth, but we present a different proof based on Sumihiro's theorem \cite{Sum75}. 

Let $\pi_X \colon \Bun_G^X \to \Bun_G$ be the natural map. To introduce the period sheaf, we need a functor $\pi_{X!}$. For this, we need to replace the sheaf theory because lower shriek functors are not available for lisse-\'{e}tale sheaves. This defect was noticed by experts for long. It was expected to be resolved by the motivic six functor formalism \cite{Sch24}, and a construction of an $\ell$-adic six functor formalism based on \cite{Sch24} was given in \cite[Notation]{HHS24}. In \Cref{sec:sixfunc}, we provide a detailed account of this $\ell$-adic six functor formalism. Following \cite{Sch25}, the sheaf theory is denoted by $\cl{D}^\oc(S, \Qla)$ for a small $v$-stack $S$. 

In \Cref{prop:rel!able}, we show that $\pi_X$ is $!$-able in the $\cl{D}^\oc$-formalism and define the unnormalized period sheaf by $\cl{P}_X = \pi_{X!} \Qla$. By \Cref{prop2:BunGX}, $\cl{P}_X\vert_{\Bun_G^1}$ contains $C_c^\infty(X(F), \Qla)$ as a direct summand, so it can be regarded as a geometrization of the Schwartz space of $X$. 

To introduce the normalized period sheaf, we first introduce the degree sheaf 
\[
    \underline{\deg} \in \cl{D}^\oc(\Bun_{\bb{G}_m}, \Qla)
\]
as in the geometric setting \cite[Remark 10.4.1]{BZSV} (see \Cref{defi:degshf}). Here, we need the choice of $\sqrt{q} \in \Qla$, which is an analogue of a square root of the canonical sheaf in the geometric setting. Now, suppose that $X$ is smooth. The canonical bundle of $X$ provides
\[
    \Omega_X^\tp \colon [X / G] \to [\ast / \bb{G}_m]. 
\]

\begin{defi2}\textup{(\Cref{defi:normP})}
    Let $\omega_X \colon \Bun_G^X \to \Bun_{\bb{G}_m}$ be the composition map with $\Omega_X^\tp$. We define the normalized period sheaf associated to $X$ by
    \[
        \cl{P}_X^\norm = \pi_{X!} \omega_X^* \und{\deg}. 
    \]
\end{defi2}

When $X$ admits a nowhere vanishing eigen-volume form with an eigencharacter $\eta_X \colon G \to \bb{G}_m$, we have $\cl{P}_X^\norm = \cl{P}_X \otimes \eta_X^* \und{\deg}$ (see \Cref{prop:normPetaX}). 
Our normalization works even if $X$ is proper. Let $P \subset G$ be a parabolic subgroup with a Levi subgroup $M$. In \Cref{prop:EisPnorm}, we show the compatibility with geometric Eisenstein series 
\[
    \cl{P}^\norm_{G\times^P X} = \Eis_{P !}(\cl{P}_X^\norm)
\]
for every smooth quasi-projective $M$-variety $X$. We may regard $\cl{P}_X^\norm$ as the geometrization of the space of half-densities. 


\subsubsection{The $\cl{B}$-side}

Next, we explain the story on the $\cl{B}$-side. Since we work in characteristic $0$, we use the stack of Weil-Deligne $L$-parameters as in \cite[Section 3.1]{Zhu21}. Let $Z^1(\WD_F, \LG)$ be the moduli space of Weil-Deligne $L$-parameters with values in $\LG$. The quotient stack $[Z^1(\WD_F, \LG) / \widehat{G}]$ is called the stack of $L$-parameters. 

Though this moduli is usually considered only over classical $\Qla$-algebras, it was observed in \cite[Remark 3.1.4]{Zhu21} and \cite[Proposition VIII.2.1]{FS24} that the definition can be extended to animated $\Qla$-algebras. This extension is important for our purposes since relative spectral stacks have nontrivial derived structure in general. 

Let $\WD_{F} = \bb{G}_a \rtimes \und{W_F}$ be the Weil-Deligne group scheme over $\Qla$. Let $Q$ be a finite quotient of $W_F$ such that the action of $W_F$ on $\widehat{G}$ factors through $Q$. Let $\LG = \widehat{G} \rtimes Q$ be the $L$-group of $G$. In \Cref{sssec:ParLGmap}, we verify that the derived mapping stack
\[
    \Par_{\LG} = \und{\Map}_{[\ast/Q]}([\ast/\WD_{F}], [\ast/\LG])
\]
defined over derived $\Qla$-schemes recovers the classical stack $[Z^1(\WD_F, \LG) / \widehat{G}]$. This is a derived enhancement of \cite[Definition 4.3]{Sch25}. Then, it is natural to define the relative spectral stack as the mapping stack
\[
    \Par_{\LG}^{\widehat{X}} = \und{\Map}_{[\ast/Q]}([\ast/\WD_{F}], [\widehat{X}/\LG])
\]
for a derived $\LG$-scheme $\widehat{X}$. We can show that it behaves well under suitable conditions on $\widehat{X}$. 
Let $\clas \widehat{X}$ denote the classical truncation of $\widehat{X}$. 

\begin{prop2}\textup{(\Cref{prop:ParLGXschematic})}
    Let $\widehat{X}$ be a derived $\LG$-scheme locally of finite presentation over $\Qla$ such that $\clas\widehat{X}$ is $\widehat{G}$-quasi-projective. Then, $\Par_{\LG}^{\widehat{X}}$ is a $1$-Artin stack locally of finite presentation over $\Qla$ and $\pi_{\widehat{X}} \colon \Par_{\LG}^{\widehat{X}} \to \Par_{\LG}$ is schematic. 
\end{prop2}

By Sumihiro's theorem, the condition on $\widehat{X}$ is satisfied when $\widehat{X}$ is a smooth quasi-projective $\LG$-variety. The key input of the proof is the Artin-Lurie representability theorem. 

Let $\omega_{\widehat{X}}$ be the dualizing complex of $\Par_{\LG}^{\widehat{X}}$. We employ the theory of ind-coherent sheaves developed in \cite{GR17I} and define the unnormalized $L$-sheaf by $\cl{L}_{\widehat{X}} = \pi_{\widehat{X}*} \omega_{\widehat{X}}$. 


Now, we will introduce the normalized $L$-sheaf. We first need to normalize $\Par_{\LG}^{\widehat{X}}$ itself, similarly to the normalization on the $\cl{A}$-side in \cite[Section 10.2]{BZSV}. Let $\bb{G}_{gr}$ denote a copy of $\bb{G}_m$ and endow $\widehat{X}$ with a grading, that is, a left $\bb{G}_{gr}$-action commuting with the $\LG$-action. From the choice of $\sqrt{q}$, we have an unramified $L$-parameter $\sqrt{\cyc} \colon W_F \to \bb{G}_{gr}$ sending $\Fr$ to $q^{-1/2}$. Then, we define the normalization $\Par_{\LG}^{\widehat{X}, \norm}$ by the following Cartesian diagram: 
\begin{center}
    \begin{tikzcd}[column sep = large]
        \Par_{\LG}^{\widehat{X}, \norm} \ar[r] \ar[d, "\pi_{\widehat{X}}^{\norm}"] & \Par_{\LG \times \bb{G}_{gr}}^{\widehat{X}} \ar[d, "\pi_{\widehat{X}}"] \\
        \Par_{\LG} \ar[r, "\id \times \sqrt{\cyc}"] & \Par_{\LG\times \bb{G}_{gr}}. 
    \end{tikzcd}
\end{center}
Let $\omega^\norm_{\widehat{X}}$ be the dualizing complex of $\Par_{\LG}^{\widehat{X}, \norm}$. We can see that $\pi_{\widehat{X}*}^\norm \omega_{\widehat{X}}^\norm$ is naturally equipped with a grading, so we have the weight decomposition and the shearing
\[
    \pi_{\widehat{X}*}^\norm \omega_{\widehat{X}}^\norm = \bigoplus_{n \in \bb{Z}} (\pi_{\widehat{X}*}^\norm \omega_{\widehat{X}}^\norm )_{\std^n}, \quad
    (\pi_{\widehat{X}*}^\norm \omega_{\widehat{X}}^\norm)^\shear = \bigoplus_{n \in \bb{Z}} (\pi_{\widehat{X}*}^\norm \omega_{\widehat{X}}^\norm )_{\std^n}[n]. 
\]
\begin{defi2}\textup{(\Cref{defi:normalized_Lsheaf})}
    We define the normalized $L$-sheaf associated to $\widehat{X}$ by 
    \[
        \cl{L}_{\widehat{X}}^\norm = (\pi_{\widehat{X}*}^\norm \omega_{\widehat{X}}^\norm)^\shear. 
    \]
\end{defi2}

Let $Z = Z(\widehat{G})^Q$. Then, $\IndCoh(\Par_{\LG})$ admits a weight decomposition with respect to $Z$. When the grading on $\widehat{X}$ is given by a central cocharacter $z_{\widehat{X}} \colon \bb{G}_{gr} \to \LG$, $\cl{L}_{\widehat{X}}^\norm$ can be computed from $\cl{L}_{\widehat{X}}$ in terms of this $Z$-weights (see \Cref{cor:normLzX}). 

The normalization by the half-cyclotomic character $\sqrt{\cyc}$ is reminiscent of local Tate duality. In \Cref{cor:FEstd}, we verify that functional equations holds in the vectorial case 
\[
    \cl{L}_V^\norm \cong \cl{L}_{V^\vee}^\norm, \quad V \in \Rep(\LG). 
\]
This essentially follows from the functional equation developed in \cite[Section 11.10.2]{BZSV}. In \Cref{prop:EisLnorm}, we also prove the compatibility with spectral Eisenstein series
\[
    \cl{L}^\norm_{\LG \times^{\LP} \widehat{X}} = \Eis_P^\spec(\cl{L}_{\widehat{X}}^\norm)
\]
for every pair $(\LM, \widehat{X})$. 




\subsection{Main computations}

Now, we can translate the \textit{normalized period conjecture} \cite[Conjecture 12.1.1]{BZSV} into the setting of the categorical local Langlands correspondence. 

\begin{conj2}\textup{(\Cref{conj:normP=L})} \label{conj2:NPC}
    Let $G$ be a connected quasi-split reductive group over $F$ and fix a Whittaker datum of $G$. Suppose that there exists a categorical equivalence 
    \[
        \bb{L}_G \colon \cl{D}^\oc(\Bun_G, \Qla) \cong \IndCoh(\Par_{\LG})
    \]
    as formulated in \textup{\cite[Conjecture X.1.4]{FS24}}. For every (conjecturally defined) dual pair $(G, X) \leftrightarrow (\LG, \widehat{X})$, we have 
    \[
        \bb{L}_G(\cl{P}_X^\norm) \cong \cl{L}_{\widehat{X}}^\norm. 
    \]
\end{conj2}

Although a satisfactory general definition of dual pairs $(G, X) \leftrightarrow (\widehat{G}, \widehat{X})$ is not available yet, a partial list of such pairs can be found in \cite[Table 1.5.1]{BZSV}.

Inspired by the computation \cite{FW25} in the geometric setting, we verify this conjecture for the Iwasawa-Tate and Hecke periods under certain conjectural properties on $\bb{L}_G$. The method in loc. cit. relies heavily on the Ran version of Hecke operators and the theory of chiral algebras, which are not currently available in our setting. Instead, we carry out a more direct and detailed computation based on weight decompositions. 

\subsubsection{The Iwasawa-Tate period}
The dual pair and the normalization factors are as follows. 
\begin{center}
    \begin{tabular}{cccc|cc}
        $G$ & $X$ & $\LG$ & $\widehat{X}$ & $\eta_X$ & $z_{\widehat{X}}$ \\ \hline
        $\bb{G}_m$ & $\std$ & $\bb{G}_m$ & $\std$ & $\std$ & $\std$ 
    \end{tabular}
\end{center} 
Here, $\std$ is the standard character of $\bb{G}_m$, $\eta_X$ is an eigencharacter of $X$ and the grading on $\widehat{X}$ is given by a central cocharacter $z_{\widehat{X}}$. Unwinding the definitions, we can see that
\[
    \Bun_{\bb{G}_m}^{\bb{A}^1} \to \Bun_{\bb{G}_m}
\]
is the Banach-Colmez space of the universal line bundle parametrized by $\Bun_{\bb{G}_m}$, and 
\[
    \Par_{\bb{G}_m}^{\bb{A}^1} \to \Par_{\bb{G}_m}
\]
is the derived vector bundle associated to the Weil-Deligne cohomology of the universal Weil-Deligne representation over $\Par_{\bb{G}_m}$. On the $\cl{A}$-side, $\cl{P}_X$ is a compactly supported cohomology of Banach-Colmez spaces, whose computation is well-known (e.g. \cite[Lemma 2.6]{HI24}). On the other hand, our computation of $\cl{L}_{\widehat{X}}$ is based on a detailed study of derived vector bundles. As a result, we obtain the following compatibility. 

\begin{thm2} \textup{(\Cref{thm:IwTatecomp} and \Cref{prop:FEIwTate})}
    For the Iwasawa-Tate period, we have
    \[
        \bb{L}_{G}(\cl{P}_X^\norm) \cong \cl{L}_{\widehat{X}}^\norm. 
    \]
    Moreover, we have a functional equation $\cl{P}_{\std}^\norm \cong \cl{P}_{\std^\vee}^\norm$. 
\end{thm2}

Note that the existence of $\bb{L}_G$ is proved by \cite{Zou24} in this case. The functional equation $\cl{P}_{X}^\norm \cong \cl{P}_{X'}^{\norm}$ is expected to hold when the associated hyperspherical $G$-varieties are isomorphic: $T^*X \cong T^*X'$. For the Iwasawa-Tate period, we can verify this from the explicit description and the Fourier transform on the local field $F$. 

\subsubsection{The Hecke period}
The dual pair and the normalization factors are as follows. 
\begin{center}
    \begin{tabular}{cccc|cc}
        $G$ & $X$ & $\LG$ & $\widehat{X}$ & $\eta_X$ & $z_{\widehat{X}}$ \\ \hline
       $\GL_2$ & $\GL_2/A$ & $\GL_2$ & $\std$ & $\triv$ & $\textrm{scaling}$
    \end{tabular}
\end{center}
Here, $\std$ is the standard two-dimensional representation of $\GL_2$, $A = {\scriptsize \left\{ \begin{pmatrix} 1 & 0 \\ 0 & \ast \end{pmatrix} \right\} } \cong \bb{G}_m$, $X$ is unimodular and the grading on $\widehat{X}$ is given by a central cocharacter $z_{\widehat{X}}(t) = {\scriptsize \left\{ \begin{pmatrix} t & 0 \\ 0 & t \end{pmatrix} \right\} }$. 
Unwinding the definitions, we can see that
\[
    \Bun_{\GL_2}^{X} \cong \Bun_A \to \Bun_{\GL_2}, 
\]
and 
\[
    \Par_{\GL_2}^{\std} \to \Par_{\GL_2}
\]
is the derived vector bundle associated to the Weil-Deligne cohomology of the universal Weil-Deligne representation over $\Par_{\GL_2}$. Let $\Bun_{A} = \coprod_{n \in \bb{Z}} \Bun_{A, n}$ be the decomposition in terms of the degree of line bundles. Then, we have a decomposition
\[
    \cl{P}_X = \bigoplus_{n \in \bb{Z}} \cl{P}_{X, n}, \quad \cl{P}_{X, n} = \pi_{X!} \und{\Lambda}_{\Bun_{A, n}}. 
\]
On the $\cl{B}$-side, we have a weight decomposition
\[
    \cl{L}_{\widehat{X}} = \bigoplus_{n \in \bb{Z}} \cl{L}_{\widehat{X}, n}
\]
with respect to the center of $\GL_2$. Due to the Hecke compatibility, these decompositions should match under $\bb{L}_G$. We verify this compatibility in the following form by providing a description of $\cl{P}_{X, n}$ and $\cl{L}_{\widehat{X}, n}$ in terms of geometric and spectral Eisenstein series (see \Cref{prop:HeckepEis} and \Cref{prop:HeckeL}). 

\begin{thm2}\textup{(\Cref{thm:Heckecomp})} 
    Suppose that $\bb{L}_{\GL_2}$ exists and satisfies the Eisenstein compatibility. For $n \neq 0$, we have
    \[
        \bb{L}_{\GL_2}(\cl{P}_{X,n}^\norm) \cong \cl{L}_{\widehat{X}, n}^\norm. 
    \]
    For $n = 0$, the first (resp.\ last) term of the fiber sequence
    \[
        \cl{W}_\psi \hookrightarrow \cl{P}^\norm_{X, 0} \to \Eis_{B!}(i^1_! \cInd_{A(F)}^{T(F)} \Qla_\norm^{-1/2})
    \]
    maps under $\bb{L}_{\GL_2}$ to the first (resp.\ last) term of the fiber sequence
    \[
        \cl{O}_{\Par_{\GL_2}} \to \cl{L}^\norm_{\widehat{X}, 0} \to \Eis_{\ov{B}}^\spec(\cl{O}_{\Par_{\bb{G}_m}} \boxtimes  (i_{\cyc^{-1/2}*} \Qla)_{\triv}). 
    \]
\end{thm2}

Here, $\Eis_{B!}$ (resp.\ $\Eis_{\ov{B}}^\spec$) denotes the geometric (resp.\ spectral) Eisenstein series with respect to $B$ (resp.\ $\ov{B}$). On the $\cl{A}$-side, $\cl{P}_{X, n}$ is computed directly for $n \neq 0$. For $n = 0$, a fiber sequence representing $\cl{P}_{X, 0} = \cInd_{A(F)}^{G(F)} \Qla$ is constructed via the unfolding technique.

On the $\cl{B}$-side, the computation of $\cl{L}_{\widehat{X}, n}$ is somewhat indirect. As in the geometric setting, we have a closed-open decomposition
\begin{center}
    \begin{tikzcd}
        \Par_{\GL_2} \ar[r, hook] & \Par_{\GL_2} ^\std & \Par_{\ov{\Mir}_2}. \ar[l, hook']
    \end{tikzcd}
\end{center}
Here, $\ov{\Mir}_2 = \left\{ \scriptsize \begin{pmatrix} \ast & 0 \\ \ast & 1 \end{pmatrix} \right\} \subset \GL_2$. Following the strategy of \cite{FW25}, we get a fiber sequence representing $\cl{L}_{\widehat{X}, n}$ from this decomposition. It is enough for the claim when $n \leq 0$. For $n > 0$, the fiber sequence is rather complicated, so we first use the functional equation
\[
    \cl{L}_{\std, n}^{\norm} \cong \cl{L}_{\std^\vee, -n}^\norm, 
\]
and then apply the strategy for $n < 0$. 

\subsubsection{Relation to distinction problems}

In \Cref{ssec:NPC}, we derive the following consequence of the normalized period conjecture from a mild expectation on the form of Hecke eigensheaves (see \Cref{conj:Heckeeigen}). 

\begin{prop2} \textup{(\Cref{conj:Xdistrep})}
    Suppose that \Cref{conj2:NPC} holds for a dual pair $(G, X) \leftrightarrow (\LG, \widehat{X})$ such that $X$ is unimodular. If a smooth irreducible $G(F)$-representation $\pi$ is $X$-distinguished and its Fargues-Scholze parameter $\varphi_\pi^{\FS}$ is of Langlands-Shahidi type and satisfies \Cref{conj:Heckeeigen}, $\varphi_\pi^{\FS}$ lies in the image of $\pi_{\widehat{X}}^\norm$. 
\end{prop2}

Under suitable conditions, we expect that $X$ admits an $L$-group $\LGX \subset \LG$, and as in \cite[Section 4.1.1]{BZSV}, $\widehat{X}$ takes the form 
\[
    \widehat{X} = \LG \times^{\LGX} V
\]
for some $V \in \Rep(\LGX)$, with a grading induced from $V$. Then, $\pi_{\widehat{X}}^\norm$ factors through $\Par_{\LGX}$, so $\varphi_\pi^\FS$ should factor through a conjugate of $\LGX$ in the above setting.

In \cite[Conjecture 2]{Pra15}, a particular case of this expectation was proposed when $X$ is a Galois symmetric variety, where $G = \Res_{E/F}(H_E)$ and $X = G / H$ for a quasi-split connected reductive group $H$ over $F$ and a separable quadratic extension $E / F$. In this situation, we have an $L$-group $\LGX \subset \LG$ and a dual pair
\[
    (G, X = G / H) \leftrightarrow (\LG, \widehat{X} = \LG / \LGX). 
\]
It turns out that the normalized period conjecture can also recover the multiplicity formula of Prasad's conjecture (see \cite[Conjecture 3.16]{BP25}) under some conditions. 

\begin{prop2}\textup{(\Cref{prop:multpi})}
    Let $\varphi \colon W_E \to \LH(\Qla)$ be a supercuspidal $L$-parameter with a centralizer $S_\varphi \subset \widehat{H}$ and let $z_\varphi \colon [\ast / S_\varphi] \hookrightarrow \Par_{\LG}$. Suppose that \Cref{conj2:NPC} holds for 
    \[
        (G, X = G / H) \leftrightarrow (\LG, \widehat{X} = \LG / \LGX)
    \]
    and $\rho_\pi \in \Irr(\pi_0(S_\varphi) / \pi_0(Z))$ satisfies $\bb{L}_G(i^1_! \pi) \cong z_{\varphi*} \cl{O}_{\rho_\pi}$. When the center of $H$ is anisotropic, 
    \[
        \dim \Hom(C_c^\infty(X(F), \Qla), \pi) = \sum_{\psi / \varphi} \dim \rho_\pi^{\pi_0(S_\psi) = \id}. 
    \]
    Here, $\psi\colon W_F \to \LGX(\Qla)$ runs through all lifts of $\varphi$ (up to conjugate) and $S_\psi \subset \widehat{H}$ denotes the centralizer of $\psi$. Moreover, for every $i > 0$, we have
    \[
        \Ext^i(C_c^\infty(X(F), \Qla), \pi) = 0. 
    \]
\end{prop2}

We expect that a similar analysis would yield to multiplicity formulas in several other settings. For general $X$, it would be interesting to seek for the relation to the local $L^2$-conjecture of Sakellaridis-Venkatesh \cite[Conjecture 16.2.2]{SV17}. 



\addtocontents{toc}{\protect\setcounter{tocdepth}{1}}

\subsection*{Authorship}

The project originated during the Summer School on Relative Langlands Duality at the proposal of the second author (M.L.). The main proofs and technical developments were carried out by the first author (Y.T.). M.L. contributed through essential discussions and revisions of the manuscript; part of this work was completed during M.L.'s military service in Taiwan. In view of Y.T.'s primary role in proving the results and drafting the manuscript, the authors are listed in that order. We recognize that alphabetical ordering is common in parts of mathematics and provide this transparent contribution statement for clarity. Both authors approved the final version and share responsibility for the results.

\subsection*{The structure of the paper}

In \Cref{sec:sixfunc}, we provide a detailed account of an $\ell$-adic six functor formalism on small $v$-stacks. In \Cref{sec:Aside}, we study the basic properties of the stack $\Bun_G^X$ and the unnormalized period sheaf $\cl{P}_X$. In \Cref{sec:Acomputation}, we compute $\cl{P}_X$ for the Iwasawa-Tate and Hecke periods. In \Cref{sec:Bside}, we study the basic properties of the stack $\Par_{\LG}^{\widehat{X}}$ and the unnormalized $L$-sheaf $\cl{L}_{\widehat{X}}$. In \Cref{sec:Bcomputation}, we compute $\cl{L}_{\widehat{X}}$ for the Iwasawa-Tate and Hecke periods. In \Cref{sec:normalization}, we state the normalized period conjecture and study its relation to distinction problems. In \Cref{sec:comparison}, we verify the normalized period conjecture for the Iwasawa-Tate and Hecke periods.

\subsection*{Acknowledgements}

The authors are grateful to Rok Gregoric, Linus Hamann, David Hansen, Naoki Imai and Yiannis Sakellaridis for helpful discussions and valuable comments on an earlier version of this paper. The first author would like to thank his advisor Yoichi Mieda for his constant support and encouragement. He would also like to thank Peter Scholze for kindly answering questions on the motivic six functor formalism \cite{Sch24}. 

The first author was supported during this work by the WINGS-FMSP
program at the Graduate School of Mathematical Sciences, the University of Tokyo and JSPS KAKENHI Grant number JP24KJ0865.

\subsection*{Notation} \label{ssec:Notation}

All rings are assumed to be commutative. We fix a prime $p$ and another prime $\ell \neq p$. Let $F$ be a non-archimedean local field with residual characteristic $p$ and let $W_F$ be the Weil group of $F$. Let $O_F$ be the ring of integers of $F$ with a uniformizer $\pi$ and let $q$ be the number of elements of the residue field of $F$. 

Let $S$ be a perfectoid space over $\bb{F}_q$. The Fargues-Fontaine curve over $F$ associated to $S$ is denoted by $\cl{X}_S$. It is defined as a Frobenius quotient
$\cl{X}_S = \cl{Y}_S / \varphi^\bb{Z}$
of an analytic adic space $\cl{Y}_S$. When $S = \Spa(R,R^+)$, $\cl{X}_S$ is also denoted by $\cl{X}_{(R, R^+)}$. 

The (large) $\infty$-category of $\infty$-categories is denoted by $\Cat_\infty$. The (large) $\infty$-category of presentable (resp.\ stable) $\infty$-categories with colimit preserving functors as morphisms is denoted by $\Pr^L$ (resp.\ $\Pr^L_{\st}$). The (large) $\infty$-category of $\infty$-groupoids is denoted by $\Ani$. The (enriched) mapping space in an (enriched) $\infty$-category $\cl{C}$ is denoted by $\Map_{\cl{C}}(-, -)$ and the (enriched) set of homomorphisms in the homotopy category $h\cl{C}$ is denoted by $\Hom_{h\cl{C}}(-,-)$. In particular, $\pi_0 \Map_{\cl{C}}(-,-)\cong \Hom_{h\cl{C}}(-,-)$. When $\cl{C}$ is clear from the context, they are just denoted by $\Map(-, -)$ and $\Hom(-, -)$. In some contexts, $\cl{C}$ is enriched in the derived category $D(\Qla)$.  

The abelian category of finite dimensional algebraic representations of an algebraic group $G$ is denoted by $\Rep(G)$. 
We refer to \Cref{app:DAG} for our convention on derived algebraic geometry. For an animated ring $R$, the derived category of $R$-modules is denoted by $D(R)$. 

\addtocontents{toc}{\protect\setcounter{tocdepth}{2}}

\section{An $\ell$-adic six functor formalism on small $v$-stacks} \label{sec:sixfunc}

In this section, we provide a detailed account of an $\ell$-adic six functor formalism on small $v$-stacks, following the construction in \cite[Notation]{HHS24}. It is essentially obtained as the base change of the motivic six functor formalism \cite{Sch24}. In contract to the \'{e}tale six functor formalism \cite{Sch17}, a coefficient ring can be taken to be an arbitrary $\bb{Z}_\ell$-algebra. 
Also, in contrast to the lisse-\'{e}tale five functors \cite{FS24}, excision holds in many situations and the class of $!$-able morphisms is large enough for application. 


\subsection{Preliminaries on six functor formalisms}

In this section, we recall the extension procedure of a six functor formalism developed in \cite{HM24}. First, let us recall the definition. 

\begin{defi} \label{defi:six_functor_formalism}
    Let $(\cl{C},E)$ be a geometric setup. A $3$-functor formalism is a lax symmetric monoidal functor 
    \[ 
        \cl{D}\colon\Corr(\cl{C},E) \to \Cat_\infty;
    \]
    \[ 
        \smbr{ X \xleftarrow{g} W \xrightarrow{f} Y}  \mapsto  f_!g^*\colon \cl{D}(X) \to \cl{D}(Y). 
    \] 
    This is a six functor formalism if the right adjoints $\underline{\Hom}, f_*, f^!$ exist. We say that $\cl{D}$ is presentable if $\cl{D}$ factors through $\Pres^L$. When $\cl{C}$ is a site, we say that $\cl{D}$ is sheafy if $\cl{D}$ satisfies the descent along every \v{C}ech nerve. 
\end{defi}

A typical construction of a six functor formalism is packaged in the following form. 

\begin{prop}\label{prop:3_functor_formalism_from_IP} \textup{(\cite[Proposition 1.2.5]{HM24}, \cite[Proposition A.5.10]{Man22})} 
    Let $(\cl{C},E)$ be a geometric set up such that $\cl{C}$ admits pullbacks and let $I,P \subset E$ be a suitable decomposition. Let $\cl{D}\colon\cl{C}^\mathrm{op} \to \CAlg(\Cat_\infty)$ be a functor satisfying the following conditions. 
    \begin{enumerate}
        \item For all $j \in I$, $j^*$ admits a left adjoint $j_!$ satisfying the base change and the projection formula. 
        \item For all $g \in P$, $g^*$ admits a right adjoint $g_*$ satisfying the base change and the projection formula. 
        \item For every Cartesian square 
        \begin{center}
            \begin{tikzcd}
                W \rar["{j'}"] \ar[d,"{g'}"] & Y \ar[d,"g"]\\
                U\rar["j"] & X
            \end{tikzcd}
        \end{center}
        in $\cl{C}$ with $j \in I$ and $g\in P$, the natural transformation $j_!g'_* \to g_* j'_!$ is an isomorphism.  
    \end{enumerate}
    Then, $\cl{D}$ extends to a $3$-functor formalism
    \[
        \cl{D}\colon \Corr(\cl{C},E) \to \Cat_\infty
    \]
    such that $f_! \cong g_*j_!$ for all $f=gj$ with $j\in I$ and $g \in P$. Moreover, $\cl{D}$ extends to a six functor formalism if the following additional conditions are satisfied. 
    \begin{enumerate}
        \item For all $X \in \cl{C}$, $\cl{D}(X)$ is a closed symmetric monoidal $\infty$-category. 
        \item For all $f\colon Y \to X$, $f^*$ admits a right adjoint $f_*$. 
        \item For all $f\colon Y\to X$ in $P$, $f_*$ admits a right adjoint $f^!$. 
    \end{enumerate}
\end{prop}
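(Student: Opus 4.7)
The plan is to invoke the Liu--Zheng--Mann universal property of the correspondence $\infty$-category $\Corr(\cl{C}, E)$: a lax symmetric monoidal functor out of it is equivalent to bivariant data consisting of a symmetric monoidal $*$-pullback on $\cl{C}^{\op}$ together with a covariant pushforward along morphisms of $E$, satisfying base change, the projection formula, and coherent associativity. Hypotheses (1)--(3) are tailored so that the $!$-pushforward can be glued from the $j_!$'s and $g_*$'s and the required axioms verified.

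To assemble $f_!$ for $f \in E$, I would first pick an $IP$-factorization $f = g \circ j$ with $j \in I$ and $g \in P$, and set $f_! := g_* \circ j_!$. Independence from the choice follows from the suitable-decomposition axioms combined with hypothesis (3): any two factorizations are connected by a comparison diagram in which the exchange isomorphism $j_! g'_* \simeq g_* j'_!$ identifies the resulting composites. Base change along a Cartesian square for $f = gj$ is obtained by splitting it into two stacked Cartesian squares (one along $j$, one along $g$) and composing the base-change isomorphisms of (1) and (2); the projection formula is analogous. Functoriality $(ff')_! \simeq f_! f'_!$ reduces, after factoring both morphisms, to applying (3) to swap the middle $j g'$ pair into $\bar g \bar j$, returning a factorization of the composite of the form $P \cdot I$.

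Feeding these data into \cite[Proposition A.5.10]{Man22} (equivalently \cite[Proposition 1.2.5]{HM24}) then produces the desired $3$-functor formalism $\cl{D}\colon \Corr(\cl{C}, E) \to \Cat_\infty$ with $f_! \cong g_* j_!$ whenever $f = gj$. For the six-functor upgrade, the additional hypotheses directly supply the remaining right adjoints: closedness of $\cl{D}(X)$ gives $\underline{\Hom}$; condition (5) gives $f_* = (f^*)^R$ for all $f$; and for $f \in E$ with factorization $f = gj$, one sets $f^! := j^* \circ g^!$, using $j_!^R = j^*$ from the adjunction $j_! \dashv j^*$ together with $g^! = g_*^R$ from (6). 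All coherences for $f^!$ propagate by adjunction from those already verified on the $!$-side.

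The principal technical difficulty is not in the $1$-categorical bookkeeping above but in the underlying coherence theorem: assembling the infinite tower of higher coherences on $\Corr(\cl{C}, E)$ from the $1$-categorical data of (1)--(3) requires the inductive simplicial construction of \cite[Appendix A]{Man22}. I would cite this rather than reprove it, and the content of our argument is then restricted to verifying that hypotheses (1)--(6) match the input format of that universal property.
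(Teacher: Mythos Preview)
The paper does not supply its own proof of this proposition: it is stated as a recollection with the citations \cite[Proposition 1.2.5]{HM24} and \cite[Proposition A.5.10]{Man22} in the header, and the text moves on immediately. Your proposal correctly identifies the content and ultimately defers to the same references, so it is entirely consistent with the paper's approach of citing rather than reproving.
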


This construction can be extended further in the following way. 

\begin{prop} \label{prop:extension_6ff_frome_site} \textup{(\cite[Theorem 3.4.11]{HM24})}
    Let $\cl{D}$ be a sheafy presentable six functor formalism on a geometric setup $(\cl{C},E)$ where $\cl{C}$ is a subcanonical site. Then, there is a collection of edges $E'$ in $\Shv(\cl{C})$  such that 
    \begin{enumerate}
        \item $\cl{D}$ extends uniquely to a sheafy presentable six functor formalism $\cl{D}'$ on $(\Shv(\cl{C}),E')$. 
        \item $E'$ is $*$-local on target and $!$-local on source and target. 
        \item $E'$ is tame: every map in $E'$ with target in $\cl{C}$ is $!$-locally on the source in $E$. 
    \end{enumerate}
\end{prop}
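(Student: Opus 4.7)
The plan is to extend $\cl{D}$ along the Yoneda embedding $\cl{C} \hookrightarrow \Shv(\cl{C})$ using descent, treating the $*$-pullback side via right Kan extension and then characterizing the class $E'$ in terms of local membership in $E$. More precisely, for a sheaf $F \in \Shv(\cl{C})$, I would first define
\[
\cl{D}'(F) = \lim_{X \to F} \cl{D}(X),
\]
the limit ranging over representables mapping to $F$. Because $\cl{D}$ is sheafy and presentable, this recovers $\cl{D}$ on $\cl{C}$ and upgrades to a symmetric monoidal functor $\cl{D}'\colon \Shv(\cl{C})^{\op} \to \CAlg(\Pres^L)$ admitting left and right adjoints to all $*$-pullbacks.

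Next I would \emph{define} $E'$ by decree: a morphism $f\colon F \to G$ in $\Shv(\cl{C})$ belongs to $E'$ if and only if for every representable $Y \in \cl{C}$ with a map $Y \to G$, there is a cover $\{X_i \to F \times_G Y\}$ by representables with each composite $X_i \to Y$ lying in $E$. Tameness (that a morphism in $E'$ whose target already lies in $\cl{C}$ is $!$-locally on the source in $E$) is then immediate from this definition, as are $*$-locality on the target and $!$-locality on source and target: stability of $E$ under pullback along morphisms in $\cl{C}$ propagates through the sheafification.

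The construction of $f_!$ for $f \in E'$ would proceed by gluing: on a cover $\{X_i \to Y \times_G F\}$ as above, each $X_i \to Y$ already admits an $f_!$ in $\cl{D}$, and base change plus the projection formula on $\cl{C}$ let one descend these local $f_!$-functors along the \v{C}ech nerve. To make this rigorous at the $\infty$-categorical level I would appeal to the abstract machinery of Mann and Liu–Zheng on extending $3$-functor formalisms along descendable coverings, or equivalently to \cite[Proposition 1.2.5]{HM24} applied to a suitable $I$/$P$ decomposition of $E'$. Base change and the projection formula on $(\Shv(\cl{C}),E')$ then reduce to their counterparts on $(\cl{C},E)$ after choosing representable covers. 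Uniqueness of the extension follows from the universal property of sheafification combined with the observation that any sheafy presentable six functor formalism on $(\Shv(\cl{C}), E')$ restricting to $\cl{D}$ must automatically agree with $\cl{D}'$ on representables and commute with the requisite colimits.

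The main obstacle, I expect, will be the coherent assembly of the locally defined $f_!$-functors into a genuine lax symmetric monoidal functor out of $\Corr(\Shv(\cl{C}), E')$, rather than merely functors between homotopy categories; verifying that the higher coherences glue correctly forces one through the full strength of the correspondence-category machinery. Once this is in place, verifying the tameness and locality properties of $E'$ is largely bookkeeping driven by the definition, and the uniqueness statement is formal.
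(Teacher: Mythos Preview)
The paper does not prove this proposition: it is stated with an explicit citation to \cite[Theorem 3.4.11]{HM24} and no argument is given. There is therefore nothing in the paper to compare your sketch against. Your outline is broadly in the spirit of the Heyer--Mann construction (right Kan extension of $\cl{D}$ along Yoneda, a class $E'$ defined by local membership in $E$, and descent of $f_!$), and the obstacle you flag---coherent assembly of the locally defined $f_!$ into a lax symmetric monoidal functor out of the correspondence category---is exactly the substantive content that the cited reference supplies via its DESCENT algorithm and the Liu--Zheng/Mann machinery.
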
 

\subsection{Motivic six functor formalism on small $v$-stacks} \label{ssec:sixfunc}

In this section, we review the motivic six functor formalism developed in \cite{Sch24} and prove basic properties which will be needed later on. For our use, we take a different set of generating objects from \cite{Sch24}. 

Let $k$ be an algebraic closure of $\bb{F}_p$. Let $\Perf$ be the category of perfectoid spaces over $k$. It is equipped with a subcanonical Grothendieck topology called the \textit{$v$-topology}, which is finer than the pro-\'etale topology. 

\begin{defi}
    A collection $\crbr{f_i: Y_i \to X} _{i \in I}$ of morphisms in $\Perf$ is a $v$-cover if for any quasicompact open subset $U \subset X$, there exist a finite subset $J \subset I$ and a collection of quasicompact open subsets $\crbr{ V_j \subset Y_j}_{j \in J}$ such that $U= \bigcup _{j \in J} f_j(V_j)$. 
\end{defi}

A sheaf (resp.\ stack) over $\Perf$ with the $v$-topology is called a $v$-sheaf (resp.\ $v$-stack). 

\begin{defi} \label{def:small_v_stack} \textup{(\cite[Definition 12.4]{Sch17})}
    \begin{enumerate}
        \item A $v$-sheaf $Y$ is \textit{small} if it has a surjection $X \thra Y$ from a perfectoid space $X$. 
        \item A $v$-stack $Y$ is \textit{small} if it has a surjection $X \thra Y$ from a perfectoid space $X$ such that  $X \times_Y X$ is a small $v$-sheaf.
    \end{enumerate}
   
\end{defi} 

Let $\vStack$ denote the category of small $v$-stacks on $\Perf$. We have a stronger topology on $\Perf$ called the \textit{arc-topology}. 

\begin{defi}
    \label{defi:arc_topology} A collection $\crbr{f_i: Y_i \to X} _{i \in I}$ of morphisms in $\Perf$ is an arc-cover if for any quasicompact open subset $U \subset X$, there exist a finite subset $J \subset I$ and a collection of quasicompact open subsets $\crbr{ V_j \subset Y_j}_{j \in J}$ such that every rank $1$ point of $U$ has a lift to $V_j$ for some $j \in J$.
\end{defi}

In other words, an arc-cover ensures the existence of lifts only for rank $1$ valuations. As in the $v$-topology, we have a notion of (small) arc-stacks on $\Perf$. For a perfectoid Huber pair $(R,R^+)$ over $k$, $\Spa(R,R^\circ) \to \Spa(R,R^+)$ is an arc-cover. Here, $R^\circ \subset R$ denotes the set of topologically bounded elements. Then, arc-stacks are naturally regarded as stacks on the arc-site $\Perft$ of perfectoid Tate $k$-algebras. As in \cite[Section 12]{Sch24}, let $\arcStack$ denote the category of small arc-stacks on $\Perft$. 

\begin{defi}
    Let $\cl{M}_\arc(R)$ denote the small arc-sheaf represented by a perfectoid Tate $k$-algebra $R$. It is called an affine arc-sheaf.
\end{defi}

There is a natural pullback functor
\[
    a'^* \colon \vStack \to \arcStack
\]
induced from the morphism of sites
\[ 
    \Perf \ra \Perft, \quad (R, R^+) \mapsto R. 
\]
It sends a small $v$-stack $X$ to the arc-sheafification $a'^*X$ of the functor $R\mapsto X(R,R^\circ)$ for a perfectoid Tate $k$-algebra $R$. In particular, $\Spa(R,R^+)$ is sent to $\cl{M}_\arc(R)$.

The \'{e}tale motivic sheaf theory on $\arcStack$ is developed in \cite{Sch24}. Let us denote the six functor formalism as
\[
    \Corr(\arcStack, E') \to \Pres^L, \quad X \mapsto \cl{D}_\mot(X).
\]
It is first constructed from \Cref{prop:3_functor_formalism_from_IP} by taking qcqs maps of finite cohomological dimension between affine arc-sheaves as proper maps, and then extended (and restricted) to $\arcStack$ via \Cref{prop:extension_6ff_frome_site}. 

One way to get a six functor formalism on $\vStack$ is to take the pullback of the above six functor formalism along $a'^*$. For our use, we will adopt a little modification. Let us first begin by reviewing some geometric properties on small arc-stacks.

Any perfectoid Tate $k$-algebra $R$ can be equipped with a seminorm by choosing a pseudo-uniformizer $\varpi \in R$. Then, it can be equipped with the Berkovich spectrum $\cl{M}(R)$, which is independent of the choice of $\varpi \in R$. For the reader's convenience, let us recall the definition. 

\begin{defi}\label{def:seminormed_ring_berkovich_spectrum}
    The \textit{Berkovich spectrum} $\cl{M}(A)$ of a seminormed ring $(A,\lvert \cdot \rvert_A)$ is the closed subspace of $\prod_{a \in A} [0, |a|_A]$ consisting of bounded multiplicative seminorms. 
\end{defi}

\begin{prop} \label{prop:indeptop}
    Let $X$ be a small arc-stack on $\Perft$. Take surjections
    \[
        Y = \bigsqcup_{i\in I} \cl{M}_\arc(R_i) \to X, \quad
        Z = \bigsqcup_{j\in J} \cl{M}_\arc(S_j) \to Y \times_X Y
    \] 
    from disjoint unions of affine arc-sheaves. Then, the topological quotient space \[ \bigsqcup_{i \in I}  \cl{M}(R_i)/\bigsqcup_{j \in J} \cl{M}(S_j) \]  is independent of the choice of surjections and bijective to the equivalence class of points \[ \crbr{ \cl{M}_\arc(C) \to X } /\sim \] from perfectoid fields $C$ over $k$. Here, two maps $\cl{M}_\arc(C_i) \to X$ $(i=1,2)$ are equivalent if there is a point $\cl{M}_\arc(C_3) \to X$ over each point such that we have a commutative diagram
    \[ 
        \begin{tikzcd}
            & \cl{M}_\arc(C_1) \ar[dr] & \\ 
            \cl{M}_\arc(C_3)\ar[ur] \ar[rr] \ar[dr] & & X. \\ 
            & \cl{M}_\arc(C_2) \ar[ur] 
        \end{tikzcd}
    \]
\end{prop}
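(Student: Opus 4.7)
The plan is to establish a natural bijection between the set underlying the quotient $\bigsqcup_i \cl{M}(R_i) / \bigsqcup_j \cl{M}(S_j)$ and the equivalence classes $\{\cl{M}_\arc(C) \to X\}/\sim$ of perfectoid field points of $X$. Since the latter is manifestly intrinsic to $X$, the independence of the description from the choice of covers (as sets) will be automatic; the independence as topological spaces will follow from a standard common-refinement comparison.

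For the forward direction, recall that each $p \in \cl{M}(R_i)$ is a bounded multiplicative seminorm on $R_i$, which factors uniquely as $R_i \to \cl{H}(p)$, where the completed residue field $\cl{H}(p)$ is a perfectoid field over $k$. Composing with $\cl{M}_\arc(R_i) \to Y \to X$ produces a perfectoid field point $\cl{M}_\arc(\cl{H}(p)) \to X$. This descends through the quotient relation: any $q \in \cl{M}(S_j)$ projecting to $p_1$ and $p_2$ via the two structural ring maps $R_{i_k} \to S_j$ yields maps $\cl{H}(p_k) \to \cl{H}(q)$ compatible over $X$, exhibiting the required equivalence in the sense of the proposition.

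For surjectivity, let $\cl{M}_\arc(C) \to X$ with $C$ a perfectoid field. Since $\cl{M}(C)$ consists of a single rank $1$ point, the defining rank $1$ lifting property of the arc-topology (\Cref{defi:arc_topology}) applied to the arc-cover $Y \to X$ gives, after possibly replacing $C$ by a perfectoid field extension $C'$, a lift $\cl{M}_\arc(C') \to \cl{M}_\arc(R_i)$, i.e.\ a map $R_i \to C'$. Post-composition with the seminorm $|\cdot|_{C'}$ yields a point of $\cl{M}(R_i)$ whose image under our map is equivalent to the original point. For injectivity, suppose $p_1 \in \cl{M}(R_{i_1})$ and $p_2 \in \cl{M}(R_{i_2})$ induce equivalent points of $X$. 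Using that the equivalence relation is visibly transitive (given two dominating fields one further dominates them), we reduce to the case of a single $C_3$ dominating $\cl{H}(p_1), \cl{H}(p_2)$ compatibly over $X$; the pair of lifts assembles into a map $\cl{M}_\arc(C_3) \to Y \times_X Y$, to which we apply the same lifting property for the arc-cover $Z \to Y \times_X Y$ to produce $q \in \cl{M}(S_j)$ projecting to $p_1$ and $p_2$, so $p_1$ and $p_2$ are identified in the quotient.

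The main obstacle is purely the repeated use of the rank $1$ lifting property of the arc-topology, but this is built into \Cref{defi:arc_topology} and so requires no separate argument; the remaining steps are formal manipulations with seminorms and completed residue fields. For topological independence, given two choices $(Y_a, Z_a)_{a=1,2}$ of covers, one forms the common refinement $Y_1 \times_X Y_2$ (together with a compatible $Z$), observes that the induced continuous maps between the three quotient spaces are mutually inverse on underlying sets by the already-established bijection with the intrinsic set of equivalence classes, and concludes that all three quotients are homeomorphic.
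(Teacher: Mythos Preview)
Your argument for the set-theoretic bijection is essentially the paper's: construct the map via completed residue fields, lift through the arc-cover $Y \to X$ for surjectivity, and through $Z \to Y \times_X Y$ for injectivity. (The transitivity you invoke is the existence of common perfectoid field extensions, which is \cite[Proposition 3.2(i)]{Sch24}; the paper cites the same fact.)

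The topological independence argument has a genuine gap. From a common refinement $Y_3$ you obtain continuous maps $Q_3 \to Q_1$ and $Q_3 \to Q_2$ induced by the projections, but there is no a priori continuous map in the reverse direction, so ``mutually inverse continuous maps'' is not what you actually have; you have continuous bijections, and a continuous bijection need not be a homeomorphism. The missing ingredient is precisely what the paper isolates: an arc-cover of affine arc-sheaves induces a surjection $\cl{M}(S) \to \cl{M}(R)$ between \emph{compact Hausdorff} spaces, and any such surjection is automatically closed, hence a quotient map. This is what forces $Q_3 \to Q_1$ to be a quotient map, and therefore a homeomorphism. Without invoking compactness of Berkovich spectra somewhere, your topological conclusion does not follow.
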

\begin{proof}
    The same argument as in \cite[Proposition 11.13]{Sch17} works in our setting. Here, we briefly recall it for the reader's convenience. 
    \begin{enumerate}
        \item First, we need to prove the bijectivity on the set of points. For surjectivity, we can always lift a map $\cl{M}_\arc(C) \to X$ arc-locally to $Y$. The injectivity follows from the fact that any two points $\cl{M}_\arc(C_i) \to \cl{M}_\arc(C)$ $(i=1,2)$ have common refinements $\cl{M}_\arc(C') \to \cl{M}_\arc(C_i)$ $(i=1,2)$ by \cite[Proposition 3.2(i)]{Sch24}. 
        \item Next, we need to prove the independence of the quotient topology. This follows from the fact that an arc-cover $R \to S$ induces a quotient map $\cl{M}(S) \to \cl{M}(R)$ since any surjection between compact Hausdorff spaces is a quotient map. 
    \end{enumerate}
\end{proof}

\begin{defi} \label{defi:topsmarc}
    For a small arc-stack $X$, take surjections as in \Cref{prop:indeptop}. Let $\cl{M}(X)$ denote the quotient space $\bigsqcup_{ i \in I} \cl{M}(R_i)/\bigsqcup_{j \in J} \cl{M}(S_j)$, which is independent of the choice we made. For a small $v$-stack $X$, we abusively denote $\cl{M}(a'^*X)$ by $\cl{M}(X)$. 
\end{defi}

\begin{defi}
    Let $f\colon Y \to X$ be a map of small arc-stacks. We say that $f$ is an open (resp.\ closed) immersion if there is an open (resp.\ closed) subset $W \subset \cl{M}(X)$ such that for every perfectoid Tate $k$-algebra $R$, $f(R)$ identifies $Y(R)$ with the full subgroupoid of $X(R)$ consisting of maps $\cl{M}_\arc(R) \to X$ such that $\cl{M}(R) \to \cl{M}(X)$ factors through $W$. 
\end{defi}

\begin{rmk}
    This definition is compatible with \cite[Definition 4.21]{Sch24}. To recover $W \subset \cl{M}(X)$ from an open immersion in the sense of loc. cit., we take covers as in \Cref{prop:indeptop} and descend an open subset of $\cl{M}(Y)$ to $\cl{M}(X)$. In the same way, we can show that open (resp.\ closed) immersions can be checked arc-locally. 
\end{rmk}

\begin{lem} \label{lem:M(X)}
        For any small $v$-stack $X$, there is a continuous surjection $\lvert X \rvert \to \cl{M}(X)$. If $X$ is qcqs, $\cl{M}(X)$ is identified with the maximal Hausdorff quotient of $\lvert X \rvert$. 
\end{lem}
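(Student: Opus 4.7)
The plan is to build the continuous surjection $\lvert X \rvert \to \cl{M}(X)$ by first handling the case of a perfectoid affinoid and then descending along any cover. For a perfectoid Huber pair $(R, R^+)$, every element of $\lvert \Spa(R, R^+) \rvert$ admits a unique maximal (rank one) generalization, and sending a point to this generalization gives a map $\lvert \Spa(R, R^+) \rvert \to \cl{M}(R)$. First I would check that this map is continuous and surjective: continuity follows because the subbasic opens $\{\lvert f \rvert \geq c\}$ and $\{\lvert f \rvert \leq c\}$ of $\cl{M}(R)$ pull back to rational subsets, and surjectivity because any bounded multiplicative seminorm on $R$ is automatically $\leq 1$ on $R^\circ \supset R^+$ and hence defines a valuation in $\Spa(R, R^+)$. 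Moreover, when $R^+ = R^\circ$ and $\Spa(R, R^\circ)$ is qcqs, this map realizes $\cl{M}(R)$ as the maximal Hausdorff quotient of $\lvert \Spa(R, R^\circ) \rvert$, which is a standard computation: two points are identified in the Hausdorff quotient iff they share the same rank one generalization.

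Next I would descend to a general small $v$-stack $X$. Choose a surjection $Y = \bigsqcup_i \Spa(R_i, R_i^\circ) \to X$ from strictly totally disconnected affinoids taken with $R_i^+ = R_i^\circ$, and similarly $Z = \bigsqcup_j \Spa(S_j, S_j^\circ) \to Y \times_X Y$. Using that $\Spa(R, R^\circ) \to \Spa(R, R^+)$ is an arc-cover, one can arrange that the same diagram $Z \rightrightarrows Y$ computes both $\lvert X \rvert$ as a topological coequalizer and $\cl{M}(X) = \cl{M}(a'^* X)$ via \Cref{prop:indeptop}. The pointwise maps from the first step then assemble into a commutative square of continuous surjections, and passing to the respective quotients yields the desired continuous surjection $\lvert X \rvert \to \cl{M}(X)$. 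Independence of the choice of cover is automatic, since both $\lvert X \rvert$ and $\cl{M}(X)$ are characterized by universal properties with respect to this presentation.

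The main work lies in the qcqs case. Choose $Y$ and $Z$ above to be qcqs; then $\cl{M}(Y)$ and $\cl{M}(Z)$ are compact Hausdorff. I would verify the universal property of the maximal Hausdorff quotient directly: given any continuous $f\colon \lvert X \rvert \to H$ with $H$ Hausdorff, the composite $f_Y\colon \lvert Y \rvert \to H$ factors uniquely through $\bar f_Y\colon \cl{M}(Y) \to H$ by the affinoid case of the first paragraph. To conclude that $\bar f_Y$ descends along $\cl{M}(Y) \to \cl{M}(X)$, it is enough to show the two composites $\cl{M}(Z) \rightrightarrows \cl{M}(Y) \to H$ agree; but precomposing with the surjection $\lvert Z \rvert \to \cl{M}(Z)$ reduces this to the equality of $\lvert Z \rvert \rightrightarrows \lvert Y \rvert \to H$, which holds because $f$ factors through $\lvert X \rvert$.

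The main technical obstacle is ensuring that $\lvert X \rvert$ and $\cl{M}(X)$ admit a common coequalizer presentation so that the comparison map is well-defined and the descent argument for the universal property goes through. The passage from arbitrary $R^+$ to $R^\circ$, via the arc-cover $\Spa(R, R^\circ) \to \Spa(R, R^+)$, is the device that bridges the $v$-topological description of $\lvert X \rvert$ with the arc-topological description of $\cl{M}(X)$; everything else is formal once this identification is in place.
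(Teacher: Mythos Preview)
Your overall strategy matches the paper's, but there is a genuine gap in your choice of cover. You insist on $R_i^+ = R_i^\circ$ for the presentation $Y \to X$, and then claim this same diagram computes $\lvert X\rvert$. It does not in general: a $v$-cover of $X$ by spaces of the form $\Spa(R,R^\circ)$ need not exist. For a concrete obstruction, take $X=\Spa(C,C^+)$ with $C^+\subsetneq O_C$. Any map $(C,C^+)\to(R,R^\circ)$ sends every $c\in O_C\setminus C^+$ into $R^\circ$, so for any $y\in\Spa(R,R^\circ)$ one has $v_y(c)\le 1$; hence no point of $\Spa(R,R^\circ)$ can hit the higher-rank specializations of $\lvert X\rvert$. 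Thus $\lvert Y\rvert\to\lvert X\rvert$ is never surjective and your claimed coequalizer description of $\lvert X\rvert$ fails. The arc-cover $\Spa(R,R^\circ)\to\Spa(R,R^+)$ you invoke bridges the arc side, but it does nothing for the $v$-side.

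The fix is exactly what the paper does: take an arbitrary $v$-cover $Y=\bigsqcup_i\Spa(R_i,R_i^+)$ with no constraint on $R_i^+$. Since $a'^*$ sends $\Spa(R,R^+)$ to $\cl{M}_\arc(R)$ regardless of $R^+$, and preserves effective epimorphisms and finite limits, this same $Y$ also presents $\cl{M}(X)$ as $\cl{M}(Y)/\cl{M}(Z)$. The continuous surjection $\lvert\Spa(R,R^+)\rvert\to\cl{M}(R)$ (and its identification with the maximal Hausdorff quotient in the qcqs affinoid case) holds for \emph{any} $R^+$ by \cite[Proposition 13.9]{Sch17}, so your affinoid input and your descent argument go through unchanged once you drop the $R^\circ$ requirement. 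One further small point in your qcqs argument: you verify that every map $\lvert X\rvert\to H$ factors through $\cl{M}(X)$, but you never check that $\cl{M}(X)$ is itself Hausdorff, which is needed to get the map $\lvert X\rvert^B\to\cl{M}(X)$. The paper handles this by observing that the equivalence relation $\cl{M}(Z)\to\cl{M}(Y)\times\cl{M}(Y)$ has closed image (compact into Hausdorff), so the quotient is Hausdorff.
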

\begin{proof}
    For the first claim, take $v$-covers $Y\to X$ and $Z \to Y\times_X Y$ from disjoint unions of affinoid perfectoid spaces $Y$ and $Z$. Then, $\lvert X \rvert$ is a quotient space $\lvert Y\rvert/\lvert Z\rvert$. Since the sheafification preserves effective epimorphisms and finite limits, $\cl{M}(X)$ is identified with the quotient space $\cl{M}(Y)/\cl{M}(Z)$. The claim follows since there is a continuous surjection $\lvert Y \rvert \to \cl{M}(Y)$ for every affinoid perfectoid space $Y$ (see \cite[Proposition 13.9]{Sch17}). 
    
    We turn to the case where $X$ is qcqs. Then, $Y$ and $Z$ can be taken as affinoid perfectoid spaces. Then, the equivalence relation of $\cl{M}(Y)$ given by $\cl{M}(Z)$ is closed, so $\cl{M}(Y)/\cl{M}(Z)$ is compact and Hausdorff. Thus, $\cl{M}(X)$ is compact and Hausdorff, so $\lvert X\rvert \to \cl{M}(X)$ factors through the maximal Hausdorff quotient $\lvert X \rvert^B$. By \cite[Proposition 13.9]{Sch17}, $\lvert Y\rvert \to \lvert X \rvert^B$ (resp.\ $\lvert Z\rvert \to \lvert X \rvert^B$) uniquely factors through $\cl{M}(Y)$ (resp.\ $\cl{M}(Z)$). By the uniqueness, the sequence $\cl{M}(Z) \rightrightarrows \cl{M}(Y) \to \lvert X \rvert^B$ induces $\cl{M}(X) \to \lvert X\rvert ^B$, so $\cl{M}(X)$ is identified with $\lvert X \rvert^B$.  \\ 
\end{proof}

\begin{lem} \label{lem:qcqs_preserved} \noindent
    \begin{enumerate}
        \item If a small $v$-stack $X$ is quasicompact (resp.\ quasiseparated), then $a'^*X$ is quasicompact (resp.\ quasiseparated). 
        \item If a map $f\colon Y \to X$ of small $v$-stacks is quasicompact (resp.\ quasiseparated), then $a'^*f$ is quasicompact (resp.\ quasiseparated). 
    \end{enumerate}
\end{lem}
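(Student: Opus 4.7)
The plan is to exploit two formal properties of the functor $a'^*\colon \vStack \to \arcStack$: it preserves finite limits, being the pullback functor of the morphism of sites $\Perf \to \Perft$, $(R,R^+)\mapsto R$; and it preserves small colimits, as a left adjoint. In particular, $a'^*$ converts fiber products of small $v$-stacks into fiber products of small arc-stacks and effective epimorphisms into effective epimorphisms. Granting these, the whole lemma reduces to comparing covers by affinoid perfectoids in the $v$-world with covers by affinoid arc-sheaves $\cl{M}_\arc(R)$ in the arc-world.

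For the quasicompact case of (1), choose a surjection $\Spa(R,R^+) \twoheadrightarrow X$ from an affinoid perfectoid; applying $a'^*$ yields a surjection $\cl{M}_\arc(R) = a'^*\Spa(R,R^+) \twoheadrightarrow a'^*X$, which witnesses that $a'^*X$ is quasicompact. For the quasicompact case of (2), fix a map $g\colon \cl{M}_\arc(R) \to a'^*X$ from an affinoid arc-sheaf. Since $a'^*X$ is the arc-sheafification of the functor $R'\mapsto X(R',R'^\circ)$, there exists an arc-cover $R\to R'$ such that the composite $\cl{M}_\arc(R') \to a'^*X$ is the image under $a'^*$ of an honest map $\Spa(R',R'^\circ) \to X$. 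Because $f$ is quasicompact, $Y\times_X \Spa(R',R'^\circ)$ is a quasicompact small $v$-stack and admits a surjection from some $\Spa(S,S^+)$. Applying $a'^*$ and using preservation of fiber products, we obtain a surjection $\cl{M}_\arc(S) \twoheadrightarrow a'^*Y \times_{a'^*X} \cl{M}_\arc(R')$; composing with the surjection $\cl{M}_\arc(R') \twoheadrightarrow \cl{M}_\arc(R)$ coming from the arc-cover produces a surjection onto $a'^*Y \times_{a'^*X} \cl{M}_\arc(R)$, establishing quasicompactness of $a'^*f$.

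Both quasiseparated statements reduce to their quasicompact counterparts via the diagonal. Preservation of finite limits identifies $a'^*\Delta_X = \Delta_{a'^*X}$ and $a'^*\Delta_f = \Delta_{a'^*f}$, so once $\Delta_X$ (respectively $\Delta_f$) is known to be quasicompact, applying the quasicompact case of (2) to the diagonal gives the desired conclusion for $a'^*X$ (respectively $a'^*f$). The one substantive point in the whole argument is the arc-local lifting step for part (2): a map $\cl{M}_\arc(R) \to a'^*X$ is not generally represented by a map of $v$-stacks out of $\Spa(R,R^\circ)$, only after passage to an arc-cover of $R$. This is the technical heart of the proof and forces the detour through arc-sheafification; once it is available, everything else is formal manipulation of effective epimorphisms and fiber products.
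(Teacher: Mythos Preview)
Your proof is correct and follows essentially the paper's approach. The paper handles the quasicompact case of (1) identically to you, then disposes of the remaining three cases in a single sentence: ``all other claims follow formally from the fact that $a'^*$ preserves effective epimorphisms and finite limits.'' Your write-up unpacks this formal content explicitly, and your reduction of both quasiseparated statements to the quasicompact case of (2) via the diagonal is exactly the intended reasoning.

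The one difference is in emphasis. You single out the arc-local lifting---passing from a map $\cl{M}_\arc(R) \to a'^*X$ to an honest $v$-map $\Spa(R',R'^\circ) \to X$ after an arc-cover---as ``the technical heart'' that ``forces the detour through arc-sheafification.'' This is a slight overstatement: the lifting is just the defining property of stackification and is no deeper than the rest of the argument. An equivalent and somewhat slicker route, likely what the paper has in mind by ``formally,'' is to note that quasicompactness of a morphism can be tested after pullback along a cover of the target; choosing the cover $\{\cl{M}_\arc(R_i) \to a'^*X\}$ induced from a $v$-cover of $X$ by affinoid perfectoids, one has $a'^*Y \times_{a'^*X} \cl{M}_\arc(R_i) \cong a'^*(Y \times_X \Spa(R_i,R_i^+))$, which is quasicompact by (1). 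This avoids ever needing to lift an arbitrary test map. Either way the argument is short and your version is entirely sound.
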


\begin{proof}
     If $X$ is quasicompact, there is a $v$-cover $Y \to X$ from an affinoid perfectoid space $Y$ over $k$. Then, $a'^*X$ is quasicompact since sheafification preserves effective epimorphisms and $a'^*Y = \cl{M}_\arc(R)$ is quasicompact. Now, all other claims follow formally from the fact that $a'^*$ preserves effective epimorphisms and finite limits.  
\end{proof}

We will take qcqs $v$-sheaves as generating objects of our six functor formalism. To introduce $!$-able morphisms between them, let us recall several classes of morphisms. 
\begin{defi}
    A $0$-truncated map $f:X \ra Y$ of small arc-stacks is \textit{proper} if it is qcqs. 
\end{defi}

\begin{defi} (\cite[Definition 4.10, Proposition 4.12]{Sch24})
    Let
    \[
        \cl{D}_{\fin}(X,\mathbb Z) \subset \cl{D}(X_\arc, \bb{Z})
    \]
    denote the stable $\infty$-category of finitary arc-sheaves on a small arc-stack $X$, which is equivalent to the $\infty$-category of functors from strictly totally disconnected objects in $\Perft$ over $X$ to $\cl{D}(\mathbb Z)$ which commute with finite products and filtered colimits. It is equipped with a natural $t$-structure induced from $\cl{D}(\bb{Z})$.  
\end{defi}

\begin{conv} \label{conv:finsheaf}
    A finitary sheaf on $X$ refers to an object in the heart of $\cl{D}_\fin(X, \bb{Z})$.
\end{conv}



\begin{defi}(\cite[Definition 4.17]{Sch24}) \label{def:cohomological_dimension}
    For a small arc-stack $X$, let $d(X)$ denote the cohomological dimension of the global sections functor 
    \[ 
        \cl{D}_\fin(X, \mathbb Z) \ra \cl{D}(\mathbb Z), \quad M \mapsto M(X). 
    \]
\end{defi}
\begin{defi}(\cite[Definition 4.19]{Sch24}) \label{def:cohomological_dimension}
    Let $f\colon Y \to X$ be a $0$-truncated proper map of small arc-stacks. 
    \begin{enumerate}
        \item We say that $f$ has finite cohomological dimension if the cohomological dimension of $Y\times_X \cl{M}_\arc(C)$ is bounded by a constant for all geometric points $\cl{M}_\arc(C) \to X$. Here, $C$ is an algebraically closed perfectoid field over $k$. The maximum of the cohomological dimension as above is denoted by $d(f)$. 
        \item We say that $f$ has locally finite cohomological dimension if the base change of $f$ to every qcqs arc-sheaf $Z$ over $X$ has finite cohomological dimension. 
    \end{enumerate}
\end{defi}

Note that (2) is a bit different from \cite[Definition 4.19]{Sch24}, but we think that it is natural to make the above definition to show the stability in \Cref{lem:propfincoh}. 

\begin{lem} \label{lem:propfincoh}
    Let $f\colon Y\to X$ and $g\colon Z \to Y$ be maps of small arc-stacks. Suppose that $f$ is $0$-truncated, proper and has locally finite cohomological dimension. 
    \begin{enumerate}
        \item The base change of $f$ along any map $W \to X$ of small arc-stacks is $0$-truncated, proper and has locally finite cohomological dimension. 
        \item If $f$ has finite cohomological dimension, the cohomological dimension of $Y \times_X \cl{M}_\arc(R)$ is bounded by $d(f)$ for all strictly totally disconnected spaces $\cl{M}_\arc(R) \to X$. 
        \item If $g$ is $0$-truncated, proper and $f$ and $g$ have (resp.\ locally) finite cohomological dimensions, $f \circ g$ is $0$-truncated, proper and has (resp.\ locally) finite cohomological dimension up to $d(f) + d(g)$. 
        \item If $f\circ g$ is $0$-truncated, proper and has locally finite cohomological dimension, the same holds for $g$. 
    \end{enumerate}
\end{lem}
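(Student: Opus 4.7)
The plan is to verify each of the four statements by combining standard stability properties of qcqs and $0$-truncated maps with a careful analysis of how cohomological dimension behaves under base change and composition.

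For (1), the properties of being $0$-truncated and qcqs are stable under arbitrary base change. For the cohomological dimension part, given any qcqs arc-sheaf $W' \to W$, the composition $W' \to W \to X$ presents $W'$ as a qcqs arc-sheaf over $X$, and since $(Y \times_X W) \times_W W' \cong Y \times_X W'$, the hypothesis on $f$ immediately yields finite cohomological dimension of this base change.

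For (2), the key inputs are proper base change in the motivic six functor formalism of \cite{Sch24} together with the vanishing of higher cohomology on strictly totally disconnected spaces for finitary arc-sheaves. Given a strictly totally disconnected $\cl{M}_\arc(R) \to X$, each connected component of $\Spa(R, R^\circ)$ corresponds to a geometric point $\cl{M}_\arc(C) \to X$ with $C$ algebraically closed. Proper base change identifies the stalks of $R^n (f_R)_* \und{\bb{Z}}$ on the base with the cohomology of the geometric fibers, which vanishes for $n > d(f)$. Combined with cohomological dimension zero of the strictly totally disconnected base, this delivers the desired bound.

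For (3), composition of $0$-truncated (resp.\ qcqs) maps is $0$-truncated (resp.\ qcqs). For the cohomological dimension, on a geometric fiber $\cl{M}_\arc(C) \to X$ I factor
\[
Z \times_X \cl{M}_\arc(C) \to Y \times_X \cl{M}_\arc(C) \to \cl{M}_\arc(C).
\]
The first map is the pullback of $g$ along $Y \times_X \cl{M}_\arc(C) \to Y$, a qcqs arc-sheaf over $Y$ by the $0$-truncated and qcqs hypotheses on $f$, so its geometric fibers have cohomological dimension at most $d(g)$; the second has cohomological dimension at most $d(f)$ by (2). Proper base change combined with a Leray-type argument yields the uniform bound $d(f) + d(g)$. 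The same reasoning applies after base change to an arbitrary qcqs arc-sheaf over $X$, giving the locally finite case.

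For (4), I factor $g$ as
\[
g \colon Z \xra{\Gamma_g} Y \times_X Z \xra{\pr_2} Y,
\]
where $\Gamma_g$ is the graph of $g$ and $\pr_2$ the projection. The projection $\pr_2$ is the base change of $fg$ along $f$, so by (1) it inherits all three properties. The graph $\Gamma_g$ is the base change of the relative diagonal $\Delta_f \colon Y \to Y \times_X Y$ along $g \times \id \colon Y \times_X Z \to Y \times_X Y$. The crux is the claim that $\Delta_f$ itself has all three properties: it is qcqs as the diagonal of a qcqs map, it is $(-1)$-truncated (hence $0$-truncated) because $f$ is $0$-truncated, and its geometric fibers are either empty or a single reduced point (since fibers of $f$ over geometric points are discrete), so they have cohomological dimension $0$. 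Hence $\Delta_f$ has locally finite cohomological dimension, and (1) transfers these three properties to $\Gamma_g$. Applying (3) to the factorization $g = \pr_2 \circ \Gamma_g$ yields the conclusion.

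The main obstacle is in (2), namely carefully invoking proper base change and the cohomological triviality of strictly totally disconnected spaces in the motivic framework extended along \Cref{prop:extension_6ff_frome_site}; the compatibility of cohomological dimension bounds with this extension is what makes the reduction to geometric fibers legitimate and is the argument that all subsequent parts ultimately rely on.
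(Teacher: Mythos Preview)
Your proposal is correct and follows essentially the same approach as the paper: (1) is immediate, (2) invokes proper base change from \cite{Sch24} together with cohomological triviality of strictly totally disconnected bases, (3) uses the Leray-type decomposition of $(fg)_*$, and (4) factors $g$ through its graph over $X$. The only differences are cosmetic: in (4) you route the properties of $\Gamma_g$ through the diagonal $\Delta_f$ (the paper argues about $\Gamma_g$ directly), and there are minor notational slips (the projection $Y\times_X Z \to Y$ is $\pr_1$, not $\pr_2$; the map $Y\times_X Z \to Y\times_X Y$ is $\id\times g$, not $g\times\id$; and in (2) you should allow an arbitrary finitary sheaf, not just $\und{\bb{Z}}$).
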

\begin{proof}
    First, (1) immediately follows from the definition, and (2) is essentially proved in \cite[Theorem 4.20]{Sch24}. Let us explain more about (2) for the reader's convenience. 

    For (2), we may assume that $X = \cl{M}_\arc(R)$ is strictly totally disconnected. We show that $\tau^{\geq d(f) + 1} f_*\cl{F} = 0$ for every finitary sheaf $\cl{F}$ on $Y$. It is enough to show that the $i$-th cohomology sheaf $\cl{H}^i(f_*\cl{F})$ vanishes for every $i > d(f)$. By \cite[Theorem 4.20]{Sch24}, $f_*\cl{F}$ is finitary and commutes with the base change in $X$. Thus, for every section $s\in \cl{H}^i(f_*\cl{F})(R)$, its stalk $s_x$ at a closed point $x\in X$ vanishes, and it implies that $s_U = 0$ for some neighborhood $U \subset \cl{M}(X)$ of $x$. Thus, we have $s = 0$. 

    For (3), we may assume that $f$ and $g$ have finite cohomological dimensions. Let $\cl{F}$ be a finitary sheaf on $Z$. By (1), $g_*\cl{F}$ has cohomological degree up to $d(g)$. Then, $g_*\cl{F}$ is written as an extension of $\cl{H}^i(g_*\cl{F})[-i]$ for $0 \leq i \leq d(f)$. The claim follows since each $f_*\cl{H}^i(g_*\cl{F})$ has cohomological degree up to $d(f)$ by (1). 

    For (4), let $\Gamma_g\colon Z \to Y\times_X Z$ be the graph of $g$ over $X$. By (1), $\pr_1 \colon Y\times_X Z \to Y$ is $0$-truncated, proper and has locally finite cohomological dimension, so it is enough to show that the same holds for $\Gamma_g$ by (3). It is easy to see that $\Gamma_g$ is $0$-truncated and quasicompact. Moreover, $\Gamma_g$ is an injection since $f$ is $0$-truncated. Thus, $\Gamma_g$ is quasiseparated, and every fiber of a geometric point is empty or isomorphic to the point itself. Thus, $\Gamma_g$ has locally finite cohomological dimension. 
\end{proof}

For our later use, we present a criterion for finite cohomological dimension. 

\begin{lem} \label{lem:qcqscolimpres}
    For any filtered diagram $(\cl{F}_\lambda)_{\lambda  \in \Lambda}$ of finitary sheaves (in the sense of \Cref{conv:finsheaf}) on a qcqs arc-stack $X$, we have $(\colim_{\lambda \in \Lambda} \cl{F}_\lambda)(X) \cong \colim_{\lambda \in \Lambda} \cl{F}_\lambda(X)$. 
\end{lem}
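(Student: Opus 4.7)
The plan is to reduce the statement to the definitional case of affinoid strictly totally disconnected (STD) arc-sheaves, where the conclusion is immediate, and then to propagate it via qcqs arc-descent.

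Since $X$ is quasicompact, I would first choose a surjection $Y = \cl{M}_\arc(R_0) \to X$ with $R_0$ strictly totally disconnected. Quasiseparatedness of $X$ implies that every iterated fiber product $Y \times_X \cdots \times_X Y$ is qcqs, so it further admits a surjection from an affinoid STD arc-sheaf. Assembling these choices yields an arc-hypercover $Z^\bullet \to X$ whose every term $Z^n$ is a disjoint union of affinoid STD arc-sheaves. Arc-descent for finitary sheaves then presents
\[
    \cl{F}(X) \simeq \Tot_{n \in \Delta} \cl{F}(Z^n)
\]
in $\cl{D}(\bb{Z})$ for every finitary sheaf $\cl{F}$ on $X$.

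On affinoid STD arc-sheaves, evaluation of a finitary sheaf commutes with filtered colimits by the very definition of $\cl{D}_\fin(-, \bb{Z})$ from \cite[Proposition 4.12]{Sch24}. Because each $\cl{F}_\lambda$ lies in the heart, every $\cl{F}_\lambda(Z^n)$ sits in cohomological degree zero, so under Dold--Kan the cosimplicial object $\cl{F}_\lambda(Z^\bullet)$ corresponds to a bounded-below cochain complex of abelian groups whose cohomology computes $\Tot_\Delta \cl{F}_\lambda(Z^\bullet)$. Filtered colimits in $\bb{Z}\text{-Mod}$ are exact and hence commute with cohomology of cochain complexes, so they commute with $\Tot_\Delta$ in this bounded-below situation. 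Combining the STD case with this interchange gives
\[
    (\colim_\lambda \cl{F}_\lambda)(X) \simeq \Tot_{n \in \Delta} \colim_\lambda \cl{F}_\lambda(Z^n) \simeq \colim_\lambda \Tot_{n \in \Delta} \cl{F}_\lambda(Z^n) \simeq \colim_\lambda \cl{F}_\lambda(X),
\]
as desired.

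The only technical obstacle is producing the affinoid STD hypercover $Z^\bullet$ of $X$; this is handled by the qcqs hypothesis on $X$ together with the standard fact that every qcqs arc-stack admits an affinoid STD surjection, applied iteratively to the fiber products. A cleaner variant avoids the full hypercover and iterates the equalizer presentation $\cl{F}(X) = \Eq(\cl{F}(Y) \rightrightarrows \cl{F}(Y \times_X Y))$, using that equalizers commute with filtered colimits in $\bb{Z}\text{-Mod}$; I do not expect a serious difficulty beyond this routine reduction.
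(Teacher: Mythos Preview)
Your proposal is correct and follows essentially the same route as the paper: choose an arc-hypercover of $X$ by strictly totally disconnected affinoids, use hypercompleteness of finitary sheaves to write $\cl{F}(X)$ as the totalization over this hypercover, invoke that evaluation on STD affinoids commutes with filtered colimits by definition, and then interchange $\colim_\lambda$ with $\Tot_\Delta$ using that each $\cl{F}_\lambda(Z^n)$ is static. The paper's proof is the same argument compressed into three lines, citing \cite[Theorem 4.1]{Sch24} for hypercompleteness; your side remark about an equalizer presentation should be dropped, since $X$ is an arc-\emph{stack} and the $0$-truncated \v{C}ech nerve does not suffice there.
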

\begin{proof}
    Since $X$ is qcqs, we can take a hypercovering $\cl{M}_\arc(A^\bullet) \to X$ with $A^\bullet$ strictly totally disconnected. By \cite[Theorem 4.1]{Sch24}, $\cl{F}$ is hypercomplete, so $\cl{F}(X) \cong \lim \cl{F}(A^\bullet)$. By loc. cit., each $\cl{F}(A^\bullet)$ commutes with small colimits in $\cl{F}$. Thus, we have
    \[
        (\colim_{\lambda \in \Lambda} \cl{F}_\lambda)(X) \cong \lim_{\Delta} \colim_{\lambda \in \Lambda} \cl{F}_\lambda(A^\bullet) \cong \colim_{\lambda \in \Lambda} \lim_\Delta \cl{F}_\lambda(A^\bullet) \cong \colim_{\lambda \in \Lambda} \cl{F}_\lambda(X). 
    \]
    Here, the second isomorphism holds since each $\cl{F}_\lambda(A^\bullet)$ is static. 
\end{proof}

\begin{prop} \label{prop:fincohexc}
    Let $X$ be a qcqs arc-stack. Let $j \colon U \hookrightarrow X$ be an open immersion and let $i \colon Z \hookrightarrow X$ be its closed complement. If $Z$ and every closed substack of $X$ contained in $U$ have finite cohomological dimensions up to a constant, $X$ has finite cohomological dimension. 
\end{prop}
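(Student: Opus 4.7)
The plan is to combine excision for the open-closed decomposition $(j,i)$ with an exhaustion of $U$ by closed substacks of $X$. Let $C$ be a common upper bound for $d(Z)$ and for $d(X')$ over all closed substacks $X' \subset X$ contained in $U$; I will show $d(X) \leq C$. Given a static finitary sheaf $\cl{F}$ on $X$, excision supplies a fiber sequence $j_! j^* \cl{F} \to \cl{F} \to i_* i^* \cl{F}$, and applying $R\Gamma(X,-)$ makes the right-most term $R\Gamma(Z, i^* \cl{F}) \in \cl{D}^{\leq C}(\bb{Z})$. The problem therefore reduces to bounding $R\Gamma(X, j_! j^* \cl{F})$.

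For this I would exhaust $\cl{M}(U)$ by compacts. By \Cref{lem:M(X)}, $\cl{M}(X)$ is compact Hausdorff and $\cl{M}(U) \subset \cl{M}(X)$ is open, hence locally compact Hausdorff and equal to the filtered union of its compact subsets $K$. Each such $K$ is closed in $\cl{M}(X)$, so it corresponds to a closed substack $\iota_K \colon X_K \hookrightarrow X$ contained in $U$; writing $\iota_K = j \circ \iota_K'$ gives a closed immersion $\iota_K' \colon X_K \hookrightarrow U$. The essential step is to establish the decomposition
\[
    j^* \cl{F} \cong \colim_K (\iota_K')_* (\iota_K')^* j^* \cl{F}
\]
in $\cl{D}_\fin(U, \bb{Z})$. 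This should follow by evaluation on a strictly totally disconnected $V = \cl{M}_\arc(R) \to U$: the compact image of $\cl{M}(R)$ in $\cl{M}(U)$ lies in some $K$, so $V$ factors through $X_K$ for all sufficiently large $K$, and both sides then evaluate to $\cl{F}(V)$.

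Granting this, applying $j_!$ and using $j_! (\iota_K')_* = (\iota_K)_! = (\iota_K)_*$ (since $\iota_K$ is a closed immersion) yields $j_! j^* \cl{F} \cong \colim_K (\iota_K)_* \iota_K^* \cl{F}$. Since $X$ is qcqs, \Cref{lem:qcqscolimpres} commutes the filtered colimit past global sections, giving
\[
    R\Gamma(X, j_! j^* \cl{F}) \cong \colim_K R\Gamma(X_K, \iota_K^* \cl{F}),
\]
where each term lies in $\cl{D}^{\leq d(X_K)}(\bb{Z}) \subset \cl{D}^{\leq C}(\bb{Z})$, a bound preserved under filtered colimits. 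Inserting this back into the excision fiber sequence forces $R\Gamma(X, \cl{F}) \in \cl{D}^{\leq C}(\bb{Z})$. The hard part will be the colimit decomposition of $j^* \cl{F}$: since $U$ need not be qcqs I cannot invoke \Cref{lem:qcqscolimpres} directly on $U$, and must instead verify it via strictly totally disconnected test objects, exploiting the compactness of their Berkovich spectra together with the defining property of finitary sheaves; the remaining steps are formal consequences of excision, the six-functor identities for composing open and closed immersions, and \Cref{lem:qcqscolimpres}.
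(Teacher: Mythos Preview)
Your proof is correct and follows essentially the same approach as the paper: excision for the pair $(U,Z)$, an exhaustion of $U$ by closed substacks of $X$, and \Cref{lem:qcqscolimpres} to pass the filtered colimit through global sections. The only cosmetic difference is in how the exhaustion is organized: the paper indexes over open neighborhoods $V_\lambda$ of $\cl{M}(Z)$ and uses the pair $W_\lambda^\circ \subset W_\lambda$ (open inside closed, both contained in $U$) to approximate $j_!j^*$ by $j^\circ_{\lambda!}j^{\circ*}_\lambda$ and then restrict to $W_\lambda$, whereas you index directly over compacts $K \subset \cl{M}(U)$ and approximate $j_!j^*$ by $(\iota_K)_*\iota_K^*$; since $\cl{M}(X)$ is compact Hausdorff these indexing systems are cofinal in one another, and your identification $j_!(\iota_K')_* = (\iota_K)_*$ plays the same role as the paper's observation that $(j^\circ_{\lambda!}j^{\circ*}_\lambda\cl{F})(X) = (j_\lambda^* j^\circ_{\lambda!}j^{\circ*}_\lambda\cl{F})(W_\lambda)$.
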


\begin{proof}

    Since $X$ is qcqs, $\cl{M}(X)$ is compact and Hausdorff by \Cref{lem:M(X)}. Let $\{V_\lambda\}_{\lambda\in \Lambda}$ be the filtered system consisting of open neighborhoods of $\cl{M}(Z) \subset \cl{M}(X)$. Let $\ov{V}_\lambda$ be the closure of $V_\lambda$. Then, it satisfies $\bigcap_{\lambda \in \Lambda} \ov{V}_\lambda = \cl{M}(Z)$. Let $W_\lambda \subset X$ be the closed substack corresponding to $\cl{M}(X)-V_\lambda$ and let $W_\lambda^\circ \subset X$ be the open substack corresponding to $\cl{M}(X) - \ov{V}_\lambda$. Let $j_\lambda \colon W_\lambda \hookrightarrow X$ be the closed immersion and let $j^\circ_\lambda \colon W_\lambda^\circ \hookrightarrow X$ be the open immersion. 

    Since $\bigcap_{\lambda \in \Lambda} \ov{V}_\lambda = \cl{M}(Z)$, $\colim_{\lambda \in \Lambda} j^\circ_{\lambda !} j_{\lambda}^{\circ*} \to j_!j^*$ is an isomorphism. For a finitary sheaf $\cl{F}$ on $X$, we have a short exact sequence of finitary sheaves by \cite[Proposition 4.25]{Sch24}
    \[
        \colim_{\lambda \in \Lambda} j^\circ_{\lambda !} j_{\lambda}^{\circ*}\cl{F} \to \cl{F} \to i_* i^* \cl{F}. 
    \]
    By \Cref{lem:qcqscolimpres}, we get a short exact sequence
    \[
        \colim_{\lambda \in \Lambda} (j^\circ_{\lambda !} j_{\lambda}^{\circ*}\cl{F})(X) \to \cl{F}(X) \to(i^*\cl{F})(Z). 
    \]
    Since $Z$ has finite cohomological dimension, $(i^*\cl{F})(Z)$ has finite cohomological dimension. On the other hand, since $\cl{M}(W^\circ_\lambda) \cap V_\lambda = \phi$, each $(j^\circ_{\lambda !} j_{\lambda}^{\circ*}\cl{F})(X)$ equals $(j_\lambda^*j^\circ_{\lambda !} j_{\lambda}^{\circ*}\cl{F})(W_\lambda)$. Thus, it has finite cohomological dimension bounded by the cohomological dimension of $W_\lambda$. 
\end{proof}

\begin{const} \label{const:6ff_qcqs}
    Let $\qcqsvShf$ denote the site of qcqs $v$-sheaves and let $E^{\qcqs}$  be the collection of morphisms $f$ in $\qcqsvShf$ such that $a'^*f$ has finite cohomological dimension. Thanks to \cite[Corollary 10.2]{Sch24}, we may apply \Cref{prop:3_functor_formalism_from_IP} for $I = \{\id\}$ and $P = E^\qcqs$ to get a sheafy presentable six functor formalism
    \[
        \Corr(\qcqsvShf, E^\qcqs) \to \Pres^L, \quad X \mapsto \cl{D}_\mot^\oc(X) = \cl{D}_\mot(a'^*X). 
    \]
    We extend it by applying \Cref{prop:extension_6ff_frome_site} and restrict the output to the full subcategory $\vStack \subset \Shv(\qcqsvShf)$. Then, we get a six functor formalism
    \[
        \Corr(\vStack, E) \to \Pres^L, \quad X \mapsto \cl{D}_\mot^\oc(X). 
    \]
    We say that a morphism in $\vStack$ is $!$-able if it lies in $E$. 
\end{const}

\begin{rmk} \label{rmk:diff!}
    We need to be cautious about the fact that the class of $!$-able morphisms are different from the torsion case studied in \cite{Sch17}. In loc. cit., a map of small $v$-stacks is $!$-able if it is compactifiable, representable in locally spatial diamonds and locally $\text{dim.trg} f <\infty$. 
\end{rmk}

\subsection{Base change to an $\ell$-adic coefficient}

In this section, we take the base change of the motivic six functor formalism constructed as \Cref{const:6ff_qcqs} to an $\ell$-adic coefficient following \cite[Notation]{HHS24}. 

Let $\ast = \Spd(k)$ be the final object of $\vStack$. For each small $v$-stack $X$, $\cl{D}^\oc_\mot(X)$ is naturally endowed with a module structure over $\cl{D}_\mot(\ast)$ (see \cite[Lemma 3.2.5]{HM24}). 
Take a $\bb{Z}_\ell$-algebra $\Lambda$ as a coefficient ring and consider the $\ell$-adic realization
\[
\cl{D}_\mot(\ast) \to \cl{D}(\bb{Z}_\ell) \to \cl{D}(\Lambda).
\]
It is a symmetric monoidal functor. Here, the first functor is constructed as follows. First, by taking the limit of the \'{e}tale realization \cite[Proposition 12.4]{Sch24} 
\[ 
    \cl D_\mot(*) = \cl{D}_\mot(\ast, \bb{Z}) \ra \cl{D}_\mot(\ast, \bb{Z}/\ell^n) \cong \cl D(\mathbb Z/\ell^n ) 
\] 
with $\bb{Z}/\ell^n$-coefficients over $n\geq 1$, we get a functor $\cl{D}_\mot(\ast) \to \cl{D}(\Spf(\bb{Z}_\ell))$.  By \cite[Theorem 11.1]{Sch24}, $\cl{D}_\mot(\ast)$ is compactly generated and compact dualizable generators map to $\Perf(\Spf(\bb{Z}_\ell))$ via the $\ell$-adic realization. The colimit-preserving extension of $\cl{D}_\mot^\omega(\ast) \to \Perf(\Spf(\bb{Z}_\ell)) \cong \Perf(\bb{Z}_\ell)$ provides $\cl{D}_\mot(\ast) \to \cl{D}(\bb{Z}_\ell)$. 

By \cite[Lemma A.0.3]{ALM24}, the base change along $\cl{D}_\mot(\ast) \to \cl{D}(\Lambda)$ provides an $\ell$-adic six functor formalism
\[
    \Corr(\vStack, E) \to \Pres^L_{D_\mot(\ast)} \to \Pres^L_{\Lambda}. 
\]
Here, $\Pres^L_{\Lambda}$ (resp.\ $\Pres^L_{D_\mot(\ast)}$) denotes the category of $\Lambda$-linear (resp.\ $\cl{D}_\mot(\ast)$-linear) presentable stable $\infty$-categories.

\begin{prop} \label{prop:linearity_statement}
    Let $\cl{D}^\oc(X,\Lambda) = \cl{D}^\oc_\mot(X) \otimes_{\cl{D}_\mot(\ast)} \cl{D}(\Lambda)$ for a small $v$-stack $X$. Then, 
    \[
        \Corr(\vStack, E) \to \Pres^L_{\Lambda}, \quad X \mapsto \cl{D}^\oc(X,\Lambda)
    \]
    is a sheafy $\Lambda$-linear presentable six functor formalism. 
\end{prop}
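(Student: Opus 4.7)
\emph{Proof sketch.} A $\Lambda$-linear presentable six functor formalism is produced directly by \cite[Lemma A.0.3]{ALM24}: since $\cl{D}_\mot^\oc$ factors through $\Pres^L_{\cl{D}_\mot(\ast)}$ by \cite[Lemma 3.2.5]{HM24}, and the $\ell$-adic realization $\cl{D}_\mot(\ast) \to \cl{D}(\Lambda)$ is a symmetric monoidal colimit-preserving functor, its associated base change $-\otimes_{\cl{D}_\mot(\ast)} \cl{D}(\Lambda)\colon \Pres^L_{\cl{D}_\mot(\ast)} \to \Pres^L_\Lambda$ transports the motivic six functor formalism into a $\Lambda$-linear presentable one, whose value at $X$ is by definition $\cl{D}^\oc(X, \Lambda)$. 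Thus the only nontrivial point is sheafiness.

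Let $f \colon Y \to X$ be a $v$-cover in $\vStack$ with \v{C}ech nerve $Y^\bullet$. By sheafiness of $\cl{D}_\mot^\oc$ (\Cref{const:6ff_qcqs}), we have an equivalence
\[
\cl{D}_\mot^\oc(X) \xrightarrow{\sim} \lim_{\Delta} \cl{D}_\mot^\oc(Y^\bullet)
\]
in $\Pres^L_{\cl{D}_\mot(\ast)}$, and the task is to show that this remains an equivalence after applying $-\otimes_{\cl{D}_\mot(\ast)} \cl{D}(\Lambda)$. The subtlety is that base change is a left adjoint (with right adjoint given by restriction of scalars) and so does not automatically preserve cosimplicial limits.

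The plan is to apply Barr--Beck--Lurie. Each cosimplicial transition functor in the \v{C}ech diagram is a $*$-pullback $g^*$ that admits a right adjoint $g_*$, so the limit $\lim_\Delta \cl{D}_\mot^\oc(Y^\bullet)$ is identified with the $\infty$-category $\Mod_T(\cl{D}_\mot^\oc(Y))$ of modules over the $\cl{D}_\mot(\ast)$-linear colimit-preserving monad $T = f_* f^*$. Since the formation of module categories over linear monads in $\Pres^L$ commutes with base change along symmetric monoidal morphisms of base categories, we obtain
\[
\cl{D}^\oc(X, \Lambda) \simeq \Mod_{T_\Lambda}(\cl{D}^\oc(Y, \Lambda)) \simeq \lim_\Delta \cl{D}^\oc(Y^\bullet, \Lambda),
\]
where $T_\Lambda$ denotes the base change of $T$. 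The main obstacle is the careful verification of this compatibility of base change with module categories over monads; this essentially follows the approach of \cite{HHS24}, from which the present $\ell$-adic formalism is extracted.
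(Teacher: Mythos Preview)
Your reduction to sheafiness and the first paragraph are fine. The monadic argument, however, has a genuine gap and is also unnecessarily complicated.

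The statement you invoke---that formation of module categories over $\cl{D}_\mot(\ast)$-linear monads commutes with base change along $-\otimes_{\cl{D}_\mot(\ast)}\cl{D}(\Lambda)$---is not a general fact, and you acknowledge as much in your last sentence. But this claim is essentially the same limit-preservation statement you started with: $\Mod_T(\cl{C})$ is itself computed as a cosimplicial limit (the cobar resolution), so asking base change to commute with it is no easier than asking base change to commute with the original \v{C}ech limit. The monadic reformulation therefore does not buy anything. There is also a side issue: for the Barr--Beck identification $\lim_\Delta \cl{D}_\mot^\oc(Y^\bullet)\simeq \Mod_T(\cl{D}_\mot^\oc(Y))$ to live in $\Pres^L_{\cl{D}_\mot(\ast)}$ you need $T=f_*f^*$ to be colimit-preserving, i.e.\ $f_*$ to commute with colimits, which is not automatic for an arbitrary $v$-cover.

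The paper's proof avoids all of this with a single observation: the base change functor $-\otimes_{\cl{D}_\mot(\ast)}\cl{D}(\Lambda)$ already preserves \emph{all} small limits in $\Pres^L$. This is because $\cl{D}_\mot(\ast)$ is rigid by \cite[Theorem 11.1]{Sch24}, and $\cl{D}(\Lambda)$, being compactly generated, is dualizable as a $\cl{D}_\mot(\ast)$-module by \cite[Proposition 9.4.4]{GR17I}; tensoring with a dualizable object is both a left and a right adjoint. Sheafiness of $\cl{D}^\oc(-,\Lambda)$ then follows immediately from sheafiness of $\cl{D}_\mot^\oc$. No monadicity, no analysis of specific covers.
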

\begin{proof}
    It is enough to verify the sheafyness. By construction, the functor $X \mapsto \cl{D}_\mot^\oc(X)$ is sheafy. By \cite[Theorem 11.1]{Sch24}, $\cl{D}_\mot(\ast)$ is rigid. Since $\cl{D}(\Lambda)$ is compactly generated, it is dualizable as a $\cl{D}_\mot(\ast)$-module by \cite[Proposition 9.4.4]{GR17I}. Thus, the base change $\Pres^L_{\cl{D}_\mot(\ast)}\to \Pres^L_{\Lambda}$ commutes with small limits, so the claim follows. 
\end{proof}

For each small $v$-stack $X$, let $\underline{\Lambda}_X \in \cl{D}^\oc(X, \Lambda)$ denote the pullback of $\Lambda \in \cl{D}(\Lambda) = \cl{D}^\oc(\ast, \Lambda)$ along $X \to \ast$. When $X$ is clear from the context, $\underline{\Lambda}_X$ is abbreviated to $\underline{\Lambda}$ by abuse of notation. 

\begin{lem} \label{lem:locprof!}
    Let $S$ be a locally profinite space and let $\pi_{S} \colon \underline{S} \to \ast$ be the projection map. Then, $\pi_{S!}\underline{\Lambda} = C_c^\infty(S, \Lambda)$ in $\cl{D}^\oc(\ast, \Lambda) = \cl{D}(\Lambda)$. 
\end{lem}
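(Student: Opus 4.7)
The plan is to handle the statement in two stages: first reduce to the case where $S$ is compact profinite, and then compute directly using a cofiltered limit of finite approximations.

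\textbf{Stage 1 (reduction to compact profinite).} First I would decompose $S = \bigsqcup_{i \in I} U_i$ into compact open profinite subsets, which is possible for any locally profinite space. This gives $\und{S} = \bigsqcup_i \und{U_i}$ with open-closed immersions $\iota_i \colon \und{U_i} \hookrightarrow \und{S}$. Each $\pi_{U_i}$ is $0$-truncated and qcqs with profinite geometric fibers (cohomological dimension $0$), hence lies in $E^{\qcqs}$ by \Cref{const:6ff_qcqs}, and $\pi_S$ becomes $!$-able by the $!$-local nature of $E$ built into the extension in \Cref{prop:extension_6ff_frome_site}. Since $\pi_{S!}$ is a left adjoint in the presentable formalism of \Cref{prop:linearity_statement} and $\und{\Lambda}_S = \bigoplus_i \iota_{i!}\und{\Lambda}_{U_i}$, the statement reduces to showing $\pi_{U!}\und{\Lambda} = C_c^\infty(U, \Lambda)$ for a compact profinite $U$, matching $C_c^\infty(S, \Lambda) = \bigoplus_i C_c^\infty(U_i, \Lambda)$.

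\textbf{Stage 2 (compact profinite case).} When $S$ is compact profinite, $\pi_S$ is proper, so $\pi_{S!} = \pi_{S*}$. I would write $S = \lim_j S_j$ as a cofiltered limit of finite discrete sets, giving $\und{S} = \lim_j \und{S_j}$ in $v$-sheaves. For finite $S_j$, $\und{S_j} = \bigsqcup_{s \in S_j} \ast$ and hence $\pi_{S_j*}\und{\Lambda}_{S_j} = \Fun(S_j, \Lambda)$. The projections $p_j \colon \und{S} \to \und{S_j}$, together with the adjunction units $\und{\Lambda}_{S_j} \to p_{j*}\und{\Lambda}_S$, yield after applying $\pi_{S_j*}$ a canonical comparison map
\[
    C_c^\infty(S, \Lambda) = \colim_j \Fun(S_j, \Lambda) \longrightarrow \pi_{S*}\und{\Lambda}_S
\]
which I want to show is an isomorphism.

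\textbf{Stage 3 (isomorphism check and main obstacle).} To conclude I would identify $a'^*\und{S}$ with an affine arc-sheaf $\cl{M}_\arc(R)$ for $R$ the continuous $k$-valued functions on $S$ (presented as $\colim_j \Fun(S_j, k)$), and use the sheafyness of $\cl{D}_\mot^\oc$ together with the filtered colimit behaviour of finitary sheaves from \Cref{lem:qcqscolimpres} to pass from finite to profinite levels. The hard part is exactly this continuity step: verifying that $\pi_{S*}\und{\Lambda}_S$ is computed as the filtered colimit of $\pi_{S_j*}\und{\Lambda}_{S_j}$ requires a continuity property of $\cl{D}_\mot^\oc$ along cofiltered limits of qcqs arc-sheaves, which is implicit in the construction via affine arc-sheaves but not spelled out in the excerpt. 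An alternative route would be to first verify the equality with torsion coefficients $\Lambda = \bb{Z}/\ell^n$ using the classical étale six functor formalism (where pushforward from a profinite perfectoid space of the constant sheaf is well understood), and then pass to a general $\bb{Z}_\ell$-algebra $\Lambda$ via the base-change construction of $\cl{D}^\oc(-, \Lambda)$.
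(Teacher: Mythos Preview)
Your overall strategy matches the paper's: reduce to the profinite case via colimit-preservation of $\pi_{S!}$, use properness to identify $\pi_{S!}=\pi_{S*}$, and then invoke a continuity statement along $S=\lim_j S_j$. The difference is in how the ``hard part'' you flag in Stage~3 is handled. The paper does not work with $\Lambda$-coefficients directly; instead it computes $\pi_{S*}\underline{\bb{Z}}$ in $\cl{D}_\mot^\oc(\ast)$ and cites \cite[Lemma 10.4]{Sch24} for the identification $\pi_{S*}\underline{\bb{Z}}\cong\colim_j C(S_j,\bb{Z})$ (which, as the paper notes, is exactly the finitary-sheaf filtered-colimit argument you gesture at). Only \emph{after} this motivic computation does the paper apply the symmetric monoidal realization $\cl{D}_\mot(\ast)\to\cl{D}(\Lambda)$, which sends $\colim_j C(S_j,\bb{Z})$ to $C_c^\infty(S,\Lambda)$ on the nose.

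This buys a clean resolution of your obstacle: the continuity step lives entirely in Scholze's motivic formalism where it is already established, and the passage to general $\Lambda$ is a single base-change at the end rather than a separate reduction to torsion coefficients. Your alternative route through $\bb{Z}/\ell^n$ and the \'etale formalism would work but is a detour; the paper's point is that the $\cl{D}^\oc(-,\Lambda)$ formalism is \emph{defined} as a base change of $\cl{D}_\mot^\oc$, so computations of $!$-pushforwards of constant sheaves should be done motivically first.
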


Here, a topological space is locally profinite if it is Hausdorff, locally compact and totally disconnected. In particular, locally profinite spaces are unions of profinite spaces. 

\begin{proof}
    Since $\pi_{S!}$ commutes with small colimits, we may assume that $S$ is profinite. Let us write $S$ as a cofiltered limit $\lim_{i \in I} S_i$ of finite sets. Since $\pi_S$ is proper, we have
    \[
        \pi_{S!} \underline{\bb{Z}} \cong \pi_{S*} \underline{\bb{Z}} \cong \colim_{i\in I} \underline{C(S_i, \bb{Z})}
    \]
    by \cite[Lemma 10.4]{Sch24}. Note that the second isomorphism follows from the commutativity of finitary sheaves with filtered colimits. Then, the claim follows by taking the realization $\cl{D}_\mot(\ast) \to \cl{D}(\Lambda)$. 
\end{proof}

\subsection{Examples of $!$-able maps}

In this section, we verify $!$-ability for several maps of small $v$-stacks. First, we treat closed immersions and partially open immersions. 

\begin{defi}(\cite[Definition 18.4]{Sch17})
    \label{def:partially_proper_v_stacks}
    A separated morphism $f \colon X \ra Y$ of $v$-stacks is \textit{partially proper} if for every perfectoid Huber pair $(R,R^+)$ over $k$ and every diagram
    \begin{center}
        \begin{tikzcd}
            \Spa(R,R^\circ) \ar[d]  \rar & X \ar[d,"f"] \\ 
            \Spa(R,R^+) \rar \ar[ur, dashed] & Y, 
        \end{tikzcd}
    \end{center}
    there exists a (necessarily unique) dotted arrow making the diagram commute. 
\end{defi}
\begin{lem}
    \label{lem:immersion_vstacks_arcstacks} Let $f\colon X \to Y$ be a map of small $v$-stacks. 
    \begin{enumerate}
        \item If $f$ is a closed immersion, then so is ${a'}^{*}f$. 
        \item If $f$ is a partially proper open immersion, then ${a'}^{*}f$ is an open immersion. 
    \end{enumerate}
\end{lem}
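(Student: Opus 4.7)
Since $a'^*$ is left exact and sends effective epimorphisms to effective epimorphisms (as used in the proof of \Cref{lem:qcqs_preserved}), both statements are $v$-local on $Y$. I would therefore reduce to the case where $Y = \Spa(R,R^+)$ is an affinoid perfectoid, so that $a'^*Y = \cl{M}_\arc(R)$ and $\cl{M}(a'^*Y) = \cl{M}(R)$ is the classical Berkovich spectrum.

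For part (1), the closed immersion $f\colon X \hookrightarrow \Spa(R,R^+)$ is cut out by a closed subset $|X| \subset |\Spa(R,R^+)|$ in the sense of \cite[Definition 10.7]{Sch17}. Set $W := |X| \cap \cl{M}(R)$, a closed subset of $\cl{M}(R)$. I would verify that for every perfectoid Tate $k$-algebra $S$, a morphism $\phi\colon\Spa(S,S^\circ) \to \Spa(R,R^+)$ factors through $X$ if and only if the induced map on Berkovich spectra $\cl{M}(S) \to \cl{M}(R)$ lands in $W$. The only nontrivial direction is the converse: every $x \in |\Spa(S,S^\circ)|$ has a unique rank-$1$ generization $\tilde x \in \cl{M}(S)$, and continuity of $\phi$ forces $\phi(x)$ to lie in the closure of $\{\phi(\tilde x)\}$ with $\phi(\tilde x) \in W \subset |X|$; since $|X|$ is closed, hence closed under specialization, we conclude $\phi(x) \in |X|$. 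This identifies $a'^*X$ with the closed subfunctor of $\cl{M}_\arc(R)$ cut out by $W$.

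For part (2), let $|X| \subset |\Spa(R,R^+)|$ be the partially proper open subset corresponding to $f$. The key structural input is that, for an open $|X|$, partial properness is equivalent to the identity $|X| = r^{-1}(W)$, where $W := |X| \cap \cl{M}(R)$ is an open subset of $\cl{M}(R)$ and $r\colon |\Spa(R,R^\circ)| \to \cl{M}(R)$ is the retraction sending a point to its rank-$1$ generization. The inclusion $|X| \subseteq r^{-1}(W)$ follows from openness (which forces closure under generization, hence in particular under rank-$1$ generization), while the reverse inclusion $r^{-1}(W) \subseteq |X|$ is exactly the content of the valuative criterion \Cref{def:partially_proper_v_stacks}. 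From this identity, it is immediate that a morphism $\Spa(S,S^\circ) \to \Spa(R,R^+)$ factors through $|X|$ if and only if the induced map $\cl{M}(S) \to \cl{M}(R)$ lands in $W$, identifying $a'^*X$ with the open subfunctor of $\cl{M}_\arc(R)$ determined by $W$.

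The main obstacle is the structural claim in (2) that partial properness of an open $|X|$ is equivalent to $|X| = r^{-1}(W)$; this requires carefully unpacking the valuative criterion in terms of rank-$1$ points and their specializations in $|\Spa(R,R^+)|$. Once this is granted, the remaining content of both parts is a point-set exercise using that continuous maps of adic spaces preserve the specialization preorder and that every point of a perfectoid adic space over a Tate ring admits a unique rank-$1$ generization. A secondary point, common to both parts, is to verify that the Berkovich-spectrum functor cut out by $W$ is already an arc-sheaf, so that no nontrivial sheafification is needed when identifying it with $a'^*X$; this should follow routinely from the fact that arc-covers $R \to S$ induce surjections $\cl{M}(S) \to \cl{M}(R)$.
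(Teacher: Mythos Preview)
Your proposal is correct and follows essentially the same approach as the paper: both reduce locally on $Y$, identify the subset $W \subset \cl{M}(Y)$, and verify that $a'^*X$ is cut out by $W$ using that $|X|$ is saturated for the map to rank-$1$ points (via closedness plus generalizing for (1), openness plus the valuative criterion for (2)). The paper phrases this more tersely as the single identity $|X| = \pi^{-1}\pi(|X|)$ for the quotient $\pi\colon |Y|\to\cl{M}(Y)$ and reduces only to $Y$ qcqs rather than affinoid, but the content is the same.
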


\begin{proof}
    Since the assertion is local on target, we may assume that $Y$ is qcqs. 

    First, we prove (1). In this case, $X$ is also qcqs and $\lvert X \rvert \subset \lvert Y \rvert$ is a closed and generalizing subset. In particular, $\lvert X \rvert = \pi^{-1} \pi(\lvert X \rvert)$ for the canonical quotient map $\pi \colon \lvert Y \rvert \to \cl{M}(Y)$. Then, for any perfectoid Tate $k$-algebra $R$, $\cl{M}_{\arc}(R) \to a'^*Y$ factors through $a'^*X$ if and only if $\cl{M}(R) \to \cl{M}(Y)$ factors through $\pi(\lvert X \rvert)$, so $a'^*f$ is closed. 

    Next, we prove (2). Since $f$ is partially proper, we have $\lvert X \rvert = \pi^{-1} \pi(\lvert X \rvert)$. Moreover, $\pi(\lvert X \rvert) \subset \cl{M}(Y)$ is an open subset since $\pi$ is a quotient map. Then, it is easy to see that $a'^*f$ is an open immersion associated with $\pi(\lvert X \rvert)$.
\end{proof}

\begin{prop} \label{prop:clop!-able}
    Closed immersions and partially proper open immersions of small $v$-stacks are $!$-able.
\end{prop}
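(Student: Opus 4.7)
The strategy is to combine the locality properties of $E$ established via \Cref{prop:extension_6ff_frome_site}---$*$-locality on target, $!$-locality on source, and the inclusion $E^\qcqs \subseteq E$---with the structural content of \Cref{lem:immersion_vstacks_arcstacks}. First, since both closed immersions and partially proper open immersions are stable under base change, $*$-locality on target lets me $v$-cover $Y$ by affinoid perfectoids and reduce to the case where $Y$ is a qcqs $v$-sheaf.

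In the closed immersion case $i \colon Z \hookrightarrow Y$ with $Y$ qcqs, $Z$ is itself a qcqs $v$-sheaf, and \Cref{lem:immersion_vstacks_arcstacks}(1) shows that $a'^*i$ is a closed immersion of qcqs arc-stacks, in particular $0$-truncated and proper. Since each geometric fiber of $a'^*i$ is either empty or the single geometric point of the base, $d(a'^*i) = 0$, placing $i$ in $E^\qcqs \subseteq E$.

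The partially proper open immersion case requires more care since the source $U$ of $j \colon U \hookrightarrow Y$ need not be qcqs even when $Y$ is. \Cref{lem:immersion_vstacks_arcstacks}(2) identifies $a'^*j$ with the open of $a'^*Y$ corresponding to an open $W \subseteq \cl{M}(Y)$; because $\cl{M}(Y)$ is compact Hausdorff by \Cref{lem:M(X)}, $W$ is a union of relatively compact opens $W_\alpha$, which I would lift to qcqs partially proper open subimmersions $U_\alpha \hookrightarrow Y$ factoring through $j$. Running the closed-case argument with ``empty or a single point'' fibers now for open immersions yields each $U_\alpha \hookrightarrow Y \in E^\qcqs$. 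A further application of $*$-locality applied to the (non-qcqs) target $U$ shows each inclusion $U_\alpha \hookrightarrow U$ also lies in $E$; then $!$-locality on source applied to the cover $\{U_\alpha \to U\}$ concludes $j \in E$.

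The main obstacle I anticipate is this last nested locality step: verifying that $\{U_\alpha \to U\}$ is a cover by maps in $E$. This is not a direct application of $E^\qcqs$ since $U$ is not qcqs, and it forces a second invocation of $*$-locality---covering $U$ itself by qcqs opens---so that each base change becomes an open immersion of qcqs $v$-sheaves whose $a'^*$-image is manifestly in $E^\qcqs$. Once this bookkeeping is done, the two locality principles combine to give $j \in E$ as claimed.
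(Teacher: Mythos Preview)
Your treatment of closed immersions is correct and matches the paper's argument.

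For partially proper open immersions, however, there is a genuine gap. You propose to write the open $W = \cl{M}(U) \subset \cl{M}(Y)$ as a union of relatively compact opens $W_\alpha$ and then lift these to \emph{qcqs} partially proper open sub-$v$-sheaves $U_\alpha \hookrightarrow Y$. The problem is that such lifts are almost never qcqs: the preimage in $|Y|$ of a Berkovich-open set is a partially proper open of the adic space, and partially proper opens of affinoid perfectoids are typically not quasicompact (think of an open disk inside a closed disk). Since membership in $E^{\qcqs}$ requires the source to lie in $\qcqsvShf$, the step ``$U_\alpha \hookrightarrow Y \in E^{\qcqs}$'' fails. Your proposed recursion---covering $U$ again by qcqs opens and base-changing---runs into the same obstruction and does not terminate.

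The paper's fix is to exhaust $U$ by \emph{closed} subsheaves instead. Writing $Z = \cl{M}(Y) \setminus \cl{M}(U)$ (compact, since $\cl{M}(Y)$ is compact Hausdorff), one takes the filtered system $\{V_\lambda\}$ of open neighborhoods of $Z$ and sets $W_\lambda \subset Y$ to be the closed sub-$v$-sheaf corresponding to $\cl{M}(Y) \setminus V_\lambda$. Each $W_\lambda$ is closed in the qcqs $Y$, hence qcqs, and $W_\lambda \to Y$ is a closed immersion---already known to be $!$-able from the first part. The family $\{W_\lambda \to U\}$ is then a $!$-cover (every point of $\cl{M}(U)$ has a neighborhood disjoint from some $V_\lambda$), and $!$-locality on the source finishes the argument. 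The key idea you are missing is precisely this switch from open to closed exhaustion, which buys quasicompactness for free.
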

\begin{proof}
    First, let $i \colon Z \hookrightarrow X$ be a closed immersion. By \Cref{prop:extension_6ff_frome_site} (2), we may assume that $X$ is qcqs. By \Cref{lem:immersion_vstacks_arcstacks}, $a'^*i$ is a closed immersion, so it is qcqs and has cohomological dimension $0$, so it follows that $i$ is $!$-able. 
    
    Let $j \colon U \to X$ be a partially proper open immersion. Again, we may assume that $X$ is qcqs. By \Cref{lem:immersion_vstacks_arcstacks}, $a'^*j$ is an open immersion associated with $\cl{M}(U) \subset \cl{M}(X)$. Let $Z = \cl{M}(X) - \cl{M}(U)$. By \Cref{lem:M(X)}, $\cl{M}(X)$ is compact and Hausdorff, so $Z$ is compact. Then, the filtered system $\{V_\lambda\}_{\lambda\in \Lambda}$ consisting of open neighborhoods of $Z$ satisfies $\bigcap_{\lambda \in \Lambda} V_\lambda = Z$. Let $W_\lambda \subset X$ be the closed subsheaf corresponding to the generalizing closed subset $\pi_X^{-1}(\cl{M}(X)-V_\lambda)$. Since closed immersions are $!$-able, each $W_\lambda \to X$ is $!$-able. Moreover, $\{W_\lambda \to U\}_{\lambda\in \Lambda}$ is a !-cover since each $x\in \cl{M}(U)$ has a neighborhood $U_x$ such that $U_x \cap V_\lambda = \phi$ for some $\lambda \in \Lambda$. Thus, the claim follows from  \Cref{prop:extension_6ff_frome_site} (3). 
\end{proof}

Then, excision makes sense for a partially proper open immersion and its complement. 

\begin{prop} \label{prop:excmot}
    Let $X$ be a small $v$-stack. Let $j \colon U \hookrightarrow X$ be a partially proper open immersion and let $i \colon Z \hookrightarrow X$ be its closed complement. For every $M \in \cl{D}^\oc(X, \Lambda)$, 
    \[
        j_!j^* M \to M \to i_*i^*M
    \]
    is a fiber sequence in $\cl{D}^\oc(X,\Lambda)$. 
\end{prop}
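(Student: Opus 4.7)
The plan is to run the standard recollement argument for an open/closed decomposition in a six functor formalism, reducing first to the underlying motivic formalism on arc-stacks and then exploiting base-change vanishings together with joint conservativity.

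First, I reduce to the motivic six functor formalism on arc-stacks. By \Cref{prop:linearity_statement}, $\cl{D}^\oc(X, \Lambda) \cong \cl{D}_\mot(a'^*X) \otimes_{\cl{D}_\mot(\ast)} \cl{D}(\Lambda)$; this base change is $\Lambda$-linear, colimit-preserving, and intertwines the six operations, so it preserves any fiber sequence built from $j_!$, $j^*$, $i_*$, $i^*$. By \Cref{lem:immersion_vstacks_arcstacks}, $a'^*j$ is an open immersion of arc-stacks with complementary closed immersion $a'^*i$, and since the image of $\cl{D}^\oc_\mot(X) \to \cl{D}^\oc(X, \Lambda)$ (under the base change) generates the target under colimits, it suffices to establish the fiber sequence for $M \in \cl{D}_\mot(a'^*X)$.

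Next, I establish the recollement axioms. Since $U$ and $Z$ are set-theoretically disjoint inside $X$, the fiber product $U \times_X Z$ is empty as an arc-sheaf, and base change on the Cartesian square
\begin{center}
\begin{tikzcd}
\varnothing \ar[r] \ar[d] & U \ar[d, "j"] \\
Z \ar[r, "i"] & X
\end{tikzcd}
\end{center}
together with $\cl{D}_\mot(\varnothing) = 0$ gives $i^*j_! \cong 0$ and $j^*i_* \cong 0$. Base change on the trivial Cartesian squares given by the diagonals of $i$ and $j$ (both identities since $i$ and $j$ are monomorphisms) gives $i^*i_* \cong \id$ and $j^*j_! \cong \id$. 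From $j^*i_* \cong 0$ and the $(j_!, j^*)$-adjunction, the composition $j_!j^*M \to M \to i_*i^*M$ is null, producing a canonical map $\phi \colon \cofib(j_!j^*M \to M) \to i_*i^*M$. Applying $i^*$ to $\phi$ identifies it with the identity on $i^*M$; applying $j^*$ identifies it with the zero map $0 \to 0$.

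The main obstacle is to conclude from these pointwise identifications that $\phi$ is itself an equivalence, i.e., to verify joint conservativity of $(j^*, i^*)$ on $\cl{D}_\mot(a'^*X)$. My strategy is to reduce to the finitary case: both $M \mapsto j_!j^*M$ and $M \mapsto i_*i^*M$ preserve colimits ($i_* \cong i_!$ since $i$ is proper, hence a left adjoint), so the class of $M$ for which $\phi$ is an equivalence is closed under colimits. For a finitary sheaf $\cl{F}$, the fiber sequence is exactly the content of \cite[Proposition 4.25]{Sch24} combined with the colimit presentation $j_!j^*\cl{F} \cong \colim_\lambda j^\circ_{\lambda!} j_\lambda^{\circ*} \cl{F}$ established in the proof of \Cref{prop:fincohexc} (using the exhaustion of $U$ by closed substacks $W_\lambda^\circ \subset X$ constructed in the proof of \Cref{prop:clop!-able}). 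The remaining technical point is that $\cl{D}_\mot(a'^*X)$ is generated under colimits by objects to which the finitary statement can be applied, which follows from the hyperdescent properties of $\cl{D}_\mot$ and its description in terms of strictly totally disconnected test objects.
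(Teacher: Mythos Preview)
Your approach is correct in spirit and reaches the goal, but it is considerably more elaborate than the paper's argument. The paper's proof is two sentences: first, the excision sequence for $\cl{D}_\fin$ from \cite[Proposition 4.25]{Sch24} extends to $\cl{D}_\mot$ directly from the definition of $\cl{D}_\mot$ in \cite[Definition 9.1]{Sch24}; second, the base change functor $(-)\otimes_{\cl{D}_\mot(\ast)}\cl{D}(\Lambda)$ preserves localization (recollement) sequences of presentable stable categories, so the excision transfers to $\cl{D}^\oc(-,\Lambda)$ in one stroke. You instead redo the recollement argument by hand in $\cl{D}_\mot$, verifying the vanishings $i^*j_!\cong 0$, $j^*i_*\cong 0$ and then arguing joint conservativity via a colimit-generation reduction to the finitary case. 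This works, but your final step (that $\cl{D}_\mot(a'^*X)$ is generated under colimits by objects to which the finitary excision applies) is exactly what the paper packages into the phrase ``extends by the definition of $\cl{D}_\mot$'', and your object-level reduction to $\cl{D}_\mot$ via generation is subsumed by the cleaner categorical fact that tensoring with a dualizable module preserves localization sequences. Your detour through the approximation $j_!j^*\cl{F}\cong\colim_\lambda j^\circ_{\lambda!}j_\lambda^{\circ*}\cl{F}$ from the proofs of \Cref{prop:fincohexc} and \Cref{prop:clop!-able} is also unnecessary once one invokes \cite[Proposition 4.25]{Sch24} directly.
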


\begin{proof}
    The claim of \cite[Proposition 4.25]{Sch24} naturally extends from $\cl{D}_\fin$ to $\cl{D}_\mot$ by the definition of $\cl{D}_\mot(-)$ \cite[Definition 9.1]{Sch24}. Then, the claim follows from the fact that the base change functor
    \[
        (-)\otimes_{\cl{D}_\mot(*)} \cl{D}(\Lambda) \colon \Pres^{L}_{\st, \cl{D}_\mot(\ast)} \ra \Pres^{L}_{\st, \Lambda}
    \]
    preserves localization sequences (compare with \cite[Lemma 2.24]{haine2022nonabelianbasechangebasechangecoefficients}).  
\end{proof}

Next, we study Banach-Colmez spaces. 

\begin{defi} \label{defi:BC_space}
    Let $S$ be a perfectoid space and let $\cl{E}$ be a vector bundle over $\cl{X}_S$. The Banach-Colmez space $\BC(\cl{E})$ is the small $v$-sheaf sending a perfectoid space $T$ over $S$ to 
    \[
        \BC(\cl{E})(T) = \Gamma(\cl{X}_T, \cl{E}\vert_{\cl{X}_T}). 
    \]
\end{defi}


\begin{prop} \label{lem:projBCfin}
    Let $C$ be an algebraically closed perfectoid field over $k$ and let $\cl{E}$ be a vector bundle on $\cl{X}_{(C,O_C)}$. Then, we have the following. 
    \begin{enumerate}
        \item Every quasicompact closed subsheaf $Z \subset a'^*\BC(\cl{E})$ has finite cohomological dimension. 
        \item The qcqs arc-sheaf $a'^*(\BC(\cl{E})\backslash\{0\})/\underline{F^\times}$ has finite cohomological dimension. 
    \end{enumerate}
    Moreover, this cohomological dimension depends only on the Newton slope of $\cl{E}$, not on the choice of $C$. 
\end{prop}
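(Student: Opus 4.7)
The approach is to reduce to the case of line bundles via the Fargues--Fontaine classification, and then treat the contribution from each Newton slope separately. By the classification of vector bundles on $\cl{X}_{(C,O_C)}$, we have $\cl{E} \cong \bigoplus_i \cl{O}(\lambda_i)$ for rational slopes $\lambda_i$, so $\BC(\cl{E}) \cong \prod_i \BC(\cl{O}(\lambda_i))$ as $v$-sheaves. A quasicompact closed subsheaf $Z \subset a'^{*}\BC(\cl{E})$ maps compatibly to each factor, so it suffices to bound the cohomological dimension of closed subsheaves of $a'^{*}\BC(\cl{O}(\lambda))$ for single rational slopes $\lambda$ and sum the bounds over the Newton polygon of $\cl{E}$.

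For $\lambda \geq 0$, $\BC(\cl{O}(\lambda))$ is a ``positive'' Banach--Colmez space. The case $\lambda = 0$ gives $\BC(\cl{O}) = \underline{F}$, whose quasicompact closed subsheaves become arc-locally profinite and hence have vanishing cohomological dimension in the sense of \Cref{def:cohomological_dimension} (cf.\ the computation in \Cref{lem:locprof!}). For $\lambda > 0$, $\BC(\cl{O}(\lambda))$ is an ascending union of quasicompact closed spatial sub-diamonds obtained as iterated extensions of $\BC(\cl{O}(1))$ (a perfectoid open disc) and $\underline{F}$, and any quasicompact closed subsheaf pulls back to a qcqs locally spatial diamond of $\dtr$ bounded linearly in $\lambda$; finite cohomological dimension then follows from the general properties of the $\ell$-adic formalism of \Cref{const:6ff_qcqs}. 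For $\lambda < 0$, exploit the short exact sequence on $\cl{X}_{(C,O_C)}$
\[
    0 \to \cl{O}(\lambda) \to \cl{O}(\lambda+1) \to i_{x*}\cl{G} \to 0
\]
at a classical untilted point $x$, with $\cl{G}$ a finite length $B_{dR}^+$-module. Taking sections yields a fiber sequence of $v$-sheaves identifying $\BC(\cl{O}(\lambda))$ as the fiber of a map from a higher-slope Banach--Colmez sheaf to a finite-dimensional disc-type sheaf. Iterating until the slope is nonnegative reduces $Z$ to a closed subsheaf of a fiber between the cases already handled, and \Cref{prop:fincohexc} combined with the excision \Cref{prop:excmot} propagates the bound.

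For (2), the map $\BC(\cl{E})\setminus\{0\} \to (\BC(\cl{E})\setminus\{0\})/\underline{F^\times}$ is an $\underline{F^\times}$-torsor. Decomposing $F^\times = \pi^{\bb{Z}} \times O_F^\times$, one quotients first by the profinite factor $\underline{O_F^\times}$ (which preserves finite cohomological dimension by descent along a cover of vanishing cohomological dimension) and then observes that the residual $\bb{Z}$-action on $\BC(\cl{E}) \setminus \{0\}$ admits a quasicompact closed fundamental domain inside $a'^{*}\BC(\cl{E})$, so (1) applies. Independence from $C$ follows from arc-descent: any two algebraically closed perfectoid fields over $k$ admit a common overfield, the Banach--Colmez spaces and their closed subsheaves are pulled back functorially along such extensions, and cohomological dimension is an arc-local invariant of the base, so the resulting bound depends only on the Newton polygon of $\cl{E}$. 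The principal obstacle is the negative slope case, since $\BC(\cl{O}(\lambda))$ for $\lambda < 0$ is not representable as a diamond and one cannot invoke the diamond-side finite cohomological dimension results of \cite{Sch17} directly; instead the estimate must be propagated carefully through fiber sequences in the $\ell$-adic six functor formalism of \Cref{const:6ff_qcqs}. A secondary technical point is verifying that the fundamental domain for the $\underline{F^\times}$-action in (2) is genuinely qcqs after arc-sheafification.
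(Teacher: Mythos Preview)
Your overall inductive strategy---reduce to a single slope and bootstrap from the known cases---matches the paper's. However, there are a few genuine gaps and one real misunderstanding.

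\textbf{Negative slopes.} You identify $\lambda < 0$ as ``the principal obstacle'' and propose an elaborate fiber-sequence argument. But with the paper's definition $\BC(\cl{E})(T) = \Gamma(\cl{X}_T, \cl{E}\vert_{\cl{X}_T})$, one has $\BC(\cl{O}(\lambda)) = 0$ for $\lambda < 0$ (cf.\ \cite[Proposition II.3.4 (i)]{FS24}, used elsewhere in the paper). So this case is trivial, and the paper's proof simply restricts to $n \geq 0$ without comment. Your discussion here suggests a confusion with the ``negative'' Banach--Colmez spaces built from $H^1$, which do not appear in this statement.

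\textbf{Rational slopes.} You pass directly from $\cl{E} = \bigoplus_i \cl{O}(\lambda_i)$ to single slopes, but do not explain how to handle non-integer $\lambda = r/d$. The paper uses $\BC(\cl{O}(r/d)) \cong \BC(\cl{O}_{\cl{X}_{d,(C,O_C)}}(r))$ via the degree-$d$ unramified cover to reduce to integer slopes; without this step your induction does not close.

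\textbf{Part (2).} Your fundamental-domain argument for the residual $\bb{Z}$-action is substantially different from the paper and, as you yourself note, its qcqs-ness after arc-sheafification is unclear. The paper avoids this entirely: it runs a second induction on rank (and then on $n$) using the closed--open decomposition
\[
    (\BC(\cl{E}_1)\setminus\{0\})/\underline{F^\times} \hookrightarrow (\BC(\cl{E})\setminus\{0\})/\underline{F^\times} \hookleftarrow U,
\]
where the fibers of $U \to (\BC(\cl{E}_2)\setminus\{0\})/\underline{F^\times}$ are copies of $\BC(\cl{E}_1)$, together with the criterion of \Cref{prop:fincohexc}. The base case is then an explicit analysis of $(\bb{A}^{1}_{C^\sharp}\setminus\{0\})/\underline{F^\times}$ via annuli $W_r/\underline{O_F^\times}$ and their Berkovich spaces. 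Your approach may be salvageable, but the paper's route through \Cref{prop:fincohexc} is cleaner and avoids the fundamental-domain issue altogether.

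\textbf{Independence from $C$.} No separate arc-descent argument is needed: the bounds produced in the induction are explicit functions of the slopes (ultimately resting on \cite[Lemma 5.3]{Sch24} for closed discs), so uniformity in $C$ falls out automatically.
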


Recall that $\cl{E}$ is written as a direct sum of $\cl{O}(\lambda)$'s for $\lambda \in \bb{Q}$ (see \cite[Theorem II.2.14]{FS24}). The Newton slope of $\cl{E}$ refers to the multiset of $\lambda$'s appearing in the summand. 

\begin{proof}
    First, we prove (1) by the induction on the rank of $\cl{E}$. First, suppose that $\cl{E}$ has a decomposition $\cl{E} = \cl{E}_1 \oplus \cl{E}_2$. Since $\cl{M}(Z)$ is compact and Hausdorff, $Z \to a'^*\BC(\cl{E}_1)$ factors through a quasicompact closed subsheaf $Z' \subset a'^*\BC(\cl{E}_1)$. Then, $Z \to Z'$ is $0$-truncated and proper. Now, consider the following diagram. 
    \begin{center}
        \begin{tikzcd}
            Z \ar[r, hook] \ar[d] & a'^*\BC(\cl{E}) \ar[d] \ar[r] & a'^*\BC(\cl{E}_2) \ar[d] \\
            Z' \ar[r, hook] & a'^*\BC(\cl{E}_1) \ar[r] & \cl{M}_\arc(C)
        \end{tikzcd}
    \end{center}
    Here, the right square is Cartesian, so every fiber of $Z \to Z'$ is a quasicompact closed subset of (the base change of) $\BC(\cl{E}_2)$. By the induction hypothesis, both $Z \to Z'$ and $Z'$ have finite cohomological dimension, thus so does $Z$. 

    Now, we are reduced to the case where $\cl{E}$ is indecomposable. Let us write $\cl{E} = \cl{O}(\tfrac{r}{d})$ with $r$ and $d$ coprime. Let $F_d$ be the unramified extension of $F$ of degree $d$ and let $\pi_d \colon \cl{X}_{d, (C,O_C)} \to \cl{X}_{(C,O_C)}$ be the finite Galois cover corresponding to $F_d$. Then, $\cl{E} = \pi_{d*} \cl{O}_{\cl{X}_d(C,O_C)}(r)$, so we have $\BC(\cl{E}) = \BC(\cl{O}_{\cl{X}_d(C,O_C)}(r))$. By passing from $F$ to $F_d$, we may assume that $\cl{E}=\cl{O}(n)$ for some $n \geq 0$.

    For $n = 0$, $\BC(\cl{O}) = \underline{F} \times \Spa(C,O_C)$, so it is an increasing union of profinite covers of $\cl{M}_\arc(C)$. Since $Z$ is quasicompact, it is a closed subspace of a profinite cover of $\cl{M}_\arc(C)$. As in \cite[Corollary 4.18]{Sch24}, every profinite cover of $\cl{M}_\arc(C)$ has cohomological dimension $0$, so $Z$ has cohomological dimension $0$. 
    
    For $n \geq 1$, we have an exact sequence
    \[
        \BC(\cl{O}(n-1)) \to \BC(\cl{O}(n)) \to \bb{A}^{1,\diamond}_{C^\sharp}
    \]
    for an untilt $C^\sharp$ of $C$ in characteristic $0$. Then, $Z \to a'^* \bb{A}^{1,\diamond}_{C^\sharp}$ factors through a quasicompact closed subsheaf $Z' \subset a'^* \bb{A}^{1,\diamond}_{C^\sharp}$. By the same argument as above, the induction hypothesis for $\cl{O}(n-1)$ implies that $Z \to Z'$ is $0$-truncated, proper and has finite cohomological dimension. It is enough to show that $Z'$ has finite cohomological dimension.

    By construction, $a'^*\bb{A}^{1,\diamond}_{C^\sharp} \to \cl{M}_\arc(C)$ is the tilting of $\bb{A}^{1}_{C^\sharp} \to \cl{M}_\arc(C^\sharp)$ (see \cite[Proposition 9.7]{Sch24} for the tilting equivalence). Then, it is enough to prove that every quasicompact closed subsheaf $Z'^\sharp \subset \bb{A}^1_{C^\sharp}$ has finite cohomological dimension. Now, we have 
    \[
        \bb{A}^1_{C^\sharp} = \colim_{T \mapsto \pi T} \cl{M}_\arc(C^\sharp\langle T \rangle_1). 
    \]
    Here, the natural closed embedding $\cl{M}_\arc(C^\sharp\langle T \rangle_1) \subset \bb{A}^1_{C^\sharp}$ is defined by the condition $\lvert T \rvert \leq 1$. Since $Z'^{\sharp}$ is quasicompact, $Z'^\sharp$ factors through some $\cl{M}_\arc(C^\sharp \langle T \rangle_1)$. Since closed immersions have cohomological dimension $0$, the claim follows from the finiteness of cohomological dimension of $\cl{M}_\arc(C^\sharp \langle T \rangle_1)$ (see \cite[Lemma 5.3]{Sch24}).  

    Next, we prove (2) again by the induction on the rank of $\cl{E}$. First, suppose that $\cl{E}$ has a decomposition $\cl{E} = \cl{E}_1 \oplus \cl{E}_2$. Then, we have the following diagram. 
    \begin{center}
        \begin{tikzcd}
            (\BC(\cl{E}_1)\backslash\{0\})/\underline{F^\times} \ar[r, hook] & (\BC(\cl{E})\backslash\{0\})/\underline{F^\times} & U \ar[l, hook'] \ar[d] \\
            & & (\BC(\cl{E}_2)\backslash\{0\})/\underline{F^\times}. 
        \end{tikzcd}
    \end{center}
    Here, $U$ is the complement of $(\BC(\cl{E}_1)\backslash\{0\})/\underline{F^\times}$, and it is easy to see that each fiber of the vertical arrow is isomorphic to (the base change of) $\BC(\cl{E}_1)$. Then, for every quasicompact closed subsheaf $Z \subset a'^*U$, it follows from (1) that $Z \to (\BC(\cl{E}_2)\backslash\{0\})/\underline{F^\times}$ has finite cohomological dimension, so $Z$ has finite cohomological dimension by the induction hypothesis. Thus, $a'^*(\BC(\cl{E})\backslash\{0\})/\underline{F^\times}$ has finite cohomological dimension by \Cref{prop:fincohexc}.  

    Now, we are reduced to the case where $\cl{E}$ is indecomposable. Let us write $\cl{E} = \cl{O}(\tfrac{r}{d})$ with $r$ and $d$ coprime.  Then, $\BC(\cl{E}) \cong \BC(\cl{O}_{\cl{X}_d(C,O_C)}(r))$, and
    \[
        (\BC(\cl{E})\backslash\{0\})/\underline{F^\times} \twoheadrightarrow (\BC(\cl{O}_{\cl{X}_d(C,O_C)}(r))\backslash\{0\})/\underline{F_d^\times}
    \]
    is quasi-pro-\'{e}tale, so it has finite cohomological dimension. By passing to $F_d$, we may assume that $\cl{E} = \cl{O}(n)$ for some $n \geq 0$. 

    We prove by the induction on $n$. The case $n=0$ is trivial. For $n \geq 1$, we have the following diagram. 
    \begin{center}
        \begin{tikzcd}
            (\BC(\cl{O}(n-1))\backslash\{0\})/\underline{F^\times} \ar[r, hook] & (\BC(\cl{O}(n))\backslash\{0\})/\underline{F^\times} & U \ar[l, hook'] \ar[d] \\
            & & (\bb{A}^{1, \diamond}_{C^\sharp}\backslash \{0\})/\underline{F^\times}. 
        \end{tikzcd}
    \end{center}
    Here, $U$ is the complement of $(\BC(\cl{O}(n-1))\backslash\{0\})/\underline{F^\times}$, and it is easy to see that each fiber of the vertical arrow is isomorphic to (the base change of) $\BC(\cl{O}(n-1))$. By repeating the previous argument (and the tilting equivalence), it is enough to show that $(\bb{A}^{1}_{C^\sharp}\backslash \{0\})/\underline{F^\times}$ has finite cohomological dimension. Since $F^\times \cong O_F^\times \times \pi^\bb{Z}$ and $B\bb{Z}$ has finite cohomological dimension, it is enough to show that $(\bb{A}^{1}_{C^\sharp}\backslash \{0\})/\underline{O_F^\times}$ has finite cohomological dimension. 
    
    First, $\bb{A}^{1}_{C^\sharp}\backslash \{0\}$ is an increasing union of $O_F^\times$-stable closed subspaces $W_r = \{\lvert \pi \rvert^{r} \leq \lvert T \rvert \leq \lvert \pi \rvert^{-r}\}$ for $r \geq 1$. It is enough to show that $W_r/\underline{O_F^\times}$ has finite cohomological dimension. Since $W_r/\underline{O_F^\times}$ is qcqs, $\cl{M}(W_r/\underline{O_F^\times})$ is compact and Hausdorff. Moreover, $O_F^\times$ acts freely on $W_r$, so $\cl{M}(W_r/\underline{O_F^\times}) \cong \cl{M}(W_r)/O_F^\times$. Thus, we have the following Cartesian diagram. 
    \begin{center}
        \begin{tikzcd}
            W_r \ar[r] \ar[d] & W_r/\underline{O_F^\times} \ar[d] \\
            \underline{\cl{M}(W_r)} \ar[r] & \und{\cl{M}(W_r)/O_F^\times}
        \end{tikzcd}
    \end{center}
    Then, each fiber of $W_r/\underline{O_F^\times} \to \und{\cl{M}(W_r)/O_F^\times}$ is represented by a residue field of $W_r$, so the map has finite cohomological dimension by the proof of \cite[Lemma 5.3]{Sch24}. Moreover, by the description in \cite[Section 2.4]{Sch24}, $\cl{M}(W_r)/O_F^\times$ is a cofiltered limit of finite trees. Since it has finite cohomological dimension as in the proof of \cite[Lemma 5.3]{Sch24}, $\und{\cl{M}(W_r)/O_F^\times}$ has finite cohomological dimension. Thus, the claim follows as in \cite[Corollary 4.18]{Sch24}. 
\end{proof}

\begin{cor} \label{cor:BC!able}
    Let $S$ be a perfectoid space over $k$ and let $\cl{E}$ be a vector bundle over $\cl{X}_S$. Then, both morphisms
    \[
        \BC(\cl{E}) \to S, \quad (\BC(\cl{E})\backslash \{0\})/\underline{F^\times} \to S
    \]
    are $!$-able. 
\end{cor}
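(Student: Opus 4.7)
By the $!$-locality of $!$-able morphisms on the target (\Cref{prop:extension_6ff_frome_site}(2)), I reduce immediately to the case where $S = \Spa(R, R^+)$ is qcqs. For the map $\BC(\cl{E}) \to S$, the idea is to exhaust $\BC(\cl{E})$ by an increasing sequence of qcqs closed subsheaves $\{Z_n\}_n$ over $S$, cut out by norm bounds on local coordinates in the spirit of the exhaustion $\bb{A}^{1,\diamond}_{C^\sharp} = \colim \cl{M}_\arc(C^\sharp \langle T \rangle_r)$ used in the proof of \Cref{lem:projBCfin}. Concretely, one filters $\cl{E}$ by a Harder--Narasimhan-type flag, trivializes each successive quotient after a suitable cover, and imposes norm bounds $|T_i| \leq |\pi|^{-n}$ on the resulting coordinates. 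Each closed immersion $Z_n \hookrightarrow \BC(\cl{E})$ is $!$-able by \Cref{prop:clop!-able}, and the collection is a $!$-cover, since every geometric point of $\BC(\cl{E})$ factors through some $Z_n$ by quasicompactness of its image.

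By $!$-locality on the source, it then suffices to show that each composition $Z_n \to S$ is $!$-able. Since $Z_n$ and $S$ are both qcqs, this map lies in $E^\qcqs$ as soon as it has finite cohomological dimension, which I verify on geometric fibers. For each $\cl{M}_\arc(C) \to S$, the fiber $(Z_n)_C$ is a quasicompact closed subsheaf of $a'^*\BC(\cl{E}|_{\cl{X}_{(C, O_C)}})$, and \Cref{lem:projBCfin}(1) bounds its cohomological dimension in terms of the Newton slope of $\cl{E}|_{\cl{X}_{(C, O_C)}}$. Uniformity of the bound across $S$ follows from the fact that the Newton polygon of a vector bundle on $\cl{X}_S$ takes only finitely many values on the qcqs base $S$, by semi-continuity of the Harder--Narasimhan polygon; alternatively, one passes to a further $v$-cover of $S$ on which the Newton polygon is locally constant and applies $!$-locality once more.

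For the map $(\BC(\cl{E})\backslash \{0\})/\underline{F^\times} \to S$, the strategy is identical but cleaner. Over each geometric point, $a'^*(\BC(\cl{E}|_{\cl{X}_{(C, O_C)}})\backslash\{0\})/\underline{F^\times}$ is already qcqs with finite cohomological dimension by \Cref{lem:projBCfin}(2), so one only needs to verify that the map is qcqs over $S$ (exhausting $\BC(\cl{E})\backslash\{0\}$ by $F^\times$-stable norm-bounded fundamental domains, then quotienting) and reduce to the fiberwise statement, with uniformity again supplied by boundedness of the Newton polygon.

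The main technical obstacle is establishing uniformity of the cohomological dimension bound across the family $S$; this hinges on constructibility of the Newton polygon stratification of $\cl{X}_S$, a standard feature of the Fargues--Fontaine curve. A secondary but delicate point is organizing the exhaustion by qcqs closed subsheaves so that it is compatible with the inductive slope-by-slope structure of $\cl{E}$, especially in the non-isoclinic case where one must combine the tilting equivalence with the exact sequences relating $\BC(\cl{O}(n))$ and $\BC(\cl{O}(n-1))$ that were used in \Cref{lem:projBCfin}.
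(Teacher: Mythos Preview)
Your treatment of the projectivized case $(\BC(\cl{E})\backslash\{0\})/\underline{F^\times} \to S$ is essentially the paper's argument, though you overcomplicate the qcqs verification: this map is already proper over $S$ (cf.\ \cite[Proposition II.2.16]{FS24}), so no exhaustion by fundamental domains is needed. One reduces to affinoid $S$, observes the map is qcqs, and checks finite cohomological dimension fiberwise via \Cref{lem:projBCfin}(2), with uniformity coming from boundedness of the Newton polygon over the qcqs base.

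For $\BC(\cl{E}) \to S$, however, the paper takes a genuinely different and much shorter route. Rather than exhausting $\BC(\cl{E})$ by qcqs closed subsheaves, the paper observes that there is a partially proper open immersion
\[
    \BC(\cl{E}) \hookrightarrow (\BC(\cl{E}\oplus\cl{O})\backslash\{0\})/\underline{F^\times}, \qquad v \mapsto [(v,1)],
\]
whose complement is the locus where the $\cl{O}$-component vanishes. This immersion is $!$-able by \Cref{prop:clop!-able}, and the target is $!$-able over $S$ by the projectivized case just established (applied to $\cl{E}\oplus\cl{O}$). Composing gives the result in one line.

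Your exhaustion strategy is plausible in outline but would require real work to execute: the closed subsheaves $Z_n$ must be constructed globally over $S$, not merely fiber by fiber, and your description (``filter by an HN-type flag, trivialize each graded piece after a cover, impose norm bounds'') does not obviously produce closed subsheaves of $\BC(\cl{E})$ itself rather than of some cover. One can make this work---e.g.\ by choosing an injection $\cl{E}\hookrightarrow\cl{O}(n)^m$ over affinoid $S$ and pulling back closed balls---but you have correctly flagged this as the delicate step, and the paper's embedding trick sidesteps it entirely. The paper's order (projectivized first, then deduce the affine case) is the opposite of yours and is what makes the argument clean.
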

\begin{proof}
    We may assume that $S$ is affinoid by \Cref{prop:extension_6ff_frome_site} (2). We may also assume that $\cl{E}$ has constant rank $n$. Then, $S \to \Bun_{\GL_n}$ has quasicompact image, so the Newton slope of $\cl{E}$ at each geometric point of $S$ is bounded. Then, $a'^*(\BC(\cl{E})\backslash \{0\})/\underline{F^\times} \to a'^*S$ has finite cohomological dimension by \Cref{lem:projBCfin} (2), so $(\BC(\cl{E})\backslash \{0\})/\underline{F^\times} \to S$ is $!$-able. Then, the $!$-ability of $\BC(\cl{E}) \to S$ follows since we have an open immersion
    \[
        \BC(\cl{E}) \hookrightarrow (\BC(\cl{E} \oplus \cl{O})\backslash \{0\})/\underline{F^\times}. 
    \]
\end{proof}

\subsection{Cohomology of Banach-Colmez spaces}

As explained in \cite[Section 2]{Sch25}, a well-known procedure of computing the compactly supported cohomology of Banach-Colmez spaces (see e.g. \cite[Lemma 2.6]{HI24}) works in the motivic six functor formalism. In this section, we present an argument of this computation for completeness. 

\begin{lem} \label{lem:computA1}
    Let $S$ be a perfectoid space and let $S^\sharp$ be an untilt of $S$ over $\bb{Z}_p$. Let $\pi \colon \bb{A}^{1,\diamond}_{S^\sharp} \to S$ be a map of small $v$-sheaves. Then, $\pi$ is cohomologically smooth, $\pi^! \bb{Z} \cong \bb{Z}(1)[2]$ and $\pi_! \pi^! \bb{Z} \cong \bb{Z}$ in the $\cl{D}^\oc_\mot$-formalism. Here, $\bb{Z}(1)$ denotes the Tate twist. 
\end{lem}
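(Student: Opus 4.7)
The plan is to reduce the claim to a standard computation for the affine line in the motivic six functor formalism on analytic adic spaces via the tilting equivalence, and then to apply the projective bundle formula together with excision established earlier in the section.

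First, since cohomological smoothness, the isomorphism $\pi^{!}\bb{Z}\cong\bb{Z}(1)[2]$, and the evaluation $\pi_{!}\pi^{!}\bb{Z}\cong\bb{Z}$ are all v-local on the target $S$ (the first two by the formalism of $\cl{D}^\oc_\mot$, the last by proper base change after embedding into $\bb{P}^{1,\diamond}_{S^\sharp}$), I may assume $S=\Spa(R,R^{+})$ is an affinoid perfectoid space, so that $S^\sharp=\Spa(R^\sharp,R^{\sharp+})$ is an affinoid perfectoid space in characteristic $0$. The tilting equivalence \cite[Proposition 9.7]{Sch24} then identifies the arc-sheafification $a'^{*}\bb{A}^{1,\diamond}_{S^\sharp}$ with $a'^{*}\bb{A}^{1}_{S^\sharp}$, reducing the question to the corresponding statement for the characteristic-zero analytic affine line $\pi\colon\bb{A}^{1}_{S^\sharp}\to S^\sharp$.

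Next, I embed $\bb{A}^{1,\diamond}_{S^\sharp}$ into $\bb{P}^{1,\diamond}_{S^\sharp}$ as the complement of the closed section $i\colon\{\infty\}\hookrightarrow\bb{P}^{1,\diamond}_{S^\sharp}$. The open immersion is partially proper, hence $!$-able by \Cref{prop:clop!-able}, and the closed immersion $i$ is likewise $!$-able. Writing $\bar{\pi}\colon\bb{P}^{1,\diamond}_{S^\sharp}\to S$, the excision fiber sequence of \Cref{prop:excmot} reads
\[
    \pi_{!}\bb{Z}\to\bar{\pi}_{!}\bb{Z}\to\bar{\pi}_{!}i_{*}\bb{Z}.
\]
Since $\bar{\pi}$ is proper, $\bar{\pi}_{!}\bb{Z}\cong\bar{\pi}_{*}\bb{Z}$, and the projective bundle formula in the motivic six functor formalism \cite{Sch24} yields $\bar{\pi}_{*}\bb{Z}\cong\bb{Z}\oplus\bb{Z}(-1)[-2]$. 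The pushforward along $\{\infty\}\hookrightarrow\bb{P}^{1,\diamond}_{S^\sharp}\to S$ of the constant sheaf is $\bb{Z}$, picking out the unit summand; from the fiber sequence I extract $\pi_{!}\bb{Z}\cong\bb{Z}(-1)[-2]$.

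Finally, cohomological smoothness of $\pi$ is a local input from Scholze's motivic setup applied to the smooth morphism $\bb{A}^{1}_{S^\sharp}\to S^\sharp$; granting it, the dualizing object $\pi^{!}\bb{Z}$ is invertible, and combining with $\pi_{!}\bb{Z}\cong\bb{Z}(-1)[-2]$ and Verdier duality forces $\pi^{!}\bb{Z}\cong\bb{Z}(1)[2]$. Then $\pi_{!}\pi^{!}\bb{Z}\cong\pi_{!}\bb{Z}(1)[2]\cong\bb{Z}$ as desired. The main obstacle in turning this sketch into a rigorous argument is the careful handling of the two inputs from \cite{Sch24}, namely cohomological smoothness of the analytic affine line in the motivic formalism and the projective bundle formula for $\bb{P}^{1,\diamond}_{S^\sharp}$ in the tilted setting; everything else is a formal consequence of the excision package already built up in \Cref{ssec:sixfunc}.
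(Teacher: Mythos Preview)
Your approach is essentially the same as the paper's: reduce to affinoid $S$, pass via the tilting equivalence to the analytic affine line over $R^\sharp$, embed into $\bb{P}^1$, and invoke \cite[Proposition 9.3]{Sch24} for the cohomology of $\bb{P}^1$ and the smoothness of $\bb{A}^1$. The paper's proof is just a more compressed version of the same outline.

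There is, however, one logical slip in your ordering. You compute $\pi_!\bb{Z}\cong\bb{Z}(-1)[-2]$ via excision and the projective bundle formula, grant cohomological smoothness so that $\pi^!\bb{Z}$ is invertible, and then claim ``Verdier duality forces $\pi^!\bb{Z}\cong\bb{Z}(1)[2]$''. That deduction is not valid as stated: knowing $\pi_!\bb{Z}$ and invertibility of $\pi^!\bb{Z}$ does not by itself pin down $\pi^!\bb{Z}$; a priori the dualizing object could be any invertible sheaf on $\bb{A}^{1,\diamond}_{S^\sharp}$, not necessarily pulled back from the base. What Verdier duality actually gives you from $\pi_!\bb{Z}\cong\bb{Z}(-1)[-2]$ is $\pi_*\pi^!\bb{Z}\cong\bb{Z}(1)[2]$, which is a different statement. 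The paper avoids this by taking the identification $\pi^!\cong\pi^*\otimes\bb{Z}(1)[2]$ directly from \cite[Proposition 9.3]{Sch24} (this is how cohomological smoothness is proved there, not a separate fact), and then deducing $\pi_!\pi^!\bb{Z}\cong\bb{Z}$ from the reduced cohomology of $\bb{P}^1$. In other words, your detour through $\pi_!\bb{Z}$ is redundant once you accept the input from \cite{Sch24} that you already need, and the step you use to close the loop does not work. Reordering to match the paper's logic fixes this completely.
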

\begin{proof}
    We may assume that $S$ is affine. Let $S^\sharp = \Spa(R^\sharp,R^{\sharp +})$. Then, $a'^*\bb{A}^{1, \diamond}_{S^\sharp} = \bb{A}^1_{R^\sharp}$ and $a'^*\pi$ is the tilt of $\pi_{R^\sharp} \colon \bb{A}^1_{R^\sharp} \to \cl{M}_\arc(R^\sharp)$. Since $\bb{A}^1_{R^\sharp}$ is the complement of $\bb{P}^1_{R^\sharp}$ along the section $\cl{M}_\arc(R^\sharp) \to \bb{P}^1_{R^\sharp}$ at $\infty$, $\pi_{R^\sharp}^! \cong \pi_{R^\sharp}^* \otimes \bb{Z}(1)[2]$ by \cite[Proposition 9.3]{Sch24} and the computation of the reduced cohomology of $\pi_{R^\sharp}^* \otimes \bb{Z}(1)[2]$ on $\bb{P}^1_{R^\sharp}$ in loc. cit. implies $\pi_{R^\sharp !} \pi_{R^\sharp}^! \bb{Z} \cong \bb{Z}$ 
\end{proof}

\begin{prop} \label{lem:computBC}
    Let $S$ be a small $v$-stack. Let $\pi \colon \cl{P} \to S$ be a torsor under $\BC(\cl{O}(n))$ for some $n \geq 1$ in the $v$-topology. Then, $\pi$ is cohomologically smooth, $\pi^! \bb{Z} \cong \bb{Z}(n)[2n]$ and $\pi_! \pi^! \bb{Z} \cong \bb{Z}$ in the $\cl{D}^\oc_\mot$-formalism. In particular, $\pi^* \colon \cl{D}_\mot^\oc(S) \to \cl{D}_\mot^\oc(\cl{P})$ is fully faithful. 
\end{prop}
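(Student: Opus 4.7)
The plan is to combine $v$-descent with the standard inductive filtration of Banach–Colmez spaces, ultimately reducing to Lemma \Cref{lem:computA1}.

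All three assertions—cohomological smoothness of $\pi$, the identification $\pi^{!}\bb{Z} \cong \bb{Z}(n)[2n]$, and the isomorphism $\pi_{!}\pi^{!}\bb{Z} \cong \bb{Z}$—are $v$-local on $S$: cohomological smoothness is defined $v$-locally, while the latter two can be checked after pullback along a $v$-cover of $S$ by sheafyness of $\cl{D}^{\oc}_{\mot}$ (\Cref{prop:linearity_statement}). Since $\cl{P} \to S$ is a $v$-torsor under $\BC(\cl{O}(n))$, I would first pass to a $v$-cover of $S$ trivializing the torsor; refining further, I may assume that $S$ is an affinoid perfectoid space equipped with a chosen untilt $S^{\sharp}$ over $O_{F}$, so that the problem reduces to the projection $\pi\colon \BC(\cl{O}(n))_{S} \to S$.

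In this setting the quotient $\cl{O}(n)/\cl{O}(n-1)$ is a skyscraper at the untilt, producing a short exact sequence of $v$-sheaves
\[
0 \to \BC(\cl{O}(n-1))_{S} \to \BC(\cl{O}(n))_{S} \to \bb{A}^{1,\diamond}_{S^{\sharp}} \to 0,
\]
and hence a factorization $\pi = \pi_{2} \circ \pi_{1}$ in which $\pi_{1}\colon \BC(\cl{O}(n))_{S} \to \bb{A}^{1,\diamond}_{S^{\sharp}}$ is a $\BC(\cl{O}(n-1))$-torsor and $\pi_{2}\colon \bb{A}^{1,\diamond}_{S^{\sharp}} \to S$ is the projection of Lemma \Cref{lem:computA1}. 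I would then proceed by induction on $n \geq 1$. For $n \geq 2$, the inductive hypothesis applied to $\pi_{1}$ combined with Lemma \Cref{lem:computA1} on $\pi_{2}$ gives the conclusion for $n$: the dualizing twists multiply, $\pi^{!}\bb{Z} \cong \pi_{1}^{!}(\bb{Z}(1)[2]) \cong \bb{Z}(n)[2n]$, and iterating the projection formula yields $\pi_{!}\pi^{!}\bb{Z} \cong \pi_{2!}\pi_{2}^{!}\bb{Z} \cong \bb{Z}$. The base case $n=1$ is the classical cohomology computation of $\BC(\cl{O}(1))$; here the analogous sequence identifies $\BC(\cl{O}(1))_{S} \to \bb{A}^{1,\diamond}_{S^{\sharp}}$ with an $\und{F}$-torsor, and the required formulas follow from its pro-étale structure, as in \cite[Lemma 2.6]{HI24} and \cite[Section 2]{Sch25}.

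Finally, the full faithfulness of $\pi^{*}$ follows formally. Cohomological smoothness makes $\pi^{!}\bb{Z}$ invertible, so the projection formula yields $\pi_{!}\pi^{!}M \cong M \otimes \pi_{!}\pi^{!}\bb{Z} \cong M$ for every $M$; hence the counit $\pi_{!}\pi^{!} \to \id$ is an isomorphism and $\pi^{!}$ is fully faithful. Since $\pi^{*} \cong \pi^{!} \otimes (\pi^{!}\bb{Z})^{-1}$, tensoring by the invertible twist $(\pi^{!}\bb{Z})^{-1}$ transports full faithfulness to $\pi^{*}$. The principal technical obstacle will be the base case $n=1$, which relies on the specific pro-étale geometry of $\BC(\cl{O}(1))$; once that is in hand, the inductive step and the formal deductions are routine manipulations with $!$-functors and the projection formula.
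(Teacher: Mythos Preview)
Your overall architecture---$v$-localize to trivialize the torsor, then induct via the exact sequence $0 \to \BC(\cl{O}(n-1)) \to \BC(\cl{O}(n)) \to \bb{A}^{1,\diamond}_{S^\sharp} \to 0$---matches the paper, and your inductive step for $n \geq 2$ is correct. There are two issues, one minor and one genuine.

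The minor one: your reduction ``refining further, I may assume $S$ is affinoid perfectoid with a chosen untilt $S^\sharp$'' is underspecified. Not every affinoid perfectoid over $k$ admits an untilt in characteristic $0$; you need to pass to a further $v$-cover (e.g.\ pull back along $\Div^1 \to \ast$). The paper organizes this more cleanly: it first reduces to the trivial torsor over arbitrary $S$, then to the absolute case $S = \ast$, and only for $n>1$ passes to the cohomologically smooth cover $\Spd(k((t))) \to \ast$ to arrange for untilts.

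The genuine gap is your base case $n=1$. Factoring $\pi$ through the $\underline{F}$-torsor $\pi_1\colon \BC(\cl{O}(1))_S \to \bb{A}^{1,\diamond}_{S^\sharp}$ does give $\pi^!\bb{Z} \cong \bb{Z}(1)[2]$ (assuming $\pi_1^! = \pi_1^*$ for pro-\'etale maps), but it does \emph{not} give $\pi_!\pi^!\bb{Z} \cong \bb{Z}$. Already for the fiber one has $f_!\bb{Z} \cong C_c^\infty(F,\bb{Z})$ by \Cref{lem:locprof!}, not $\bb{Z}$; so $\pi_{1!}\bb{Z}$ is a nonconstant sheaf on $\bb{A}^{1,\diamond}_{S^\sharp}$ with large stalks, and computing $\pi_{2!}$ of it is not at all immediate from ``pro-\'etale structure.'' The references you cite do contain the result, but not by this route. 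The paper instead treats $n=1$ over $S = \ast$ directly: one has $\BC(\cl{O}(1)) \cong \Spd(k\llbracket t\rrbracket)$, which is the open unit disk $U = \{|T|<1\} \subset \bb{A}^1_k$; its complement in $\bb{P}^1_k$ is a closed disk, so by ball invariance and excision, $\pi_!\pi^!\bb{Z}$ is the reduced cohomology of $\bb{Z}(1)[2]$ on $\bb{P}^1_k$, which is $\bb{Z}$ by \cite[Proposition 9.3]{Sch24}. This geometric input is what your argument is missing.
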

\begin{proof}
    We prove by the induction on $n$. We would like to prove that we have an adjoint pair
    \[
        \pi_! \dashv \pi^* \otimes \bb{Z}(n)[2n]
    \]
    and the unit $\pi_!\bb{Z}(n)[2n] \to \bb{Z}$ is an isomorphism. Since these two functors commute with pullbacks and $\pi$ is surjective, it is enough to prove this after pullbacks along the \v{C}ech nerve of $\pi$. In particular, we may assume that $\cl{P} \cong \BC(\cl{O}(n)) \times S$.
    
    It is enough to treat the absolute case $S = \ast$. For $n=1$, $\BC(\cl{O}(1)) \cong \Spd(k\llbracket t \rrbracket)$, so it is identified with the open unit disk $U = \{ \lvert T \rvert < 1\} \subset \bb{A}^1_k$. Its complement $\bb{P}^1_k - U$ is a closed unit disk, so by the ball invariance, $\pi_! \pi^! \bb{Z}$ computes the reduced cohomology of $\bb{Z}(1)[2]$ on $\bb{P}^1_k$. It follows from the proof of \cite[Proposition 9.3]{Sch24} that $\pi_! \pi^! \bb{Z} \cong \bb{Z}$ and $\pi^! \cong \pi^* \otimes \bb{Z}(1)[2]$. 

    For $n > 1$, we may pass to the \v{C}ech nerve of the cohomologically smooth cover
    \[
        \Spd(k((t)), k\llbracket t \rrbracket) \to \ast. 
    \]
    We may assume that $S = \Spa(R, R^+)$ is an affinoid perfectoid space in characteristic $p$.  
    
    Take an untilt $S^\sharp = \Spa(R^\sharp,R^{\sharp +})$ in characteristic $0$. We have an exact sequence 
    \[
        0 \to \BC(\cl{O}(n-1))\vert_S \to \BC(\cl{O}(n))\vert_S  \xrightarrow{q} \bb{A}^{1, \diamond}_{R^\sharp} \to 0. 
    \]
    Here, $q$ is a torsor under $\BC(\cl{O}(n-1))$. Thus, the induction hypothesis can be applied to $q$, so $\pi_! \pi^! \bb{Z} \cong \bb{Z}$ and $\pi^! \cong \pi^* \otimes \bb{Z}(n)[2n]$ by \Cref{lem:computA1}. Thus, we get the claim. 
\end{proof}

\section{Formulation of the $\cl{A}$-side} \label{sec:Aside}

In this section, we introduce the stack $\Bun_G^X$ of $G$-torsors with an $X$-section over the Fargues-Fontaine curve and the associated unnormalized period sheaf $\cl{P}_{X}$. Throughout this section, we will use the sheaf theory explained in \Cref{sec:sixfunc}. 

\subsection{Recollections of $\Bun_G$}
\label{ssec:recollections_BunG}
Let $G$ be a connected reductive group over $F$. In this section, we recall the basic properties of the small $v$-stack $\Bun_G$ introduced in \cite{FS24}. 

\begin{defi} \textup{(\cite[Chapter III]{FS24})}
    The stack sending $S\in \Perf$ to the groupoid of $G$-torsors on $\cl{X}_S$ is a small $v$-stack and denoted by $\Bun_G$. 
\end{defi}

When $S = \Spa(R,R^+)$ is an affinoid perfectoid space over $\bb{F}_q$, there is an algebraic version $X_S^\alg$ of the Fargues-Fontaine curve (see \cite[p.67]{FS24}). There is a morphism $\cl{X}_S \to X_S^\alg$ of locally ringed spaces and the pullback induces the equivalence of the categories of vector bundles (see \cite[Proposition II.2.7]{FS24}). The same applies to the groupoids of $G$-torsors by the Tannakian interpretation. Let $\Perfa$ be the $v$-site of affinoid perfectoid spaces over $k$. As a stack on $\Perfa$, we have a description
\begin{equation}
    \Bun_G(S) = \Map(X_S^\alg, [\ast/G]). \label{eq:BunGalg}
\end{equation}
Here, the mapping anima is taken in the category of stacks over $F$. Let $\breve{F}$ be the completion of the maximal unramified extension of $F$. 

\begin{defi}
    Let $B(G)$ be the set of $\sigma$-conjugacy classes of $G(\breve{F})$. Here, two elements $g, g' \in G(\breve{F})$ are $\sigma$-conjugate if $g' = hg\sigma(h)^{-1}$ for some $h \in G(\breve{F})$.  
\end{defi}

Let $\bar{F}$ be an algebraic closure of $F$ and let $\Gamma=\Gal(\bar{F}/F)$. Fix a Borel pair $T \subset B \subset G_{\bar{F}}$ and let $X_*(T)^+_{\bb{Q}} \subset X_*(T)_{\bb{Q}}$ be the set of dominant rational cocharacters. Then, Newton points and Kottwitz values are described as the following maps
\[
    \nu \colon B(G) \to X_*(T)^{+, \Gamma}_{\bb{Q}} ,\quad 
    \kappa \colon B(G) \to \pi_1(G)_{\Gamma}
\]
and the product $\nu \times \kappa \colon B(G) \to X_*(T)^{+, \Gamma}_{\bb{Q}} \times \pi_1(G)_{\Gamma}$ is injective. For each $b \in B(G)$, $\nu(b)$ is denoted by $\nu_b$, and $b$ is called basic when $\nu_b$ is central. The set of basic $\sigma$-conjugacy classes is denoted by $B(G)_\bas$, and $\kappa$ induces a bijection $B(G)_\bas \cong \pi_1(G)_\Gamma$. 

\begin{prop}\textup{(\cite{Vie24})}
    There is a homeomorphism $\lvert \Bun_G \rvert \simeq B(G)$. Here, $B(G)$ is equipped with the order topology concerning Newton points and Kottwitz values. 
\end{prop}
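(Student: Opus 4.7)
The plan is to produce a continuous bijection $B(G)\to |\Bun_G|$ and show it is open (equivalently, that the specialization orders match up). Since $B(G)$ is at most countable, being a homeomorphism is equivalent to matching topologies on pairs of points.

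First I would identify the underlying set. For each $b\in B(G)$ Kottwitz attaches an isocrystal with $G$-structure, and Fargues' classification of $G$-torsors on the Fargues-Fontaine curve over an algebraically closed perfectoid field $C$ (generalizing the Dieudonn\'e-Manin classification from the $G=\GL_n$ case, using the injectivity of $\nu\times\kappa$) gives a canonical bijection between $B(G)$ and the isomorphism classes of $G$-torsors on $\cl{X}_{(C,O_C)}$. Combined with the description of $|\Bun_G|$ as equivalence classes of maps $\Spa(C,O_C)\to\Bun_G$ (analogously to \Cref{prop:indeptop} on the arc-side), this yields a bijection $\iota\colon B(G)\xrightarrow{\sim} |\Bun_G|$ of underlying sets.

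Next I would compare topologies. The topology on $B(G)$ has as closed sets the upward closed subsets for the order $b'\leq b$ iff $\nu_{b'}\leq\nu_b$ in the dominance order and $\kappa_{b'}=\kappa_b$ (equivalently, iff $b'$ specializes to $b$ in the expected way). One direction, namely that $\iota$ is continuous (i.e.\ Newton points are upper semicontinuous and Kottwitz values locally constant), is the Rapoport-Richartz / Kedlaya-Liu semicontinuity theorem for $G$-isocrystals in families, reducing via a faithful representation $G\hookrightarrow\GL_n$ to the vector bundle case and the known semicontinuity of the Harder-Narasimhan polygon on $\cl{X}_S$.

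For openness of $\iota$ (equivalently, that every specialization relation in $B(G)$ is actually realized by a point of $\Bun_G$), the idea is to construct explicit families. Given $b'\leq b$ in $B(G)$, one produces an $S$-family of $G$-torsors on $\cl{X}_S$ with generic fiber $\cl{E}_{b'}$ and a special fiber isomorphic to $\cl{E}_b$; this is done by modifying $\cl{E}_{b'}$ along a moving Cartier divisor using the affine Grassmannian $\mathrm{Gr}_G$ and invoking the surjectivity of the natural map from affine Deligne-Lusztig varieties / $B(G,\mu)$-admissible locus onto the Newton stratum above $b'$. Concretely one uses the Beauville-Laszlo-type modification at an untilt and the nonemptiness/surjectivity results for $G$-bundle modifications in \cite{FS24}.

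The main obstacle will be this last step: showing that every predicted specialization in $B(G)$ is realized by a genuine family on the Fargues-Fontaine curve. For $G=\GL_n$ it follows from explicit constructions of families of modifications of isoclinic bundles, but for general quasi-split reductive $G$ it requires the nonemptiness of the relevant open Newton strata in $\mathrm{Gr}_{G,\mu}$, i.e.\ the identification of the $|{\cdot}|$-image of $\Bun_G^b$ with the appropriate generalization. This is precisely where the argument of Viehmann in \cite{Vie24} is needed; I would cite it rather than reproduce it, after having set up all the point-theoretic and semicontinuity input above.
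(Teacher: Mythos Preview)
The paper does not give its own proof of this proposition: it is stated as a citation of \cite{Vie24} (together with the earlier results of Fargues and Fargues--Scholze for the bijection and semicontinuity) and used as a black box. So there is nothing to compare your argument against here.

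Your outline is an accurate summary of what actually goes into the result. The bijection $B(G)\to|\Bun_G|$ and the continuity direction are already in \cite{FS24} (Theorem III.2.2 and Theorem III.2.3/III.2.7), and the content of \cite{Vie24} is precisely the openness direction you flag as the main obstacle. Your proposed mechanism for realizing specializations via modifications and $B(G,\mu)$-type loci is morally the right picture, and you are right to defer to Viehmann rather than attempt it directly: the actual argument there is a careful induction reducing to minimal degenerations and does not proceed by a single global family construction. Since the paper itself simply cites the result, your level of detail already exceeds what is required.
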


For each $b \in B(G)$, let $\Bun_G^b = \Bun_G\times_{|\Bun_G|} \crbr{b}  \subset \Bun_G$ be the locally closed substack associated to $b$. Let $i^b \colon \Bun_G^b \hookrightarrow \Bun_G$ denote the locally closed immersion. The fibers of $\kappa$ are exactly the connected components of $\lvert \Bun_G \rvert$. For each $b\in B(G)_\bas$, the associated connected component is denoted by $\Bun_G^{(b)}$. 

\begin{exa}(\cite[Section IX.6.4]{FS24}) \label{exa:BunGm}
    When $G = \bb{G}_m$, $\Bun_{\bb{G}_m}$ is the classifying stack of line bundles on $\cl{X}_S$. By \cite[Theorem III.4.5]{FS24}, we have a concrete description
    \[
        \Bun_{\bb{G}_m} = \bigsqcup_{n \in \bb{Z}} \Bun_{\bb{G}_m, n} \cong \bigsqcup_{n \in \bb{Z}} [\ast/\underline{F^\times}]. 
    \]
    Here, $\Bun_{\bb{G}_m, n}$ classifies lines bundles of degree $n$ (see \cite[Section II.2.4]{FS24} for the notion of degrees). In other words, $\Bun_{\bb{G}_m, n}$ classifies line bundles which are pro-\'{e}tale locally isomorphic to $\cl{O}(n)$ and its underlying point corresponds to the $\sigma$-conjugacy class $[\pi^{-n}] \in B(\bb{G}_m)$. As in \cite[Theorem 3.1 (ii)]{Sch25}, $\cl{D}^\oc(\Bun_{\bb{G}_m, n}, \Lambda) \cong \cl{D}(F^\times, \Lambda)$ for a $\bb{Z}_\ell$-algebra $\Lambda$. 
\end{exa}

\subsection{Relative stacks $\Bun_{G}^{X}$} 

In this section, we introduce the stack $\Bun_G^X$ of $G$-torsors with an $X$-section over the Fargues-Fontaine curve and the unnormalized period sheaf $\cl{P}_{X}$. We will show that $\Bun_G^X$ is an Artin $v$-stack and the projection $\pi_X \colon \Bun_G^X \to \Bun_G$ is $!$-able. 

Throughout this section, $X$ is a normal quasi-projective $G$-variety over $F$. Our convention of a $G$-variety is as follows. 

\begin{conv} \label{conv:Gvar}
    A $G$-variety $X$ over $F$ is an $F$-variety equipped with a left $G$-action. When we consider the quotient stack $[X/G]$, we take the right $G$-action on $X$ given by $x \cdot g = g^{-1} x$ for $x \in X$ and $g \in G$. For an $F$-scheme $S$, $[X/G](S)$ is the groupoid of pairs $(P, s)$ where $P$ is a $G$-bundle over $S$ and $s$ is a section of $P\times^{G} X \to S$. 
\end{conv}

\begin{rmk}
    As we will recall later, $X$ admits a $G$-equivariant ample line bundle under our assumptions by \cite[Theorem 2.5]{Sum75}. Thus, $P \times^G X$ is a scheme for every $G$-bundle $P$ over an $F$-scheme by the descent \cite[Theorem 7]{BLR90}. 
\end{rmk}

Following \eqref{eq:BunGalg}, we define $\Bun_G^X$ as follows. For convenience, we work with the subcategory $\Perfs  \subset \Perfa$ consisting of strictly totally disconnected spaces. It forms a base in the $v$-topology. 

\begin{defi}
    For a normal quasi-projective $G$-variety $X$ over $F$, let $\Bun_G^X$ be the prestack over $\Perfs$ sending $S$ to $\Map(X_S^\alg, [X/G])$. Concretely, $\Bun_G^X(S)$ is the groupoid of pairs $(P, s)$ where $P$ is a $G$-bundle on $X_S^\alg$ and $s$ is a section of $P \times^G X \to X_S^\alg$. 
\end{defi}

\begin{rmk}\label{rmk:partially_proper_}
    As in \cite[Proposition II.2.16]{FS24}, $X_S^\alg$ is independent of the choice of $R^+ \subset R$, so $\Bun^X_G \to \ast$ is partially proper (recall \Cref{def:partially_proper_v_stacks}). 
\end{rmk}



\begin{exa} \label{exa:univvb}
    Here are some concrete examples of $\Bun_G^X$. 
    \begin{enumerate}
        \item When $X$ is a homogeneous $G$-variety, i.e. $X = G / H$ for some closed subgroup $H \subset G$, we have $[X/G] \cong [\ast/H]$ and $\Bun_G^X \cong \Bun_H$. 
        \item When $G = \GL_n$ and $X = \bb{A}^n$ is the standard representation of $\GL_n$, $[\bb{A}^n/\GL_n]$ classifies vector bundles with a global section. Let $\BC(\cl{E}^\univ)$ be the Banach-Colmez space of the universal rank $n$ vector bundle over ``the Fargues-Fontaine curve of $\Bun_{\GL_n}$'': 
        \[
            \BC(\cl{E}^\univ)(S) = \BC(\cl{E}_S)
        \]
        for a perfectoid space $S \to \Bun_{\GL_n}$ with $\cl{E}_S$ the associated vector bundle over $\cl{X}_S$. Then, we have an isomorphism 
        \[
            \Bun_{\GL_n}^{\bb{A}^n} \cong \BC(\cl{E}^\univ)
        \]
        over $\Bun_{\GL_n}$. 
    \end{enumerate}
\end{exa}

Each $V\in \Rep(G)$ is an instance of a $G$-variety. Thanks to the quasi-projectivity and the normality of $X$, we have the following structural result for $X$. 

\begin{lem}\textup{(\cite[Theorem 2.5]{Sum75})} \label{lem:sumihiro}
    There is a $G$-equivariant embedding $X \hookrightarrow \bb{P}(V)$ for some $V \in \Rep(G)$. 
\end{lem}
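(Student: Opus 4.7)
The plan is to follow the classical strategy underlying Sumihiro's theorem, which has three ingredients: the existence of an ample line bundle on $X$, the $G$-linearization of a suitable power of that line bundle, and the reduction to a finite-dimensional subrepresentation of global sections. The first step is automatic from quasi-projectivity: fix any ample line bundle $L$ on $X$.

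The main step is to produce a positive integer $n$ such that $L^{\otimes n}$ admits a $G$-linearization, i.e.\ a lift of the $G$-action on $X$ to a fiberwise linear action on the total space of $L^{\otimes n}$. This is where normality of $X$ enters crucially. The standard approach is to consider the two pullbacks $m^*L$ and $p_2^*L$ along the action map $m\colon G\times X \to X$ and the projection $p_2$, and to show that $m^*L \otimes p_2^*L^{-1}$ is pulled back from $G$. Normality of $X$ lets one control $\mathrm{Pic}(G\times X)/\mathrm{Pic}(X)$ via the seesaw principle; then connectedness of $G$ and a careful computation of cocycles, possibly after replacing $L$ by $L^{\otimes n}$ to kill an obstruction in a low-degree group-cohomology group of $\mathbb{G}_m$-valued characters, yield the desired linearization. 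In our setup this is exactly the content of \cite[Theorem 2.5]{Sum75}, so I would simply invoke that reference rather than reprove it.

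Once $L^{\otimes n}$ is a $G$-linearized ample (hence, after further raising $n$, very ample) line bundle on $X$, the cohomology group $H^0(\bar X, L^{\otimes n})$ on a projective completion $\bar X$ of $X$ is a rational $G$-representation. Algebraic $G$-representations are locally finite, so I can choose a finite-dimensional $G$-stable subspace $V^\vee \subset H^0(\bar X, L^{\otimes n})$ that generates $L^{\otimes n}$ on $X$ and separates points and tangent vectors on $X$. Taking duals, the associated morphism $X \to \mathbb{P}(V)$ is a $G$-equivariant locally closed immersion.

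The main obstacle is the linearization step; the subsequent passage from $G$-linearized sections to an equivariant embedding is routine. Once the linearization is granted by Sumihiro's theorem, the embedding into $\mathbb{P}(V)$ for some $V \in \mathrm{Rep}(G)$ is immediate.
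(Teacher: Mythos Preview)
The paper gives no proof of this lemma at all; it is stated as a direct citation of \cite[Theorem 2.5]{Sum75}, and your proposal likewise ultimately invokes that reference, so the approaches agree.

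One small correction to your expository sketch: you write that $H^0(\bar X, L^{\otimes n})$ on a projective completion $\bar X$ is a rational $G$-representation, but a priori the $G$-action on $X$ need not extend to an arbitrary completion $\bar X$, nor does the $G$-linearized line bundle automatically extend. The cleaner argument works directly on $X$: once $L^{\otimes n}$ is $G$-linearized and very ample, $H^0(X,L^{\otimes n})$ is a (possibly infinite-dimensional) locally finite rational $G$-module, and one extracts a finite-dimensional $G$-stable subspace $V^\vee$ generating $L^{\otimes n}$ and giving an immersion $X\hookrightarrow \bb{P}(V)$. This is a minor point since you defer to Sumihiro anyway, but the passage through $\bar X$ as written is circular.
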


Fix such a $V \in \Rep(G)$ and let $\bar{X} \subset \bb{P}(V)$ be the schematic image of $X$. Since $X$ is Noetherian, $X \hookrightarrow \bar{X}$ is an open immersion and $\bar{X}$ is a projective $G$-variety. A priori, $\Bun_G^X$ is just a prestack on $\Perfs$, but we can define closed and open immersions between prestacks by imposing that every pullback to an object of $\Perfs$ is represented by a closed or open subspace. 

\begin{lem} \label{lem:algvsadic}
    Let $S \in \Perfs$ and let $\pi_S \colon \lvert \cl{X}_S \rvert \to \lvert X_S^\alg \rvert$ denote the natural map. For a closed subset $Z \subset \lvert X_S^\alg \rvert$, $\pi_S^{-1}(Z)$ is empty if and only if $Z$ is empty, and $\pi_S^{-1}(Z) = \lvert \cl{X}_S \rvert$ if and only if $Z = \lvert X_S^\alg \rvert$. 
\end{lem}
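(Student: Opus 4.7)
The two substatements are equivalent to asserting that the image of $\pi_S$ meets every nonempty closed subset of $\lvert X_S^\alg \rvert$ and is Zariski-dense in $\lvert X_S^\alg \rvert$, respectively. My plan is to exhibit a class of ``classical'' closed points in $X_S^\alg$---those corresponding to effective Cartier divisors of degree one on the Fargues-Fontaine curve, equivalently to untilts of $S$ over $F$---which simultaneously lie in the image of $\pi_S$ and are Zariski-dense, meeting every nonempty closed subset. Since the claim is local on the target, I would first reduce to $S = \Spa(R, R^+)$ affinoid perfectoid, so that $X_S^\alg$ is the scheme constructed in \cite[p.67]{FS24} and $\pi_S$ comes from a canonical morphism of locally ringed spaces $\cl{X}_S \to X_S^\alg$.

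For the image containment, every classical closed point of $X_S^\alg$ corresponds to an untilt $S^\sharp$ of $S$ over $F$, which determines a closed immersion $S^\sharp \hookrightarrow \cl{X}_S$ whose image under $\pi_S$ is exactly the chosen closed point. Hence it suffices to verify (i) that every nonempty closed subset of $X_S^\alg$ contains a classical closed point, and (ii) that the classical closed points are Zariski-dense in $X_S^\alg$; both substatements then follow immediately. When $R = C$ is an algebraically closed perfectoid field, $X_C^\alg$ is essentially a Dedekind scheme whose closed points are exactly the classical ones, so (i) and (ii) hold by the Fargues-Fontaine description of closed points via primitive elements of degree one.

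The main obstacle is extending (i) and (ii) to the relative setting where $R$ is a general affinoid perfectoid Tate ring, in which case $X_S^\alg$ need no longer be one-dimensional. Here one expects to argue fiber-wise: choose an untilt $R^\sharp$ and study $X_S^\alg$ relative to an appropriate morphism to $\Spec R^\sharp$ whose geometric fibers are Fargues-Fontaine curves over algebraically closed perfectoid fields. The crux is to ensure that a nonempty closed $Z \subset X_S^\alg$ meets some geometric fiber nontrivially and that any nonempty open subset contains a classical point in some fiber. This should follow from a careful analysis of the explicit construction of $X_S^\alg$---e.g.\ via the gluing of the period rings $B_e(R^\sharp)$ and $B_{\mathrm{dR}}^+(R^\sharp)$, or via the $\operatorname{Proj}$ of the graded ring of $\varphi$-eigenfunctions---together with the fact that the locus of classical points is precisely the support of degree-one effective Cartier divisors, which is Zariski-dense in each geometric fiber.
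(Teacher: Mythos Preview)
Your approach diverges from the paper's in both halves, and the second half of your proposal contains a genuine gap.

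For the claim that $\pi_S^{-1}(Z) = \lvert \cl{X}_S \rvert$ forces $Z = \lvert X_S^\alg \rvert$, the paper does \emph{not} argue via density of classical points. Instead it uses the GAGA-type isomorphism $\Gamma(X_S^\alg, \cl{O}(n)) \cong \Gamma(\cl{X}_S, \cl{O}(n))$ directly: if $Z$ were proper, pick a nonzero section $t \in \Gamma(X_S^\alg, \cl{O}(n))$ vanishing on $Z$; then $t$ vanishes on all of $\cl{X}_S$, hence $t=0$ since $\cl{X}_S$ is sousperfectoid and thus reduced, a contradiction. This bypasses entirely the question of whether classical points are Zariski-dense in the relative setting, which you correctly flag as the hard part of your approach.

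For the claim that $\pi_S^{-1}(Z) = \varnothing$ forces $Z = \varnothing$, your idea of reducing to geometric fibers is right in spirit, but the proposed base $\Spec R^\sharp$ is wrong. An untilt $S^\sharp$ determines a Cartier divisor \emph{on} $X_S^\alg$, not a morphism \emph{from} $X_S^\alg$ to $\Spec R^\sharp$ with Fargues--Fontaine curves as fibers; there is no such structure morphism. The paper instead uses the natural map $\lvert X_S^\alg \rvert \to \pi_0(S)$ coming from $\Gamma(X_S^\alg, \cl{O}) \cong C(\lvert S \rvert, F) = C(\pi_0(S), F)$ (recall $S$ is strictly totally disconnected). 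Because Banach--Colmez spaces commute with filtered colimits, the fiber over $s \in \pi_0(S)$ is the image of $\lvert X_s^\alg \rvert$, so one reduces to $S = \Spa(C, O_C)$ for a perfectoid field $C$; there $X_S^\alg$ is a genuine curve, any nonempty closed $Z$ contains some $V_+(\xi)$ with $\xi \in \Gamma(X_S^\alg, \cl{O}(1))$, and $\pi_S^{-1}(V_+(\xi))$ is visibly nonempty. Your proposal would work if you replaced $\Spec R^\sharp$ by $\pi_0(S)$ and supplied the compatibility-with-colimits step.
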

\begin{proof}
    For both claims, the converse direction is obvious. First, suppose that $\pi_S^{-1}(Z)= \lvert \cl{X}_S \rvert$. If $Z \neq \lvert X_S^\alg \rvert$, there is a nonzero element $t \in \Gamma(X_S^{\alg}, \cl{O}(n))$ for some $n \geq 0$ such that $t = 0$ on $Z$. As in the proof of \cite[Proposition 11.2.1]{SW20}, $\cl{X}_S$ is sousperfectoid. In particular, $\cl{X}_S$ is reduced, so  $t = 0$ on $\cl{X}_S$. However, $\Gamma(X_S^\alg, \cl{O}(n)) \to \Gamma(\cl{X}_S, \cl{O}(n))$ is an isomorphism (see \cite[Proposition II.2.7]{FS24}), so it contradicts $t \neq 0$. 

    Next, suppose that $\pi_S^{-1}(Z)$ is empty. Since $\pi_0(S)$ is profinite, we have the following commutative diagram of continuous maps
    \begin{center}
        \begin{tikzcd}
            \lvert \cl{X}_S \rvert \ar[r, "\pi_S"] \ar[d, "p_S"'] & \lvert X_S^\alg \rvert \ar[d] \\
            \lvert S \rvert \ar[r] & \pi_0(S). 
        \end{tikzcd}
    \end{center}
    Since Banach-Colmez spaces are extensions of $\bb{G}_a$ and $\underline{F}$, $\Gamma(X_S^\alg, \cl{O}(n))$ commutes with filtered colimits in $S$, so the fiber of $\lvert X_S^\alg \rvert \to \pi_0(S)$ at a point $s \in \pi_0(S)$ is equal to the image of $\lvert X_s^\alg \rvert \to \lvert X_S^\alg \rvert$. Suppose that $Z$ is nonempty. Then, $Z \cap \lvert X_s^\alg \rvert$ is nonempty for some $s \in \pi_0(S)$. Since $S$ is strictly totally disconnected, we reduce to the case $S = \Spa(C,O_C)$ for some perfectoid field $C$ over $k$. Then, $X_S^\alg$ is a curve and $Z$ contains $V_+(\xi)$ for some $\xi \in \Gamma(X_S^\alg, \cl{O}(1))$. However, $\pi_S^{-1}(V_+(\xi))$ is obviously nonempty, so we get a contradiction. 
\end{proof}

\begin{lem} \label{lem:redPV}
    The natural map $\Bun_G^X \to \Bun_G^{\bar{X}}$ is an open immersion and $\Bun_G^{\bar{X}} \to \Bun_G^{\bb{P}(V)}$ is a closed immersion. 
\end{lem}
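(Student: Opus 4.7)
The plan is to verify both statements pointwise on $\Perfs$. Fix $S \in \Perfs$: for the closed immersion, I start with an $S$-point $(P, s) \in \Bun_G^{\bb{P}(V)}(S)$ and produce a closed subspace of $S$ representing $\Bun_G^{\bar{X}} \times_{\Bun_G^{\bb{P}(V)}} S$; for the open immersion, I start with $(P, s) \in \Bun_G^{\bar{X}}(S)$ and produce an open subspace representing $\Bun_G^X \times_{\Bun_G^{\bar{X}}} S$. The section $s$ is a morphism $X_S^\alg \to P \times^G \bb{P}(V)$ (or $P \times^G \bar{X}$) of $F$-schemes. Pulling back along the closed immersions $P \times^G \bar{X} \hookrightarrow P \times^G \bb{P}(V)$ and $P \times^G (\bar{X} - X) \hookrightarrow P \times^G \bar{X}$ (noting that $\bar{X} - X$ is closed in $\bar{X}$) produces closed subschemes $Y, Z \subset X_S^\alg$ with open complements $U = X_S^\alg - Y$ and $V = X_S^\alg - Z$. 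Unwinding definitions, for any $T \to S$ in $\Perfs$, the condition that $s_T$ defines a $T$-point of $\Bun_G^{\bar{X}}$ is $U_T = \emptyset$, and the condition that it defines a $T$-point of $\Bun_G^X$ is $Z_T = \emptyset$, where $U_T$ and $Z_T$ denote the pullbacks along $X_T^\alg \to X_S^\alg$.

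Next I translate these emptiness conditions into topological conditions on $|S|$ via the adic-algebraic comparison. By \Cref{lem:algvsadic}, $U_T = \emptyset$ if and only if $\pi_T^{-1}(U_T) = \emptyset$ in $|\cl{X}_T|$, and analogously for $Z_T$. Setting $\tilde{U} := \pi_S^{-1}(U) \subset |\cl{X}_S|$ (open) and $\tilde{Z} := \pi_S^{-1}(Z) \subset |\cl{X}_S|$ (closed), the functoriality of $\pi$ under base change together with the Cartesian identification $\cl{X}_T = \cl{X}_S \times_S T$ shows that $\pi_T^{-1}(U_T)$ and $\pi_T^{-1}(Z_T)$ are the set-theoretic preimages of $\tilde{U}$ and $\tilde{Z}$ under $|\cl{X}_T| \to |\cl{X}_S|$. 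These are empty if and only if the image of $|T| \to |S|$ is disjoint from $p_S(\tilde{U})$ (respectively $p_S(\tilde{Z})$), where $p_S \colon |\cl{X}_S| \to |S|$ is the structure map.

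The concluding step invokes the topological behaviour of $p_S$: the morphism $\cl{X}_S \to S$ of small $v$-sheaves is flat (giving $p_S$ open) and proper (giving $p_S$ closed), facts standard from \cite{FS24} and stemming from the Frobenius-quotient presentation $\cl{X}_S = \cl{Y}_S / \varphi^{\bb{Z}}$ together with the qcqs-ness of a fundamental domain. Hence $p_S(\tilde{U}) \subset |S|$ is open and $p_S(\tilde{Z}) \subset |S|$ is closed, with complements respectively a closed and an open subset of $|S|$. Since $S \in \Perfs$ is strictly totally disconnected and $|S|$ is profinite, these subsets lift canonically to honest closed and open subspaces of $S$, yielding the desired representing subspaces. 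I expect the main subtle point to be the closedness of $p_S$ needed for the open immersion $\Bun_G^X \hookrightarrow \Bun_G^{\bar{X}}$: while openness follows from flatness of the Fargues-Fontaine curve, closedness genuinely requires the universal closedness of $\cl{X}_S \to S$ as a map of small $v$-sheaves; the openness case for $\Bun_G^{\bar{X}} \hookrightarrow \Bun_G^{\bb{P}(V)}$ is comparatively mild, as the relevant $\tilde{U}$ is open.
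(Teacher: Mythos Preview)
Your overall strategy matches the paper's. There is one real gap: for the closed immersion $\Bun_G^{\bar{X}} \to \Bun_G^{\bb{P}(V)}$, you assert that ``$s_T$ defines a $T$-point of $\Bun_G^{\bar{X}}$'' is equivalent to the topological condition ``$U_T = \emptyset$''. But factoring through a \emph{closed} subscheme is a scheme-theoretic condition --- it says the defining ideal of $P_T \times^G \bar{X}$ pulls back to zero, i.e.\ $Y_T = X_T^{\alg}$ as schemes, not merely $|Y_T| = |X_T^{\alg}|$. The two agree only when $X_T^{\alg}$ is reduced. The paper supplies this step: $\cl{Y}_S$ is sousperfectoid hence reduced, and this passes to $X_S^{\alg}$. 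Without it your closed-immersion argument does not close. (No such issue arises for the open immersion $\Bun_G^X \hookrightarrow \Bun_G^{\bar{X}}$, since factoring through an open subscheme is purely topological.)

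A smaller point: you justify openness of $p_S$ via ``flatness'', but in the diamond setting the standard route is cohomological smoothness of $\Div^1 \to \ast$ (which gives universal openness, cf.\ \cite[Proposition 23.11]{Sch17}), exactly as the paper invokes. Your closing remark that the closed-immersion half is ``comparatively mild'' is therefore somewhat backwards: both halves need a nontrivial topological input on $p_S$ (closedness from properness of $\Div^1$, openness from its cohomological smoothness), and the closed-immersion half additionally needs the reducedness of $X_S^{\alg}$ that you omitted.
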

\begin{proof}
    For the first claim, take a map $S \to \Bun_G^{\bar{X}}$ corresponding to a pair $(P, s \colon X_S^\alg \to P\times^G \bar{X})$. Then, $s^{-1}(P\times^G X)$ is an open subset $U \subset \lvert X_S^\alg \rvert$. For $T \in \Perfs_{/S}$, $T \to S \to \Bun_G^{\bar{X}}$ factors through $\Bun_G^X$ if and only if $x_{T/S}^{-1}(U) = \lvert X_T^\alg \rvert$ where $x_{T/S}\colon \lvert X_T^\alg \rvert \to \lvert X_S^\alg \rvert$. By \Cref{lem:algvsadic}, this is equivalent to $x'^{-1}_{T/S}(\pi_S^{-1}(U)) = \lvert \cl{X}_T \rvert$ where $x'_{T/S}\colon \lvert \cl{X}_T \rvert \to \lvert \cl{X}_S \rvert$. Now, $p_S \colon \lvert \cl{X}_S \rvert \to \lvert S \rvert$ is closed since $\Div^1 \to \ast$ is proper and $\cl{X}_S \cong \Div^1 \times S$, so $\lvert S \rvert - p_S(\lvert \cl{X}_S \rvert - \pi_S^{-1}(U))$ is an open subset of $\lvert S \rvert$. It represents the fiber product $S \times_{\Bun_G^{\bar{X}}} \Bun_G^X$. 

    The second claim follows by the same argument. Here, we additionally need the facts that $P$ is reduced and $p_S$ is open. The first claim follows from the reducedness of $\cl{Y}_S$ because it implies that $X_S^\alg$ is reduced. The second claim follows from the cohomological smoothness of $\Div^1 \to \ast$ (see \cite[Proposition 23.11]{Sch17} and \cite[Proposition II.3.7]{FS24}). 
\end{proof}

In this way, we are reduced to the case $X = \bb{P}(V)$. Then, we have a Cartesian diagram
\begin{equation*}\label{eq:reduction_square}
    \begin{tikzcd}
        \Bun_G^{\bb{P}(V)} \ar[r] \ar[d] & \Bun_{\GL(V)}^{\bb{P}(V)} \ar[d] \\
        \Bun_G \ar[r] & \Bun_{\GL(V)}. 
    \end{tikzcd}
\end{equation*}
\begin{prop} \label{prop:BunGXArtin}
    The prestack $\Bun_G^X$ is a $v$-stack on $\Perfs$. Moreover, it is extended to an Artin $v$-stack on $\Perf$. 
\end{prop}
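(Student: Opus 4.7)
The plan is to use the preceding lemma to reduce the problem to the case $X = \bb{P}(V)$, and then the Cartesian diagram displayed just above the statement to reduce further to the universal case $\Bun_{\GL(V)}^{\bb{P}(V)}$, which I will describe explicitly in terms of Banach-Colmez spaces.

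For the $v$-stack property on $\Perfs$, I would argue directly: the assignment $S \mapsto \Map(X_S^\alg, [X/G])$ is a $v$-stack because $X_S^\alg$ is formed $v$-locally on $S$ by \cite[Proposition II.2.7]{FS24}, because $G$-torsors on $X_S^\alg$ descend in $S$ (this is just the $v$-stack property of $\Bun_G$), and because sections of the twist $P \times^G X \to X_S^\alg$ satisfy $v$-descent. The last point uses that $P \times^G X$ is a scheme quasi-projective over $X_S^\alg$, which in turn relies on the $G$-equivariant ample line bundle provided by Sumihiro's theorem (\Cref{lem:sumihiro}) together with effective descent for quasi-projective schemes equipped with a compatible ample line bundle. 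For the extension to $\Perf$, I would exploit the fact that $\Perfs$ forms a basis of the $v$-topology on $\Perf$ (every affinoid perfectoid admits a $v$-cover by a strictly totally disconnected space), so a $v$-sheaf on $\Perfs$ extends uniquely to a $v$-sheaf on $\Perf$; moreover, since $\Bun_G^X$ is insensitive to $R^+$ (as noted in \Cref{rmk:partially_proper_}), the extension is given simply by $\Spa(R, R^+) \mapsto \Bun_G^X(\Spa(R, R^\circ))$.

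For the Artin property, the preceding \Cref{lem:redPV} reduces us to showing that $\Bun_G^{\bb{P}(V)}$ is Artin (since locally closed immersions preserve the Artin property), and the Cartesian square
\[
\begin{tikzcd}
\Bun_G^{\bb{P}(V)} \ar[r] \ar[d] & \Bun_{\GL(V)}^{\bb{P}(V)} \ar[d] \\
\Bun_G \ar[r] & \Bun_{\GL(V)}
\end{tikzcd}
\]
reduces this to $\Bun_{\GL(V)}^{\bb{P}(V)}$, using that $\Bun_G$ and $\Bun_{\GL(V)}$ are Artin by \cite{FS24}. To handle this last case, I would rewrite $[\bb{P}(V)/\GL(V)] = [(V \setminus \{0\})/(\GL(V) \times \bb{G}_m)]$, giving a natural map
\[
\Bun_{\GL(V)}^{\bb{P}(V)} \to \Bun_{\GL(V)} \times \Bun_{\bb{G}_m}
\]
whose fiber over a pair $(\cl{E}, \cl{L})$ is the open subspace of the relative Banach-Colmez space $\BC(\cl{E} \otimes \cl{L}^{-1})$ consisting of sections that are nowhere vanishing on $X_S^\alg$ (equivalently, line subbundles $\cl{L} \hookrightarrow \cl{E}$). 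Since Banach-Colmez spaces are cohomologically smooth (hence Artin) over the base by the results recalled in \Cref{ssec:recollections_BunG}, and the product of two Artin $v$-stacks is Artin, the total space is Artin.

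The main obstacle is checking that the ``nowhere vanishing'' condition cuts out a genuinely open subspace of the relative Banach-Colmez space. This should follow by exactly the openness/closedness argument used in the proof of \Cref{lem:redPV}: for a section $s$ of a vector bundle on $X_S^\alg$, its vanishing locus is closed in $\lvert X_S^\alg \rvert$, and since the map $\lvert \cl{X}_S \rvert \to \lvert S \rvert$ is closed (coming from the properness of $\Div^1 \to \ast$), the locus in $S$ over which $s$ vanishes nowhere is open. Once this is set up, the remaining verifications — that open and closed immersions, base change along maps to Artin $v$-stacks, and cohomologically smooth covers all preserve the Artin property — are standard within the Fargues-Scholze framework.
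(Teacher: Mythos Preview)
Your reduction strategy (via \Cref{lem:redPV} and the Cartesian square) matches the paper exactly, but the paper handles the terminal case $\Bun_{\GL(V)}^{\bb{P}(V)}$ more directly: since $\GL(V)$ acts transitively on $\bb{P}(V)$ with stabilizer a maximal parabolic $P$, one has $[\bb{P}(V)/\GL(V)] \cong [\ast/P]$, hence $\Bun_{\GL(V)}^{\bb{P}(V)} \cong \Bun_P$. Both the $v$-stack property (Tannakian interpretation) and the Artin property then follow immediately from \cite[Lemma 4.1]{ALB21} and \cite[Proposition IV.1.8]{FS24}. Your rewriting $[\bb{P}(V)/\GL(V)] = [(V\setminus\{0\})/(\GL(V)\times\bb{G}_m)]$ is of course equivalent (the stabilizer of a nonzero vector under $\GL(V)\times\bb{G}_m$ is again $P$), so your Banach--Colmez description of the fibers is really just an unpacking of $\Bun_P \to \Bun_{\GL(V)} \times \Bun_{\bb{G}_m}$.

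One point needs care: your claim that ``Banach--Colmez spaces are cohomologically smooth (hence Artin) over the base'' is not correct as stated. Cohomological smoothness of $\BC(\cl{O}(n))$ holds only for $n \geq 1$ (cf.\ \Cref{lem:computBC}); for $n=0$ one gets $\underline{F}$, and over $\Bun_{\GL(V)}\times\Bun_{\bb{G}_m}$ the slopes of $\cl{E}\otimes\cl{L}^{\pm 1}$ vary arbitrarily. The Artin conclusion can still be salvaged, since $\BC(\cl{E})\to S$ is representable in locally spatial diamonds (this is what \cite[Proposition II.2.16]{FS24} gives, used later in the proof of \Cref{prop:rel!able}), and representability in locally spatial diamonds over an Artin base yields Artin. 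But once you observe that the open locus of nowhere-vanishing sections is literally $\Bun_P$, invoking the Banach--Colmez space at all becomes unnecessary. Likewise, your direct $v$-descent argument for sections of $P\times^G X$ is plausible but would need to be spelled out; the reduction to $\Bun_P$ avoids this entirely.
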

\begin{proof}
    By \Cref{lem:redPV}, we may assume that $X = \bb{P}(V)$. By the above diagram, we are reduced to the case $G = \GL(V)$ and $X = \bb{P}(V)$ by \cite[Proposition IV.1.8 (i)]{FS24}. However, $[\bb{P}(V) / \GL(V)] \cong [\ast/ P]$ for a maximal parabolic subgroup $P \subset \GL(V)$, so $\Bun_{\GL(V)}^{\bb{P}(V)} \cong \Bun_P$, which is a $v$-stack from Tannakian interpretation. By \cite[Lemma 4.1]{ALB21} and \cite[Proposition IV.1.8 (iii)]{FS24}, $\Bun_P$ is an Artin $v$-stack, so the claim follows. 
\end{proof}

To define the unnormalized period sheaf $\cl{P}_X$ on $\Bun_G$, we need the following property. 

\begin{prop} \label{prop:rel!able}
    The natural map $\pi_X\colon \Bun_G^X \to \Bun_G$ is $!$-able. 
\end{prop}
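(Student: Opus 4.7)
The plan is to factor $\pi_X$ through the closed-open chain established in \Cref{lem:redPV}, dispatch the ``easy'' pieces with \Cref{prop:clop!-able}, and reduce the remaining base case to a Banach-Colmez computation via \Cref{cor:BC!able}.

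First, using \Cref{lem:sumihiro} to embed $X \hookrightarrow \bar{X} \hookrightarrow \bb{P}(V)$, \Cref{lem:redPV} yields the factorization
\[
    \Bun_G^X \xhookrightarrow{\ j\ } \Bun_G^{\bar{X}} \xhookrightarrow{\ i\ } \Bun_G^{\bb{P}(V)} \xrightarrow{\pi_{\bb{P}(V)}} \Bun_G,
\]
in which $j$ is an open immersion and $i$ is a closed immersion. Since $\Bun_G^X \to \ast$ is partially proper by \Cref{rmk:partially_proper_} and $i \circ j$ is separated (being a locally closed immersion), the usual cancellation property for partially proper maps implies that $j$ itself is partially proper. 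By \Cref{prop:clop!-able}, both $i$ and $j$ are $!$-able, and since $!$-able maps are closed under composition it suffices to show that $\pi_{\bb{P}(V)}$ is $!$-able.

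Next, the Cartesian square preceding \Cref{prop:BunGXArtin} presents $\pi_{\bb{P}(V)}$ as the base change of $\Bun_{\GL(V)}^{\bb{P}(V)} \to \Bun_{\GL(V)}$ along $\Bun_G \to \Bun_{\GL(V)}$, and $!$-ability is preserved under base change. I may therefore assume $G = \GL(V)$. Using that $!$-ability is $v$-local on the target (\Cref{prop:extension_6ff_frome_site}~(2)), I may further pull back along an affinoid perfectoid $S \to \Bun_{\GL(V)}$ classifying a rank-$n$ vector bundle $\cl{E}_S$ on $\cl{X}_S$. Over such $S$, the fiber $\Bun_{\GL(V)}^{\bb{P}(V)} \times_{\Bun_{\GL(V)}} S$ parametrizes line sub-bundles $\cl{L} \hookrightarrow \cl{E}_S$; decomposing by $d = \deg \cl{L} \in \bb{Z}$ and trivializing $\cl{L}$ pro-\'etale locally, each degree-$d$ stratum is identified with a partially proper open sub-$v$-sheaf of $\smbr{\BC(\cl{E}_S(-d)) \backslash \{0\}}/\underline{F^\times}$, cut out by the nowhere-vanishing condition. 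These ambient quotients are $!$-able over $S$ by \Cref{cor:BC!able}, so each stratum is $!$-able by another application of \Cref{prop:clop!-able}; assembling along the $\bb{Z}$-indexed disjoint union concludes.

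The main obstacle will be the last step: producing a clean $v$-local description of the fiber over $S$ as a disjoint union of (open sub-$v$-sheaves of) Banach-Colmez quotients. In particular, one must juggle the passage between sections on $\cl{X}_S$ and on $X_S^{\alg}$ (via \cite[Proposition II.2.7]{FS24}) and the pro-\'etale local triviality of fixed-degree line bundles in order to land inside the class of morphisms to which \Cref{cor:BC!able} applies; some care is also needed in assembling the countable disjoint union while staying within the $!$-able class granted by \Cref{prop:extension_6ff_frome_site}.
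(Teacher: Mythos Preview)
Your proof is correct and takes essentially the same approach as the paper: reduce via \Cref{lem:redPV} and \Cref{prop:clop!-able} to $\Bun_{\GL(V)}^{\bb{P}(V)} \to \Bun_{\GL(V)}$, decompose by the degree of the line subbundle, and realize each stratum as a partially proper open in a projectivized Banach-Colmez space. The only cosmetic difference is that the paper works globally over $\Bun_G$ (defining the compactification $Z$ there and checking locally finite cohomological dimension via \Cref{lem:projBCfin}, citing \cite[Lemma 4.1.4]{HHS24} for the open immersion), whereas you first pass to an affinoid perfectoid $S$ and invoke the already-packaged \Cref{cor:BC!able}.
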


\begin{proof}
    By definition, $\Bun_G^X \to \ast$ is partially proper, so $\Bun_G^X \to \Bun_G^{\bb{P}(V)}$ is $!$-able by \Cref{prop:clop!-able} and \Cref{lem:redPV}. By the above Cartesian diagram, we are reduced to the case $G = \GL(V)$ and $X = \bb{P}(V)$. In particular, it is enough to show the $!$-ability of $\Bun_P \to \Bun_G$ for a suitable parabolic subgroup $P \subset G$. This is basically follows from the existence of the Drinfeld compactification in \cite{HHS24}. Instead, we will directly argue in our case to ensure the finite cohomological dimension of a compactification to apply the six functor formalism in \Cref{sec:sixfunc}. 

    Let $d = \dim V$. Then, $\Bun_G$ classifies vector bundles $\cl{E}$ of rank $d$ on the Fargues-Fontaine curve and $\Bun_P \to \Bun_G$ classifies line subbundles $ \cl{L} \subset \cl{E}$. We get a decomposition $\Bun_P = \bigsqcup_{n \in \bb{Z}} \Bun_P^n$ with respect to the degree of $\cl{L}$. It is enough to show that $\Bun_P^n \to \Bun_G$ is !-able. 

    Let $Z \to \Bun_G$ be the $v$-stack classifying $(H^0(X_S, \cl{E}_S(-n))\backslash \{0\})/\underline{F^\times}$ over $\cl{E}_S \in \Bun_G(S)$. The natural inclusion $\Bun^n_P \hookrightarrow Z$ sending $ \cl{L} \subset \cl{E}$ to $\cl{O} \hookrightarrow \cl{E} \otimes \cl{L}^\vee$ is a partially proper open immersion by \cite[Lemma 4.1.4]{HHS24}. Moreover, $Z \to \Bun_G$ is proper and representable in spatial diamonds by \cite[Proposition II.2.16]{FS24}. Thus, it is enough to show that $Z \to \Bun_G$ has locally finite cohomological dimension. 

    Take a qcqs $v$-sheaf $Y \to \Bun_G$ and let $\Spa(C,O_C) \to Y$ be an arbitrary geometric point. Let $\cl{E}_C$ be the vector bundle corresponding to $\Spa(C,O_C) \to \Bun_G$. Since $Y$ is quasicompact, the Newton polygon of $\cl{E}_C$ is bounded, so the number of possible isomorphism classes of $\cl{E}_C$ is finite. Thus, the claim follows from \Cref{lem:projBCfin}. 
\end{proof}



\begin{rmk}
    It follows that $\Bun_P \to \Bun_G$ is !-able for a parabolic subgroup $P \subset G$ since $\Bun_P = \Bun_G^{G/P}$. This verifies that geometric Eisenstein series are well-defined in our six functor formalism as claimed in \cite{HHS24}. 
\end{rmk}

\begin{defi} \label{defi:periodsheaf}
    Let $\Lambda$ be a $\bb{Z}_\ell$-algebra. We define the unnormalized period sheaf by 
    \[\cl{P}_{X, \Lambda} = \pi_{X!} \underline{\Lambda}_X\]
    where $\underline{\Lambda}_X$ denotes the constant sheaf on $\Bun_G^X$ with values in $\Lambda$. 
\end{defi}

When $\Lambda$ is clear from context, we abuse the notation to write $\cl{P}_X$ for $\cl{P}_{X,\Lambda}$. 

\subsection{Basic locus of $\Bun_G^X$}

We take $X$ as in the previous section. In this section, we first study the fiber of $\Bun_G^X \to \Bun_G$ over $\Bun_G^1 \subset \Bun_G$. 

\begin{lem} \label{lem:totallydiscon}
    For every $S \in \Perf^\std$, every open cover of $\Spec(C(\lvert S \rvert, F))$ splits. 
\end{lem}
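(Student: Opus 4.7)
The plan is to reduce the claim to a purely topological statement about profinite sets. First, I would identify the ring $R := C(|S|, F)$ as continuous functions on a profinite set. For $S \in \Perf^\std$, the spectral space $|S|$ is $w$-local and its subspace of closed points is canonically homeomorphic to the profinite set $T := \pi_0(S)$. Since $F$ is a Hausdorff, totally disconnected topological field, every continuous map $|S| \to F$ is constant on each connected component $\Spa(K, K^+) \subset |S|$ and hence factors uniquely through the retraction $|S| \to T$; restriction gives the inverse, yielding a natural identification $R \cong C(T, F)$.

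Next, since $\Spec R$ is quasi-compact, any open cover admits a finite subcover, and refining to basic opens the problem reduces to the following: given $g_1, \dots, g_n \in C(T,F)$ with $\sum_i g_i r_i = 1$ for some $r_i$, produce orthogonal idempotents $e_1, \dots, e_n \in C(T,F)$ summing to $1$ with $e_i \in (g_i)$ for each $i$; the resulting decomposition $\Spec R = \bigsqcup_i D(e_i)$ then provides the sought-after splitting. The relation $\sum_i g_i(t) r_i(t) = 1$ forces the open sets $V_i := \{t \in T : g_i(t) \neq 0\}$ to cover $T$. Since $T$ is zero-dimensional compact Hausdorff, hence normal with a basis of clopens, I would extract a clopen partition $T = T_1 \sqcup \dots \sqcup T_n$ with $T_i \subset V_i$ by iteratively choosing, for $i = 1, \dots, n$, a clopen $T_i$ sandwiched between the closed set $(T \setminus \bigcup_{j < i} T_j) \setminus \bigcup_{j > i} V_j$ and the open set $V_i \setminus \bigcup_{j < i} T_j$. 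The characteristic functions $e_i := \chi_{T_i} \in C(T, F)$ are then orthogonal idempotents with $\sum_i e_i = 1$.

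The only nontrivial step — and the main obstacle — is to verify $e_i \in (g_i)$, i.e.\ to exhibit $h_i \in C(T, F)$ with $g_i h_i = e_i$. I would take $h_i$ equal to $1/g_i$ on $T_i$ and to $0$ on $T \setminus T_i$. Continuity on $T \setminus T_i$ and across the clopen boundary of $T_i$ is automatic, so the question reduces to continuity on $T_i$. Here compactness of $T_i$ together with non-vanishing of $g_i$ on $T_i$ ensures that the continuous real-valued function $|g_i|$ attains a positive minimum $\epsilon > 0$ on $T_i$, and inversion is continuous on $\{x \in F : |x| \geq \epsilon\}$ by completeness of $F$, so $1/g_i$ is continuous on $T_i$ as required. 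This compactness-plus-completeness argument is the only analytic input; the rest of the proof is formal topology and elementary commutative algebra.
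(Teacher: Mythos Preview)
Your proof is correct but takes a genuinely different route from the paper's. After the common first step of identifying $C(|S|,F)\cong C(T,F)$ with $T=\pi_0(S)$ profinite, the paper proceeds structurally: it builds a continuous map $q\colon \Spec R \to T$ whose fiber over $s\in T$ is $\Spec$ of the local ring $\colim_{s\in U}C(U,F)$, together with a section $i\colon T\to \Spec R$ landing in the closed points. The key observation is that for any open $U\subset\Spec R$ one has $q^{-1}(i^{-1}(U))\subset U$ (locality of the fibers forces the whole fiber into $U$ once the closed point is there), so any open cover of $\Spec R$ is refined by the pullback along $q$ of a clopen partition of $T$. Your argument instead stays on the algebraic side: you reduce to a finite cover by principal opens $D(g_i)$, pass to the clopen cover $\{g_i\neq 0\}$ of $T$, refine to a clopen partition $T=\bigsqcup T_i$, and then show $\chi_{T_i}\in(g_i)$ by writing down an explicit continuous inverse $h_i$.

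What each approach buys: the paper's map $q$ with local fibers is a reusable structural description of $\Spec C(T,F)$ (and indeed the surrounding lemmas exploit the map $i$ and its image again), whereas your argument is more self-contained and avoids analyzing $\Spec R$ beyond quasi-compactness. One small remark: your compactness-plus-minimum argument for continuity of $1/g_i$ on $T_i$ is more than you need, since $F$ is a topological field and inversion $F^\times\to F^\times$ is already continuous; the composite $T_i\xrightarrow{g_i}F^\times\xrightarrow{(\,\cdot\,)^{-1}}F$ is continuous on the nose.
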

\begin{proof}
    First, $\lvert S \rvert \to \pi_0(S)$ is a maximal Hausdorff quotient, so $C(\lvert S \rvert, F) = C(\pi_0(S), F)$. We will freely use this identification. Then, we have a map
    \[
        i\colon \pi_0(S) \to \Spec(C(\lvert S \rvert, F))
    \]
    sending $s \in \pi_0(S)$ to the closed point corresponding to the maximal ideal $\{f(s) = 0 \lvert f \in C(\lvert S \rvert, F)\}$. For each $f \in C(\lvert S \rvert, F)$, the inverse image of $D(f)$ equals the open locus $\{ f(s) \neq 0\}$, so the map is continuous. 

    Since $\pi_0(S)$ is profinite, there is a natural continuous map 
    \[
        q \colon \Spec(C(\lvert S \rvert, F)) \to \pi_0(S). 
    \]
    Its fiber at each $s \in \pi_0(S)$ is the spectrum of a local ring $\colim_{s\in U \subset \pi_0(S)} C(U, F)$. In particular, the image of $i$ equals to the set of closed points. Then, for every open subset $U \subset \Spec(C(\lvert S \rvert, F))$, we have $q^{-1}(i^{-1}(U)) \subset U$. 

    Take an open covering $\{U_j \}_{j \in J}$ of $\Spec(C(\lvert S \rvert, F))$. Then, $\{q^{-1}(i^{-1}(U_j))\}_{j \in J}$ becomes its refinement. Since $\pi_0(S)$ is profinite, the open covering $\{i^{-1}(U_j)\}_{j \in J}$ of $\pi_0(S)$ splits. Thus, the open covering $\{q^{-1}(i^{-1}(U_j))\}_{j \in J}$ splits. 
\end{proof}

\begin{lem} \label{lem:imgpS}
    For every $S \in \Perfs$, the natural map
    \[
        p_S \colon X_S^\alg \to \Spec(C(\lvert S \rvert, F))
    \]
    does not factor through any proper locally closed subscheme of $\Spec(C(\lvert S \rvert, F))$. 
\end{lem}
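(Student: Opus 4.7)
The plan is to split the locally closed immersion into its open and closed parts and handle each separately. Any locally closed subscheme of $\Spec(C(\lvert S \rvert, F))$ has the form $Z = V(J) \cap U$ with $U$ open and $J \subset \Gamma(U, \cl{O})$ an ideal, so it suffices to prove: (i) every open $U \subset \Spec(C(\lvert S \rvert, F))$ containing the image of $p_S$ is the whole space, and (ii) the ring map $p_S^{*} \colon C(\lvert S \rvert, F) \to \Gamma(X_S^\alg, \cl{O})$ is injective. Granting both, a factorization of $p_S$ through $Z$ would force $U = \Spec(C(\lvert S \rvert, F))$ by (i), whence $p_S^{*}$ kills $J$, and then $J = 0$ by (ii), so $Z$ is the entire scheme.

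The key geometric input for both statements is the computation of $p_S$ on points. For each $s \in \pi_0(S)$ the algebraic curve $X_s^\alg$ is nonempty (already exploited in the proof of \Cref{lem:algvsadic}), and for any $f \in C(\lvert S \rvert, F)$ the pullback $p_S^{*} f$ restricts to the constant $f(s) \in F$ on $X_s^\alg$. Hence every point of the image of $X_s^\alg \hookrightarrow X_S^\alg$ is sent by $p_S$ to the closed point $i(s)$ appearing in the proof of \Cref{lem:totallydiscon}. This gives (ii) at once: if $p_S^{*} f = 0$, then $f(s) = 0$ for every $s \in \pi_0(S)$, and since $\lvert S \rvert \to \pi_0(S)$ is the maximal Hausdorff quotient while $F$ is Hausdorff we have $C(\lvert S \rvert, F) = C(\pi_0(S), F)$, forcing $f = 0$.

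For (i) I would invoke the fiber description from the proof of \Cref{lem:totallydiscon}: the fiber $q^{-1}(s)$ of $q \colon \Spec(C(\lvert S \rvert, F)) \to \pi_0(S)$ is the spectrum of a local ring with closed point $i(s)$, so every point of the scheme generalizes to some $i(s)$. Because open subsets of an affine scheme are stable under generalization, any open $U$ containing $\{i(s)\}_{s \in \pi_0(S)}$ must exhaust $\Spec(C(\lvert S \rvert, F))$. The only genuinely nontrivial ingredient is the point-computation of $p_S$ on the fibers $X_s^\alg$; once that is recorded, everything else is formal bookkeeping about how the open and closed parts of a locally closed immersion interact.
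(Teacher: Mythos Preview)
Your proof is correct and follows essentially the same approach as the paper: show that the image of $p_S$ contains every closed point $i(s)$ (so no proper open subscheme can contain it) and that $p_S^*$ is injective (so no proper closed subscheme can contain it), then combine these for a general locally closed subscheme. The paper's version is terser---it cites the isomorphism $\Gamma(X_S^{\alg},\cl{O})\cong C(\lvert S\rvert,F)$ directly for injectivity rather than arguing pointwise, and leaves implicit your observation that an open set containing all the $i(s)$ must be the whole spectrum---but the underlying ideas are identical.
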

\begin{proof}
    By \Cref{lem:totallydiscon}, the image of $p_S$ contains all closed point. Thus, $p_S$ does not factor through any proper open subscheme. Moreover, $p_S$ does not factor through any proper closed subscheme since $\Gamma(X_S^\alg, \cl{O}) \cong C(\lvert S \rvert, F)$. 
\end{proof}

\begin{lem} \label{lem:undsch}
    For every $S \in \Perf^\std$, $\underline{X(F)}(S) \cong X(C(\lvert S \rvert, F))$. 
\end{lem}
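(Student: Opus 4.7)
The plan is to identify both sides with the set $C(\pi_0(S), X(F))$ of continuous maps from the profinite set $\pi_0(S)$ to the Hausdorff (indeed totally disconnected) topological space $X(F)$.

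For the left-hand side, $\underline{X(F)}(S)$ is the set of continuous maps $\lvert S \rvert \to X(F)$, with $X(F)$ carrying the analytic topology. Since $X(F)$ is Hausdorff, any such map factors through the maximal Hausdorff quotient of $\lvert S \rvert$; for $S \in \Perf^\std$ this quotient is $\pi_0(S)$, as recalled in the proof of \Cref{lem:totallydiscon}. Hence $\underline{X(F)}(S) = C(\pi_0(S), X(F))$.

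For the right-hand side I would proceed in two stages. In the affine case, choose a closed embedding $X = \Spec A \hookrightarrow \bb{A}^n_F$ cut out by an ideal $I \subset F[x_1,\dots,x_n]$; then
\[
X(C(\lvert S \rvert, F)) = \{(c_1,\dots,c_n) \in C(\pi_0(S),F)^n : f(c_1,\dots,c_n) = 0 \text{ for all } f \in I\},
\]
and vanishing of each $f(c_1,\dots,c_n)$ in the ring $C(\pi_0(S), F)$ is equivalent to pointwise vanishing, so this set equals $C(\pi_0(S), X(F))$. For general quasi-projective $X$, Sumihiro's theorem (\Cref{lem:sumihiro}) supplies a finite affine open cover $\{U_i\}_{i=1}^m$. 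Given a morphism $f \colon \Spec C(\lvert S \rvert, F) \to X$, pulling back this cover yields an open cover of $\Spec C(\lvert S \rvert, F)$, which splits by \Cref{lem:totallydiscon}. Since idempotents in $C(\lvert S \rvert, F) = C(\pi_0(S), F)$ correspond to clopens of $\pi_0(S)$, the splitting produces a finite clopen partition $\pi_0(S) = \bigsqcup_j T_j$ such that each $f|_{\Spec C(T_j, F)}$ factors through some $U_{i_j}$; the affine case then yields continuous maps $T_j \to U_{i_j}(F) \hookrightarrow X(F)$ which glue. Conversely, given $g \in C(\pi_0(S), X(F))$, the open cover $\{g^{-1}(U_i(F))\}$ of the profinite space $\pi_0(S)$ admits a finite clopen refinement $\pi_0(S) = \bigsqcup_j T_j$ with $g(T_j) \subset U_{i_j}(F)$, and the affine case produces compatible morphisms $\Spec C(T_j, F) \to U_{i_j} \hookrightarrow X$ which glue to a morphism $\Spec C(\lvert S \rvert, F) \to X$. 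Unwinding the definitions shows that the two constructions are mutually inverse.

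The main technical point is the gluing step in the non-affine case. It depends crucially on \Cref{lem:totallydiscon}, which guarantees that open covers of $\Spec C(\lvert S \rvert, F)$ split into clopens indexed by clopens of $\pi_0(S)$, together with the profiniteness of $\pi_0(S)$, which allows any open cover of $\pi_0(S)$ to be refined to a finite clopen partition.
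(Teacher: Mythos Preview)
Your proof is correct and follows essentially the same strategy as the paper: reduce to the affine case via the splitting of open covers from \Cref{lem:totallydiscon}, then handle the affine case by embedding into $\bb{A}^n$ and checking vanishing pointwise over $\pi_0(S)$. The paper compresses the reduction into one sentence, whereas you spell out both directions of the gluing and introduce the intermediate set $C(\pi_0(S), X(F))$; one minor point is that invoking Sumihiro is unnecessary, since the quasi-projectivity of $X$ already yields a finite affine open cover.
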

\begin{proof}
    By \Cref{lem:totallydiscon}, it is enough to show that $\underline{U(F)}(S) \cong U(C(\lvert S \rvert , F))$ for every affine scheme $U$ of finite type over $F$. Fix a closed immersion $U \hookrightarrow \bb{A}^n$. First, we have
    \[
        \underline{\bb{A}^n(F)}(S) \cong \underline{F}(S)^n = (C(\lvert S \rvert, F))^n \cong \bb{A}^n(C(\lvert S \rvert ,F)). 
    \]
    Then, we need to see that the closed subset $\underline{U(F)}(S) \subset \underline{\bb{A}^n(F)}(S)$ maps to $U(C(\lvert S \rvert, F))$. 
    
    Let $x \in \bb{A}^n(C(\lvert S \rvert ,F))$ and let $x_s \in \bb{A}^n(F)$ be the restriction to a point $s \in \pi_0(S)$. For a function $f$ on $\bb{A}^n$, $f(x) = 0$ if and only if $f(x_s) = 0$ for every $s \in \pi_0(S)$. Thus, $x \in U(C(\lvert S \rvert ,F))$ if and only if $x_s \in U(F)$ for every $s \in \pi_0(S)$. Thus, $\underline{U(F)}(S)$ maps exactly to $U(C(\lvert S \rvert, F))$. 
\end{proof}

\begin{prop}\label{prop:trivfib}
    Let $\Bun_G^{1, X} = \Bun_G^X \times_{\Bun_G} \Bun_G^1$. There is a closed and open immersion 
    \[
        i_X^1\colon [\underline{X(F)}/\underline{G(F)}] \hookrightarrow \Bun_G^{1,X}
    \]
    over $\Bun_G^1 \cong [\ast/\underline{G(F)}]$ and it is an isomorphism if $X$ is quasi-affine. 
\end{prop}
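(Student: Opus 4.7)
The plan is to reduce the statement, via Sumihiro's theorem, to the case $X = \bb P(V)$ and then analyze this case through the degree decomposition of line bundles on the Fargues-Fontaine curve. After pulling back along the atlas $\ast \to [\ast/\underline{G(F)}] \cong \Bun_G^1$, the fiber product $\Bun_G^X \times_{\Bun_G} \Bun_G^1$ identifies with the $\underline{G(F)}$-quotient of the prestack
\[
    X^\alg \colon S \mapsto \Map(X_S^\alg, X),
\]
so it is enough to produce a $\underline{G(F)}$-equivariant closed and open immersion $\underline{X(F)} \hookrightarrow X^\alg$ that becomes an isomorphism when $X$ is quasi-affine. The candidate map sends $x \in \underline{X(F)}(S) \cong X(C(|S|, F))$ (\Cref{lem:undsch}) to the composition $X_S^\alg \xrightarrow{p_S} \Spec(C(|S|, F)) \xrightarrow{x} X$. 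Its injectivity follows from the separatedness of $X$ together with \Cref{lem:imgpS}: the equalizer of two sections that agree after precomposition with $p_S$ is a closed subscheme of $\Spec(C(|S|, F))$ through which $p_S$ factors, forcing it to be the whole spectrum.

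For the quasi-affine case, I would fix an open immersion $X \hookrightarrow \bar X$ into an affine $\bar X$. Then
\[
    \Map(X_S^\alg, \bar X) = \Hom_{F\text{-alg}}(\cl O(\bar X), \Gamma(X_S^\alg, \cl O)) = \bar X(C(|S|, F)) = \underline{\bar X(F)}(S),
\]
and a map $X_S^\alg \to \bar X$ factors through $X$ if and only if the corresponding section $\Spec(C(|S|, F)) \to \bar X$ does: one direction is immediate, and the converse applies \Cref{lem:imgpS} to the open preimage of $X$ in $\Spec(C(|S|, F))$. This shows $X^\alg(S) = X(C(|S|, F)) = \underline{X(F)}(S)$.

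For the general case I would invoke Sumihiro's theorem (\Cref{lem:sumihiro}) to choose a $G$-equivariant locally closed embedding $X \hookrightarrow \bb P(V)$ with projective closure $\bar X$. By \Cref{lem:redPV}, $X^\alg \hookrightarrow \bar X^\alg$ is open and $\bar X^\alg \hookrightarrow \bb P(V)^\alg$ is closed. The same combination of the factorization through $p_S$ and \Cref{lem:imgpS} used in the quasi-affine case yields Cartesian squares
\[
    \underline{X(F)} = \underline{\bar X(F)} \times_{\bar X^\alg} X^\alg, \qquad \underline{\bar X(F)} = \underline{\bb P(V)(F)} \times_{\bb P(V)^\alg} \bar X^\alg,
\]
so the closed-and-open immersion property is inherited by pullback once it is known for $\bb P(V)$.

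For $X = \bb P(V)$, an $S$-point of $\bb P(V)^\alg$ is a line bundle $L$ on $X_S^\alg$ together with a generating map $V \otimes \cl O \twoheadrightarrow L$. The degree of $L$ is locally constant on $|X_S^\alg|$, and since the Fargues-Fontaine curve is geometrically connected, it descends to a locally constant function $|S| \to \bb Z$, producing a decomposition $\bb P(V)^\alg = \bigsqcup_{d \in \bb Z} \bb P(V)^\alg_d$ into clopen substacks. It remains to identify $\bb P(V)^\alg_0$ with $\underline{\bb P(V)(F)}$: on a geometric point $\Spa(C, O_C)$ a degree zero line bundle with a nonzero global section is trivial, so the resulting map is constant. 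For general $S$, I would invoke \Cref{exa:BunGm} to see that a degree zero line bundle on $\cl X_S$ is $v$-locally on $S$ trivial; after trivialization the sections lie in $C(|S|, F) \otimes V$, producing an element of $\bb P(V)(C(|S|, F)) = \underline{\bb P(V)(F)}(S)$, which descends. The hardest step I expect is this last one, namely upgrading pointwise triviality of $L$ to $v$-local triviality on $S$ so that the projective map reads off as a constant section; once available, the clopen decomposition of $\bb P(V)^\alg$ closes out the proof.
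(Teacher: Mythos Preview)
Your overall strategy matches the paper's: construct $i_X^1$ via $p_S$, settle the quasi-affine case by embedding into the affine hull and using \Cref{lem:imgpS}, then reduce the general case through Sumihiro to $\bb{P}(V)$ with the Cartesian squares you describe. The quasi-affine argument and the Cartesian-square reduction are essentially identical to the paper's.

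The one genuine difference is in how the $\bb{P}(V)$ case is finished. You decompose $\bb{P}(V)^{\alg}$ by the degree of the line bundle and then try to identify the degree-$0$ piece directly by $v$-locally trivializing $L$ and reading off a section in $V \otimes C(\lvert S\rvert,F)$. This works, but the ``hardest step'' you flag is really the quasi-affine case in disguise: after trivializing $L$ the datum is a nowhere-vanishing section of $V$ over $X_S^{\alg}$, i.e.\ a point of $(V\setminus\{0\})^{\alg}(S)$, and to conclude that it lies in $\underline{(V\setminus\{0\})(F)}(S)$ you need exactly the quasi-affine argument you already gave. The paper packages this more cleanly by using the isomorphism $[\bb{P}(V)/G]\cong[(V\setminus\{0\})/(G\times\bb{G}_m)]$, hence $\Bun_G^{\bb{P}(V)}\cong\Bun_{G\times\bb{G}_m}^{V\setminus\{0\}}$. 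Your degree decomposition then becomes the decomposition $\Bun_{\bb{G}_m}=\coprod_n\Bun_{\bb{G}_m,n}$, the clopen piece $\Bun_{G\times\bb{G}_m}^1\hookrightarrow\Bun_G^1\times\Bun_{\bb{G}_m}$ is the degree-$0$ component, and on it the quasi-affine case for $V\setminus\{0\}$ applies directly to give $[\underline{(V\setminus\{0\})(F)}/\underline{G(F)\times F^\times}]\cong[\underline{\bb{P}(V)(F)}/\underline{G(F)}]$. This avoids the descent bookkeeping you anticipate and shows that the $v$-local triviality step is already contained in \Cref{exa:BunGm}.
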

\begin{proof}
    First, we construct $i_X^1$. For each $S \in \Perf^\std$, the pullback along $p_S$ provides $\Bun_G^1 \cong [\ast/\underline{G(F)}]$ identifying $\Bun_G^1(S)$ with the groupoid of trivial $G$-torsors over $\Spec(C(\lvert S \rvert, F))$. 
    
    By \Cref{lem:undsch}, $[\und{X(F)}/\und{G(F)}](S)$ is the groupoid of trivial $G$-torsors $P$ on $\Spec(C(\lvert S \rvert , F))$ equipped with a section $s\in (P \times^G X)(C(\lvert S \rvert , F))$. Then, we get a desired map
    \[
        i_X^1\colon [\underline{X(F)}/\underline{G(F)}] \to \Bun_G^{1,X}, \quad (P, s) \mapsto (p_S^*P, p_S^*(s)). 
    \]

    Now, we show that $i_X^1$ is an isomorphism when $X$ is quasi-affine. Let $U = \Spec(\Gamma(X, \cl{O}))$. Then, $X \hookrightarrow U$ is a $G$-equivariant open immersion. Let $S \in \Perf^\std$ and let $P$ be a trivial $G$-torsor on $\Spec(C(\lvert S \rvert, F))$. The fiber of $\Bun_G^{1, U}(S) \to [\ast/\und{G(F)}](S)$ over $p_S^*P$ is the set of sections of $p_S^*P \times^{G} U \to X_S^\alg$. It is bijective to the set of maps $X_S^\alg \to P \times^G U$ over $C(\lvert S \rvert, F)$. Since $P \times^G U$ is affine, such a map uniquely factor through $p_S$. Thus, $i^1_U$ is an isomorphism.  

    Now, for every map $f\colon X_S^\alg \to P \times^G X$ over $C(\lvert S \rvert, F)$, there is a unique dotted arrow making the following diagram commute. 
    \begin{center}
        \begin{tikzcd}
            X_S^\alg \ar[r, "f"] \ar[d, "p_S"] & P \times^G X \ar[d, hook] \\
            \Spec(C(\lvert S \rvert, F)) \ar[r, dotted] & P \times^G U. 
        \end{tikzcd}
    \end{center}
    By \Cref{lem:imgpS}, the dotted arrow factors through $P \times^G X$. Thus, $i_X^1$ is an isomorphism. 

    We turn to the general case. Take a $G$-equivariant embedding $X\hookrightarrow \bb{P}(V)$ by \Cref{lem:sumihiro}. Since $[\bb{P}(V)/G] \cong [(V-\{0\})/G \times \bb{G}_m]$, $\Bun_G^{\bb{P}(V)} \cong \Bun_{G\times \bb{G}_m}^{V - \{0\}}$. Consider the following diagram. 
    \begin{center}
        \begin{tikzcd}
            \lbrack \und{V-\{0\}}/\und{G(F) \times F^\times} \rbrack \ar[r, "\cong"] \ar[d, "i^1_{V-\{0\}}"] & \lbrack \und{\bb{P}(V)}/\und{G(F)} \rbrack \ar[d, "i_{\bb{P}(V)}^1"] \\
            \Bun_{G \times \bb{G}_m}^{V-\{0\}} \ar[r, "\cong"] \ar[d] & \Bun_G^{\bb{P}(V)} \ar[d] \\
            \Bun_{G \times \bb{G}_m} \ar[r] & \Bun_G. 
        \end{tikzcd}
    \end{center}
    Since $\Bun_{G \times \bb{G}_m}^1 \hookrightarrow \Bun_G^1 \times \Bun_{\bb{G}_m}$ is a connected component, $\Bun_{G \times \bb{G}_m}^{1, V-\{0\}} \hookrightarrow \Bun_G^{1, \bb{P}(V)}$ is a closed and open immersion. Since $V-\{0\}$ is quasi-affine, $i_{V-\{0\}}^1$ is an isomorphism. Thus, $i_{\bb{P}(V)}^1$ is a closed and open immersion. It remains to see that
    \begin{center}
        \begin{tikzcd}
            \lbrack \und{X(F)}/\und{G(F)} \rbrack \ar[r, hook] \ar[d, "i_X^1"] & \lbrack \und{\bb{P}(V)(F)}/\und{G(F)} \rbrack \ar[d, "i_{\bb{P}(V)}^1"]\\
            \Bun_G^X \ar[r, hook] & \Bun_G^{\bb{P}(V)}. 
        \end{tikzcd}
    \end{center}
    is a fiber square. It is enough to show that for every commutative diagram
    \begin{center}
        \begin{tikzcd}
            X_S^\alg \ar[r, "f"] \ar[d, "p_S"] & P \times^G X \ar[d, hook] \\
            \Spec(C(\lvert S \rvert, F)) \ar[r, "s"] & P \times^G \bb{P}(V), 
        \end{tikzcd}
    \end{center}
    $s$ factors through $P \times^G X$. This follows from \Cref{lem:imgpS}. 
\end{proof}

\begin{cor} \label{cor:computetriv}
    Let $X$ be a normal quasi-projective $G$-variety over $F$. Then, $C_c^\infty(X(F), \Lambda)$ is a direct summand of $i^{1*}\cl{P}_X$. Moreover, $i^{1*}\cl{P}_X \cong C_c^\infty(X(F), \Lambda)$ if $X$ is quasi-affine. Here, $G(F)$ acts on $C_c^\infty(X(F), \Lambda)$ by the left translation. 
\end{cor}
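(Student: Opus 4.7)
The plan is to deduce the corollary directly from \Cref{prop:trivfib} combined with proper base change for the map $\pi_X$ (which is $!$-able by \Cref{prop:rel!able}) and the computation of $\pi_!\underline{\Lambda}$ for locally profinite spaces in \Cref{lem:locprof!}.

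First, since $i^1 \colon \Bun_G^1 \hookrightarrow \Bun_G$ is an open immersion (hence $!$-able), base change in the six functor formalism of \Cref{prop:linearity_statement} yields
\[
    i^{1*} \cl{P}_X = i^{1*} \pi_{X!} \underline{\Lambda}_{\Bun_G^X} \cong \pi_{X,1!} \underline{\Lambda}_{\Bun_G^{1,X}},
\]
where $\pi_{X,1} \colon \Bun_G^{1,X} \to \Bun_G^1$ is the pullback of $\pi_X$ along $i^1$. This reduces the claim to a computation over $\Bun_G^1 \cong [\ast/\underline{G(F)}]$.

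Next, I invoke \Cref{prop:trivfib}: the clopen immersion $i_X^1 \colon [\underline{X(F)}/\underline{G(F)}] \hookrightarrow \Bun_G^{1,X}$ exhibits $[\underline{X(F)}/\underline{G(F)}]$ as a disjoint summand. Let $j \colon Z \hookrightarrow \Bun_G^{1,X}$ denote the complementary clopen substack, so that $\Bun_G^{1,X} \cong [\underline{X(F)}/\underline{G(F)}] \sqcup Z$ and $\underline{\Lambda}_{\Bun_G^{1,X}} \cong i_{X!}^1 \underline{\Lambda} \oplus j_! \underline{\Lambda}$. Pushing forward along $\pi_{X,1}$ decomposes $i^{1*}\cl{P}_X$ as a direct sum in which the first summand is $\pi_!\underline{\Lambda}$ for the quotient map $\pi \colon [\underline{X(F)}/\underline{G(F)}] \to [\ast/\underline{G(F)}]$. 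When $X$ is quasi-affine, \Cref{prop:trivfib} further asserts that $i_X^1$ is an isomorphism, so $Z = \emptyset$ and only the first summand remains.

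It remains to identify $\pi_!\underline{\Lambda}$ with $C_c^\infty(X(F),\Lambda)$ as a $G(F)$-representation under the left translation action. Applying base change to the Cartesian square
\[
    \begin{tikzcd}
        \underline{X(F)} \ar[r] \ar[d, "\pi_{X(F)}"'] & {[\underline{X(F)}/\underline{G(F)}]} \ar[d, "\pi"] \\
        \ast \ar[r] & {[\ast/\underline{G(F)}]}
    \end{tikzcd}
\]
we find that the underlying object of $\pi_!\underline{\Lambda}$ in $\cl{D}(\Lambda)$ equals $\pi_{X(F)!}\underline{\Lambda} \cong C_c^\infty(X(F),\Lambda)$ by \Cref{lem:locprof!}. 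The equivariant structure is the one induced from the right $G$-action $x\cdot g = g^{-1}x$ used in \Cref{conv:Gvar}, which on $C_c^\infty$ is precisely the left translation $(g\cdot f)(x) = f(g^{-1}x)$. The main (minor) subtlety to check carefully is this bookkeeping of actions and the identification of $\cl{D}^\oc([\ast/\underline{G(F)}],\Lambda)$ with smooth $G(F)$-representations, so that the computation of $\pi_!\underline{\Lambda}$ on the point descends to the correct equivariant statement; everything else is a formal consequence of \Cref{prop:trivfib} and \Cref{lem:locprof!}.
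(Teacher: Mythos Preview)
Your proposal is correct and follows essentially the same approach as the paper: both reduce to \Cref{prop:trivfib} and then identify the lower shriek along $[\underline{X(F)}/\underline{G(F)}] \to [\ast/\underline{G(F)}]$ with $C_c^\infty(X(F),\Lambda)$ via \Cref{lem:locprof!}. The paper's proof is a single sentence invoking these two inputs, while you spell out the base-change step along $i^1$ and the equivariant bookkeeping more explicitly; these elaborations are fine but do not constitute a different strategy.
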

\begin{proof}
    It follows from \Cref{prop:trivfib} since the lower shriek of $\und{\Lambda}$ along $[\und{X(F)}/\und{G(F)}] \to [\ast/\und{G(F)}]$ is $C_c^\infty(X(F), \Lambda)$ by \Cref{lem:locprof!}. 
\end{proof}

\begin{cor} \label{cor:XFdecomposition}
    Let $H \subset G$ be a connected reductive subgroup and let $X = G / H$. Then, $\pi_X$ is identified with
    \[
       \pi_X: \Bun_G^X \cong  \Bun_H \to \Bun_G, 
    \]
    and we have $\pi_{X}^{-1}(B(G)_\bas) \subset B(H)_\bas$, where the induced map on the underlying spaces is also denoted by $\pi_X$. Moreover, we have a $G(F)$-equivariant isomorphism
    \[
        X(F) \cong \coprod_{b \in  \pi_X^{-1}(1)} G(F) / H_b(F). 
    \]
\end{cor}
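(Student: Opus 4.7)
The first assertion $\Bun_G^X \cong \Bun_H$ is a direct instance of \Cref{exa:univvb}(1), since $[X/G] \cong [\ast/H]$; under this identification, $\pi_X$ becomes the canonical map $\Bun_H \to \Bun_G$ induced by $H \subset G$. For the second assertion, my plan is to compare Newton points directly. Given $b \in B(H)$ whose image $b'$ in $B(G)$ is basic, the Newton point $\nu_b \in X_*(T_H)_{\bb{Q}}^{+,\Gamma}$, viewed in $X_*(T_G)_{\bb{Q}}$ via a compatible choice of maximal tori $T_H \subset T_G$, equals $\nu_{b'}$ and hence lies in $X_*(Z(G))_{\bb{Q}}$. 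Thus $\nu_b$ lies in $X_*(T_H \cap Z(G))_{\bb{Q}}$; since any element of $Z(G)$ commutes in particular with $H$, we have $T_H \cap Z(G) \subset Z(H)$, whence $\nu_b \in X_*(Z(H))_{\bb{Q}}$ and $b$ is basic.

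For the $G(F)$-equivariant decomposition of $X(F)$, the starting point is Matsushima's theorem (valid over arbitrary fields, after Richardson--Haboush): since $H$ is connected reductive, the homogeneous space $X = G/H$ is affine, and in particular quasi-affine. \Cref{prop:trivfib} therefore applies and yields an isomorphism
\[
    [\underline{X(F)}/\underline{G(F)}] \xrightarrow{\sim} \Bun_G^{1,X}.
\]
Under $\Bun_G^X \cong \Bun_H$ the right-hand side is identified with the open substack $U = \pi_X^{-1}(\Bun_G^1) \subset \Bun_H$, whose underlying topological space is $\pi_X^{-1}(1)$, a subset of $B(H)_\bas$ by the preceding paragraph.

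To finish, I would decompose $U$ into basic Newton strata. Because the Kottwitz map induces a bijection $B(H)_\bas \cong \pi_1(H)_\Gamma$ parametrizing the connected components of $\Bun_H$, distinct basic classes lie in distinct connected components, and each basic stratum $\Bun_H^b$ is open in its component (being the stratum of minimal Newton polygon there). Consequently
\[
    U \cong \bigsqcup_{b \in \pi_X^{-1}(1)} \Bun_H^b.
\]
By the Fargues--Scholze description of basic strata, each $\Bun_H^b \cong [\ast/\underline{H_b(F)}]$, where $H_b$ is the inner form of $H$ attached to $b$, and the induced map $\Bun_H^b \to \Bun_G^1$ corresponds to the embedding $H_b(F) \hookrightarrow G(F)$ obtained from a trivialization $b = g^{-1}\sigma(g)$ with $g \in G(\breve{F})$. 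Taking the fiber over the distinguished point of $\Bun_G^1 \cong [\ast/\underline{G(F)}]$ then yields the claimed $G(F)$-equivariant isomorphism $X(F) \cong \bigsqcup_b G(F)/H_b(F)$.

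The main obstacle is the last step: pinning down the stack-theoretic description $\Bun_H^b \cong [\ast/\underline{H_b(F)}]$ for basic $b$, together with the compatibility of its composition with $\pi_X$ with the canonical embedding $H_b(F) \hookrightarrow G(F)$. Both are standard consequences of the Fargues--Scholze theory of Newton strata, but the compatibility requires a careful unpacking of the $\sigma$-centralizer construction once a trivialization of $b$ in $G(\breve{F})$ has been chosen. Granted these inputs, the rest of the proof is purely formal assembly followed by passage to $G(F)$-orbits.
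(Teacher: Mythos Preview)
Your proposal is correct and follows essentially the same route as the paper: identify $\Bun_G^X \cong \Bun_H$ via $[X/G]\cong[\ast/H]$, deduce basicness by comparing Newton points, decompose $\Bun_G^{1,X}$ into basic strata $[\ast/\underline{H_b(F)}]$, and invoke \Cref{prop:trivfib} using that $X=G/H$ is affine (Matsushima). The paper's proof is simply a terser version of what you wrote; your worry about the compatibility of $\Bun_H^b \to \Bun_G^1$ with $H_b(F)\hookrightarrow G(F)$ is not addressed there either and is taken as standard.
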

\begin{proof}
    Since $[X / G] \cong  [\ast / H]$, we have $\Bun_G^X \cong \Bun_H$. For each $b \in B(H)$, its Newton point is central if that of $\pi_X(b)$ is central. Thus, we get $\pi_X^{-1}(B(G)_\bas) \subset B(H)_\bas$ and we have 
    \[
        \Bun^{1, X}_G\cong \coprod_{b \in \pi_X^{-1}(1)} [\ast / H_b(F)]. 
    \]
    Since $H$ is reductive, $X$ is affine, so the last claim follows from \Cref{prop:trivfib}. 
\end{proof}

Next, we study the fiber of $\Bun_G^X \to \Bun_G$ over $\Bun_G^b \subset \Bun_G$ for  $[b] \in H^1(F, G)$. Note that $H^1(F, G) \subset B(G)_\bas$ and it is identified with the torsion part of $\pi_1(G)_{\Gamma}$ via the Kottwitz value $\kappa$. In particular, $H^1(F, G)$ is finite. 

Fix a descent representative $b \in [b]$. It means that $b$ is defined over a finite unramified extension $F_r / F$ of degree $r$ and satisfies
\[
    b \sigma(b) \cdots \sigma^{r-1}(b) = 1. 
\]
The Frobenius action $b\sigma$ on $X \otimes F_r$ provides a descent datum of $X \otimes F_r$ over $F$. Since $X$ admits a $G$-equivariant ample line bundle, this descent datum is effective. 

\begin{defi}
    For each $[b] \in H^1(F, G)$, let $X_b$ be a normal quasi-projective $G_b$-variety given by descending $X \otimes F_r$ by the Frobenius action $b\sigma$. We say that $X_b$ is the inner form of $X$ associated to $b$. 
\end{defi}

Recall that we have an isomorphism
\[
    \iota^b \colon \Bun_G \cong \Bun_{G_b}, \quad \cl{P} \mapsto \cl{P}^b
\]
for each $b \in B(G)_\bas$. For each $\cl{P} \in \Bun_G(S)$, let $\varphi_\cl{P}$ denote the Frobenius action of $\cl{P}\vert_{\cl{Y}_S}$. Then, $\cl{P}^b \in \Bun_{G_b}(S)$ is given by the conditions
\[
    \cl{P}_b\vert_{\cl{Y}_S} = \cl{P}\vert_{\cl{Y}_S}, \quad
    \varphi_{\cl{P}_b}(p) = \varphi_{\cl{P}}(p)b^{-1}. 
\]

\begin{prop} \label{prop:innerformBunGX}
    For each $[b] \in H^1(F, G)$, there is an isomorphism
    $
    \iota_X^b \colon \Bun_G^X \cong \Bun_{G_b}^{X_b}
    $
    making the following diagram commute: 
    \begin{center}
        \begin{tikzcd}
            \Bun_G^X \ar[r, "\iota_X^b"] \ar[d, "\pi_X"] & \Bun_{G_b}^{X_b} \ar[d, "\pi_{X_b}"] \\
            \Bun_G \ar[r, "\iota^b"] & \Bun_{G_b}. 
        \end{tikzcd}
    \end{center}
\end{prop}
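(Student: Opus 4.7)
The plan is to extend the standard twisting construction from $\iota^b\colon \Bun_G \cong \Bun_{G_b}$ to the stack of $X$-sections. For $S \in \Perfs$, an object of $\Bun_G^X(S)$ is a pair $(\cl{P}, s)$ with $\cl{P}$ a $G$-bundle on $X_S^\alg$ and $s\colon X_S^\alg \to \cl{P}\times^G X$ a section. Recall that $\cl{P}^b = \iota^b(\cl{P})$ has the same restriction $\cl{P}\vert_{\cl{Y}_S}$ as a scheme and is equipped with the new Frobenius $\varphi_{\cl{P}^b}(p) = \varphi_\cl{P}(p)\, b^{-1}$. The goal is to use this recipe to manufacture a section $s^b$ of $\cl{P}^b \times^{G_b} X_b$ over $X_S^\alg$.

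The key observation is that $X \otimes F_r$ and $X_b \otimes F_r$ coincide as $G \otimes F_r = G_b \otimes F_r$-varieties, so the associated bundles $\cl{P}\times^G X$ and $\cl{P}^b \times^{G_b} X_b$ admit a common presentation as descent of the same $\cl{Y}_S$-scheme $\cl{P}\vert_{\cl{Y}_S}\times^G (X\otimes F_r)$. Sections over $X_S^\alg$ correspond to $\varphi$-equivariant sections on this cover. Under the convention $[p\cdot g,\ g^{-1}x] = [p,x]$, the Frobenius realizing the first descent acts by $[p,x]\mapsto [\varphi_\cl{P}(p),\sigma(x)]$, while the Frobenius realizing the second acts by $[p,x]\mapsto [\varphi_\cl{P}(p) b^{-1},\ b\sigma(x)]$. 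Choosing $g = b^{-1}$ in the equivalence relation shows these two Frobenius actions are identical; hence a $\varphi$-equivariant section for the former is automatically one for the latter, defining $s^b$ from $s$.

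This construction is manifestly natural in $S$ and assembles into a morphism $\iota_X^b\colon \Bun_G^X \to \Bun_{G_b}^{X_b}$ which, by design, covers $\iota^b$. Running the same recipe with $(G_b, X_b)$ in place of $(G,X)$ and $b^{-1}$ in place of $b$ furnishes a two-sided inverse (since the double twist of the Frobenius by $b^{-1}$ and then $b$ is trivial, and $(X_b)_{b^{-1}} \cong X$ canonically), so $\iota_X^b$ is an isomorphism of $v$-stacks, and the stated diagram commutes by construction. The main technical care lies in the bookkeeping of left versus right actions under $x \cdot g = g^{-1}x$ and in matching the descent datum defining $X_b$ with the Frobenius twist on the bundle side; once this is set up correctly, the only genuine content is the single Frobenius manipulation above.
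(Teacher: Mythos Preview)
Your proof is correct and follows the same approach as the paper: identify $\cl{P}\vert_{\cl{Y}_S}\times^{G} X$ with $\cl{P}^b\vert_{\cl{Y}_S}\times^{G_b} X_b$ via the common $F_r$-form and check that the two Frobenius structures agree. The paper simply asserts that ``the Frobenius actions on both sides are equal'' without writing out the manipulation $[\varphi_{\cl{P}}(p)\,b^{-1},\, b\sigma(x)] = [\varphi_{\cl{P}}(p),\, \sigma(x)]$ that you made explicit.
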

\begin{proof}
    Take $r \geq 1$ as previously and let $G_r = G \otimes F_r$ and $X_r = X \otimes F_r$. For each $\cl{P}\in \Bun_G(S)$ and $\cl{P}^b \in \Bun_{G_b}(S)$, there is a natural identification
    \[
        \cl{P}\vert_{\cl{Y}_S} \times^{G} X = \cl{P}\vert_{\cl{Y}_S} \times^{G_r} X_r = \cl{P}^b\vert_{\cl{Y}_S} \times^{G_b} X_b. 
    \]
    By the description of $\varphi_{\cl{P}_b}$, the Frobenius actions on both sides are equal, so it provides an identification of fibers of $\pi_X$ and $\pi_{X_b}$ over $S$. Thus, the claim follows. 
\end{proof}
\begin{cor} \label{cor:innerformBunGXexp}
    For each $[b] \in H^1(F, G)$, let $\Bun_G^{b, X} = \Bun_G^X \times_{\Bun_G} \Bun_G^b$. There is a closed and open immersion 
    \[
        i_X^b\colon [\underline{X_b(F)}/\underline{G_b(F)}] \hookrightarrow \Bun_G^{b,X}
    \]
    over $\Bun_G^b \cong [\ast/\underline{G_b(F)}]$ and it is an isomorphism if $X$ is quasi-affine. 
\end{cor}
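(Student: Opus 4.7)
The plan is to deduce this corollary directly from \Cref{prop:innerformBunGX} together with \Cref{prop:trivfib} applied to the inner form pair $(G_b, X_b)$. In particular, no new geometric input is needed: the content is entirely the bookkeeping of transporting the trivial-locus statement across the twisting isomorphism $\iota^b$.

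First I would note that $\iota^b \colon \Bun_G \cong \Bun_{G_b}$ sends $\Bun_G^b$ to $\Bun_{G_b}^1$, since by construction $\iota^b$ twists the Frobenius by $b^{-1}$, so the $\sigma$-conjugacy class $[b]$ on the $G$-side becomes the trivial class on the $G_b$-side. Combined with the commutative square in \Cref{prop:innerformBunGX}, taking the fiber product with $\Bun_G^b$ over $\Bun_G$ on the source and with $\Bun_{G_b}^1$ over $\Bun_{G_b}$ on the target yields an isomorphism
\[
    \iota_X^b \colon \Bun_G^{b, X} \xrightarrow{\cong} \Bun_{G_b}^{1, X_b}
\]
over the identification $\Bun_G^b \cong [\ast/\underline{G_b(F)}] \cong \Bun_{G_b}^1$.

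Next, I would invoke \Cref{prop:trivfib} for the pair $(G_b, X_b)$, which produces a closed and open immersion
\[
    i_{X_b}^1 \colon [\underline{X_b(F)}/\underline{G_b(F)}] \hookrightarrow \Bun_{G_b}^{1, X_b}
\]
over $\Bun_{G_b}^1$, and which is an isomorphism when $X_b$ is quasi-affine. Composing with $(\iota_X^b)^{-1}$ defines the desired $i_X^b$. Since $\iota_X^b$ is an isomorphism over the corresponding identification of base stacks, the resulting map is still a closed and open immersion over $\Bun_G^b$.

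The only point that requires a brief justification is that $X_b$ is quasi-affine precisely when $X$ is. This is immediate from the construction of $X_b$ as an effective descent of $X \otimes F_r$ along the twisted Frobenius $b\sigma$: the property of being quasi-affine is preserved under base change and descent along a finite \'etale cover. I do not expect any genuine obstacle; the main thing to be careful about is simply that the $\underline{G_b(F)}$-equivariance matches across the identification $\Bun_G^b \cong \Bun_{G_b}^1$, which is already built into \Cref{prop:innerformBunGX}.
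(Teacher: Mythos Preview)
Your proposal is correct and matches the paper's proof exactly: the paper simply writes ``This follows from \Cref{prop:trivfib} and \Cref{prop:innerformBunGX}. Note that if $X$ is quasi-affine, so is $X_b$,'' and you have spelled out precisely that argument.
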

\begin{proof}
    This follows from \Cref{prop:trivfib} and \Cref{prop:innerformBunGX}. Note that if $X$ is quasi-affine, so is $X_b$. 
\end{proof}

\begin{rmk}
    We cannot expect a similar description for basic $\sigma$-conjugacy classes outside $H^1(F, G)$. One can see this from the explicit description for the Iwasawa-Tate and Hecke periods. 
\end{rmk}

\section{Computations on the $\cl{A}$-side} \label{sec:Acomputation}

In this section, we carry out the explicit computation of the unnormalized period sheaf $\cl{P}_{X}$ for the dual pairs associated with the Iwasawa-Tate and Hecke periods.
For the Iwasawa-Tate period, we calculate the compactly supported cohomology of the relevant Banach-Colmez spaces directly.
For the Hecke period, we employ the geometric Eisenstein series and the unfolding technique to describe the period sheaf.

\subsection{The Iwasawa-Tate period} \label{ssec:IwTateP}

In this section, we compute the unnormalized Iwasawa-Tate period sheaf $\cl{P}_{X}$, where $G = \bb{G}_m$ and $X=\bb{A}^1$ with a standard $\bb{G}_m$-action. In this case, we have a simple description of $\Bun_{\bb{G}_m}^{\bb{A}^1}$. 

\begin{lem}
    We have $\Bun_{\bb{G}_m}^{\bb{A}^1} \cong \BC(\cl{L}^\univ)$ (see \Cref{exa:univvb} (2) for the terminology). 
\end{lem}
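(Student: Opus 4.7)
The plan is to unwind both sides as functors on $\Perfs$ and check that they coincide via the comparison between the algebraic and adic Fargues-Fontaine curves, exactly as in \Cref{exa:univvb}(2) for the higher rank case.

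First, by construction the quotient stack $[\bb{A}^1/\bb{G}_m]$ classifies pairs $(L, s)$ consisting of a line bundle $L$ and a global section $s$ (with the convention on group actions fixed in \Cref{conv:Gvar}). Hence for $S \in \Perfs$,
\[
    \Bun_{\bb{G}_m}^{\bb{A}^1}(S) = \Map(X_S^\alg, [\bb{A}^1/\bb{G}_m])
\]
is naturally equivalent to the groupoid of pairs $(L, s)$ with $L$ a line bundle on $X_S^\alg$ and $s \in \Gamma(X_S^\alg, L)$.

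On the other hand, $\BC(\cl{L}^\univ) \to \Bun_{\bb{G}_m}$ is the relative Banach-Colmez space of the universal line bundle: for $S \to \Bun_{\bb{G}_m}$ corresponding to a line bundle $\cl{L}_S$ on $\cl{X}_S$, the fiber is $\Gamma(\cl{X}_S, \cl{L}_S)$ by \Cref{defi:BC_space}. Taking into account the groupoid structure from $\Bun_{\bb{G}_m}(S)$, the $S$-points of $\BC(\cl{L}^\univ)$ form the groupoid of pairs $(L', s')$ with $L'$ a line bundle on $\cl{X}_S$ and $s' \in \Gamma(\cl{X}_S, L')$.

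The key input is the algebraic-adic comparison \cite[Proposition II.2.7]{FS24}: pullback along $\cl{X}_S \to X_S^\alg$ induces an equivalence between the categories of vector bundles on $X_S^\alg$ and on $\cl{X}_S$, inducing an isomorphism on global sections. Applying this to line bundles yields a natural equivalence between the two groupoids above, functorial in $S \in \Perfs$. This is the desired isomorphism; there is no genuine obstacle since both sides are already $v$-stacks (by \Cref{prop:BunGXArtin} and \cite[Proposition II.2.16]{FS24}), and the extension to $\Perf$ is the canonical one.
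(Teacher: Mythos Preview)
Your proof is correct and follows essentially the same approach as the paper: unwind both sides on $\Perfs$ using the description of $[\bb{A}^1/\bb{G}_m]$ as pairs (line bundle, section), invoke the algebraic-adic comparison \cite[Proposition II.2.7]{FS24}, and extend to $\Perf$ by $v$-descent. The paper's version is simply more terse, absorbing the algebraic-adic comparison into the phrase ``by definition'' (since the equivalence was already recalled in the recollections on $\Bun_G$). One minor quibble: your citation of \cite[Proposition II.2.16]{FS24} for $\BC(\cl{L}^\univ)$ being a $v$-stack is not quite on target (that proposition concerns partial properness), but the fact itself is standard and does not affect the argument.
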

\begin{proof}
    The stack $[\bb{A}^1/\bb{G}_m]$ sends a scheme $S$ to the groupoid of a line bundle $\cl{L}$ over $S$ with a section $s \in \Gamma(S, \cl{L})$ (see \Cref{conv:Gvar}). Thus, $\Bun_{\bb{G}_m}^{\bb{A}^1}(S) = \BC(\cl{L}^\univ)(S)$ for $S \in \Perfs$ by definition. The claim follows since $\Perfs \subset \Perf$ is a basis of the $v$-topology. 
\end{proof}

Recall from \Cref{exa:BunGm} that we have a decomposition $\Bun_{\bb{G}_m} = \bigsqcup_{n\in \bb{Z}} \Bun_{\bb{G}_m, n}$. Let $\cl{P}_{\bb{A}^1, n} \in \cl{D}^\oc(\Bun_{\bb{G}_m, n}, \Lambda) \cong \cl{D}(F^\times, \Lambda)$ denote the restriction of $\cl{P}_{\bb{A}^1}$. Then, we have the following description. 

\begin{prop} \label{prop:IwTateperiod}
    For each integer $n$, we have
    \[
        \cl{P}_{\bb{A}^1,n} = \Lambda_\triv (n < 0), \cl{P}_{\bb{A}^1,0} = C_c^\infty(F, \Lambda), \cl{P}_{\bb{A}^1,n} = \Lambda_\norm [-2n] (n > 0). 
    \]
    Here, $\Lambda_\triv$ denotes the trivial character of $F^\times$ and $\Lambda_\norm$ denotes the norm character sending $x \in F^\times$ to $\lvert x \rvert = q^{-\ord(x)}$. Moreover, $F^\times$ acts on $C_c^\infty(F,\Lambda)$ by the left translation. 
\end{prop}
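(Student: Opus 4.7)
The approach combines the identification $\Bun_{\bb{G}_m}^{\bb{A}^1} \cong \BC(\cl{L}^\univ)$ with the decomposition $\Bun_{\bb{G}_m} = \bigsqcup_n \Bun_{\bb{G}_m, n}$ and the equivalence $\cl{D}^\oc(\Bun_{\bb{G}_m, n}, \Lambda) \cong \cl{D}(F^\times, \Lambda)$ recalled in \Cref{exa:BunGm}. Write $p_n \colon \BC(\cl{L}^\univ)|_{\Bun_{\bb{G}_m, n}} \to \Bun_{\bb{G}_m, n}$ for the restricted map. After pulling back along the pro-\'etale atlas $\Spa(C, O_C) \to [\ast/\und{F^\times}] \cong \Bun_{\bb{G}_m, n}$, this map becomes the trivial $\BC(\cl{O}(n))$-torsor over $\Spa(C, O_C)$. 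One then splits by the sign of $n$.

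For $n < 0$, the $v$-sheaf $\BC(\cl{O}(n))$ reduces to its zero section since $H^0(\cl{X}_S, \cl{O}(n))$ vanishes for every perfectoid $S$; hence $p_n$ is an isomorphism and $\cl{P}_{\bb{A}^1, n} \cong \und{\Lambda}$, which corresponds to $\Lambda_\triv$. For $n = 0$, since $\bb{A}^1$ is quasi-affine and $\Bun_{\bb{G}_m, 0} = \Bun_{\bb{G}_m}^1$, \Cref{cor:computetriv} together with \Cref{lem:locprof!} directly yields $\cl{P}_{\bb{A}^1, 0} \cong C_c^\infty(F, \Lambda)$ with $F^\times$ acting by left translation.

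For $n > 0$, apply the ball invariance of \Cref{lem:computBC} to $p_n$, which gives $p_{n!}\und{\Lambda} \cong \und{\Lambda}(-n)[-2n]$. It then remains to identify the twist $\und{\Lambda}(-n)$ on $\Bun_{\bb{G}_m, n}$ with $\Lambda_\norm$ under the equivalence with $\cl{D}(F^\times, \Lambda)$, by tracking the $F^\times$-descent datum induced by the scaling action of $\Aut(\cl{O}(n)) = F^\times$ on the top compactly supported cohomology of $\BC(\cl{O}(n))$.

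The main obstacle will be this last identification. One must trace through how the $F^\times$-automorphism action on $\BC(\cl{O}(n))$ by scaling of $\cl{O}(n)$ induces the descent datum on the top Tate twist, and verify, using local class field theory together with the choice of $\sqrt{q}$ fixed in the introduction, that the resulting character of $F^\times$ is exactly the norm character $\Lambda_\norm$.
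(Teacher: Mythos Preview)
Your approach is essentially the paper's: the cases $n<0$ and $n=0$ are handled exactly as in the text, and for $n>0$ both you and the paper invoke \Cref{lem:computBC} to see that $\cl{P}_{\bb{A}^1,n}$ is a shift of a rank-one object, leaving only the identification of the character.

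The one substantive difference is how that last identification is carried out. You propose to trace the $F^\times$-descent datum on the Tate twist directly; the paper instead observes that by the projection formula one may reduce to $\Lambda=\bb{Z}_\ell$ and then to $\Lambda=\bb{Z}/\ell^m$, where the result is a special case of \cite[Lemma 2.4 (i)]{HI24}. Your route is viable in principle but you have not actually executed it, and two remarks are in order. First, the reference to $\sqrt{q}$ is misplaced: the norm character $x\mapsto q^{-\ord(x)}$ is integral and no square root is needed for this unnormalized statement. Second, local class field theory plays no role here either; the scaling action of $F^\times=\Aut(\cl{O}(n))$ on $\BC(\cl{O}(n))$ is a direct geometric action, and what one must compute is its effect on $H^{2n}_c$. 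The paper's reduction-to-torsion trick sidesteps this computation entirely by appealing to the already-established \'etale case.
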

\begin{proof}
    Let $\Bun_{\bb{G}_m, n}^{\bb{A}^1} = \pi_{\bb{A}^1}^{-1}(\Bun_{\bb{G}_m, n})$. 
    For $n < 0$, $\cl{L}^\univ$ has negative degree on $\Bun_{\bb{G}_m, n}$, so $\Bun_{\bb{G}_m, n}^{\bb{A}^1} \to \Bun_{\bb{G}_m, n}$ is an isomorphism by \cite[Proposition II.3.4 (i)]{FS24}. Thus, $\cl{P}_{\bb{A}^1,n} = \Lambda_\triv$. 

    For $n=0$, the claim follows from \Cref{cor:computetriv}. 

    For $n > 0$, $\Bun_{\bb{G}_m, n}^{\bb{A}^1} \times_{\Bun_{\bb{G}_m, n}} \ast = \BC(\cl{O}(n))$. Let $\pi_n \colon \BC(\cl{O}(n)) \to \ast$. Since the $\cl{D}^\oc$-formalism is the base change of the $\cl{D}^\oc_\mot$-formalism, $\pi_{n!} \Lambda \cong \Lambda[-2n]$ in $\cl{D}^\oc(\ast, \Lambda) \cong \cl{D}(\Lambda)$ by \Cref{lem:computBC}. In particular, $\cl{P}_{\bb{A}^1, n}$ is a shift of a character of $F^\times$. To identify this character, we may assume that $\Lambda = \bb{Z}_\ell$ by the projection formula. Then, we may reduce to the case $\Lambda = \bb{Z}/\ell^m \bb{Z}$ for $m \geq 0$. In this case, the claim is a special case of \cite[Lemma 2.4 (i)]{HI24}.
\end{proof}


\subsection{The Hecke period} \label{ssec:HeckeP}

In this section, we compute the unnormalized Hecke period sheaf $\cl{P}_{X}$, where $G = \GL_2$ and $X = \GL_2 / A$. Here, $A = {\scriptsize \left\{ \begin{pmatrix} 1 & 0 \\ 0 & \ast \end{pmatrix} \right\} } \cong \bb{G}_m$. 


Since $[X/G] \cong [\ast/A]$, $\Bun_G^X \cong \Bun_A$. 
Since $A \cong \bb{G}_m$, we have a decomposition  $\Bun_A = \coprod_{ n \in \bb{Z}} \Bun_{A, n}$ and let $\pi_n \colon \Bun_{A, n} \to \Bun_G$ be the restriction of the above map. Then, $\Bun_{A, n}$ maps to the point corresponding to the $\sigma$-conjugacy class $b_n = {\scriptsize \left[\begin{pmatrix} 1 & 0 \\ 0 & \pi^{-n} \end{pmatrix} \right]} \in B(G)$. Now, we have $\cl{P}_X = \bigoplus_{n \in \bb{Z}} \pi_{n!} \Lambda$. We will compute each factor $\cl{P}_{X, n} = \pi_{n!} \Lambda$. 


Let $T \subset \GL_2$ be the diagonal torus. Let $B \subset \GL_2$ be the standard Borel subgroup and let $\ov{B} \subset \GL_2$ be its opposite. By \cite[Theorem 3.1 (ii)]{Sch25}, we have
\[
    \cl{D}^\oc(\Bun_G^{b_0}, \Lambda) \cong \cl{D}(\GL_2(F), \Lambda), \quad
    \cl{D}^\oc(\Bun_G^{b_n}, \Lambda) \cong \cl{D}(T(F), \Lambda) ( n \neq 0). 
\]
Thus, $\cl{P}_{X, 0}$ is a complex of smooth $\GL_2(F)$-representations and $\cl{P}_{X, n}$ is a complex of smooth $T(F)$-representations for $n \neq 0$.

\begin{prop} \label{prop:Heckperiod}
    For each integer $n$, we have
    \[
        \cl{P}_{X, n} = i^{b_n}_! \cInd^{T(F)}_{A(F)} \Lambda^{-1}_\norm [2n] (n < 0), 
        \cl{P}_{X, n} = i^{b_n}_! \cInd^{T(F)}_{A(F)} \Lambda_\norm [-2n] (n > 0), 
    \]
    \[
        \cl{P}_{X, 0} = i^1_! C_c^\infty(\GL_2(F)/A(F), \Lambda). 
    \]
\end{prop}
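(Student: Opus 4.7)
The plan is to handle $n = 0$ via the trivialization in \Cref{prop:trivfib}, and for $n \neq 0$ to factor $\pi_n$ through the locally closed stratum $\Bun_G^{b_n}$ and reduce to an explicit Banach-Colmez cohomology computation governed by the Iwasawa-Tate result. For $n = 0$, the case is immediate: since $A \cong \bb{G}_m$ is reductive, $X = \GL_2/A$ is affine by Matsushima's theorem, in particular quasi-affine, so \Cref{prop:trivfib} yields $\Bun_G^{1, X} \cong [\underline{X(F)}/\underline{G(F)}]$; since $\pi_0 \colon \Bun_{A, 0} \to \Bun_G$ factors through the open substack $\Bun_G^1$, \Cref{cor:computetriv} gives $\cl{P}_{X, 0} = i^1_! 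C_c^\infty(\GL_2(F)/A(F), \Lambda)$ directly.

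For $n \neq 0$, I would factor $\pi_n = i^{b_n} \circ \tilde{\pi}_n$ with $\tilde{\pi}_n\colon \Bun_{A, n} \to \Bun_G^{b_n}$; post-composition with $i^{b_n}_!$ is well-defined since the locally closed immersion $i^{b_n}$ decomposes as a closed immersion followed by a partially proper open immersion, each $!$-able by \Cref{prop:clop!-able}. The vector bundle associated to $b_n$ is $\cl{O} \oplus \cl{O}(n)$, whose automorphism group $G_{b_n}$ sits in an extension $1 \to \BC(\cl{O}(\lvert n \rvert)) \to G_{b_n} \to T(F) \to 1$, with the unipotent being lower-triangular for $n > 0$ and upper-triangular for $n < 0$. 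Under the identifications $\Bun_G^{b_n} \cong [\ast/G_{b_n}]$ and $\Bun_{A, n} \cong [\ast/A(F)]$, the map $\tilde{\pi}_n$ corresponds to the inclusion $a \mapsto \diag(1, a)$. Base change along $\ast \to [\ast/G_{b_n}]$ presents $\tilde{\pi}_{n!}\Lambda$ as the compactly supported cohomology of the homogeneous space $G_{b_n}/A(F)$, which an explicit right-coset parametrization identifies with $F^\times \times \BC(\cl{O}(\lvert n \rvert))$; by Künneth together with \Cref{lem:computBC}, this cohomology is $C_c^\infty(F^\times, \Lambda) \otimes \Lambda(-\lvert n \rvert)[-2\lvert n \rvert]$.

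The final step would be to interpret the $G_{b_n}$-representation as a $T(F)$-representation. The unipotent radical $\BC(\cl{O}(\lvert n \rvert))$ acts on the Banach-Colmez factor by translation and hence trivially on cohomology, so the representation descends through the Levi quotient $T(F)$. Computing the $T(F)$-action explicitly from the coset formula, the first diagonal acts on $C_c^\infty(F^\times, \Lambda)$ by left translation, while the scaling action of the other diagonal on the Banach-Colmez factor contributes a $T(F)$-character whose restriction to $A(F) = \{\diag(1, a)\}$ is $\Lambda_\norm^{\pm 1}$, with the sign according to whether $n$ is positive or negative. The essential input is \Cref{prop:IwTateperiod}, which identifies the scaling character of $F^\times$ on the top compactly supported cohomology of $\BC(\cl{O}(\lvert n \rvert))$ with $\Lambda_\norm$. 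Applying the projection formula $\cInd^{T(F)}_{A(F)} \Lambda_\triv \otimes \chi \cong \cInd^{T(F)}_{A(F)}(\chi\vert_{A(F)})$ together with the equivalence $\cl{D}^\oc(\Bun_G^{b_n}, \Lambda) \cong \cl{D}(T(F), \Lambda)$ from \cite[Theorem 3.1 (ii)]{Sch25} then gives $\tilde{\pi}_{n!}\Lambda \cong \cInd^{T(F)}_{A(F)} \Lambda_\norm^{\pm 1}[-2\lvert n \rvert]$, which matches the two cases in the proposition.

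The main obstacle will be the careful bookkeeping of characters and signs in the Banach-Colmez cohomology computation—especially the somewhat surprising fact that the scaling character on top cohomology is simply $\Lambda_\norm$ (independent of $n$) rather than a power of the norm character, and that the projection formula correctly matches the resulting $T(F)$-twist with compact induction from $A(F)$. One must also confirm that the Banach-Colmez unipotent does act trivially on cohomology so that the $G_{b_n}$-representation descends through the Levi quotient underlying the equivalence with $\cl{D}(T(F), \Lambda)$.
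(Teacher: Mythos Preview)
Your approach is correct and reaches the same answer, but the paper organizes the $n\neq 0$ case differently. Rather than computing the cohomology of $G_{b_n}/A(F)$ directly, the paper factors $\pi_n$ through $\Bun_T^{b_n}$ as $\mfr{p}\circ\mfr{i}\circ f$ (using $\Bun_B^{b_n}=\Bun_G^{b_n}$ for $n<0$, and $\ov{B}$ for $n>0$), and then applies the projection-formula identity $\mfr{i}^*\mfr{i}_!(f_!\Lambda)\cong f_!\Lambda\otimes\mfr{i}^*\mfr{i}_!\Lambda$ to separate the compact induction $f_!\Lambda=\cInd_{A(F)}^{T(F)}\Lambda$ from the Banach--Colmez twist $\mfr{i}^*\mfr{i}_!\Lambda$; the latter is then a single $A(F)$-character, identified by reducing to torsion coefficients and invoking \cite[Lemma~2.6]{HI24}. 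This buys a cleaner bookkeeping: the $\cInd$-structure is manifest before any Banach--Colmez cohomology appears, and one never has to track a full $T(F)$-action on a product space. Your direct route works too, but one detail is imprecise: the clean splitting ``first diagonal acts on $F^\times$, second on the Banach--Colmez factor'' holds only for one sign of $n$; for the other sign the natural coset coordinate on the Banach--Colmez factor is a ratio, and the first diagonal entry scales it as well. Your projection-formula step $\cInd_{A(F)}^{T(F)}\Lambda_\triv\otimes\chi\cong\cInd_{A(F)}^{T(F)}(\chi\vert_{A(F)})$ does absorb this extra character and yields the stated answer, but the action is more intertwined than you describe. Also, your worry that the unipotent must be shown to act trivially is unnecessary: since $\mfr{i}^*$ is an equivalence with inverse $\mfr{q}^*$, every object of $\cl{D}^\oc(\Bun_G^{b_n},\Lambda)$ is automatically pulled back from $\Bun_T^{b_n}$.
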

\begin{proof}
    For $n = 0$, $\pi_0$ is given by $[\ast/A(F)] \to [\ast/\GL_2(F)] \hookrightarrow \Bun_G$, so the claim follows from \Cref{lem:locprof!}. For $n < 0$, we have a diagram
    \begin{center}
        \begin{tikzcd}
            \Bun_{A,n} \ar[r, "f"] & \Bun_{T}^{b_n} \ar[r, "\mfr{i}", yshift = 0.5ex] & \Bun_B^{b_n} \ar[l, "\mfr{q}", yshift = -0.5ex] = \Bun_G^{b_n} \ar[r, "\mfr{p}", hook] & \Bun_G. 
        \end{tikzcd}
    \end{center}
    Here, $\pi_n = \mfr{p} \circ \mfr{i} \circ f$ and $\mfr{i}^*$ induces an equivalence $\cl{D}^\oc(\Bun_G^{b_n}, \Lambda) \cong \cl{D}(T(F), \Lambda)$. It is well-known in usual sheaf theories, and the same proof works in the $\cl{D}^\oc$-formalism once we have \Cref{lem:computBC}: $\mfr{i}$ is a torsor under a twisted form of $\BC(\cl{O}(-n))$, so $\mfr{i}^*$ is fully faithful by \Cref{lem:computBC} and essentially surjective by $\mfr{i}^*\circ \mfr{q}^* \cong \id$. To compute $\cl{P}_{X,n}$, it is enough to show $\mfr{i}^*\mfr{i}_!f_! \Lambda \cong \cInd^{T(F)}_{A(F)} \Lambda_\norm^{-1} [2n]$. 

    Since $\mfr{i}^*$ is an equivalence, we may write $f_!\Lambda \cong \mfr{i}^* K$ for some $K \in \cl{D}(\Bun_G^{b_n}, \Lambda)$. By the projection formula, we have
    \[
        \mfr{i}^* \mfr{i}_! f_!\Lambda \cong \mfr{i}^* (K \otimes \mfr{i}_! \Lambda) \cong f_! \Lambda \otimes \mfr{i}^* \mfr{i}_! \Lambda. 
    \]
    Thus, it is enough to show $f^*\mfr{i}^* \mfr{i}_! \Lambda \cong \Lambda_\norm^{-1}[2n]$. Let $\pi \colon \ast \to \Bun_T^{b_n}$. Since $\Bun_T^{b_n}\times_{\Bun_B^{b_n}, \mfr{i} \circ \pi} \ast \cong \BC(\cl{O}(-n))$, $\pi^* \mfr{i}^* \mfr{i}_! \Lambda \cong \Lambda[2n]$ by \Cref{lem:computBC}. Then, $f^*\mfr{i}^* \mfr{i}_! \Lambda$ is a shift of a smooth character of $A(F)$. To identify this character, we may reduce to the case $\Lambda = \bb{Z}_\ell$, and then to $\Lambda = \bb{Z}/\ell^m$ for $m \geq 0$. Then, the claim follows from \cite[Lemma 2.6 (1)]{HI24} since $A(F)$ acts on $\Bun_T^{b_n}\times_{\Bun_B^{b_n}, \mfr{i} \circ \pi} \ast \cong \BC(\cl{O}(-n))$ by the inverse of the usual multiplication by $F^\times$. 

    Thus, we get the claim for $n < 0$. For $n > 0$, the same argument works by using
    \begin{center}
        \begin{tikzcd}
            \Bun_{A,n} \ar[r, "f"] & \Bun_{T}^{b_n} \ar[r, "\mfr{\ov{i}}", yshift = 0.5ex] & \Bun_{\ov{B}}^{b_n} \ar[l, "\mfr{\ov{q}}", yshift = -0.5ex] = \Bun_G^{b_n} \ar[r, "\mfr{\ov{p}}", hook] & \Bun_G. 
        \end{tikzcd}
    \end{center} 
\end{proof}

To compare with the spectral side, we provide a description of $\cl{P}_X$ using the (normalized) geometric Eisenstein series. 
Consider the diagram
\begin{center}
    \begin{tikzcd}
        & \Bun_B \ar[ld, "\mfr{p}"'] \ar[rd, "\mfr{q}"] & \\
        \Bun_G & & \Bun_T \\
        & \Bun_\ov{B}. \ar[lu, "\mfr{\ov{p}}"] \ar[ru, "\mfr{\ov{q}}"'] &
    \end{tikzcd}
\end{center}
Let $K_B \in \cl{D}^\oc(\Bun_T, \Lambda)$ (resp.\ $K_{\ov{B}} \in \cl{D}^\oc(\Bun_T, \Lambda)$) be the complex such that $\mfr{q}^*K_B$ (resp.\ $\mfr{\ov{q}}^*K_{\ov{B}}$) is the dualizing complex of $\Bun_B$ (resp.\ $\Bun_{\ov{B}}$). See \cite[Theorem 1.6]{HI24} for the concrete description in torsion coefficients. As previously, the same computation holds true in the $\cl{D}^\oc$-formalism: we can show that $K_B$ is a shift of a character and the computation of such a character can be reduced to $\Lambda = \bb{Z}_\ell$, and then to $\Lambda = \bb{Z}/\ell^m$ for $m \geq 0$.  

Now, let $\Lambda = \Qla$ and fix a square root $\sqrt{q} \in \Qla$. This choice determines the half-norm character $\Qla_\norm^{1/2}$ of $F^\times$ and canonical square roots $K_B^{1/2}$ and $K_{\ov{B}}^{1/2}$. 

\begin{defi}(\cite[Definition 2.1.7]{HHS24})
    The normalized geometric Eisenstein series functors associated to $B$ and $\ov{B}$ are defined as 
    \[
        \Eis_{B!} = \mfr{p}_! \mfr{q}^*((-)\otimes K_B^{1/2}) \colon \cl{D}^\oc(\Bun_T, \Qla) \to \cl{D}^\oc(\Bun_G, \Qla),
    \]
    \[
        \Eis_{\ov{B}!} = \mfr{\ov{p}}_! \mfr{\ov{q}}^*((-)\otimes K_{\ov{B}}^{1/2}) \colon \cl{D}^\oc(\Bun_T, \Qla) \to \cl{D}^\oc(\Bun_G, \Qla). 
    \]
\end{defi}

Let $U$ be the unipotent radical of $B$. Fix a nontrivial additive character $\psi \colon F \to \Qla$ and let $\cl{W}_\psi = i^1_! \cInd_{U(F)}^{G(F)} \psi$ be the associated Whittaker sheaf. 

\begin{prop} \label{prop:HeckepEis}
    For each nonzero integer $n$, we have
    \[
        \cl{P}_{X, n} = \Eis_{B!}(i^{b_n}_! \cInd_{A(F)}^{T(F)} \Qla_\norm^{-1/2}[n]) (n<0), 
        \cl{P}_{X, n} = \Eis_{\ov{B}!}(i^{b_n}_! \cInd_{A(F)}^{T(F)} \Qla_\norm^{1/2}[-n]) (n>0), 
    \]
    and for $n=0$, we have a short exact sequence
    \[
        \cl{W}_\psi \hookrightarrow \cl{P}_{X, 0} \to \Eis_{B!}(i^1_! \cInd_{A(F)}^{T(F)} \Qla_\norm^{-1/2}). 
    \]
\end{prop}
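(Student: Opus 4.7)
The plan is to treat the cases $n \neq 0$ and $n = 0$ separately, starting from the explicit formulas for $\cl{P}_{X,n}$ in \Cref{prop:Heckperiod} and computing the right-hand sides by exploiting the geometry of $\mfr{p}, \mfr{q}$ combined with \Cref{lem:computBC}.

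For $n \neq 0$, consider the case $n < 0$; the case $n > 0$ is the parallel argument with $\ov{B}$ in place of $B$ (since for $n > 0$ the HN-destabilising sub of $\cl{O} \oplus \cl{O}(n)$ is $\cl{O}(n)$, yielding a unique reduction to $\ov{B}$). First I would observe that $\Eis_{B!}(\cG)$ for $\cG$ supported on $\Bun_T^{b_n}$ is itself supported on $\Bun_G^{b_n}$: the relevant slice of $\Bun_B$ parametrises extensions $0 \to \cl{O} \to \cl{E} \to \cl{O}(n) \to 0$, and these split since $H^1(\cl{X}_S, \cl{O}(-n)) = 0$ for $n < 0$, so $\cl{E} \cong \cl{O}\oplus\cl{O}(n) \in \Bun_G^{b_n}$. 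Proper base change then reduces the computation to the sub-diagram $\Bun_T^{b_n} \xleftarrow{\mfr{q}'} \mfr{q}^{-1}(\Bun_T^{b_n}) \xrightarrow{\mfr{p}'} \Bun_G^{b_n}$, in which $\mfr{p}'$ is an isomorphism (source and target both have automorphism group $T(F) \ltimes \BC(\cl{O}(-n))$ since the flag is preserved by every automorphism of $\cl{O}\oplus\cl{O}(n)$) and $\mfr{q}'$ is a $\BC(\cl{O}(-n))$-classifying-stack map. By \Cref{lem:computBC}, $K_B|_{\Bun_T^{b_n}}$ computes as the dualising complex of $[\ast/\BC(\cl{O}(-n))]$, namely $\Qla(n)[2n]$; its square root, absorbing Tate twists via the chosen $\sqrt{q}$, supplies the shift $[n]$ and the half-norm twist needed to convert $\cInd_{A(F)}^{T(F)} \Qla_\norm^{-1/2}[n]$ into $\cInd_{A(F)}^{T(F)} \Qla_\norm^{-1}[2n] = \cl{P}_{X,n}$.

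For $n = 0$, the plan is to produce the fiber sequence on the trivial stratum $\Bun_G^1 \cong [\ast/\und{G(F)}]$ and then apply $i^1_!$. Unwinding $\cl{P}_{X,0}|_1 \cong C_c^\infty(G(F)/A(F), \Qla)$ (\Cref{prop:Heckperiod}) and $\Eis_{B!}(i^1_! \cInd_{A(F)}^{T(F)}\Qla_\norm^{-1/2})|_1 \cong I_{B(F)}^{G(F)}(\cInd_{A(F)}^{T(F)}\Qla_\norm^{-1/2})$ (the normalised parabolic induction, where the modulus $\delta_B^{1/2}$ is supplied by $K_B^{1/2}|_1$), the task becomes constructing the short exact sequence of smooth $G(F)$-representations
\[
0 \to \cInd_{U(F)}^{G(F)}\psi \to C_c^\infty(G(F)/A(F), \Qla) \to I_{B(F)}^{G(F)}(\cInd_{A(F)}^{T(F)}\Qla_\norm^{-1/2}) \to 0.
\]
I would set this up via the Bruhat stratification $G = B \sqcup BwB$, which is compatible with the right $A$-action and gives a $B(F)$-equivariant stratification of $G(F)/A(F)$ into the closed orbit $B(F)/A(F)$ and the open orbit $B(F)wB(F)/A(F)$. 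Compact induction from $B$ to $G$, combined with the transitivity $\cInd_{A(F)}^{G(F)} = \cInd_{B(F)}^{G(F)}\cInd_{A(F)}^{B(F)}$ and Frobenius reciprocity, extracts the Eisenstein quotient from the closed piece $C_c^\infty(B/A) \cong \cInd_{A(F)}^{B(F)}\Qla$. For the open piece, the unfolding technique identifies the compact induction of $C_c^\infty(BwB/A)$ with $\cInd_{U(F)}^{G(F)}\psi$: using $BwB \cong UwB$ and integrating along the $U$-orbits against $\psi$ realises the Whittaker model of the Jacquet quotient.

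The main obstacles I foresee are the precise bookkeeping of Tate twists, half-norm characters and the modulus $\delta_B$ in the $n \neq 0$ case (so that the two shifts and twists on the $\cl{A}$-side and the Eisenstein side agree \emph{on the nose}), and in the $n = 0$ case, the unfolding identification of the open-orbit contribution with $\cl{W}_\psi$ — this is the essential geometric input beyond formal manipulation with Frobenius reciprocity.
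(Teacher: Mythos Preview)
For $n\neq 0$ your approach is correct and matches the paper: both reduce $\Eis_{B!}$ (resp.\ $\Eis_{\ov{B}!}$) on the stratum $\Bun_T^{b_n}$ to the dualizing complex of a classifying stack of a positive Banach--Colmez space, then invoke \Cref{prop:Heckperiod}. The paper cites the computation of $K_B$ from \cite{HI24} where you sketch it directly.

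For $n=0$ there is a genuine gap. The Bruhat stratification $G/A = B/A \sqcup BwB/A$ is only $B(F)$-equivariant on the left, not $G(F)$-equivariant --- indeed $G/A$ is $G$-homogeneous, so it admits no nontrivial $G$-stable stratification at all. The resulting sequence
\[
0 \to C_c^\infty(BwB/A) \to C_c^\infty(G/A) \to C_c^\infty(B/A) \to 0
\]
is therefore only a filtration of $\Res_B^G \cInd_A^G\Qla$, and there is no mechanism to promote it to the required $G(F)$-equivariant short exact sequence. Frobenius reciprocity does give a $G$-map $\cInd_A^G\Qla \to \Ind_B^G C_c^\infty(B/A)$, but the target is too large, and cutting it down to the Eisenstein term and identifying the kernel with the Whittaker still requires the step you have not supplied.

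The paper's argument is different and pinpoints that step. It factors through the mirabolic $\Mir_2 = A\ltimes U$, writing $\cInd_A^G\Qla = \cInd_{\Mir_2}^G C_c^\infty(U(F))$, and then applies the Fourier transform on the fibre $U(F)\cong F$, obtaining $C_c^\infty(U(F)) \cong C_c^\infty(F)\otimes\Qla_\norm^{-1}$ as $\Mir_2(F)$-modules (with $F$ now the linear dual). Excision is performed on this \emph{dual} side, along $F=\{0\}\sqcup F^\times$, which \emph{is} $\Mir_2(F)$-stable: the closed point yields $\cInd_{\Mir_2}^G\Qla_\norm^{-1}\cong \cInd_B^G(\cInd_A^T\Qla_\norm^{-1})$, and the open orbit $F^\times$ (on which $A(F)$ acts simply transitively with stabilizer $U(F)$ acting via $\psi$) yields $\cInd_U^G\psi$. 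This Fourier transform --- not the Bruhat decomposition --- is the actual content of the unfolding you invoke.
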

\begin{proof}
    For $n < 0$, $\nu_{b_n}$ is strictly codominant with respect to $B$ (cf.\ \cite[Lemma 2.9 (1)]{Tak24}), so $\Eis_{B!}\vert_{\Bun_T^{b_n}} = i^{b_n}_!(- \otimes (\Qla_{\norm} \boxtimes \Qla_{\norm}^{-1})^{1/2}[n])$ by the computation of $K_B$ in \cite[Theorem 1.6]{HI24}. Then, the claim follows from \Cref{prop:Heckperiod}. The same argument works for $n > 0$. 
    
    We will prove the claim for $n = 0$. In this case, $\Eis_{B!}\vert_{\Bun_T^1}$ is the normalized parabolic induction $\cInd_{B(F)}^{G(F)}(- \otimes (\Qla_{\norm} \boxtimes \Qla_{\norm}^{-1})^{1/2})$. Thus, we need to show that there is a short exact sequence
    \[
        \cInd_{U(F)}^{G(F)} \psi \hookrightarrow \cInd_{A(F)}^{G(F)} \Qla \twoheadrightarrow \cInd^{G(F)}_{B(F)}(\cInd_{A(F)}^{T(F)} \Qla_\norm^{-1}). 
    \]
    This can be proved via the unfolding (see \cite[Section 7.3.1]{FW25}). Consider the Fourier transform via the following diagram. 
    \begin{center}
        {\footnotesize 
        \begin{tikzcd}
            & & \lbrack \ast/\Mir_2(F) \rbrack \times_{\lbrack\ast/A(F)\rbrack} \lbrack F/A(F) \rbrack \ar[r, "\ev"] \ar[ld, "\pr_1"] \ar[rd, "\pr_2"] & \lbrack \ast/ \Qla \rbrack \\
            & \lbrack \ast/\Mir_2(F) \rbrack \ar[ld, "f"] \ar[rd, yshift = 0.5ex] & & \lbrack F/A(F) \rbrack \ar[ld]  \\
            \lbrack \ast/G(F) \rbrack & & \lbrack \ast/A(F) \rbrack \ar[lu, "z", yshift = - 0.5ex] & 
        \end{tikzcd}
        }
    \end{center}
    Here, $\Mir_2 = A \ltimes U \subset B$ and $F$ is identified with the linear dual of $U(F)$ in defining $\ev$. The usual Fourier transform on $C_c^\infty(F, \Qla)$ with respect to $\psi$ can be reinterpreted as 
    \[
        z_! \Qla_\triv \cong \pr_{1!} \ev^*\psi \otimes \Qla_\norm^{-1}. 
    \]
    Here, $z_! \Qla_\triv \cong C_c^\infty(U(F), \Qla)$ and $\pr_{1!} \ev^*\psi \cong C_c^\infty(F, \Qla)$. The action of $(a, u) \in A(F) \ltimes U(F)$ on each space is given by 
    \[
        ((a, u)\cdot f)(x) = f(ax+u), \quad ((a, u)\cdot \widehat{f})(y) = \psi(\langle u, a^{-1}y \rangle) \widehat{f}(a^{-1}y)
    \]
    for $f \in C_c^\infty(U(F), \Qla)$ and $\widehat{f} \in C_c^\infty(F, \Qla)$. Thus, the Fourier transform
    \[
        C_c^\infty(U(F), \Qla) \ni f(x) \mapsto \widehat{f}(y) = \int_{F} f(x) \psi(-\langle x, y \rangle) dx \in C_c^\infty(F, \Qla)
    \]
    provides $z_! \Qla_\triv \cong \pr_{1!} \ev^*\psi \otimes \Qla_\norm^{-1}$. Now, the desired short exact sequence is obtained from the excision along the closed-open decomposition
    \[
        [\ast/\Mir_2(F)] \hookrightarrow \lbrack \ast/\Mir_2(F) \rbrack \times_{\lbrack\ast/A(F)\rbrack} \lbrack F/A(F) \rbrack \hookleftarrow [\ast/U(F)]
    \]
    obtained from $F = \{0\} \cup F^\times$. 
\end{proof}

\section{Formulation of the $\cl{B}$-side} \label{sec:Bside}

In this section, we introduce the relative spectral stack $\Par_{\LG}^{\widehat{X}}$ and the unnormalized $L$-sheaf $\cl{L}_{\widehat{X}}$. Throughout this section, we work with rational coefficients to avoid essential technical difficulty in general coefficients. 

Let $\Lambda$ be a classical $G$-ring over $\bb{Q}$. We refer to \Cref{app:DAG} for our notation and convention in derived algebraic geometry. 

\subsection{Recollections of $\Par_{\LG}$}

In this section, we review the stack of $L$-parameters introduced in \cite{DHKM20}, \cite{Zhu21} and \cite{FS24}. As we work in characteristic $0$, we explain the definition as the stack of Weil-Deligne $L$-parameters as in \cite[Section 3.1]{Zhu21}. 

Let $\widehat{G}$ be the dual group of $G$ over $\Lambda$. Take a finite quotient $Q$ of $W_F$ so that the action of $W_F$ on $\widehat{G}$ factors through $Q$. Let $\LG = \widehat{G} \rtimes Q$ be the $L$-group of $G$. When $G$ is split, we take $Q$ to be the trivial group so that $\LG \cong \widehat{G}$. 

Let $\widehat{\mfr{g}} = \Lie(\widehat{G})$ and let $\lvert \cdot \rvert \colon W_F \to \bb{R}_{>0}$ be the homomorphism such that $\lvert \sigma \rvert = q^{-\ord(\sigma)}$. In particular, $\lvert \Fr \rvert = q^{-1}$ for a geometric Frobenius element $\Fr \in W_F$. 

\begin{defi} \label{defi:WDpara}
    For a $\Lambda$-algebra $R$, a Weil-Deligne $L$-parameter over $R$ with values in $\LG$ is a pair $(\varphi^\sm, N)$ where $\varphi^\sm \colon W_F \to \LG(R)$ is a smooth homomorphism over $Q$ and $N \in \widehat{\mfr{g}} \otimes R$ is an element such that $\Ad(\varphi^\sm(\sigma))N = \lvert \sigma \rvert N$ for all $\sigma \in W_F$. 
\end{defi}

Here, $N$ is automatically nilpotent since $\Lambda$ is in characteristic $0$. When $\LG = \GL_n$ for some $n$, a Weil-Deligne $L$-parameter is also called a Weil-Deligne representation. 


\begin{prop}\textup{(\cite[Proposition 3.1.6]{Zhu21})} \label{prop:ParLGproperty}
    Let $Z^1(\WD_F, \LG)$ be the functor sending a $\Lambda$-algebra $R$ to the set of Weil-Deligne $L$-parameters over $R$ with values in $\LG$. Then, $Z^1(\WD_F, \LG)$ is represented by a disjoint union of affine schemes, which is equidimensional of dimension $\dim \widehat{G}$ and is a local complete intersection over $\Lambda$. Moreover, the dualizing complex on $[Z^1(\WD_F, \LG) / \widehat{G}]$ is trivial. 
\end{prop}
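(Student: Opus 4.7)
The plan is to follow the strategy of \cite[Proposition 3.1.6]{Zhu21} (see also \cite{DHKM20} for a closely related setup). The starting point is a decomposition of $Z^1(\WD_F,\LG)$ by inertial types. Since $\Lambda$ has characteristic zero and $\varphi^\sm$ is smooth, its restriction to the wild inertia subgroup $P_F \subset I_F$ factors through a finite quotient and takes values in a finite subgroup of $\LG(\Lambda)$. Fixing a $\widehat{G}$-conjugacy class of such restrictions $\tau \colon P_F \to \LG(\Lambda)$ cuts out an open and closed substack $Z^1_\tau \subset Z^1(\WD_F,\LG)$, producing a disjoint union decomposition indexed by inertial types.

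For each $\tau$, let $H = Z_{\widehat{G}}(\tau(P_F))$, which is reductive. An $L$-parameter extending $\tau$ is determined by the images of a Frobenius lift $\Fr$ and a topological generator $t$ of the tame quotient, together with the monodromy $N \in \widehat{\mfr{g}}^{\tau(P_F)}$. The defining relations $\Fr \, t \, \Fr^{-1} = t^q$, the required compatibility with $\tau$, and $\Ad(\varphi^\sm(\sigma))N = \lvert \sigma \rvert N$ cut out a closed subscheme of the affine scheme $\LG \times H \times \widehat{\mfr{g}}^{\tau(P_F)}$. This exhibits each $Z^1_\tau$ as an affine scheme and hence $Z^1(\WD_F,\LG)$ as a disjoint union of such schemes.

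For equidimensionality, the dimension formula, and the LCI property, I would analyze the tangent complex at a geometric point $\varphi$, which identifies with a shift of the Weil–Deligne cohomology complex $C^\bullet(\WD_F, \widehat{\mfr{g}})$ with adjoint coefficients. A direct Euler characteristic calculation, exploiting that $\lvert \Fr \rvert = q^{-1}$ balances the Frobenius-weight contributions, shows that this complex has Euler characteristic zero, so that $[Z^1(\WD_F,\LG)/\widehat{G}]$ has virtual dimension zero. Combined with the explicit equations above, whose number matches the expected codimension in the ambient smooth scheme, this yields that each $Z^1_\tau$ is LCI of dimension $\dim \widehat{G}$.

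The main obstacle is the triviality of the dualizing complex on $[Z^1(\WD_F,\LG)/\widehat{G}]$. The key input is the Killing form self-duality on $\widehat{\mfr{g}}$, which induces a perfect pairing on the Weil–Deligne cohomology of the adjoint representation, a local Tate-type duality statement. This self-duality upgrades the LCI structure on the quotient stack to a Calabi–Yau structure of virtual dimension zero, canonically trivializing the determinant of the cotangent complex on the smooth locus. Propagating this trivialization across the non-smooth locus uses the Cohen–Macaulay property coming from LCI together with reflexivity, as carried out in \cite[Section 3.1]{Zhu21}.
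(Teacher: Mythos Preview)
The paper does not give its own proof of this proposition: it simply cites \cite[Proposition 3.1.6]{Zhu21} and uses the result as input. Your proposal is a reasonable sketch of the argument from that reference (decomposition by inertial types, explicit equations in an ambient affine scheme, Euler characteristic zero of the adjoint Weil--Deligne cohomology, and the Killing form duality for the triviality of the dualizing complex), so there is nothing to compare against in the paper itself.
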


The quotient stack $[Z^1(\WD_F, \LG) / \widehat{G}]$ is called the stack of $L$-parameters. This is independent of the choice of $Q$. When $\Lambda$ is a $\Ql$-algebra, $Z^1(\WD_F, \LG)$ is isomorphic to the moduli space $Z^1(W_F, \LG)$ of \textit{continuous} $L$-parameters (see \cite[Lemma 3.1.8]{Zhu21}). 

\begin{exa}(\cite[Section IX.6.4]{FS24}) \label{exa:ParGm}
    Suppose that $\Lambda = \Qla$ and $\LG = \bb{G}_m$. By the local class field theory, $Z^1(\WD_F, \bb{G}_m)$ sends a $\Lambda$-algebra $R$ to the set of smooth homomorphisms $F^\times \to R^\times$. Since $F^\times \cong O_F^\times \times \bb{Z}$ and $\Qla[O_F^\times/K]$ decomposes into a direct sum of characters for each open subgroup $K \subset O_F^\times$, we have
    \[
        Z^1(\WD_F, \bb{G}_m) = \bigsqcup_{\chi \colon O_F^\times \to \Qlax} \bb{G}_m. 
    \]
    Here, $\chi$ runs through all smooth characters $O_F^\times \to \Qlax$. The component $\bb{G}_m \hookrightarrow Z^1(\WD_F, \bb{G}_m)$ for each $\chi$ sends $T\in R^\times$ to a character $\chi_T\colon F^\times \to R^\times$ such that $\chi_T(\pi) = T$ and $\chi_T\vert_{O_F^\times} = \chi$. As a result, we have
    \[
        \Par_{\bb{G}_m} =  \bigsqcup_{\chi \colon O_F^\times \to \Qlax} \bb{G}_m \times [\ast/\bb{G}_m]. 
    \]
    The component for each $\chi$ is denoted by $\Par_{\bb{G}_m, \chi} \subset \Par_{\bb{G}_m}$. 
\end{exa}

\subsubsection{Derived enhancement} \label{sssec:ParLGmap}

In the literature, the stack of $L$-parameters is defined as a moduli over classical $\Lambda$-algebras. As observed in \cite[Remark 3.1.4]{Zhu21} and \cite[Proposition VIII.2.1]{FS24}, it is known that the definition can be extended to animated $\Lambda$-algebras. This extension is important for our purpose because relative spectral stacks to be introduced will have nontrivial derived structure. In this section, we will provide a definition of the stack of $L$-parameters as a derived stack and compare it with the classical one. 

\begin{defi}
    Let
    \[
        \ID_F = \bb{G}_a \times \und{I_F}, \quad \WD_F = \bb{G}_a \rtimes \und{W_F}
    \]
    be the inertia group scheme and the Weil-Deligne group scheme over $\bb{Q}$, respectively. Here, $\und{I_F}$ (resp.\ $\und{W_F}$) is the (resp.\ locally) profinite group scheme associated to $I_F$ (resp.\ $W_F$), and the conjugation action of $\sigma \in W_F$ on $\bb{G}_a$ is the multiplication by $\lvert \sigma \rvert$.  
\end{defi}

A choice of a geometric Frobenius $\Fr \in W_F$ provides $\WD_F \cong \ID_F \rtimes \bb{Z}$. For each animated $\bb{Q}$-algebra $R$, let $\WD_{F, R} = \WD_{F} \times \Spec(R)$ and $\ID_{F, R} = \ID_{F} \times \Spec(R)$. We will follow the notation in \Cref{app:WeilDeligne_cohomology}. 

\begin{defi}\label{defi:parst} 
    Let $\Par_{\LG} = \und{\Map}_{[\ast/Q]}([\ast/\WD_F], [\ast/\LG])$ be the mapping stack over $\CAlg_\Lambda$. Here, $\ast = \Spec(\Lambda)$. For each animated $\Lambda$-algebra $R$, $\Par_{\LG}(R)$ is the mapping anima $\Map_{[\ast/Q]}([\Spec(R)/\WD_{F}], [\ast/\LG])$.
\end{defi}

\begin{rmk}
    The \'{e}tale classifying stack $[\ast/\WD_{F}]$ is not equal to the fpqc classifying stack. This difference does not matter here since the \'{e}tale classifying stack $[\ast/\LG]$ satisfies the fpqc descent due to the smoothness of $\LG$.  
\end{rmk}

This definition is a derived enhancement of \cite[Definition 4.3]{Sch25}. We will verify that our definition coincides with the classical Artin stack $[Z^1(\WD_F, \LG)/\widehat{G}]$. 

First, we compute the cotangent complex of $\Par_{\LG}$. Let $S$ be a derived scheme over $\Lambda$ and let $x \in \Par_{\LG}(S)$. As in \cite[Section 10.3.2]{AG15}, the cotangent space at $x$ viewed as a functor
\[
    \QCoh(S)^{\leq 0} \to \Ani
\]
is given by the truncation of the group cohomology
\[
    \cl{M} \mapsto \tau^{\leq 0} R\Gamma( \WD_{F} , \cl{M} \otimes (\cl{P}_x\times^{\LG} \widehat{\mfr{g}})) [1]. 
\]
Here, $R\Gamma(\WD_{F}, -)$ denotes the pushforward along $[S/\WD_{F}] \to S$, $\LG$ acts on $\widehat{\mfr{g}}$ via the adjoint action, and $\cl{P}_x$ is the $\LG$-torsor over $[S/\WD_{F}]$ obtained from $x \colon [S/\WD_{F}] \to [\ast/\LG]$. 

Let us explain the computation of $R\Gamma(\WD_F, -)$. For each $\cl{M} \in \QCoh([S/\WD_{F}])$, let $\cl{M}_S \in \QCoh(S)$ be the underlying quasi-coherent sheaf on $S$. Then, $\cl{M}_S$ carries a smooth $W_F$-action, and a $\bb{G}_{a}$-action, which is given by a locally nilpotent endomorphism (see \Cref{lem:Ga_cohomology})
\[
    N \in \End(\cl{M}_S).
\]

\begin{prop} \textup{(\Cref{prop:WD_cohomology})}
    For each $\cl{M} \in \QCoh([S/\WD_{F}])$, we have 
    \[
        R\Gamma(\WD_{F}, \cl{M}) \cong \left[\cl{M}_S^{I_F} \xrightarrow{(\Fr - 1, N)} \cl{M}_S^{I_F} \oplus \cl{M}_S^{I_F} \xrightarrow{(N,  - q \Fr + 1)} \cl{M}_S^{I_F}\right]. 
    \]
    In particular, $R\Gamma(\WD_{F}, - )$ has cohomological dimension $2$, commutes with small colimits, and satisfies the projection formula and the base change along $S$. 
\end{prop}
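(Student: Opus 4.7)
The plan is to exploit the semidirect product decomposition $\WD_{F} = \bb{G}_a \rtimes \und{W_F}$, which induces a factorization of the classifying map $[S/\WD_{F}] \to S$ through $[S/\und{W_F}]$, with successive fibers $B\bb{G}_a$ and $BW_F$. Accordingly one obtains $R\Gamma(\WD_{F}, -) \cong R\Gamma(W_F, R\Gamma(\bb{G}_a, -))$, and the task reduces to computing each step explicitly in a $W_F$-equivariant manner and totalizing.

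For the inner step, \Cref{lem:Ga_cohomology} supplies $R\Gamma(\bb{G}_a, \cl{M}) \cong [\cl{M}_S \xrightarrow{N} \cl{M}_S]$ concentrated in cohomological degrees $0$ and $1$, where $N \in \End(\cl{M}_S)$ is the locally nilpotent endomorphism determined by the $\bb{G}_a$-action on $\cl{M}_S$. The subtle point is the $W_F$-equivariant structure on this complex: since $W_F$ acts on $\bb{G}_a$ by the character $\lvert \cdot \rvert$, the canonical identification $H^1(\bb{G}_a, \Lambda) \cong \Lie(\bb{G}_a)^\vee$ forces the degree-$1$ factor to be twisted by $\lvert \cdot \rvert^{-1}$, so that a geometric Frobenius acts there by $q\Fr$ rather than $\Fr$. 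This matches, and indeed is responsible for, the compatibility $\Ad(\sigma) N = \lvert \sigma \rvert N$ in \Cref{defi:WDpara}.

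For the outer step, the $W_F$-action on $\cl{M}_S$ inherited from $\cl{M} \in \QCoh([S/\WD_{F}])$ is smooth, and in characteristic $0$ the functor of $I_F$-invariants is exact on smooth $W_F$-modules. Combined with the standard identification $R\Gamma(W_F, V) \cong [V^{I_F} \xrightarrow{\Fr - 1} V^{I_F}]$ for a smooth $W_F$-complex $V$, applying $R\Gamma(W_F, -)$ to the two-term $\bb{G}_a$-complex yields a bicomplex with horizontal differential $N$ on $I_F$-invariants and vertical differentials $\Fr - 1$ in row $0$ and $q\Fr - 1$ in row $1$. Taking the totalization with the Koszul sign convention produces precisely the three-term complex displayed in the statement.

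The remaining consequences follow directly from this explicit model. Cohomological dimension $\leq 2$ is visible from the length; commutation with small colimits reduces to the fact that each ingredient (the action of $N$, smooth $I_F$-invariants, and the cone of $\Fr - 1$) preserves colimits in characteristic $0$; and the projection formula and base change along $S$ are manifest because $N$, $\Fr$, and smooth $I_F$-invariants all commute with $-\otimes_S \cl{F}$ for $\cl{F} \in \QCoh(S)$. I expect the main technical obstacle to be the careful bookkeeping of the $W_F$-equivariant structure on $R\Gamma(\bb{G}_a, \cl{M})$, in particular locating the correct Tate twist on the degree-$1$ factor that accounts for the coefficient $q$ in $-q\Fr + 1$, and confirming that the totalization of the resulting bicomplex genuinely computes the intrinsic pushforward from $[S/\WD_{F}]$ rather than an unwanted shift or twist thereof.
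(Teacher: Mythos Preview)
Your proposal is correct and follows essentially the same approach as the paper: decompose $\WD_F$ as a semidirect product, compute the $\bb{G}_a$-cohomology via the nilpotent endomorphism $N$, use exactness of smooth $I_F$-invariants in characteristic $0$, and finish with the cone of $\Fr-1$ for $\bb{Z}$-cohomology. The only cosmetic difference is the order of the factors---the paper groups $\ID_F=\bb{G}_a\times\und{I_F}$ and takes $I_F$-invariants \emph{before} the $\bb{G}_a$-step (so that the Frobenius twist is read off from the resulting two-term complex $[\cl{M}_S^{I_F}\xrightarrow{N}\cl{M}_S^{I_F}]$ directly), whereas you take $\bb{G}_a$-cohomology first and then $R\Gamma(W_F,-)$; since $\ID_F$ is a direct product the two orderings agree, and the paper additionally records the preliminary reduction to bounded-below $\cl{M}$ via left $t$-completeness before invoking the group-extension lemma.
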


\begin{defi}
    Let $\cl{O}(1)$ be a line bundle on $[\ast/\WD_F]$ corresponding to the norm character $\lvert \cdot \rvert$ and let $\cl{O}(n) = \cl{O}(1)^{\otimes n}$. For each $\cl{M} \in \QCoh([S/\WD_{F}])$ and $n \in \bb{Z}$, let $\cl{M}(n) = \cl{M} \otimes \cl{O}(n)$ be the $n$-th Tate twist.
\end{defi}

\begin{cor} \label{cor:Tatedual}
    For every perfect complex $\cl{M}$ on $[S / \WD_{F}]$, we have an isomorphism 
    \[
        R\Gamma(\WD_{F}, \cl{M})^\vee \cong R\Gamma(\WD_{F}, \cl{M}^\vee(1))[2]. 
    \]  
\end{cor}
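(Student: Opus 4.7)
The plan is to prove the isomorphism by constructing an explicit chain map between the two 3-term complexes furnished by \Cref{prop:WD_cohomology}, applied respectively to $\cl{M}$ and to $\cl{M}^\vee(1)$. Write $C := \cl{M}_S^{I_F}$ and denote by $\Fr, N \colon C \to C$ the Frobenius and monodromy operators; the defining identity $\Ad(\Fr) N = |\Fr| N = q^{-1} N$ unwinds to $N\Fr = q \Fr N$, whose dual reads $\Fr^\vee N^\vee = q N^\vee \Fr^\vee$ on $C^\vee$. Since $\cl{M}$ is perfect and the $W_F$-action is smooth, the $I_F$-action on the finitely many cohomology sheaves of $\cl{M}_S$ factors through a finite quotient, so averaging in characteristic $0$ realizes $C$ as a direct summand of $\cl{M}_S$ and yields a canonical equivalence $(\cl{M}_S^\vee)^{I_F} \simeq C^\vee$. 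As $\cl{O}(1)$ is unramified, this identifies $(\cl{M}^\vee(1))_S^{I_F}$ with $C^\vee$, on which the Frobenius acts as $q^{-1}(\Fr^\vee)^{-1}$ and the monodromy as $-N^\vee$.

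With these identifications, \Cref{prop:WD_cohomology} presents both $R\Gamma(\WD_F, \cl{M})^\vee$ and $R\Gamma(\WD_F, \cl{M}^\vee(1))[2]$ as explicit 3-term complexes in cohomological degrees $[-2, 0]$ built out of $C^\vee$ equipped with $\Fr^\vee$ and $N^\vee$. I then define a chain map $\Phi = (\phi_{-2}, \phi_{-1}, \phi_0)$ from $R\Gamma(\WD_F, \cl{M}^\vee(1))[2]$ to $R\Gamma(\WD_F, \cl{M})^\vee$ by
\[
\phi_{-2} = \id, \qquad \phi_{-1} = \begin{pmatrix} 0 & -1 \\ q\Fr^\vee & 0 \end{pmatrix}, \qquad \phi_0 = -\Fr^\vee.
\]
A direct matrix computation, which uses the commutation $\Fr^\vee N^\vee = q N^\vee \Fr^\vee$ to pass $q$-factors between the $\Fr$- and $N$-entries, verifies that both squares commute; $\Phi$ is termwise invertible because $\phi_{-2}$, $\phi_0$, and $\det \phi_{-1} = q \Fr^\vee$ are all units. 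Naturality in $\cl{M}$ is automatic, since each component of $\Phi$ is built from the functorial data $\Fr, N$, duality, and the Tate twist.

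The main obstacle is justifying the identification $(\cl{M}^\vee)^{I_F} \simeq (\cl{M}^{I_F})^\vee$ for a perfect complex $\cl{M}$ carrying a smooth $W_F$-action: one must propagate the openness of the $I_F$-kernel through the derived structure of $\cl{M}$ so that $I_F$ truly acts through a finite quotient in a derived sense, at which point characteristic-$0$ averaging concludes. Once this is in place, the remainder of the proof reduces to the routine matrix verification sketched above.
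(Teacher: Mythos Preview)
Your proposal is correct and is exactly the approach the paper intends: the paper's proof is the single sentence ``This follows from the explicit description of \Cref{prop:WD_cohomology},'' and you have spelled out precisely what that entails, namely writing both sides as explicit three-term complexes in $C^\vee$ and exhibiting an invertible chain map. Your concern about $(\cl{M}^\vee)_S^{I_F}\simeq(\cl{M}_S^{I_F})^\vee$ is legitimate but resolves as you outline: the statement is local on $S$, so one may assume $S$ affine, where perfect complexes are compact; then the filtered colimit $\cl{M}_S=\colim_K \cl{M}_S^K$ of split summands from \Cref{lem:IF_exact}(2) stabilizes, $I_F$ acts through a finite quotient, and characteristic-$0$ averaging gives the identification.
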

\begin{proof}
    This follows from the explicit description of \Cref{prop:WD_cohomology}. 
\end{proof}

\begin{prop} \label{prop:ParLGcotangent}
    For a derived $\Lambda$-scheme $S$ and $x \in \Par_{\LG}(S)$, let $\widehat{\mfr{g}}_x = \cl{P}_x \times^{\LG} \widehat{\mfr{g}}$ be a vector bundle on $[S / \WD_{F}]$. For every $(S, x)$, the cotangent space of $\Par_{\LG}$ at $(S, x)$ is corepresentable by the perfect complex
    \[
        R\Gamma( \WD_{F}, \widehat{\mfr{g}}_x^\vee(1))[1]
    \]
    with tor-amplitude in $[-1, 1]$. In particular, $\Par_{\LG}$ admits a cotangent complex. 
\end{prop}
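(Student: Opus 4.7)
The plan is to start from the formula for the cotangent space given immediately before the proposition, namely
\[
    \cl{M} \mapsto \tau^{\leq 0} R\Gamma\bigl(\WD_{F}, \pi^*\cl{M} \otimes \widehat{\mfr{g}}_x\bigr)[1] \qquad \bigl(\cl{M} \in \QCoh(S)^{\leq 0}\bigr),
\]
where $\pi \colon [S/\WD_F] \to S$, and to rewrite it using two facts already established about Weil--Deligne cohomology: the projection formula for $R\Gamma(\WD_F,-)$ and Tate duality (\Cref{cor:Tatedual}).

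First I would apply the projection formula to factor $\pi^*\cl{M}$ out of the cohomology, obtaining
\[
    \tau^{\leq 0}\bigl(\cl{M} \otimes_{\cl{O}_S} R\Gamma(\WD_F, \widehat{\mfr{g}}_x)[1]\bigr).
\]
The second step is to identify the auxiliary object
\[
    P \;=\; R\Gamma(\WD_F, \widehat{\mfr{g}}_x^\vee(1))[1]
\]
as the would-be corepresenting complex. For this I would first argue that $P$ is perfect of tor-amplitude $[-1,1]$ over $\cl{O}_S$: by \Cref{prop:WD_cohomology} the object $R\Gamma(\WD_F, \widehat{\mfr{g}}_x^\vee(1))$ is computed by an explicit three-term complex built from $(\widehat{\mfr{g}}_x^\vee(1))^{I_F}$ placed in degrees $0,1,2$, and since $\widehat{\mfr{g}}_x$ is a vector bundle with a smooth (hence finite-quotient) $W_F$-action and we are in characteristic zero, taking $I_F$-invariants preserves finitely generated projectives. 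So the three-term complex is a complex of finite locally free $\cl{O}_S$-modules, giving tor-amplitude $[0,2]$ before the shift and $[-1,1]$ after.

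Once $P$ is known to be perfect, the internal hom in $\QCoh(S)$ identifies with tensoring with the dual, so $\Map_{\QCoh(S)}(P, \cl{M}) \simeq \tau^{\leq 0}(P^\vee \otimes_{\cl{O}_S} \cl{M})$. By Tate duality,
\[
    P^\vee \;\simeq\; R\Gamma(\WD_F, \widehat{\mfr{g}}_x^\vee(1))^\vee[-1] \;\simeq\; R\Gamma(\WD_F, \widehat{\mfr{g}}_x)[2][-1] \;=\; R\Gamma(\WD_F, \widehat{\mfr{g}}_x)[1],
\]
matching exactly the functor computed in the first step. Hence $P$ corepresents the cotangent space at $(S,x)$.

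Finally, to upgrade pointwise corepresentability to the existence of a genuine cotangent complex on $\Par_{\LG}$, I would invoke the base change property of $R\Gamma(\WD_F,-)$: for any morphism $f \colon S' \to S$ and the pullback $x' = f^*x$, the formation of $R\Gamma(\WD_F, \widehat{\mfr{g}}_{x'}^\vee(1))$ is compatible with $f^*$, so the perfect complexes $P$ glue functorially in $(S,x)$. The main subtlety that needs care is ensuring this base-change compatibility at the derived level — but it is already packaged into the statement of \Cref{prop:WD_cohomology} that $R\Gamma(\WD_F,-)$ commutes with small colimits and satisfies base change along $S$, so no new work is required beyond keeping track of functorialities.
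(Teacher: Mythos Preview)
Your proposal is correct and follows essentially the same route as the paper: projection formula, then use perfectness to rewrite the tensor product as an internal Hom, then Tate duality (\Cref{cor:Tatedual}) to identify the corepresenting object; the paper is simply terser and, for the final ``glue to a global cotangent complex'' step, cites \cite[Remark 1.3.14]{DAGXIV} rather than spelling out the base-change argument as you do. Your added justification of the tor-amplitude via the explicit three-term complex of \Cref{prop:WD_cohomology} is exactly what the paper leaves implicit.
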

\begin{proof}
    The cotangent space at $(S, x)$ sends $\cl{M} \in \QCoh(S)^{\leq 0}$ to the truncation of
    \begin{align*}
         R\Gamma( \WD_{F} , \cl{M} \otimes \widehat{\mfr{g}}_x) [1] & \cong \cl{M} \otimes R\Gamma( \WD_{F}, \widehat{\mfr{g}}_x)[1] \\
        & \cong R\Hom(R\Gamma( \WD_{F}, \widehat{\mfr{g}}_x)^\vee[-1], \cl{M}). 
    \end{align*}
    Thus, the claim follows from \Cref{cor:Tatedual}. The second claim follows from \cite[Remark 1.3.14]{DAGXIV} since the Weil-Deligne cohomology satisfies the base change along $S$ by \Cref{prop:WD_cohomology}. 
\end{proof}

Next, we verify that the classical truncation of $\Par_{\LG}$ coincides with $[Z^1(\WD_F, \Lambda) / \LG]$. In the following, a group derived scheme refers to a group object in the sense of \cite[Definition 7.2.2.1]{HTT}. The category of group derived schemes over $Q$ is denoted by $\grp/Q$.


\begin{prop} \label{prop:ParLGcl}
    We have $\Par_{\LG} \cong \und{\Map}_{\grp/Q}(\WD_F, \LG) / \widehat{G}$. Moreover, we have
    \[
        \clas \und{\Map}_{\grp/Q}(\WD_F, \LG)\cong Z^1(\WD_F, \Lambda) ,\quad 
        \clas \Par_{\LG} \cong [Z^1(\WD_F, \Lambda) / \widehat{G}]. 
    \]
\end{prop}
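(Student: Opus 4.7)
The plan is to first establish the quotient presentation $\Par_{\LG}\cong\und{\Map}_{\grp/Q}(\WD_F,\LG)/\widehat{G}$ by unwinding the mapping-stack definition in \Cref{defi:parst}, and then to verify the classical-truncation claim by a direct computation using the semidirect decomposition $\WD_F=\bb{G}_a\rtimes\und{W_F}$. The statement about $\clas\Par_{\LG}$ will then follow by passing to classical truncations in the quotient presentation.

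For the first identification, I fix an animated $\Lambda$-algebra $R$ and unwind $\Par_{\LG}(R)=\Map_{[\ast/Q]}([\Spec R/\WD_F],[\ast/\LG])$. An $R$-point corresponds to a $\LG$-torsor $P$ on $\Spec R$ equipped with a $\WD_F$-equivariant structure, together with an identification of the associated $Q$-torsor $P\times^{\LG}Q$ with the canonical one induced from $\WD_F\to Q$. Because $\LG=\widehat{G}\rtimes Q$, specifying such a trivialization of the $Q$-reduction is equivalent to giving the underlying $\widehat{G}$-torsor; since $\widehat{G}$ is smooth, $P$ is \'etale-locally trivial on $\Spec R$. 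After trivializing, the $\WD_F$-equivariant structure becomes a group scheme homomorphism $\WD_{F,R}\to\LG_R$ over $Q$, and two trivializations differ by a section of $\widehat{G}$ acting by conjugation. Assembling these observations via \'etale descent yields a $\widehat{G}$-torsor $\und{\Map}_{\grp/Q}(\WD_F,\LG)\to\Par_{\LG}$, giving the desired isomorphism.

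For the classical truncation of $\und{\Map}_{\grp/Q}(\WD_F,\LG)$, I take $R$ to be a classical $\Lambda$-algebra and decompose a homomorphism $\phi\colon\WD_{F,R}\to\LG_R$ over $Q$ according to $\WD_F=\bb{G}_a\rtimes\und{W_F}$. The restriction $\varphi^\sm=\phi|_{\und{W_F}}$ is a smooth homomorphism $\und{W_F}_R\to\LG_R$ over $Q$, while the restriction to $\bb{G}_a$ factors through $\widehat{G}_R$ because $\bb{G}_a$ is connected and maps trivially to $Q$. In characteristic zero, homomorphisms $\bb{G}_{a,R}\to\widehat{G}_R$ over a classical $R$ are classified by nilpotent elements $N\in\widehat{\mfr{g}}\otimes R$ via the exponential, and the semidirect-product compatibility translates precisely to the relation $\Ad(\varphi^\sm(\sigma))N=\lvert\sigma\rvert N$ from \Cref{defi:WDpara}. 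This identifies $\clas\und{\Map}_{\grp/Q}(\WD_F,\LG)$ with $Z^1(\WD_F,\Lambda)$, and taking classical truncations of the first identification (which commutes with quotients by the classical smooth group scheme $\widehat{G}$) yields $\clas\Par_{\LG}\cong[Z^1(\WD_F,\Lambda)/\widehat{G}]$.

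The main obstacle I anticipate is the careful treatment of the first step over animated base rings: one must verify that $\widehat{G}$-torsors on $\Spec R$ are \'etale-locally trivial in the derived sense and that the resulting $\widehat{G}$-descent for the mapping stacks is valid at the $\infty$-categorical level. This should follow from smoothness of $\widehat{G}$ together with the descent properties already implicit in \Cref{defi:parst}, but it requires some care in organizing the $\widehat{G}$-equivariance so that the equivalence holds on all of $\CAlg_\Lambda$ rather than just on classical rings.
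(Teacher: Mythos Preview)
Your proposal is correct and follows essentially the same strategy as the paper. The main difference is in execution: the paper constructs the universal $\widehat{G}$-torsor $\widehat{\cl{P}}^{\univ}\to\Par_{\LG}$ directly from the pullback diagram
\[
\begin{tikzcd}
S \ar[r, dotted] \ar[d] & \lbrack \ast/\widehat{G} \rbrack \ar[d] \ar[r] & \ast \ar[d] \\
\lbrack S / \WD_{F} \rbrack \ar[r, "x"] & \lbrack \ast/\LG \rbrack \ar[r] & \lbrack \ast/Q \rbrack
\end{tikzcd}
\]
and then identifies its functor of points with $\und{\Map}_{\grp/Q}(\WD_F,\LG)$ by taking \v{C}ech nerves: the pointed mapping anima $\Map_{S/(-)/[\ast/Q]}([S/\WD_F],[\ast/\LG])$ becomes $\Map_{Q^\bullet}(\WD_F^\bullet,(\LG)^\bullet)=\Map_{\grp/Q}(\WD_{F,S},\LG)$. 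This sidesteps the local-trivialization-and-descent maneuver you outline and thereby handles the animated case uniformly, which is exactly the obstacle you flagged. Your approach works too, but the \v{C}ech nerve argument is cleaner and avoids having to organize the $\widehat{G}$-equivariance by hand.
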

\begin{proof}
    Let $S$ be a derived scheme over $\Lambda$ and let $x \in \Par_{\LG}(S)$. From the diagram
    \begin{center}
        \begin{tikzcd}
            S \ar[r, dotted] \ar[d] & \lbrack \ast/\widehat{G} \rbrack \ar[d] \ar[r] & \ast \ar[d] \\
            \lbrack S / \WD_{F, S} \rbrack \ar[r, "x"] & \lbrack \ast/\LG \rbrack \ar[r] & \lbrack \ast/Q \rbrack, 
        \end{tikzcd}
    \end{center}
    we get a canonical $\widehat{G}$-torsor $\cl{\widehat{P}}_x$ over $S$. Let $\widehat{\cl{P}}^\univ \to \Par_{\LG}$ be the universal $\widehat{G}$-torsor. 
    Then, $\widehat{\cl{P}}^\univ(S)$ is isomorphic to the pointed mapping anima $\Map_{S/(-)/[\ast/Q]}([S/\WD_{F, S}], [\ast/\LG])$. By taking the \v{C}ech nerve, we have 
    \[
        \Map_{S/(-)/[\ast/Q]}([S/\WD_{F}], [\ast/\LG]) = \Map_{Q^\bullet}(\WD_{F}^\bullet, (\LG)^\bullet) =\Map_{\grp/Q}(\WD_{F, S}, \LG).  
    \]
    Here, the second term refers to the mapping anima of simplicial sets. Thus, we get the first claim. When $S$ is classical, $\Map_{\grp/Q}(\WD_{F, S}, \LG)$ is static since $\WD_{F, S}$ and $\LG$ are both classical. Since a homomorphism $\bb{G}_{a, S} \to \LG$ uniquely corresponds to an element $N \in \widehat{\mfr{g}} \otimes \cl{O}_S$ that is nilpotent Zariski locally on $S$, it is easy to see from \Cref{defi:WDpara} that $\und{\Map}_{\grp/Q}(\WD_F, \LG)(S) = Z^1(\WD_F, \LG)(S)$. Thus, we get the second claim. 
\end{proof}
    
\begin{prop} \label{prop:ParLGArtin}
    The derived stack $\Par_{\LG}$ is a derived $1$-stack locally of finite presentation. 
\end{prop}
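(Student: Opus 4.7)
The plan is to apply the Artin--Lurie representability theorem (in the form of \cite{DAGXIV}) to the functor $\Par_{\LG}$. This reduces the problem to checking five properties: (a) descent, (b) the classical truncation is a classical $1$-Artin stack locally of finite presentation, (c) existence of a cotangent complex with appropriate tor-amplitude, (d) nilcompleteness, and (e) infinitesimal cohesiveness. Most of these follow formally from the description of $\Par_{\LG}$ as a mapping stack, with the key input already in hand.

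First, \Cref{prop:ParLGcl} identifies $\clas\Par_{\LG}$ with $[Z^1(\WD_F,\LG)/\widehat{G}]$, which is a classical $1$-Artin stack locally of finite presentation over $\Lambda$ by \Cref{prop:ParLGproperty}. Next, \Cref{prop:ParLGcotangent} provides a cotangent complex, and it is perfect with tor-amplitude in $[-1,1]$; the leftmost bound $-1$ is exactly what is required for a derived $1$-stack (this matches the cotangent amplitude of $[\ast/\widehat{G}]$, which is the source of the derived structure in the quotient presentation). For descent, I would use that $\und{\Map}_{[\ast/Q]}([\ast/\WD_F],-)$ preserves fpqc sheaves because it is a right adjoint (being a mapping stack) to a colimit-type construction, and $[\ast/\LG]$ is an fpqc stack since $\LG$ is smooth affine.

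For nilcompleteness and infinitesimal cohesiveness, the strategy is to show that these conditions pass from the target $[\ast/\LG]$ to the mapping stack. Concretely, for any derived $\Lambda$-algebra $R$,
\[
\Par_{\LG}(R) = \Map_{[\ast/Q]}([\Spec R/\WD_F],[\ast/\LG]),
\]
and since $\WD_F$ is a flat affine group scheme over $\bb{Q}$ (after choosing $\Fr$, one can write $\WD_F\cong\bb{G}_a\rtimes(\und{I_F}\rtimes\bb{Z})$ with $\und{I_F}$ pro-finite \'etale), base change along $\Spec R\to\Spec\Lambda$ behaves well. Nilcompleteness, i.e. $\Par_{\LG}(R)\cong\lim_n\Par_{\LG}(\tau_{\leq n}R)$, reduces to the corresponding statement for $[\ast/\LG]$, which holds because $\LG$ is a smooth algebraic group. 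The same input gives infinitesimal cohesiveness: for a square-zero extension $\widetilde R\to R$ by a connective $R$-module $M$, one needs that the square
\[
\Par_{\LG}(\widetilde R) \to \Par_{\LG}(R)\times_{\Par_{\LG}(R\oplus M[1])}\Par_{\LG}(R)
\]
is an equivalence, which again follows from cohesiveness of $[\ast/\LG]$ applied \textit{pointwise} over $[\ast/\WD_F]$.

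Finally, to conclude local finite presentation, I would invoke that having a perfect cotangent complex together with the classical truncation being locally of finite presentation suffices, given nilcompleteness and cohesiveness (see \cite[Theorem 3.2.1]{DAGXIV}). The main obstacle I anticipate is writing down the reduction of nilcompleteness and cohesiveness from the mapping stack to the target $[\ast/\LG]$ in a way that cleanly handles the non-nice source $[\ast/\WD_F]$; the fact that $\WD_F$ is a pro-smooth affine group scheme (and that $I_F$ has finite cohomological dimension as controlled by the Weil--Deligne cohomology computation in \Cref{prop:WD_cohomology}) is what makes this reduction go through, and is essentially the same input that made the cotangent complex perfect.
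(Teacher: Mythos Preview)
Your overall strategy matches the paper's: both apply the Artin--Lurie representability criterion, using \Cref{prop:ParLGcl} for the classical truncation and \Cref{prop:ParLGcotangent} for the cotangent complex, and then upgrade from almost to locally of finite presentation via perfectness of the cotangent complex.

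There is one genuine omission. You list five conditions, but the version of Artin--Lurie used in the paper (\Cref{thm:Artin_Lurie_representability}, condition (1)) also requires that $\Par_{\LG}$ preserve filtered colimits of $n$-truncated algebras for each $n$. You seem to treat ``locally of finite presentation'' as something deduced after the fact from the perfect cotangent complex and the classical truncation, but almost finite presentation is an \emph{input} to the representability theorem, not an output; the perfect cotangent complex only upgrades almost to locally of finite presentation once representability is known. This condition is not automatic from nilcompleteness and cohesiveness, and your sketch does not address it.

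The paper handles this, together with nilcompleteness and infinitesimal cohesiveness, by a single clean device that sharpens your ``pointwise over $[\ast/\WD_F]$'' idea: write $[\ast/\WD_F] = \colim_{n \in \Delta^{\op}} \WD_F^n$, so that
\[
\Par_{\LG}(S) \;\cong\; \lim_{n \in \Delta} \Map_{[\ast/Q]}(\WD_{F,S}^n,[\ast/\LG]) \;\cong\; \lim_{n \in \Delta} \Map(\WD_{F,S}^n,[\ast/\widehat{G}]).
\]
Nilcompleteness and cohesiveness pass to limits, so they follow immediately from the corresponding properties of $[\ast/\widehat{G}]$. For the filtered-colimit condition, limits do \emph{not} commute with filtered colimits in general; the paper observes that when $S$ is $n$-truncated each term $\Map(\WD_{F,S}^m,[\ast/\widehat{G}])$ is $(n+1)$-truncated (by \cite[Corollary 5.3.8]{DAG}), and a totalization of uniformly truncated spaces does commute with filtered colimits. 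This truncatedness bound is the missing ingredient in your sketch, and it is exactly what the simplicial description buys you over the vaguer ``pointwise'' reduction. Your remark about the Weil--Deligne cohomology input is not needed here; finite cohomological dimension of $\WD_F$ plays no role in these three checks.
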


See \Cref{app:DAG} for derived $1$-stacks and the Artin-Lurie representability. 

\begin{proof}
    We will apply the Artin-Lurie representability criterion \cite[Theorem 7.1.6]{DAG} to $\Par_{\LG}$. Note that it will only show that $\Par_{\LG}$ is almost of finite presentation. It implies the claim since the cotangent complex of $\Par_{\LG}$ is perfect (cf.\ \cite[Proposition 3.2.18]{DAG}). 
    
    First, the conditions on $\clas \Par_{\LG}$, (3) and (7) in loc. cit., follow from \Cref{prop:ParLGcl}. Moreover, $\Par_{\LG}$ satisfies the \'{e}tale descent, and admits a cotangent complex by \Cref{prop:ParLGcotangent}. It remains to see that $\Par_{\LG}$ commutes with filtered colimits on truncated objects, and is infinitesimally cohesive and convergent. 

    Since $[\ast/\WD_{F}] = \colim_{n \in \Delta^\op} \WD_{F}^n$, we have 
    \[
        \Par_{\LG}(S) = \lim_{n \in \Delta} \Map_{[\ast/Q]}(\WD_{F,S}^n, [\ast/\LG]). 
    \]
    Here, each term is isomorphic to $\Map(\WD_{F,S}^n, [\ast/\widehat{G}])$, so it commutes with filtered colimits on truncated objects, and is infinitesimally cohesive and convergent. Since the latter two conditions are preserved under limits, they hold for $\Par_{\LG}$. Moreover, if $S$ is $n$-truncated, $\Map(\WD_{F,S}^n, [\ast/\widehat{G}])$ is $(n+1)$-truncated by \cite[Corollary 5.3.8]{DAG}. Since the totalization of truncated spaces commutes with filtered colimits, $\Par_{\LG}$ commutes with filtered colimits on truncated objects. 
\end{proof}

As in \cite[Section 1.3.6]{GR17I}, there is a natural map 
\[
    [Z^1(\WD_F, \LG) / \widehat{G}] = \clas \Par_{\LG} \to \Par_{\LG}. 
\]
It can be seen to be an isomorphism by comparing the cotangent complexes. 

\begin{prop}
    The natural map $[Z^1(\WD_F, \LG) / \widehat{G}] \to \Par_{\LG}$ is an isomorphism. 
\end{prop}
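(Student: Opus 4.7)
The plan is to show that the natural map $j \colon \clas \Par_{\LG} = [Z^1(\WD_F, \LG)/\widehat{G}] \to \Par_{\LG}$, constructed in \Cref{prop:ParLGcl}, is étale. Since $j$ is automatically a closed immersion and is surjective on classical truncations by construction, being étale will force $j$ to be an isomorphism.

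By the fiber sequence $j^\ast L_{\Par_{\LG}} \to L_{\clas \Par_{\LG}} \to L_j$, étaleness amounts to showing that $j^\ast L_{\Par_{\LG}} \to L_{\clas \Par_{\LG}}$ is an isomorphism. Both sides are perfect of tor-amplitude $[-1, 1]$: the left by \Cref{prop:ParLGcotangent}, and the right because $Z^1(\WD_F, \LG)$ is a classical local complete intersection of dimension $\dim \widehat{G}$ by \Cref{prop:ParLGproperty}, while $\widehat{G}$ is smooth. It therefore suffices to check the comparison on tangent complexes at each classical geometric point $x \in Z^1(\WD_F, \LG)(k)$ with $k$ a field.

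By \Cref{prop:ParLGcotangent} and \Cref{cor:Tatedual}, the tangent complex of $\Par_{\LG}$ at $x$ is $R\Gamma(\WD_F, \widehat{\mfr{g}}_x)[1]$, which by \Cref{prop:WD_cohomology} is represented by the explicit three-term complex
\[
\widehat{\mfr{g}}_x^{I_F} \xrightarrow{(\Fr - 1,\, N)} \widehat{\mfr{g}}_x^{I_F} \oplus \widehat{\mfr{g}}_x^{I_F} \xrightarrow{(N,\, -q\Fr + 1)} \widehat{\mfr{g}}_x^{I_F}
\]
placed in degrees $[-1, 1]$. On the other side, the tangent complex of $\clas \Par_{\LG}$ at $x$ fits into the standard quotient-stack triangle
\[
\widehat{\mfr{g}}_x \xrightarrow{\text{inf. action}} T_{Z^1(\WD_F, \LG), x} \to T_{[Z^1/\widehat{G}], x},
\]
and $T_{Z^1(\WD_F, \LG), x}$ is computed from the presentation of $Z^1(\WD_F, \LG)$ as the locus of pairs $(\varphi^\sm, N)$ with $\varphi^\sm$ a smooth homomorphism $W_F \to \LG$ over $Q$ and $\Ad(\varphi^\sm(\sigma)) N = \lvert \sigma \rvert N$ for all $\sigma \in W_F$. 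Infinitesimal deformations of $\varphi^\sm$ give smooth $1$-cocycles of $W_F$ with values in $\widehat{\mfr{g}}_x$ (whose $B^1$ part is the $\widehat{G}$-action direction); deformations of $N$ give elements of $\widehat{\mfr{g}}_x$; and linearizing the constraint produces exactly the two differentials above, with the $I_F$-invariance detecting the smooth (i.e.\ locally constant) part of the $W_F$-cocycle condition.

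The main obstacle will be the explicit matching in this last step: identifying the Koszul-type complex cutting out $Z^1(\WD_F, \LG)$ inside the space of smooth homomorphisms $\und{\Map}_{\grp/Q}(\und{W_F}, \LG) \times \widehat{\mfr{g}}$, together with the $\widehat{G}$-quotient shift, with the Weil-Deligne cohomology complex of \Cref{prop:WD_cohomology}. Once this term-by-term correspondence is established, $j^\ast L_{\Par_{\LG}} \to L_{\clas \Par_{\LG}}$ is an isomorphism, $j$ is étale, and the proof is complete.
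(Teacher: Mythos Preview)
Your proposal is correct and takes essentially the same approach as the paper: both show the relative cotangent complex $L_j$ vanishes by matching the cotangent complex of $[Z^1(\WD_F,\LG)/\widehat{G}]$ (via its quasi-smooth presentation) with the Weil--Deligne cohomology description of \Cref{prop:ParLGcotangent}, with the paper citing \cite[Lemma 3.3.1]{Zhu21} and \cite[Corollary VIII.2.3]{FS24} for the explicit matching you sketch. The only difference is in the endgame: the paper passes to a smooth cover $S \to \Par_{\LG}$ and argues that the pullback $S' \to S$ is flat via \cite[Corollary 3.4.10]{DAG} (after using \cite[Theorem B.2.14]{AG15} to see $S'$ is a scheme), rather than your ``\'etale closed immersion surjective on points'' observation, but these are equivalent conclusions from $L_j = 0$.
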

\begin{proof}
    By \Cref{prop:ParLGproperty}, $[Z^1(\WD_F, \LG) / \widehat{G}]$ is quasi-smooth. It can be seen from the explicit description in the proof of \cite[Lemma 3.3.1]{Zhu21} and \cite[Corollary VIII.2.3]{FS24} that its cotangent complex is equal to the pullback of that of $\Par_{\LG}$ (see \Cref{prop:ParLGcotangent}). In particular, the relative cotangent complex of $[Z^1(\WD_F, \LG) / \widehat{G}] \to \Par_{\LG}$ is trivial. 

    Since $\Par_{\LG}$ is a derived $1$-stack over $\Lambda$, we may take a smooth cover $x \colon S \to \Par_{\LG}$ from a disjoint union of affine $\Lambda$-schemes. Let
    \[
        S' = [Z^1(\WD_F, \LG) / \widehat{G}] \times_{\Par_{\LG}, x} S. 
    \]
    It is enough to show that $S' \to S$ is an isomorphism. First, $S'$ is an Artin $\Lambda$-stack locally of finite presentation over $\Lambda$, and the relative cotangent complex $L_{S'/S}$ vanishes. Since the cotangent complex of $S$ is connective, that of $S'$ is connective. Moreover, $\clas S' \cong \clas S$, so by \cite[Theorem B.2.14]{AG15}, $S'$ is a derived scheme. Then, $S' \to S$ is flat by \cite[Corollary 3.4.10]{DAG}, so we get $S' \cong S$ from $\clas S'\cong \clas S$. 
\end{proof}

\subsubsection{Ind-coherent sheaves on $\Par_{\LG}$} \label{ssec:indcoh}

Recall from \Cref{prop:ParLGproperty} that $\Par_{\LG}$ is a disjoint union of the $\widehat{G}$-quotient of an affine $\widehat{G}$-scheme. Take the decomposition
\[
    \Par_{\LG} = \coprod_{x \in \pi_0(\Par_{\LG})} \Par_{\LG, x}
\]
into connected components. By \cite[Proposition 3.21]{BZFN10}, $\Par_{\LG, x}$ is perfect, so
\[
    \QCoh(\Par_{\LG}) = \prod_{x \in \pi_0(\Par_{\LG})} \QCoh(\Par_{\LG, x}) = \Ind\bigl(\bigoplus_{x} \Perf(\Par_{\LG, x})\bigr). 
\]
Similarly, since $\Par_{\LG, x}$ is a QCA stack, we have 
\[
    \IndCoh(\Par_{\LG}) = \prod_{x \in \pi_0(\Par_{\LG})} \IndCoh(\Par_{\LG, x}) = \Ind\bigl(\bigoplus_{x} \Coh(\Par_{\LG, x})\bigr). 
\]

\subsection{Relative spectral stacks $\Par_{\LG}^{\widehat{X}}$} \label{ssec:relspec}

In this section, we introduce the relative spectral stack $\Par_{\LG}^{\widehat{X}}$ and the associated unnormalized $L$-sheaf $\cl{L}_{\widehat{X}}$ for a suitable derived $\LG$-scheme $\widehat{X}$ over $\Lambda$. Here, our convention on $\LG$-schemes is the same as \Cref{conv:Gvar}.

\begin{defi} \label{defi:relspec}
    Let $\widehat{X}$ be a derived $\LG$-scheme locally of finite presentation (see \cite[p.32]{DAG}) over $\Lambda$ such that $\clas\widehat{X}$ is $\widehat{G}$-quasi-projective over $\Lambda$. Let $\Par_{\LG}^{\widehat{X}}$ be the mapping stack $\und{\Map}_{[\ast/Q]}([\ast/\WD_F], [\widehat{X}/\LG])$ over $\CAlg_\Lambda$. 
\end{defi}

\begin{rmk}
    Here, a $\widehat{G}$-scheme over $\Lambda$ is $\widehat{G}$-quasi-projective if it admit a $\widehat{G}$-equivariant ample line bundle. By \cite[Theorem 2.5]{Sum75}, this condition is satisfied when $\Lambda$ is normal and the $\widehat{G}$-scheme is normal and quasi-projective over $\Lambda$. 
\end{rmk}

\begin{exa}
    Let $\widehat{H} \subset \widehat{G}$ be a flat closed subgroup scheme stable under the action of $Q$. By abuse of notation, let $\LH = \widehat{H} \rtimes Q$ and suppose $\widehat{X} = \widehat{G}/\widehat{H}$. Then, $[\widehat{X}/\LG] \cong [\ast/\LH]$ and $\Par_{\LG}^{\widehat{X}} \cong \Par_{\LH}$. 
\end{exa}

In contrast to $\Par_{\LG}$, it is important to keep track of the derived structure of $\Par_{\LG}^{\widehat{X}}$ (see e.g. \Cref{prop:explicitrel}). We will show that $\Par_{\LG}^{\widehat{X}}$ is an Artin $\Lambda$-stack and $\Par_{\LG}^{\widehat{X}} \to \Par_{\LG}$ is schematic. 

First, we study the cotangent complex of $\Par_{\LG}^{\widehat{X}}$. Let $L_{[\widehat{X} / \widehat{G}]}$ be the relative cotangent complex of $[\widehat{X} / \LG] \to [\ast/Q]$. Since $\widehat{X}$ is locally of finite presentation, $L_{[\widehat{X} / \widehat{G}]}$ is perfect with tor-amplitude $(-\infty, 1]$ by \cite[Proposition 3.2.14]{DAG}. 

\begin{prop} \label{prop:ParLGXcotangent}
    For a derived $\Lambda$-scheme $S$ and $x \in \Par_{\LG}^{\widehat{X}}(S)$, the cotangent space of $\Par_{\LG}^{\widehat{X}}$ over $\Lambda$ at $(S, x)$ is corepresentable by the perfect complex
    \[
        R\Gamma(\WD_{F}, x^* L_{[\widehat{X} / \widehat{G}]} (1))[2]
    \]
    with tor-amplitude in $(-\infty, 1]$. In particular, $\Par_{\LG}^{\widehat{X}}$ admits a cotangent complex. 
\end{prop}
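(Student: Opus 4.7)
The plan is to compute the cotangent space of $\Par_{\LG}^{\widehat{X}}$ at a point $(S, x)$ by a direct deformation-theoretic argument, paralleling the proof of \Cref{prop:ParLGcotangent}. Let $f_x \colon [S/\WD_{F}] \to [\widehat{X}/\LG]$ be the map classified by $x$, and let $p\colon [S/\WD_{F}] \to S$ denote the structure map. For $\cl{M} \in \QCoh(S)^{\leq 0}$, I would first identify the space of $\cl{M}$-deformations of $x$ with the space of $p^*\cl{M}$-deformations of $f_x$, using that $\WD_{F}$ is flat so that forming the quotient is compatible with square-zero extensions (i.e.\ $[S[\cl{M}]/\WD_{F}]$ is the correct square-zero extension of $[S/\WD_{F}]$ by $p^*\cl{M}$). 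Standard deformation theory for morphisms of derived stacks then identifies this deformation space with $\Map_{[S/\WD_{F}]}(f_x^* L_{[\widehat{X}/\LG]}, p^*\cl{M})$.

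Next, since $L_{[\widehat{X}/\LG]}$ is perfect (hence dualizable), setting $T := L_{[\widehat{X}/\LG]}^{\vee}$ and applying the projection formula together with the base-change property of Weil-Deligne cohomology from \Cref{prop:WD_cohomology} rewrites the deformation space as
\[
R\Gamma(\WD_{F},\; f_x^* T \otimes p^*\cl{M}) \;\cong\; R\Gamma(\WD_{F}, f_x^* T) \otimes \cl{M}.
\]
The complex $K := R\Gamma(\WD_{F}, f_x^* T)$ is perfect on $S$, since $f_x^* T$ is perfect and $R\Gamma(\WD_F,-)$ preserves perfectness by the explicit three-term description in \Cref{prop:WD_cohomology}. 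Consequently the tangent functor $\cl{M} \mapsto K \otimes \cl{M}$ is corepresented by $K^{\vee}$, and applying the Tate-style duality of \Cref{cor:Tatedual} yields
\[
K^{\vee} \;\cong\; R\Gamma(\WD_{F},\, (f_x^* T)^{\vee}(1))[2] \;=\; R\Gamma(\WD_{F}, f_x^* L_{[\widehat{X}/\LG]}(1))[2],
\]
giving the stated formula.

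For the tor-amplitude, $L_{[\widehat{X}/\LG]}$ has tor-amplitude in $(-\infty, 1]$ because $\widehat{X}$ is locally of finite presentation, and the Tate twist preserves this. Since $R\Gamma(\WD_{F},-)$ has cohomological dimension $2$ by \Cref{prop:WD_cohomology}, the image has tor-amplitude in $(-\infty, 3]$, and the final shift by $[2]$ restores it to $(-\infty, 1]$. The pointwise cotangent spaces then assemble into a global cotangent complex exactly as in the concluding step of \Cref{prop:ParLGcotangent}, invoking the base-change property of Weil-Deligne cohomology together with \cite[Remark 1.3.14]{DAGXIV}. The main obstacle is the deformation-theoretic identification of $[S[\cl{M}]/\WD_{F}]$ as the correct square-zero extension of $[S/\WD_F]$ by $p^*\cl{M}$ and the verification that the projection formula and base change apply; once those are in place, the remainder is a mechanical combination of the projection formula and Tate duality already established in \Cref{cor:Tatedual}.
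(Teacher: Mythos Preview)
Your proposal is correct and follows essentially the same route as the paper: identify the tangent functor as $\cl{M}\mapsto R\Gamma(\WD_F,\cl{M}\otimes f_x^*L_{[\widehat{X}/\widehat{G}]}^\vee)$ via deformation theory (which the paper outsources to \cite[Section 10.3.2]{AG15} rather than spelling out the square-zero extension of $[S/\WD_F]$), then apply the projection formula from \Cref{prop:WD_cohomology} and Tate duality from \Cref{cor:Tatedual}. Your tor-amplitude bookkeeping and the concluding appeal to \cite[Remark 1.3.14]{DAGXIV} also match the paper's treatment.
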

\begin{proof}
    As in \cite[Section 10.3.2]{AG15}, the cotangent space at $x$ viewed as a functor
    \[
        \QCoh(S)^{\leq 0} \to \Ani
    \]
    is given by the truncation of the group cohomology
    \[
        \cl{M} \mapsto \tau^{\leq 0} R\Gamma( \WD_{F} , \cl{M} \otimes x^* L_{[\widehat{X} / \widehat{G}]}^\vee). 
    \]
    By \Cref{prop:WD_cohomology} and \Cref{cor:Tatedual}, it is isomorphic to the truncation of
    \[
        \cl{M} \otimes R\Gamma( \WD_{F} , x^* L_{[\widehat{X} / \widehat{G}]}^\vee) \cong R\Hom(R\Gamma( \WD_{F} , x^* L_{\widehat{X}}(1))[2], \cl{M}).  
    \]
\end{proof}

\begin{rmk} \label{rmk:ParLGXrelcotangent}
    Let $L_{\widehat{X}}$ be the relative cotangent complex of $[\widehat{X} / \LG] \to [\ast/\LG]$, which is perfect and connective. By the same proof as above, the relative cotangent space of $\Par_{\LG}^{\widehat{X}} \to \Par_{\LG}$ at $(S, x)$ is corepresentable by the connective perfect complex
    \[
        R\Gamma(\WD_{F}, x^* L_{\widehat{X}} (1))[2]. 
    \]
\end{rmk}

Next, we study the classical truncation of $\Par_{\LG}^{\widehat{X}}$. 

\begin{prop} \label{prop:ParLGXclassical}
    Let $R$ be a $\Lambda$-algebra and let $x \in \Par_{\LG}(R)$. Let $\widehat{\cl{P}}_x$ be the $\widehat{G}$-torsor over $R$ corresponding to $x$. 
    \begin{enumerate}
        \item Let $\clas \widehat{X}_x = \cl{\widehat{P}}_x \times^{\widehat{G}} \clas \widehat{X}$. Then, $\clas \widehat{X}_x$ is represented by a scheme. 
        \item Let $\widehat{X}_R^{\WD_F} = \Par_{\LG}^{\widehat{X}} \times_{\Par_{\LG}, x} \Spec(R)$. Then, $\clas\widehat{X}_R^{\WD_F}$ is a closed subscheme of $\clas \widehat{X}_x$. 
    \end{enumerate}
\end{prop}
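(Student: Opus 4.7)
My plan is to handle the two parts separately: part (1) by fppf descent of quasi-projective schemes equipped with a $\widehat{G}$-equivariant ample line bundle, and part (2) by identifying $\clas \widehat{X}_R^{\WD_F}$ with the $\WD_F$-fixed-point subscheme of $\clas \widehat{X}_x$ and verifying this fixed-point condition is closed in stages.

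For (1), the $\widehat{G}$-torsor $\widehat{\cl{P}}_x \to \Spec(R)$ is fppf since $\widehat{G}$ is smooth over $\Lambda$, and after base change along this cover the twisted form trivializes to $\widehat{\cl{P}}_x \times_\Lambda \clas \widehat{X}$, which is manifestly a scheme carrying a canonical $\widehat{G}$-equivariant ample line bundle pulled back from $\clas \widehat{X}$. Since this ample line bundle descends, the effective descent theorem for quasi-projective schemes (\cite[Theorem 7]{BLR90}, already invoked after \Cref{conv:Gvar}) produces $\clas \widehat{X}_x$ as a quasi-projective $R$-scheme.

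For (2), I would first unravel the functor of points. For a classical $R$-algebra $A$, the anima $\widehat{X}_R^{\WD_F}(A)$ parametrizes lifts of $x_A \colon [\Spec(A)/\WD_F] \to [\ast/\LG]$ to $[\widehat{X}/\LG]$, and such lifts correspond to $\WD_F$-equivariant sections of the twisted form $\widehat{X}_{x,A} := \widehat{\cl{P}}_{x,A} \times^{\widehat{G}} \widehat{X} \to \Spec(A)$, with $\WD_F$ acting trivially on the base. Because $A$ is classical, every map from $\Spec(A)$ into the derived scheme $\widehat{X}_{x,A}$ factors uniquely through the classical truncation $\clas \widehat{X}_x \times_R A$, so $\clas \widehat{X}_R^{\WD_F}(A)$ becomes the set of $\WD_F$-equivariant sections of $\clas \widehat{X}_x \times_R A \to \Spec(A)$, equivalently the $A$-points of the $\WD_F$-fixed subfunctor $(\clas \widehat{X}_x)^{\WD_F}$. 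Writing $\WD_F = \bb{G}_a \rtimes \und{W_F}$, I would cut out this fixed locus in stages: the $\bb{G}_a$-fixed part is the closed subscheme given by the vanishing of the infinitesimal generator $N$ acting on $\clas \widehat{X}_x$; the $I_F$-fixed part is closed because $\varphi^\sm|_{I_F}$ has finite image in $\LG(R)$ as a smooth homomorphism from a profinite group to an algebraic group, reducing to the fixed locus of a finite group acting on a $\widehat{G}$-quasi-projective $R$-scheme; and the Frobenius equivariance condition is the equalizer of the Frobenius action and the identity.

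The main obstacle I anticipate is making the reduction from $I_F$ to a finite quotient rigorous over a general, possibly non-Noetherian $\Lambda$-algebra $R$. The idea is to use smoothness of $\varphi^\sm$ to ensure that $\varphi^\sm(I_F) \subset \LG(R)$ is locally constant and valued in a finite subset, so that the $I_F$-fixed subfunctor reduces to the fixed-point subscheme for a finite group acting on a quasi-projective $R$-scheme with an equivariant ample line bundle, which is then represented by a closed subscheme.
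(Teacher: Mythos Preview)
Your approach is correct and close in spirit to the paper's, but the organization of part (2) differs in an instructive way. For (1) you do exactly what the paper does: invoke the $\widehat{G}$-equivariant ample line bundle and effective descent from \cite[Theorem 7]{BLR90}.

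For (2), both arguments identify $\clas\widehat{X}_R^{\WD_F}$ with the $\WD_F$-fixed locus in $\clas\widehat{X}_x$ and then show this is closed. The paper first passes to the smooth cover $\widehat{\cl{P}}_x$ to trivialize the torsor (so that $\varphi^{\sm}$ and $N$ are literally available), uses the \v{C}ech nerve of $S \to [S/\WD_F]$ to verify staticness, and then takes the maximal closed locus $Z \subset \WD_{F,S}$ where the action agrees with projection; it decomposes $Z$ along the Frobenius cosets $\ID_{F,S}\rtimes\{\Fr^n\}$ and uses the open subgroup $K\subset I_F$ on which $\varphi_R$ is trivial to reduce each piece to a finite-type condition. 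You instead decompose $\WD_F$ as $\bb{G}_a \rtimes (I_F \rtimes \bb{Z})$ and handle each factor separately: $\bb{G}_a$-fixed via vanishing of the vector field induced by $N$, $I_F$-fixed via the finite quotient $I_F/K$, and $\Fr$-fixed via a single equalizer. This works because $\WD_F$ is generated by these three pieces, so the intersection of the three closed loci is exactly the $\WD_F$-fixed locus. Your anticipated obstacle is not a real one: the reduction from $I_F$ to a finite quotient is immediate from the definition of a \emph{smooth} homomorphism in \Cref{defi:WDpara}, and holds over any $R$, Noetherian or not. One point you leave implicit that the paper makes explicit: to speak of $\varphi^{\sm}$ and $N$ you must first trivialize the $\widehat{G}$-torsor, so the staged argument is really carried out after passing to $\widehat{\cl{P}}_x$ and then descended back, exactly as the paper does at the end.
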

\begin{proof}
    Since $\clas \widehat{X}$ admits a $\widehat{G}$-equivariant ample line bundle, (1) follows from the descent proved in \cite[Theorem 7]{BLR90}. For (2), we may pass to a smooth cover $\widehat{\cl{P}}_x$ and assume that $\Spec(R) \to \Par_{\LG}$ factors through $Z^1(\WD_F, \LG)$. Let $\varphi_R\colon \WD_{F, R} \to \LG$ be the associated $L$-parameter over $R$. Then, $\WD_{F, R}$ acts on $\widehat{X}$ via $\varphi_R$. For each $R$-algebra $S$, 
    \begin{align*}
        \widehat{X}_R^{\WD_F}(S) & = \Sec([\widehat{X}_S / \WD_{F}] \to [S/\WD_{F}]) \\
        & \cong \lim_{[n] \in \Delta} \Sec(\widehat{X}_S \times_S \WD_{F, S}^n \to \WD_{F, S}^n).  
    \end{align*}
    Here, $\Sec(X \to Y)$ denotes the mapping anima $\Map_{Y}(Y, X)$. The second isomorphism follows from the descent along $\Spec(S) \to [\Spec(S)/\WD_{F}]$. Now, $\Sec(\widehat{X}_S \times_S \WD_{F, S}^n \to \WD_{F, S}^n)$ is static since $\WD_{F, S}^n$ is classical, so $\widehat{X}_R^{\WD_F}(S)$ is static. Then, we have
    \begin{equation} \label{eq:XWDF}
        \widehat{X}_R^{\WD_F}(S) \cong \Eq(\widehat{X}(S) \rightrightarrows \widehat{X}(\WD_{F, S})). 
    \end{equation} 
    We show that $\clas \widehat{X}_R^{\WD_F} \to \clas \widehat{X}$ is a closed immersion. Let $x \in \widehat{X}(S)$. Consider the diagram
    \begin{center}
        \begin{tikzcd}
            \WD_{F, S} \ar[d] \ar[r, "\varphi_R \times x"] & \LG \times \clas \widehat{X} \ar[d, "\rho"] \\
            \Spec(S) \ar[r, "x"] & \clas \widehat{X}. 
        \end{tikzcd}
    \end{center}
    Here, $\rho$ denotes the action of $\LG$ on $\clas \widehat{X}$. This diagram commutes if and only if $x \in \widehat{X}_R^{\WD_F}(S)$. Since $\clas \widehat{X}$ is separated, we can take the maximal closed subscheme $Z \subset \WD_{F, S}$ on which the above diagram commutes. 

    For each integer $n$, let $Z_n = Z \cap (\ID_{F, S} \rtimes \{\Fr^n\})$. There is a compact open subgroup $K \subset I_F$ such that $\varphi_R\vert_K$ is trivial. Let $K_n = \Ad(\Fr^n)(K)$. Then, we have a closed immersion $Z_n/K \subset (\bb{G}_{a, S} \times (I_F/K_n)) \rtimes \{\Fr^n\}$. Then, the category of $S$-algebras $A$ such that 
    \[
        (Z_n/K)_A = (\bb{G}_{a, A} \times (I_F/K_n)) \rtimes \{\Fr^n\}
    \]
    has an initial object, which is a quotient of $S$. By running over all $n$, it follows that $\clas \widehat{X}_R^{\WD_F} \to \clas \widehat{X}$ is a closed immersion. By taking the descent along $\widehat{\cl{P}}_x \to \Spec(R)$, we get a closed immersion $\clas \widehat{X}_R^{\WD_F} \hookrightarrow \clas \widehat{X}_x$. 
\end{proof}

\begin{cor} \label{cor:clParLGXArtin}
    The classical truncation $\clas \Par_{\LG}^{\widehat{X}}$ is a classical Artin stack locally of finite presentation over $\Lambda$. 
\end{cor}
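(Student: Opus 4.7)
The plan is to show that the classical truncation $\clas \pi_{\widehat{X}} \colon \clas \Par_{\LG}^{\widehat{X}} \to \clas \Par_{\LG}$ is schematic and locally of finite presentation. Combined with the fact that $\clas \Par_{\LG}$ is a classical Artin stack locally of finite presentation over $\Lambda$ --- which follows from \Cref{prop:ParLGproperty} and the identification $\clas \Par_{\LG} \cong [Z^1(\WD_F, \LG)/\widehat{G}]$ of \Cref{prop:ParLGcl}, using smoothness of $\widehat{G}$ --- this yields the corollary.

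For schematicness, I would fix a $\Lambda$-algebra $R$ and a map $x \colon \Spec(R) \to \clas \Par_{\LG}$. By \Cref{prop:ParLGXclassical}(2), the pullback $\clas \widehat{X}_R^{\WD_F}$ is a closed subscheme of the twisted form $\clas \widehat{X}_x = \widehat{\cl{P}}_x \times^{\widehat{G}} \clas \widehat{X}$, which is a scheme over $R$ by \Cref{prop:ParLGXclassical}(1) and the $\widehat{G}$-quasi-projectivity of $\clas \widehat{X}$. Hence $\clas \widehat{X}_R^{\WD_F}$ is itself a scheme over $R$, so $\clas \pi_{\widehat{X}}$ is schematic.

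To upgrade to local finite presentation, I would use \eqref{eq:XWDF} to view $\clas \widehat{X}_R^{\WD_F}$ as the locus of points in $\clas \widehat{X}_x$ fixed under the entire $\WD_F$-action determined by the $L$-parameter at $x$. Decomposing $\WD_F \cong (\bb{G}_a \rtimes \und{I_F}) \rtimes \bb{Z}$, being fixed by $\WD_F$ amounts to: (a) vanishing of the algebraic vector field on $\clas \widehat{X}_x$ induced by the monodromy operator $N \in \widehat{\mfr{g}} \otimes R$, (b) invariance under a finite quotient $I_F/K$ through which $\varphi^\sm\vert_{I_F}$ factors for some open subgroup $K \subset I_F$ (using smoothness of $\varphi^\sm$), and (c) invariance under $\Fr$. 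Each is cut out by finitely many equations on the finite-type $R$-scheme $\clas \widehat{X}_x$, so the inclusion $\clas \widehat{X}_R^{\WD_F} \hookrightarrow \clas \widehat{X}_x$ is of finite presentation, and hence $\clas \widehat{X}_R^{\WD_F}$ is locally of finite presentation over $R$.

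The main subtlety lies in this last step: the proof of \Cref{prop:ParLGXclassical} a priori presents $\clas \widehat{X}_R^{\WD_F}$ as an intersection indexed by all integers $n$, and one must see that this collapses to finitely many equations. This reduction is exactly what the semidirect decomposition of $\WD_F$ accomplishes, since the smoothness of $\varphi^\sm\vert_{I_F}$ compresses the inertia constraint to a finite one, while the $\Fr$-factor contributes a single equation and the $\bb{G}_a$-factor contributes finitely many (the vanishing of a vector field on a finite-type scheme).
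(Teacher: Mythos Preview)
Your proof is correct and shares the same skeleton as the paper's—reduce to showing $\clas\pi_{\widehat{X}}$ is schematic and locally of finite presentation, using \Cref{prop:ParLGXclassical}—but your treatment of finite presentation is more hands-on than necessary. The paper's argument is two lines: after pulling back along the smooth atlas $Z^1(\WD_F,\LG)\to\Par_{\LG}$, \Cref{prop:ParLGXclassical}(2) exhibits the fiber as a closed subscheme of $\clas\widehat{X}\times_\Lambda Z^1(\WD_F,\LG)$; since $\Lambda$ is a $G$-ring (hence Noetherian) and both factors are locally of finite type over $\Lambda$, any closed immersion into this scheme is automatically of finite presentation. Your explicit argument—decomposing $\WD_F$ and counting equations—is a valid alternative that does not rely on Noetherianness, but given the standing hypothesis on $\Lambda$ it is extra work. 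One small cleanup: your phrasing ``the $L$-parameter at $x$'' and ``$\varphi^\sm\vert_{I_F}$'' implicitly assumes the $\widehat{G}$-torsor over $R$ has been trivialized; to access an honest homomorphism $\varphi^\sm$ you should first pass to the cover $Z^1(\WD_F,\LG)$ (as the paper does), where moreover each connected component carries a uniform open subgroup $K\subset I_F$ trivializing the inertia action.
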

\begin{proof}
    By \Cref{prop:ParLGXclassical}, $\clas \Par_{\LG}^{\widehat{X}} \times_{\Par_{\LG}} Z^1(\WD_F, \LG)$ is a scheme locally of finite presentation over $Z^1(\WD_F, \LG)$. Since $Z^1(\WD_F, \LG) \to \Par_{\LG}$ is a smooth cover, $\clas \Par_{\LG}^{\widehat{X}}$ is an Artin stack locally of finite presentation over $\Lambda$. 
\end{proof}

As in the proof of \Cref{prop:ParLGArtin}, the above properties are enough to prove the following. See \Cref{app:DAG} for derived $1$-stacks and the Artin-Lurie representability. 

\begin{prop} \label{prop:ParLGXArtin}
    The derived stack $\Par_{\LG}^{\widehat{X}}$ is a derived $1$-stack locally of finite presentation. 
\end{prop}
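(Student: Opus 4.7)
The plan is to verify the hypotheses of the Artin-Lurie representability criterion \cite[Theorem 7.1.6]{DAG}, following the proof of \Cref{prop:ParLGArtin} almost verbatim. As there, the criterion directly yields only that $\Par_{\LG}^{\widehat{X}}$ is almost of finite presentation; the upgrade to locally of finite presentation then follows from \cite[Proposition 3.2.18]{DAG}, using that the cotangent complex supplied by \Cref{prop:ParLGXcotangent} is perfect.

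The conditions on the classical truncation --- items (3) and (7) in loc.\ cit.\ --- are supplied by \Cref{cor:clParLGXArtin}, which already shows that $\clas \Par_{\LG}^{\widehat{X}}$ is a classical Artin stack locally of finite presentation over $\Lambda$. \'Etale descent is immediate from the fact that $[\widehat{X}/\LG]$ is an Artin stack (so in particular an \'etale stack), and a cotangent complex is constructed in \Cref{prop:ParLGXcotangent}. It remains to verify that $\Par_{\LG}^{\widehat{X}}$ is infinitesimally cohesive, convergent, and commutes with filtered colimits on truncated objects.

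I would handle these three conditions uniformly using the simplicial presentation $[\ast/\WD_F] = \colim_{[n] \in \Delta^{\op}} \WD_F^n$, which gives
\[
    \Par_{\LG}^{\widehat{X}}(S) \;=\; \lim_{[n] \in \Delta} \Map_{[\ast/Q]}(\WD_{F,S}^n, [\widehat{X}/\LG]).
\]
Each term of this totalization is a mapping stack from a derived affine scheme into the $1$-Artin stack $[\widehat{X}/\LG]$, which is locally of finite presentation over $\Lambda$ by hypothesis on $\widehat{X}$. Hence each term is itself infinitesimally cohesive, convergent, and preserves filtered colimits on truncated objects, and the first two properties are automatically preserved under the totalization. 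For filtered colimit preservation, the argument proceeds as in \Cref{prop:ParLGArtin}: on $n$-truncated $S$, \cite[Corollary 5.3.8]{DAG} ensures that each term is uniformly $(n+k)$-truncated for a fixed $k$, and a totalization of uniformly truncated anima commutes with filtered colimits.

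The main obstacle --- really the only place where the argument requires attention beyond transcribing the $[\ast/\widehat{G}]$ case --- is ensuring that this uniform truncation estimate survives the passage from $[\ast/\widehat{G}]$ to $[\widehat{X}/\LG]$. This is guaranteed by the hypothesis that $\widehat{X}$ is a derived scheme locally of finite presentation (so that $[\widehat{X}/\LG]$ is genuinely $1$-Artin, with the $\widehat{G}$-quotient contributing the single Artin layer), together with the perfectness of the cotangent complex in \Cref{prop:ParLGXcotangent}, which is the input needed to conclude local --- rather than merely almost --- finite presentation at the end.
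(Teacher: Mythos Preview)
Your proof is correct and follows the paper's argument essentially verbatim: Artin--Lurie representability with the classical input from \Cref{cor:clParLGXArtin}, the cotangent complex from \Cref{prop:ParLGXcotangent}, and the simplicial presentation $[\ast/\WD_F]=\colim_{\Delta^{\op}}\WD_F^n$ together with \cite[Corollary 5.3.8]{DAG} to handle infinitesimal cohesiveness, convergence, and filtered colimits. One minor slip: $\WD_{F,S}^n$ is not a derived affine scheme (since $\underline{W_F}$ is only locally profinite, it is a countable disjoint union of affines), but this does not affect the argument, as mapping out of a disjoint union is a product and all the properties you need are stable under products.
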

\begin{proof}
    We will apply the Artin-Lurie representability criterion \cite[Theorem 7.1.6]{DAG}. Note that it will only show that $\Par_{\LG}^{\widehat{X}}$ is almost of finite presentation. It implies the claim since the cotangent complex of $\Par^{\widehat{X}}_{\LG}$ is perfect. 
    
    The conditions on the classical truncation follow from \Cref{cor:clParLGXArtin}. By definition, $\Par_{\LG}^{\widehat{X}}$ satisfies the \'{e}tale descent, and it admits a cotangent complex by \Cref{prop:ParLGcotangent}. It remains to see that $\Par_{\LG}^{\widehat{X}}$ commutes with filtered colimits on truncated objects, and $\Par_{\LG}^{\widehat{X}}$ is infinitesimally cohesive and convergent. 

    Since $[\ast/\WD_{F}] = \colim_{n \in \Delta^\op} \WD_{F}^n$, we have 
    \[
        \Par_{\LG}^{\widehat{X}}(S) = \lim_{n \in \Delta} \Map_{[\ast/Q]}(\WD_{F,S}^n, [\widehat{X}/\LG]). 
    \]
    Here, each term is isomorphic to $\Map(\WD_{F,S}^n, [\widehat{X}/\widehat{G}])$, so it commutes with filtered colimits on truncated objects, and it is infinitesimally cohesive and convergent. Since the latter two conditions are preserved under limits, they hold for $\Par_{\LG}^{\widehat{X}}$. Moreover, if $S$ is $n$-truncated, $\Map(\WD_{F,S}^n, [\widehat{X}/\widehat{G}])$ is $(n+1)$-truncated by \cite[Corollary 5.3.8]{DAG}. Since the totalization of truncated spaces commutes with filtered colimits, $\Par_{\LG}^{\widehat{X}}$ commutes with filtered colimits on truncated objects. 
\end{proof}

\begin{prop} \label{prop:ParLGXschematic}
    The natural map $\pi_{\widehat{X}} \colon \Par_{\LG}^{\widehat{X}} \to \Par_{\LG}$ is schematic. In particular, $\Par_{\LG}^{\widehat{X}}$ has affine diagonal and is a $1$-Artin stack locally of finite presentation.  
\end{prop}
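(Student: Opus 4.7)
The plan is to first establish schematicity of $\pi_{\widehat{X}}$ in its precise form, and then derive the remaining claims as essentially formal consequences. To that end, I would fix an affine derived scheme $S$ with a map $S \to \Par_{\LG}$ corresponding to an $L$-parameter $x$ over $S$, and set $Y = \Par_{\LG}^{\widehat{X}} \times_{\Par_{\LG}} S$. Base-changing \Cref{prop:ParLGXArtin} along $S \to \Par_{\LG}$ shows that $Y$ is a derived $1$-Artin stack locally of finite presentation. The classical truncation $\clas Y$ is identified, via \Cref{prop:ParLGXclassical}, with a closed subscheme of the quasi-projective twisted form $\clas \widehat{X}_x = \widehat{\cl{P}}_x \times^{\widehat{G}} \clas \widehat{X}$; in particular $\clas Y$ is a classical scheme. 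Moreover, \Cref{rmk:ParLGXrelcotangent} furnishes a perfect and connective relative cotangent complex $L_{Y/S}$.

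The key step will be to invoke \cite[Theorem B.2.14]{AG15} to promote $Y$ from a derived Artin stack whose classical truncation is a scheme and whose cotangent complex over the derived scheme $S$ is connective, to a genuine derived scheme. This mirrors the use of the same theorem just above in the identification $[Z^1(\WD_F, \LG)/\widehat{G}] \cong \Par_{\LG}$, and I expect it to be the principal technical ingredient of the proof; the rest is essentially formal. In particular, the derived enhancement of $\Par_{\LG}^{\widehat{X}}$ built in \Cref{defi:relspec}, and the connectivity of $L_{Y/S}$ it supports, are what make this step go through.

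For the affine diagonal I would factor $\Delta_{\Par_{\LG}^{\widehat{X}}}$ as
\[
    \Par_{\LG}^{\widehat{X}} \;\xrightarrow{\Delta_{\pi_{\widehat{X}}}}\; \Par_{\LG}^{\widehat{X}} \times_{\Par_{\LG}} \Par_{\LG}^{\widehat{X}} \;\longrightarrow\; \Par_{\LG}^{\widehat{X}} \times \Par_{\LG}^{\widehat{X}}.
\]
The first arrow is the relative diagonal of the (now schematic) morphism $\pi_{\widehat{X}}$; after base change to any atlas $S \to \Par_{\LG}$ it becomes the diagonal of the quasi-projective morphism $Y \to S$ constructed above, hence a closed immersion, and in particular affine. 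The second arrow is the base change of the diagonal of $\Par_{\LG}$, which is affine because $\Par_{\LG} = [Z^1(\WD_F, \LG)/\widehat{G}]$ is the quotient of an affine scheme by an affine group by \Cref{prop:ParLGproperty}. The composite is therefore affine.

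Finally, to conclude that $\Par_{\LG}^{\widehat{X}}$ is a $1$-Artin stack locally of finite presentation, I would pull back a smooth atlas of $\Par_{\LG}$ by a disjoint union of affine derived schemes along the schematic morphism $\pi_{\widehat{X}}$; this produces a smooth atlas of $\Par_{\LG}^{\widehat{X}}$ by a derived scheme. Combined with the affine diagonal just established, this yields the $1$-Artin structure, and local finite presentation has already been recorded in \Cref{prop:ParLGXArtin}.
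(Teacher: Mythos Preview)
Your argument is correct and matches the paper's proof almost exactly: invoke \cite[Theorem B.2.14]{AG15} with the inputs from \Cref{prop:ParLGXArtin}, \Cref{prop:ParLGXclassical}, and \Cref{rmk:ParLGXrelcotangent} to obtain schematicity, then deduce the remaining claims formally. The only cosmetic differences are that the paper phrases the affine-diagonal step via descent along the smooth affine cover $\widehat{X}^{\WD_F}_{Z^1} \to \Par_{\LG}^{\widehat{X}}$ rather than by factoring the diagonal, and that you should say $Y \to S$ is \emph{separated} (which is what you need, and is detected on classical truncations) rather than quasi-projective.
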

\begin{proof}
    Let $S$ be a derived $\Lambda$-scheme and let $x \in \Par_{\LG}(S)$. Let $S' = \Par_{\LG}^{\widehat{X}} \times_{\Par_{\LG}, x} S$. We will apply \cite[Theorem B.2.14]{AG15} (cf.\ \cite[Theorem 5.1.12]{DAG}) to show that $S'$ is a derived scheme. By \Cref{prop:ParLGXArtin}, $S'$ is a derived $1$-stack. Moreover, $\clas S'$ is a scheme by \Cref{prop:ParLGXclassical}. By the same proof as in \Cref{prop:ParLGXcotangent} (see \Cref{rmk:ParLGXrelcotangent}), the relative cotangent complex of $S' \to S$ is connective, so the cotangent complex of $S'$ is connective. Thus, we may apply the above criterion to $S'$ to see that $S'$ is a derived scheme. Thus, we get the first claim. 

    Then, $\widehat{X}^{\WD_F}_{Z^1} = \Par_{\LG}^{\widehat{X}} \times_{\Par_{\LG}} Z^1(\WD_F, \LG)$ is a derived scheme. Since $Z^1(\WD_F, \LG)$ is a disjoint union of affine schemes, $\widehat{X}^{\WD_F}_{Z^1}$ is separated by \Cref{prop:ParLGXclassical} (2). Since $\widehat{X}^{\WD_F}_{Z^1} \to \Par_{\LG}^{\widehat{X}}$ is a smooth affine cover, it follows that $\Par_{\LG}^{\widehat{X}}$ has an affine diagonal. Combining with \Cref{prop:ParLGXArtin}, we see that $\Par_{\LG}^{\widehat{X}}$ is $1$-Artin. 
\end{proof}

\begin{defi}
    Let $\omega_{\widehat{X}}$ be the dualizing complex of $\Par_{\LG}^{\widehat{X}}$. We define the unnormalized $L$-sheaf associated to $\widehat{X}$ as in \Cref{defi:relspec} by 
    \[
        \cl{L}_{\widehat{X}} = \pi_{\widehat{X} *} \omega_{\widehat{X}}. 
    \]
\end{defi}

Note that $\pi_{\widehat{X}}$ needs to be schematic to define $\pi_{\widehat{X} *}$ in the formalism of ind-coherent sheaves developed in \cite{GR17I} (see \cite[1.4 (i)]{GR17I}). 

Now, we consider the orbit decomposition of the underlying space $\lvert \Par_{\LG}^{\widehat{X}} \rvert$. 

\begin{lem} \label{lem:orbitdec}
    Let $Z \subset \widehat{X}$ be a $\LG$-stable closed subset and let $U = \widehat{X} - Z$. Then, we have the following. 
    \begin{enumerate}
        \item $\Par_{\LG}^U \to \Par_{\LG}^{\widehat{X}}$ is an open immersion. 
        \item $\lvert \Par_{\LG}^Z \rvert \to \lvert \Par_{\LG}^{\widehat{X}} \rvert$ is injective and the image is the complement of $\lvert \Par_{\LG}^U \rvert$. 
    \end{enumerate}
\end{lem}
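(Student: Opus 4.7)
The plan is to exploit the mapping-stack description $\Par_{\LG}^{\widehat{X}} = \und{\Map}_{[\ast/Q]}([\ast/\WD_F], [\widehat{X}/\LG])$ together with the essential observation that the $\WD_F$-action on the test object $\Spec(R)$ inside $[\Spec(R)/\WD_F]$ is trivial, so that $[\Spec(R)/\WD_F] \simeq \Spec(R) \times [\ast/\WD_F]$. This lets us descend open and closed substacks uniquely to open and closed subschemes of $\Spec(R)$.

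For (1), I would fix a test derived affine $S = \Spec(R)$ together with a point $\phi \colon [\Spec(R)/\WD_F] \to [\widehat{X}/\LG]$ representing $x \in \Par_{\LG}^{\widehat{X}}(R)$, and pull back the open immersion $[U/\LG] \hookrightarrow [\widehat{X}/\LG]$ to obtain an open substack $W = \phi^{-1}([U/\LG]) \hookrightarrow [\Spec(R)/\WD_F]$. Pulling $W$ back along the smooth cover $\Spec(R) \to [\Spec(R)/\WD_F]$ yields an open subscheme $V_0 \subset \Spec(R)$, and by triviality of the $\WD_F$-action we get $W \simeq V_0 \times [\ast/\WD_F]$. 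Then for any further $\Spec(R') \to \Spec(R)$, the composite map into $\Par_{\LG}^{\widehat{X}}$ factors through $\Par_{\LG}^U$ precisely when the induced map into $[\widehat{X}/\LG]$ factors through $[U/\LG]$, which in turn is equivalent to $\Spec(R') \to \Spec(R)$ factoring through $V_0$. This identifies $\Par_{\LG}^U \times_{\Par_{\LG}^{\widehat{X}}} \Spec(R) \simeq V_0$, an open subscheme of $\Spec(R)$, so $\Par_{\LG}^U \to \Par_{\LG}^{\widehat{X}}$ is an open immersion.

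For the injectivity in (2), the closed immersion $[Z/\LG] \hookrightarrow [\widehat{X}/\LG]$ is a monomorphism of stacks (closed immersions of derived schemes are monomorphisms, and this is preserved under the quotient by $\LG$), and the functor $\und{\Map}_{[\ast/Q]}([\ast/\WD_F], -)$ preserves monomorphisms as a limit-preserving functor. Hence $\Par_{\LG}^Z \to \Par_{\LG}^{\widehat{X}}$ is a monomorphism, and passing to $\pi_0$ of the groupoids of geometric points gives injectivity on underlying topological spaces. For the image description, take an algebraically closed field $k$ and a geometric point $\phi \colon [\Spec(k)/\WD_F] \to [\widehat{X}/\LG]$. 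The open $V_0 \subset \Spec(k)$ from the analysis of (1) is either $\emptyset$ or $\Spec(k)$ since $k$ is a field. In the latter case $\phi$ factors through $[U/\LG]$ and the point lies in $\lvert \Par_{\LG}^U \rvert$; in the former case the underlying map $\Spec(k) \to \widehat{X}$ lands set-theoretically in $Z$, and (giving $Z$ the reduced-induced structure) it factors uniquely through $Z \hookrightarrow \widehat{X}$, with the factorization automatically $\WD_F$-equivariant since the $\WD_F$-action on $\Spec(k)$ is trivial and $Z$ is $\LG$-stable. Since the two alternatives are mutually exclusive, the image of $\lvert \Par_{\LG}^Z \rvert$ is exactly the complement of $\lvert \Par_{\LG}^U \rvert$.

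If there is any obstacle, it lies in the bookkeeping around the descent $W \simeq V_0 \times [\ast/\WD_F]$ in the derived setting, and the monomorphism property of mapping stacks into closed immersions. Both become essentially formal once one notes that $\WD_F$ acts trivially on the base $\Spec(R)$, so the whole problem separates cleanly into a $\Spec(R)$-part and a $[\ast/\WD_F]$-part.
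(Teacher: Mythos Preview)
Your proposal is correct and follows essentially the same approach as the paper. For (1), both you and the paper reduce to the fact that $\Spec(R) \to [\Spec(R)/\WD_F]$ is a homeomorphism on underlying spaces (the paper states this in one line; you spell out the descent of open subsets). For (2), the paper obtains injectivity by citing \Cref{prop:ParLGXclassical}(2), which shows $\clas Z_R^{\WD_F}$ is a closed subscheme of $\clas Z_x \subset \clas \widehat{X}_x$, whereas you argue more categorically that mapping stacks preserve monomorphisms; both are valid and amount to the same thing. For the image, both arguments work pointwise: the paper uses the equalizer description \eqref{eq:XWDF} over a general $\Spec(R) \to \Par_{\LG}$, while you work directly with geometric points of $\Par_{\LG}^{\widehat{X}}$. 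One small remark: your justification that the factorization through $Z$ is ``automatically $\WD_F$-equivariant since the $\WD_F$-action on $\Spec(k)$ is trivial'' is slightly imprecise; the cleaner reason is that $Z \hookrightarrow \widehat{X}$ is a $\WD_F$-equivariant monomorphism, so any equivariant map into $\widehat{X}$ that factors through $Z$ as a plain map factors equivariantly.
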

\begin{proof}
    For each $R \in \CAlg_{\Lambda}$, $\Spec(R) \to [\Spec(R)/\WD_{F, R}]$ is homeomorphic, so (1) follows easily. For (2), it is enough to show that $\lvert \widehat{X}_R^{\WD_F} \rvert = \lvert Z_R^{\WD_F} \rvert \coprod \lvert U_R^{\WD_F} \rvert$ for every $\Spec(R) \to \Par_{\LG}$. First, $\lvert Z_R^{\WD_F} \rvert \to \lvert \widehat{X}_R^{\WD_F} \rvert$ is injective by \Cref{prop:ParLGXclassical} (2). Then, we can see from the description in \eqref{eq:XWDF} that every point of $\lvert  \widehat{X}_R^{\WD_F} \rvert$ outside $\lvert U_R^{\WD_F} \rvert$ lies in $\lvert Z_R^{\WD_F} \rvert$. 
\end{proof}


\subsection{Description in the vectorial case} \label{ssec:vectcase}

In this section, we provide a description of $\Par_{\LG}^{\widehat{X}}$ as a derived vector bundle over $\Par_{\LG}$ when $\widehat{X}$ comes from an algebraic representation $V \in \Rep(\LG)$. 

\begin{prop} \label{prop:explicitrel}
    For each $V \in \Rep(\LG)$, $\Par_{\LG}^{V}$ is a derived vector bundle over $\Par_{\LG}$. For each $R \in \CAlg_\Lambda$ and $x \in \Par_\LG(R)$, $\Par_{\LG}^{V} \times_{\Par_\LG, x} \Spec(R)$ is a derived vector bundle representing $R\Gamma(\WD_{F}, x^* \cl{O}_V)$. 
\end{prop}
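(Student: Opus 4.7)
The plan is to unwind the definition of the mapping stack $\Par_{\LG}^{V}$ pointwise, identify lifts of $x$ with sections of a vector bundle on $[\Spec R / \WD_{F, R}]$, and recognize the resulting functor of points as that of a derived vector bundle via the Weil--Deligne cohomology computation of \Cref{prop:WD_cohomology}.

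First, I would use the Tannakian description of $[V/\LG] \to [\ast/\LG]$. Since $V$ is the total space of the vector bundle $\cl{O}_V$ on $[\ast/\LG]$ attached to $V \in \Rep(\LG)$, a lift of $x \in \Par_{\LG}(R)$ to $\Par_{\LG}^{V}(R)$ is exactly a section of the associated bundle $\cl{V}_x := \wh{\cl{P}}_x \times^{\LG} V \simeq x^{*}\cl{O}_V$ over $[\Spec R / \WD_{F, R}]$. More generally, for any $R$-algebra $S$, an $S$-point of $\Par_{\LG}^{V} \times_{\Par_{\LG}, x} \Spec R$ is a section of the pullback of $x^{*}\cl{O}_V$ to $[\Spec S / \WD_{F, S}]$.

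Next, I would invoke \Cref{prop:WD_cohomology}: the pushforward $R\Gamma(\WD_F, -)$ along $[\Spec R / \WD_{F, R}] \to \Spec R$ has cohomological dimension $2$, commutes with small colimits, and satisfies base change along $\Spec R$. Hence $\wt{\cl{E}}_{V, x} := R\Gamma(\WD_F, x^{*}\cl{O}_V)$ is a perfect complex on $\Spec R$ of cohomological amplitude in $[0, 2]$, so $\wt{\cl{E}}_{V, x}^{\vee}$ is connective perfect. Base change identifies the $S$-points of the fiber with
\[
    \tau^{\leq 0}(\wt{\cl{E}}_{V, x} \otimes_R S) \;=\; \Map_{R}(\wt{\cl{E}}_{V, x}^{\vee}, S).
\]
Since $\wt{\cl{E}}_{V, x}^{\vee}$ is connective perfect, $\Sym_{R} \wt{\cl{E}}_{V, x}^{\vee}$ is connective, and this functor is corepresented by $\Spec_R \Sym_R \wt{\cl{E}}_{V, x}^{\vee}$. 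This is by definition the derived vector bundle over $\Spec R$ representing $\wt{\cl{E}}_{V, x}$, giving the second assertion of the proposition.

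Finally, I would globalize. Applying the same recipe to the universal $L$-parameter produces a perfect complex $\wt{\cl{E}}_{V}$ on $\Par_{\LG}$, and base change in \Cref{prop:WD_cohomology} ensures that $x^{*}\wt{\cl{E}}_{V} \simeq \wt{\cl{E}}_{V, x}$ for every $x : \Spec R \to \Par_{\LG}$. The pointwise identification above is natural in $x$, so it assembles into a canonical equivalence $\Par_{\LG}^{V} \cong \Spec_{\Par_{\LG}} \Sym \wt{\cl{E}}_{V}^{\vee}$ of derived stacks over $\Par_{\LG}$, which exhibits $\pi_{V}$ as a derived vector bundle with the predicted fibers. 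The only substantive input is the base change and finite cohomological dimension for $R\Gamma(\WD_F, -)$ supplied by \Cref{prop:WD_cohomology}; the remainder of the argument is formal unwinding of the mapping stack and the definition of $\bb{V}(-)$.
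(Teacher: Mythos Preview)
Your proposal is correct and follows essentially the same approach as the paper: unwind the mapping stack to identify $S$-points of the fiber with sections of $x^{*}\cl{O}_V$ over $[\Spec S/\WD_{F}]$, then use the base change in \Cref{prop:WD_cohomology} to recognize this as $\tau^{\leq 0}(R\Gamma(\WD_F, x^{*}\cl{O}_V)\otimes_R S)$, which is the functor of points of $\Tot_R(R\Gamma(\WD_F, x^{*}\cl{O}_V))$. The paper's proof is terser (it simply writes down this functor of points and cites \Cref{prop:WD_cohomology}), while you additionally spell out the connectivity of the dual, the $\Spec\,\Sym$ description, and the globalization step; these are all implicit in the paper's ``it is enough to show the latter claim.''
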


\begin{proof}
    It is enough to show the latter claim. Since $[V / \LG] \to [\ast / \LG]$ is a vector bundle associated to $\cl{O}_V$, we have 
    \[
        (\Par_{\LG}^{V} \times_{\Par_\LG, x} \Spec(R))(S) = \tau^{\leq 0} R\Gamma(\WD_{F}, x^* \cl{O}_V) = \tau^{\leq 0} (R\Gamma(\WD_{F}, x^* \cl{O}_V) \otimes S)
    \]
    for every animated $R$-algebra $S$ by \Cref{prop:WD_cohomology}. Here, $x$ is a map 
    \[ 
        x\colon [\Spec(R)/\WD_{F}] \to [ \ast /\LG], 
    \] 
    so $x^*\cl{O}_V$ is a perfect complex on $[\Spec(R)/\WD_{F}]$. 
\end{proof}

Note that the vectorial case does not cover many examples of relative Langlands duality (see \cite[Table 1.5.1]{BZSV}). However, we can see from the table in loc. cit. that vectorial extensions of homogeneous spaces do cover many examples. 

\begin{exa} \label{rmk:Vhomog}
    Let $\widehat{H} \subset \widehat{G}$ be a flat closed subgroup scheme stable under the action of $Q$ and let $\LH = \widehat{H} \rtimes Q$. When $\widehat{X} = \LG \times^{\LH} V$ for $V \in \Rep(\LH)$, $\pi_{\widehat{X}}$ is identified with
    \[
        \Par_{\LG}^{\widehat{X}} = \Par_{\LH}^{V} \xrightarrow{\pi_V} \Par_{\LH} \to \Par_{\LG}. 
    \]
\end{exa}

\section{Computations on the $\cl{B}$-side} \label{sec:Bcomputation}

In this section, we carry out the explicit computation of the unnormalized $L$-sheaf $\cl{L}_{\widehat{X}}$ for the dual pairs associated with the Iwasawa-Tate and Hecke periods.
For the Iwasawa-Tate period, we exploit the explicit description of $\Par_{\bb{G}_m}^{\bb{A}^1}$ as a derived vector bundle.
For the Hecke period, we utilize the spectral Eisenstein series and the weight decomposition of $\IndCoh(\Par_{\GL_2})$.
\subsection{The Iwasawa-Tate $L$-sheaf} \label{ssec:IwTateL}

In this section, we compute the unnormalized $L$-sheaf $\cl{L}_{\bb{A}^1}$ over $\Lambda = \Qla$ when $\LG = \bb{G}_m$ and $\widehat{X} = \bb{A}^1$. Here, $\widehat{X} = \std$ as a $\bb{G}_m$-variety where $\std$ denotes the standard representation of $\bb{G}_m$. Since $\Par_{\bb{G}_m}$ is smooth, the natural embedding 
\[
    \QCoh(\Par_{\bb{G}_m}) \hookrightarrow \IndCoh(\Par_{\bb{G}_m})
\]
is an equivalence. We will compute in the category $\QCoh(\Par_{\bb{G}_m})$. 

Recall the decomposition $\Par_{\bb{G}_m} = \bigsqcup_{\chi \colon O_F^\times \to \Qlax} \Par_{\bb{G}_m, \chi}$ in \Cref{exa:ParGm}. Each $\Par_{\bb{G}_m, \chi}$ is isomorphic to $\bb{G}_m \times [\ast/\bb{G}_m]$. Let $\cl{L}_{\bb{A}^1, \chi}$ (resp.\ $\Par_{\bb{G}_m, \chi}^{\bb{A}^1}$) denote the restriction of $\cl{L}_{\bb{A}^1}$ (resp.\ $\Par_{\bb{G}_m}^{\bb{A}^1}$) to the component $\Par_{\bb{G}_m, \chi}$.

\begin{lem} \label{lem:IwTatent}
    If a smooth character $\chi \colon O_F^\times \to \Qlax$ is not trivial, $\Par_{\bb{G}_m, \chi}^{\bb{A}^1} \to \Par_{\bb{G}_m, \chi}$ is an isomorphism and we have $\cl{L}_{\bb{A}^1, \chi} \cong \cl{O}_{\Par_{\bb{G}_m, \chi}}$. 
\end{lem}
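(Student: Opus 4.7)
The plan is to apply \Cref{prop:explicitrel}, which identifies the fiber of $\pi_{\bb{A}^1}\colon \Par_{\bb{G}_m}^{\bb{A}^1} \to \Par_{\bb{G}_m}$ over any $R$-point $x$ with the derived vector bundle representing $R\Gamma(\WD_{F}, x^*\cl{O}_{\bb{A}^1})$; note $\bb{A}^1 = \std$ as a $\bb{G}_m$-variety, so this is the vectorial case. It suffices to show that this complex vanishes identically when $x$ lies in the component $\Par_{\bb{G}_m, \chi}$ for $\chi$ nontrivial.

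First I would unravel the description of $x$ using \Cref{exa:ParGm}: a point $x \in \Par_{\bb{G}_m, \chi}(R)$ corresponds (after passing to the smooth cover $Z^1(\WD_F, \bb{G}_m)_\chi$) to the character $\chi_T \colon F^\times \to R^\times$ with $\chi_T(\pi) = T$ and $\chi_T\vert_{O_F^\times} = \chi$, and hence to a one-dimensional Weil-Deligne representation $(\varphi_{\chi_T}, N)$. Here $N$ must be trivial: since $\widehat{\mfr g} = \Qla$ is one-dimensional and abelian, the eigenvalue equation $\Ad(\Fr)N = q^{-1}N$ forces $(1 - q^{-1})N = 0$, hence $N = 0$ in characteristic zero.

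Next, I would compute the $I_F$-invariants of this Weil-Deligne representation and invoke \Cref{prop:WD_cohomology}. Under local class field theory, $\varphi_{\chi_T}\vert_{I_F}$ factors as $I_F \twoheadrightarrow O_F^\times \xrightarrow{\chi} \Qlax \to R^\times$. Since $\chi$ is nontrivial, there exists $i_0 \in I_F$ with $\chi(i_0) \neq 1$ in $\Qlax$; the scalar $\chi(i_0) - 1$ is then a unit in $\Qla$, hence in $R$. Consequently $(x^*\cl{O}_{\bb{A}^1})^{I_F} \subseteq \ker(\chi(i_0) - 1\colon R \to R) = 0$, and the explicit three-term complex in \Cref{prop:WD_cohomology} immediately gives $R\Gamma(\WD_F, x^*\cl{O}_{\bb{A}^1}) = 0$.

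Vanishing of this complex means the base change of $\pi_{\bb{A}^1}$ along any $R$-point of $\Par_{\bb{G}_m, \chi}$ is $\Spec(R)$ itself. Applied to the smooth atlas $\bb{G}_m = Z^1(\WD_F, \bb{G}_m)_\chi \twoheadrightarrow \Par_{\bb{G}_m, \chi}$ of \Cref{exa:ParGm}, the universal point (with $R = \Qla[T, T^{-1}]$) yields $\Par_{\bb{G}_m, \chi}^{\bb{A}^1} \times_{\Par_{\bb{G}_m, \chi}} \bb{G}_m \cong \bb{G}_m$, and by smooth descent $\pi_{\bb{A}^1}$ restricts to an isomorphism over $\Par_{\bb{G}_m, \chi}$. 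Combined with the triviality of the dualizing complex on $\Par_{\bb{G}_m}$ from \Cref{prop:ParLGproperty}, this produces $\cl{L}_{\bb{A}^1, \chi} = \pi_{\bb{A}^1 *}\omega_{\bb{A}^1}\vert_{\Par_{\bb{G}_m, \chi}} \cong \omega_{\Par_{\bb{G}_m, \chi}} \cong \cl{O}_{\Par_{\bb{G}_m, \chi}}$. There is no substantial obstacle; the crux is the vanishing of $I_F$-invariants, which is a direct consequence of the nontriviality of $\chi$ together with working in characteristic zero.
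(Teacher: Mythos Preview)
Your proof is correct and follows essentially the same approach as the paper's: both argue that the Weil-Deligne cohomology of the universal representation over $\Par_{\bb{G}_m,\chi}$ vanishes (by vanishing of $I_F$-invariants when $\chi$ is nontrivial), apply \Cref{prop:explicitrel} to deduce the first claim, and then invoke \Cref{prop:ParLGproperty} for the second. Your discussion of $N=0$ is correct but not strictly needed, since once $(x^*\cl{O}_{\bb{A}^1})^{I_F}=0$ the entire three-term complex of \Cref{prop:WD_cohomology} already vanishes.
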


\begin{proof}
    The Weil-Deligne cohomology of the universal Weil-Deligne representation on $\Par_{\bb{G}_m, \chi}$ is trivial if $\chi$ is not trivial. Thus, the first claim follows from \Cref{prop:explicitrel}. The second claim follows from \Cref{prop:ParLGproperty}. 
\end{proof}

From now on, we concentrate on the component $\Par_{\bb{G}_m, \triv}$ associated to the trivial character of $O_F^\times$. Since $\Par_{\bb{G}_m, \triv} \cong \bb{G}_m \times [\ast/\bb{G}_m]$, $\QCoh(\Par_{\bb{G}_m, \triv}) \cong \prod_{\chi \in X^*(\bb{G}_m)} \QCoh(\bb{G}_m)$. Let $T$ denote the standard coordinate of $\bb{G}_m$. 

\begin{lem}
    The perfect complex on $\Par_{\bb{G}_m, \triv}$ associated to the derived vector bundle $\Par_{\bb{G}_m, \triv}^{\bb{A}^1} \to \Par_{\bb{G}_m, \triv}$ is given by the perfect complex on $\bb{G}_m$
    \[
        \left[ \Qla[T^{\pm}] \xrightarrow{T - 1} \Qla[T^{\pm}] \right] \oplus \left[ \Qla[T^{\pm}] \xrightarrow{qT - 1} \Qla[T^{\pm}] \right] [-1]
    \]
    which lies in $\QCoh(\Par_{\bb{G}_m, \triv})_{\std}$. 
\end{lem}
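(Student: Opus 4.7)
The plan is to apply Proposition \ref{prop:explicitrel}, which identifies the desired perfect complex with $R\Gamma(\WD_F, \cl{O}_{\std})$ evaluated at the universal $L$-parameter of $\Par_{\bb{G}_m,\triv}$, and then unpack this cohomology using the explicit three-term complex of Proposition \ref{prop:WD_cohomology}.

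First I would pin down the universal Weil-Deligne parameter. Setting $R = \Qla[T^{\pm}]$ and invoking Example \ref{exa:ParGm}, one has $\Par_{\bb{G}_m,\triv} \cong \Spec(R) \times [\ast/\bb{G}_m]$, and the universal smooth character $\chi_T\colon W_F \to R^{\times}$ is trivial on $O_F^{\times}$ and sends a chosen geometric Frobenius $\Fr$ to $T$. Because $\bb{G}_m$ is abelian, the adjoint action on $\widehat{\mfr{g}}$ is trivial, so the Weil-Deligne relation $\Ad(\chi_T(\Fr))N = |\Fr|\,N$ becomes $(1 - q^{-1})N = 0$, forcing $N = 0$ in characteristic zero. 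Thus the universal line bundle $\cl{O}_{\std}$ is the rank-one free module $R$ on which $I_F$ acts trivially, $\Fr$ acts by multiplication by $T$, and the monodromy operator vanishes.

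Substituting $\cl{M}_S^{I_F} = R$, $N = 0$, and $\Fr$-action by $T$ into Proposition \ref{prop:WD_cohomology}, both $N$-entries in the two differentials disappear and the three-term complex becomes block-diagonal, splitting as the direct sum of $[R \xrightarrow{T-1} R]$ (in degrees $0,1$) and $[R \xrightarrow{1-qT} R]$ (in degrees $1,2$); the second summand equals $[R \xrightarrow{qT-1} R][-1]$ up to an overall sign, matching the form in the statement. For the weight claim, $\cl{O}_{\std}$ is the line bundle of weight $\std$ under the central stabilizer $\bb{G}_m$ of $\Par_{\bb{G}_m,\triv}$ (by definition of the $\bb{G}_m$-gerbe structure on that component), and Weil-Deligne cohomology is $R$-linear and hence preserves the central $\bb{G}_m$-grading, so the output lies in $\QCoh(\Par_{\bb{G}_m,\triv})_{\std}$.

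I do not anticipate any substantive obstacle: the argument is a bookkeeping exercise once the conventions of Proposition \ref{prop:WD_cohomology} are in place. The only point that warrants attention is verifying that the vanishing of $N$ genuinely decouples the three-term complex into the asserted direct sum, which follows immediately from the block-diagonal form of $(\Fr - 1, N)$ and $(N, -q\Fr + 1)$ when $N = 0$.
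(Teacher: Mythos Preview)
Your proposal is correct and follows essentially the same route as the paper: invoke Proposition~\ref{prop:explicitrel} to reduce to computing $R\Gamma(\WD_F,\cl{O}_\std)$ for the universal unramified parameter $\Fr\mapsto T$ with $N=0$, then plug into the three-term complex of Proposition~\ref{prop:WD_cohomology} and observe that it decouples into the two displayed two-term complexes. The paper's proof is terser but identical in substance; your extra remark explaining why the universal $N$ vanishes (via the Weil-Deligne relation and triviality of $\Ad$ for $\bb{G}_m$) is a helpful clarification the paper leaves implicit.
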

\begin{proof}
    The Weil-Deligne parameter on $\bb{G}_m \to \Par_{\bb{G}_m, \triv}$ is given by the unramified character $W_F \to \bb{Z} \to \Qla[T^\pm]$ sending $\Fr$ to $T$. Since $\widehat{X} = \std$, it follows from \Cref{prop:WD_cohomology} and \Cref{prop:explicitrel} that the desired perfect complex on $\bb{G}_m \times [\ast/\bb{G}_m]$ is given by
    \[
        \left[ \Qla[T^{\pm}] \boxtimes \cl{O}_{\std} \xrightarrow{(T - 1, 0)} (\Qla[T^{\pm}] \boxtimes \cl{O}_{\std})^{\oplus 2} \xrightarrow{(0, qT - 1)} \Qla[T^{\pm}] \boxtimes \cl{O}_{\std} \right]. 
    \]
\end{proof}

Let 
\[
    \cl{E}_\triv = \left[ \Qla[T^{\pm}] \xrightarrow{T - 1} \Qla[T^{\pm}] \right] ,\quad \cl{E}_{\cyc} = \left[ \Qla[T^{\pm}] \xrightarrow{qT - 1} \Qla[T^{\pm}] \right]
\]  
be perfect complexes over $\bb{G}_m$ regarded as objects in $\QCoh(\Par_{\bb{G}_m, \triv})_{\std}$.  Here, $\cl{E}_\triv$ is concentrated at the trivial character $\{ T = 1 \} \times [\ast/\bb{G}_m] \subset \Par_{\bb{G}_m, \triv}$ and $\cl{E}_{\cyc}$ is concentrated at the cyclotomic character $\{ T = q^{-1} \} \times [\ast/\bb{G}_m] \subset \Par_{\bb{G}_m, \triv}$. As a result, we get the following. 

\begin{lem} \label{lem:gluerelPar}
    Let $\bb{V}(\cl{E}_\triv)$ \textup{(resp.\ $\bb{V}(\cl{E}_{\cyc}[-1])$)} be the derived vector bundle on $\Par_{\bb{G}_m, \triv}$ corresponding to $\cl{E}_\triv$ \textup{(resp.\ $\cl{E}_{\cyc}[-1]$)}. Then, $\Par_{\bb{G}_m, \triv}^{\bb{A}^1}$ is isomorphic to $\bb{V}(\cl{E}_\triv)$ \textup{(resp.\ $\bb{V}(\cl{E}_{\cyc}[-1])$)} over the open subset $\{T \neq q^{-1}\}$ \textup{(resp.\ $\{T \neq 1\}$)} in $\Par_{\bb{G}_m, \triv}$. 
\end{lem}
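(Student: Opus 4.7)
The plan is to leverage the previous lemma, which identifies $\Par_{\bb{G}_m, \triv}^{\bb{A}^1}$ with the derived vector bundle associated to the perfect complex $\cl{E}_\triv \oplus \cl{E}_{\cyc}[-1]$ on $\Par_{\bb{G}_m, \triv}$, together with \Cref{prop:explicitrel}. Since the formation of derived vector bundles from perfect complexes is symmetric monoidal (in particular, it turns direct sums into fiber products over the base), it suffices to check that each summand becomes acyclic after restriction to the appropriate open subset.

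First I would observe that $\cl{E}_{\cyc}$, being the two-term complex $[\Qla[T^\pm] \xrightarrow{qT-1} \Qla[T^\pm]]$, has its differential $qT-1$ invertible precisely on the open locus $\{T \neq q^{-1}\} \subset \bb{G}_m$. Thus $\cl{E}_{\cyc}$ (and hence $\cl{E}_{\cyc}[-1]$) restricts to zero on this locus, so its associated derived vector bundle restricts to the base. By the additivity just noted, the restriction of $\Par_{\bb{G}_m, \triv}^{\bb{A}^1}$ to $\{T \neq q^{-1}\} \times [\ast/\bb{G}_m]$ agrees with the restriction of $\bb{V}(\cl{E}_\triv)$. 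The parallel argument, using that $T - 1$ is invertible exactly on $\{T \neq 1\}$, gives the isomorphism with $\bb{V}(\cl{E}_{\cyc}[-1])$ on that locus.

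There is no real obstacle here: the only substantive input beyond the preceding lemma is the fact that $\bb{V}(-)$ sends a quasi-isomorphism of perfect complexes to an isomorphism of derived vector bundles, and sends direct sums to fiber products over the base, both of which follow directly from the construction $\bb{V}(\cl{F}) = \Spec_{\Par}(\Sym(\cl{F}^\vee[\,\cdot\,]))$ combined with the fact that $\Sym$ commutes with colimits (and in particular preserves equivalences). I would state this additivity once and apply it to both restrictions in parallel.
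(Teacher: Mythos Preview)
Your proposal is correct and matches the paper's approach: the paper states this lemma without proof, treating it as an immediate consequence of the preceding lemma identifying the perfect complex underlying $\Par_{\bb{G}_m,\triv}^{\bb{A}^1}$ as $\cl{E}_\triv \oplus \cl{E}_{\cyc}[-1]$, and your argument spells out precisely the details one would supply. One minor remark: your justification of additivity and quasi-isomorphism invariance via $\Spec(\Sym(-)^\vee)$ only applies to coconnective complexes (which is fine here, since both summands are), but these properties follow more directly and generally from the definition $\Tot_S(\cl{E})(T) = \Map(\cl{O}_T, \cl{E}|_T)$ used in the paper.
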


Let 
\[
    U_1 = \{ T \neq 1 \}, \quad U_q = \{ T \neq q^{-1} \} \subset \bb{G}_m 
\]
be the above open subsets. 
Our strategy for computing $\cl{L}_{\bb{A}^1, \triv}$ is to compute it over $U_1 \times [\ast/\bb{G}_m]$ and $U_q \times [\ast/\bb{G}_m]$ and glue the results together.

\subsubsection{Computation over $U_1 \times [\ast/\bb{G}_m]$}

In this section, we compute $\cl{L}_{\bb{A}^1, \triv}$ over $U_1 \times [\ast/\bb{G}_m]$. For this, we rely on the following lemma in \cite{FW25} (cf.\ \cite[Proposition G.1.5]{AG15}). 

\begin{lem} \textup{(\cite[Lemma 6.2.2]{FW25})} \label{lem:FWlem}
    Let $S$ be an eventually coconnective Artin stack locally of finite type over $\Qla$. Let $\cl{E}$ be a perfect complex on $S$ with tor-amplitude in $[0,\infty)$ and let $\pi \colon E = \Tot_{S}(\cl{E}) \to S$ be the projection. Let 
    \[
        \widehat{\pi}_* \colon \IndCoh(E) \to \IndCoh(S)
    \]
    denote the pushforward from the infinitesimal neighborhood of the zero section $S \hookrightarrow E$ along $\pi$ \textup{(see \cite[Section 4.4]{FW25})}. Let $\omega_{E/S} = \pi^! \cl{O}_S$ denote the relative dualizing complex of $E$ over $S$. Then, 
    \[
        \widehat{\pi}_*(\omega_{E/S}) \cong \Sym^\bullet(\cl{E}) \in \QCoh(S) \xhookrightarrow{\Xi_S} \IndCoh(S). 
    \]
\end{lem}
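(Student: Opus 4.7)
The plan is to reduce Lemma~\ref{lem:FWlem} to a Serre-duality computation on each finite-order infinitesimal neighborhood of the zero section, combined with the description of $\widehat{\pi}_*$ as a colimit over that tower.

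First, using smooth descent on $S$ together with the compatibility of both $\widehat{\pi}_*$ and $\Sym^{\bullet}(\cl{E})$ with flat base change, I would reduce to the case $S = \Spec(A)$ affine and $\cl{E}$ represented by a bounded complex of finitely generated projective $A$-modules. Then $E = \Tot_S(\cl{E}) = \Spec_S \Sym^{\bullet}_{\cl{O}_S}(\cl{E}^\vee)$ by definition. Let $s \colon S \hookrightarrow E$ be the zero section with ideal $I = (\cl{E}^\vee) \subset \Sym^{\bullet}(\cl{E}^\vee)$, and let $i_k \colon E_k \hookrightarrow E$ denote the $k$-th infinitesimal neighborhood $E_k = \Spec_S(\Sym^{\bullet}(\cl{E}^\vee)/I^{k+1})$, with projection $\pi_k = \pi \circ i_k$.

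By the definition of $\widehat{\pi}_*$ in \cite[Section 4.4]{FW25} as the pushforward from the formal completion along $s$, we have the identification
\[
    \widehat{\pi}_*(\omega_{E/S}) \cong \colim_k \pi_{k*}^{\IndCoh}\bigl(i_k^! \omega_{E/S}\bigr) \cong \colim_k \pi_{k*}^{\IndCoh}(\omega_{E_k/S}),
\]
using $\omega_{E/S} = \pi^! \cl{O}_S$ and the functoriality $i_k^! \pi^! = \pi_k^!$. Since each $\pi_k$ is a finite morphism and $S$ is eventually coconnective, Grothendieck--Serre duality for finite maps identifies
\[
    \pi_{k*}^{\IndCoh}(\omega_{E_k/S}) \cong \underline{\Hom}_{\cl{O}_S}\bigl(\pi_{k*}\cl{O}_{E_k},\, \cl{O}_S\bigr).
\]
By construction $\pi_{k*}\cl{O}_{E_k}$ is the truncated symmetric algebra $\Sym^{\leq k}(\cl{E}^\vee) = \bigoplus_{j=0}^k \Sym^j(\cl{E}^\vee)$, whose $\cl{O}_S$-linear dual is $\Sym^{\leq k}(\cl{E}) = \bigoplus_{j=0}^k \Sym^j(\cl{E})$ since $\cl{E}$ is perfect. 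The transition maps in the tower dualize to the natural direct-summand inclusions $\Sym^{\leq k}(\cl{E}) \hookrightarrow \Sym^{\leq k+1}(\cl{E})$, so passing to the colimit yields
\[
    \widehat{\pi}_*(\omega_{E/S}) \cong \colim_k \Sym^{\leq k}(\cl{E}) \cong \Sym^{\bullet}(\cl{E}) \in \QCoh(S) \hookrightarrow \IndCoh(S).
\]

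The main technical obstacle is the coherent handling of the formal completion in the IndCoh framework of \cite{GR17I}, specifically the identification of $\widehat{\pi}_*$ with the colimit of $\pi_{k*}^{\IndCoh} \circ i_k^!$ on the infinitesimal tower, and the derived interpretation of $\Sym^{\leq k}(\cl{E}^\vee)$ when $\cl{E}$ is not concentrated in degree zero so that $E_k$ is genuinely derived. Once those foundations are in place, supplied by \cite[Section 4.4]{FW25}, the remaining computation is a direct Serre-duality calculation on each finite-order thickening, and the grading on $\Sym^{\bullet}(\cl{E}^\vee)$ induced by the scaling $\bb{G}_m$-action on $E$ ensures compatibility of all the maps with the claimed splittings.
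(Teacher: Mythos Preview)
The paper does not give its own proof of this lemma; it is quoted directly from \cite[Lemma 6.2.2]{FW25} (with a pointer to \cite[Proposition G.1.5]{AG15}) and used as a black box. So there is no in-paper argument to compare against.

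Your outline is a reasonable sketch of how such a statement is typically proved and is likely close in spirit to the argument in \cite{FW25}: reduction to an affine base, expressing the pushforward from the formal completion as a colimit over finite-order thickenings, Grothendieck duality for each affine map $\pi_k$ (using that $\pi_{k*}\cl{O}_{E_k}=\Sym^{\leq k}(\cl{E}^\vee)$ is perfect), and the identification $(\Sym^j(\cl{E}^\vee))^\vee\cong \Sym^j(\cl{E})$ in characteristic $0$. One point worth flagging more explicitly: when $\cl{E}$ has tor-amplitude strictly in $[1,\infty)$ (as in the paper's application to $\cl{E}_{\cyc}[-1]$), the map $E\to S$ is already a bijection on underlying topological spaces and $\widehat{\pi}_*=\pi_*$; in that regime your ``$E_k$'' must be read purely through the weight filtration on $\Sym^\bullet(\cl{E}^\vee)$ rather than as classical infinitesimal neighborhoods, and the word ``finite'' for $\pi_k$ should be understood as ``affine with perfect pushforward'' rather than classically finite. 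You already identify this derived bookkeeping as the main technical input to be supplied by \cite[Section 4.4]{FW25}, so there is no substantive gap beyond what you have noted.
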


Let $E_1 = \Par_{\bb{G}_m, \triv}^{\bb{A}^1}\vert_{U_1 \times [\ast/\bb{G}_m]}$. By \Cref{lem:gluerelPar}, we have $E_1 = \bb{V}(\cl{E}_{\cyc}[-1])\vert_{U_1 \times [\ast/\bb{G}_m]}$. The projection $\pi_1\colon E_1 \to U_1 \times [\ast/\bb{G}_m]$ is bijective on the underlying spaces since $\cl{E}_{\cyc}[-1]$ has tor-amplitude in $[1,\infty)$, so we have $\pi_{1*} = \widehat{\pi}_{1*}$. Since the dualizing complex of $\Par_{\LG}$ is trivial by \Cref{prop:ParLGproperty}, we have
\[
    \cl{L}_{\bb{A}^1, \triv}\vert_{U_1 \times [\ast/\bb{G}_m]} \cong \Sym^\bullet(\cl{E}_{\cyc}[-1])\vert_{U_1\times [\ast/\bb{G}_m]}
\]
by \Cref{lem:FWlem}. We have $\QCoh(U_1 \times [\ast/\bb{G}_m]) \cong \prod_{\chi \in X^*(\bb{G}_m)} \QCoh(U_1 \times [\ast/\bb{G}_m])_\chi$. Then, $\Sym^n(\cl{E}_{\cyc}[-1])\vert_{U_1\times [\ast/\bb{G}_m]} \in \QCoh(U_1 \times [\ast/\bb{G}_m])_{\std^{n}}$ for each $n \geq 0$. As a result, we get the following. 

\begin{prop} \label{prop:computeU1}
    We have a decomposition
    \[
        \cl{L}_{\bb{A}^1, \triv}\vert_{U_1 \times [\ast/\bb{G}_m]} \cong \cl{O}_{U_1} \oplus \bigoplus_{n \geq 1} \left[ \cl{O}_{U_1} \xrightarrow{qT - 1} \cl{O}_{U_1}\right][- 2n + 1]
    \]
    in $\QCoh(U_1 \times [\ast/\bb{G}_m])$. Here, the term indexed by $n \geq 1$ (resp. the first term) on the right hand side is an object in $\QCoh(U_1 \times [\ast/\bb{G}_m])_{\std^{n}}$ \textup{(resp.\ $\QCoh(U_1 \times [\ast/\bb{G}_m])_{\triv}$)}. 
\end{prop}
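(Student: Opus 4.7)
The plan is to compute each $\Sym^n(\cl{E}_{\cyc}[-1])\vert_{U_1}$ as a perfect complex with $\bb{G}_m$-weight $\std^n$, building on the identification
\[
    \cl{L}_{\bb{A}^1, \triv}\vert_{U_1 \times [\ast/\bb{G}_m]} \cong \Sym^\bullet(\cl{E}_{\cyc}[-1])\vert_{U_1 \times [\ast/\bb{G}_m]}
\]
already derived from \Cref{lem:FWlem}. The weight grading is automatic: $\cl{E}_{\cyc}[-1]$ lies in weight $\std$, so $\Sym^n$ lies in weight $\std^n$, and it suffices to identify each summand as a complex on $U_1$.

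Since we work in characteristic $0$ (over $\Qla$), the derived symmetric algebra of a perfect complex with tor-amplitude in $[0,\infty)$ is computed by the free graded-commutative CDGA on any strict free resolution. Represent $\cl{E}_{\cyc}[-1]\vert_{U_1}$ by the two-term complex $[\cl{O}_{U_1} \xrightarrow{qT-1} \cl{O}_{U_1}]$ placed in cohomological degrees $1$ and $2$, with generators $x$ (degree $1$, odd) and $y$ (degree $2$, even) satisfying $dx = (qT-1)y$ and $dy = 0$. The free graded-commutative algebra is then $\Lambda_{\cl{O}_{U_1}}(x) \otimes_{\cl{O}_{U_1}} \Sym_{\cl{O}_{U_1}}(y)$, with an additional polynomial-degree grading assigning each generator weight $1$.

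In polynomial degree $n \geq 1$, the underlying graded module is spanned by $x y^{n-1}$ in cohomological degree $2n-1$ and $y^n$ in cohomological degree $2n$ (with $x^2 = 0$ forced by oddness, which is crucial for the result to be a two-term complex). The Leibniz rule gives $d(x y^{n-1}) = (qT-1) y^n$, hence
\[
    \Sym^n(\cl{E}_{\cyc}[-1])\vert_{U_1} \cong [\cl{O}_{U_1} \xrightarrow{qT-1} \cl{O}_{U_1}][-2n+1],
\]
while $\Sym^0 = \cl{O}_{U_1}$ in cohomological degree $0$. Taking the direct sum over $n$, together with the weight-$\std^n$ assignment, yields the claim.

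The only point requiring a moment's care is the identification of this strict CDGA model with the derived symmetric power functor. In characteristic $0$ this follows from the functor-of-points description $\Sym^n(E) = (E^{\otimes n})^{S_n}$ together with the exactness of $S_n$-invariants over $\bb{Q}$, applied to our free resolution; so this step is routine once the Koszul signs have been set up correctly.
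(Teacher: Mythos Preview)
Your argument is correct and follows essentially the same route as the paper: both start from the identification $\cl{L}_{\bb{A}^1,\triv}\vert_{U_1\times[\ast/\bb{G}_m]}\cong\Sym^\bullet(\cl{E}_{\cyc}[-1])$ obtained via \Cref{lem:FWlem}, observe that $\Sym^n$ lands in weight $\std^n$, and then compute $\Sym^n$ of the two-term rank-one complex explicitly. The only cosmetic difference is that the paper packages the symmetric-power computation into a separate lemma (\Cref{lem:expSympow}), proved by taking $\Sigma_n$-invariants of the tensor complex with Koszul signs, whereas you carry out the equivalent calculation inline via the free graded-commutative CDGA model; in characteristic~$0$ these are the same computation.
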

\begin{proof}
    The claim follows from the above discussion and the computation of $\Sym^n(\cl{E}_{\cyc}[-1])$ in \Cref{lem:expSympow}. 
\end{proof}

\subsubsection{Computation over $U_q \times [\ast/\bb{G}_m]$}

In this section, we do the computation of $\cl{L}_{\bb{A}^1, \triv}$ over $U_q \times [\ast/\bb{G}_m]$. For this, we will provide an explicit description of $E_q = \Par_{\bb{G}_m, \triv}^{\bb{A}^1}\vert_{U_q \times [\ast/\bb{G}_m]}$. By \Cref{lem:gluerelPar}, we have $E_q = \bb{V}(\cl{E}_\triv)\vert_{U_q \times [\ast/\bb{G}_m]}$. 

\begin{lem} \label{lem:expEq}
    The natural map $\cl{E}_\triv \to \cl{O}_{\std}$ on $\Par_{\bb{G}_m, \triv}$ induces a closed immersion $\bb{V}(\cl{E}_\triv) \hookrightarrow \bb{V}(\cl{O}_\std)$. Its base change to $\bb{G}_m$ is a regular closed immersion 
    \[
        \bb{V}(\cl{E}_\triv) \times_{\Par_{\bb{G}_m, \triv}} \bb{G}_m \hookrightarrow \bb{A}^1 \times \bb{G}_m
    \]
    given by $(T-1)x = 0$, where $x$ denotes the coordinate on $\bb{A}^1$. 
\end{lem}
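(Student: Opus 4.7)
The plan is to compute $\bb{V}(\cl{E}_\triv)$ explicitly by writing out $\Sym^\bullet(\cl{E}_\triv)$ as a Koszul-type derived cdga, and then read off both the closed immersion and its regularity from that formula.

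First, since the statement is about the relative spectral stack over the component $\Par_{\bb{G}_m, \triv} \cong \bb{G}_m \times [\ast/\bb{G}_m]$, base change along $\bb{G}_m \to \Par_{\bb{G}_m, \triv}$ simply forgets the $\bb{G}_m$-quotient and retains the $\bb{G}_m$-weight grading. Thus it is enough to work over $\bb{G}_m = \Spec(\Qla[T^\pm])$ and keep track of weights. After this base change, $\bb{V}(\cl{O}_\std)$ becomes $\Spec(\Qla[T^\pm][x])$ with $x$ of weight $1$, i.e.\ $\bb{A}^1 \times \bb{G}_m$, as in the statement.

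Next I would compute $\Sym^\bullet(\cl{E}_\triv)$ as a derived cdga over $\Qla[T^\pm]$. By definition, $\cl{E}_\triv$ is the two-term complex $[\Qla[T^\pm] \xrightarrow{T-1} \Qla[T^\pm]]$ placed in cohomological degrees $-1$ and $0$, each of weight $1$. Writing $x$ for a generator of the degree-$0$ part and $y$ for a generator of the degree-$(-1)$ part, the free commutative dga on $\cl{E}_\triv$ is
\[
    \Sym^\bullet(\cl{E}_\triv) \cong \Qla[T^\pm][x] \otimes_{\Qla[T^\pm]} \Lambda_{\Qla[T^\pm]}(y), \qquad dy = (T-1)x,
\]
since $y$ is in odd cohomological degree and therefore squares to zero. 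This is exactly the Koszul complex for the element $(T-1)x \in \Qla[T^\pm][x]$. The natural map $\cl{E}_\triv \to \cl{O}_\std$ (identity on the degree-$0$ piece) corresponds to the inclusion $\Qla[T^\pm][x] \hookrightarrow \Sym^\bullet(\cl{E}_\triv)$ obtained by dropping the $y$-factor. Taking Spec, this yields the map $\bb{V}(\cl{E}_\triv) \to \bb{V}(\cl{O}_\std)$.

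The remaining claims are then read off. On $\pi_0$, the induced map of rings is the quotient $\Qla[T^\pm][x] \twoheadrightarrow \Qla[T^\pm][x]/((T-1)x)$, so $\bb{V}(\cl{E}_\triv) \hookrightarrow \bb{V}(\cl{O}_\std)$ is a closed immersion in the derived sense. After base change to $\bb{G}_m$, this closed immersion is cut out by the single function $(T-1)x$, and since $\Qla[T^\pm][x]$ is an integral domain, $(T-1)x$ is a non-zero-divisor; therefore the closed immersion is regular, and in fact the Koszul complex above exhibits $\Sym^\bullet(\cl{E}_\triv)$ as the derived quotient $\Qla[T^\pm][x]/^L((T-1)x)$. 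There is no real obstacle; the only thing to be careful about is matching the cohomological degree convention of $\cl{E}_\triv$ to the convention on $\Sym^\bullet$ (degree $-1$ generators appear as odd/exterior variables), but once that is fixed the claims are immediate from the Koszul description.
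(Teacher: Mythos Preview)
Your approach is the same as the paper's: both identify $\bb{V}(\cl{E}_\triv)$ over $\bb{G}_m$ with the spectrum of the Koszul algebra on $(T-1)x$ by computing the relevant symmetric algebra. The paper does this via \Cref{lem:expSympow}; you write out the Koszul cdga directly.

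One bookkeeping slip to fix. In the paper's conventions (see \Cref{app:DAG}) one has $\bb{V}(\cl{E}) = \underline{\Spec}(\Sym^\bullet \cl{E}^\vee)$ for coconnective $\cl{E}$, and $\cl{E}_\triv$ sits in cohomological degrees $[0,1]$ (it is the degree-$[0,1]$ piece of the Weil--Deligne cohomology), not $[-1,0]$. Your two slips cancel: $\cl{E}_\triv^\vee$ does sit in $[-1,0]$, and $\Sym^\bullet(\cl{E}_\triv^\vee)$ is exactly the Koszul cdga you wrote, so the conclusion is unaffected. But note that with your stated placement of $\cl{E}_\triv$ in $[-1,0]$, ``identity on the degree-$0$ piece'' is \emph{not} a chain map $\cl{E}_\triv \to \cl{O}_\std$ (the compatibility $f^0\circ(T-1)=0$ fails). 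The correct formulation is on the duals: $\cl{O}_\std^\vee \to \cl{E}_\triv^\vee$ is the identity in degree $0$, inducing the ring inclusion $\Qla[T^\pm][x] \hookrightarrow \Sym^\bullet(\cl{E}_\triv^\vee)$ and hence the closed immersion on $\bb{V}$.
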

\begin{proof}
    It is enough to prove the second claim. By definition, $\bb{V}(\cl{E}_\triv) \times_{\Par_{\bb{G}_m, \triv}} \bb{G}_m$ is the relative spectrum $\underline{\Spec} (\Sym^\bullet \cl{E}_\triv^\vee)$. Here, $\cl{E}_\triv$ is regarded as a perfect complex on $\bb{G}_m$. By \Cref{lem:expSympow}, the natural map $\Sym^\bullet \cl{O}_{\bb{G}_m}^\vee \to \Sym^\bullet \cl{E}_\triv^\vee$ is surjective and its kernel is generated by $(T-1) x$. Thus, we get the claim. 
\end{proof}

\begin{lem} \label{lem:dualtriv}
    Let $\omega_{\triv}$ denote the relative dualizing complex of $\bb{V}(\cl{E}_\triv) \to \Par_{\bb{G}_m, \triv}$. Then, we have $\omega_\triv \cong \cl{O}_{\bb{V}(\cl{E}_\triv)}$. 
\end{lem}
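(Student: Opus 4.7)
The plan is to factor $\pi \colon \bb{V}(\cl{E}_\triv) \to \Par_{\bb{G}_m, \triv}$ as $\pi = p \circ i$, using the codimension-$1$ regular closed immersion $i \colon \bb{V}(\cl{E}_\triv) \hookrightarrow \bb{V}(\cl{O}_\std)$ furnished by \Cref{lem:expEq}, where $p \colon \bb{V}(\cl{O}_\std) \to \Par_{\bb{G}_m, \triv}$ is the smooth derived vector bundle projection of relative dimension $1$. The immersion $i$ is cut out by the section $(T-1)x$, which lies in $\Gamma(\bb{V}(\cl{O}_\std), p^*\cl{O}_\std)$ because $x$ is the tautological coordinate on the total space of the line bundle $p^*\cl{O}_\std$.

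First I would compute the two relative dualizing complexes separately. Since $L_p = p^*\cl{O}_\std^{\vee}$ is the cotangent of a smooth rank-$1$ derived vector bundle, I obtain $\omega_p \cong p^*(\cl{O}_\std^{\vee})[1]$. On the other hand, $i$ is regular of codimension $1$ with normal bundle $N_i \cong i^*p^*\cl{O}_\std$, so $\omega_i \cong N_i[-1] \cong \pi^*\cl{O}_\std[-1]$. Composing via $\omega_\pi \cong i^*\omega_p \otimes \omega_i$, both the line-bundle twists and the cohomological shifts cancel exactly, yielding
\[
    \omega_\triv \cong \pi^*(\cl{O}_\std^{\vee})[1] \otimes \pi^*\cl{O}_\std[-1] \cong \pi^*(\cl{O}_\std^{\vee} \otimes \cl{O}_\std) \cong \cl{O}_{\bb{V}(\cl{E}_\triv)}.
\]

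The main delicate point will be bookkeeping the $\bb{G}_m$-equivariant structures: the fiber direction of the line bundle $\bb{V}(\cl{O}_\std)$ carries weight $\std$, so the relative cotangent of $p$ has weight $\std^{\vee}$, while the defining section $(T-1)x$ of $i$ has weight $\std$. This calculation can equivalently be read through the general formula $\omega_{\bb{V}(\cl{E})/S} \cong \pi^*\det(\cl{E}^{\vee})[\chi]$ for quasi-smooth derived vector bundles: since $\cl{E}_\triv$ has virtual rank $\chi = 0$ and trivial determinant $\det(\cl{E}_\triv) \cong \cl{O}_\std \otimes \cl{O}_\std^{-1} \cong \cl{O}$, the dualizing complex is forced to be trivial.
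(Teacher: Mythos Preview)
Your proof is correct and follows essentially the same factorization $\pi = p \circ i$ as the paper, computing $\omega_p$ for the smooth line bundle and $\omega_i$ for the regular closed immersion separately and observing that the twists and shifts cancel. The only difference is presentational: the paper first pulls back along a smooth cover $\Spec(R) \to \Par_{\bb{G}_m,\triv}$ and carries out the same computation in explicit module-theoretic form as $R\Hom_P(S,\Omega_{P/R}[1]) \cong \Omega_{P/R}\otimes_P I^{-1}/P$, keeping track of the generator $(T-1)^{-1}$ of $I^{-1}/P$ so that the resulting trivialization can be compared with the obvious one over $U_1$ in the remark that follows.
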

\begin{proof}
    Let $f\colon \Spec(R) \to \Par_{\bb{G}_m, \triv}$ be a smooth morphism and let $q\colon \bb{V}(\cl{E}_\triv)_R \to \Spec(R)$ be the base change of $\bb{V}(\cl{E}_\triv) \to \Par_{\bb{G}_m, \triv}$ along $f$. It is enough to construct a functorial isomorphism $q^!\cl{O}_{\Spec(R)} \cong \cl{O}_{\bb{V}(\cl{E}_\triv)_R}$. 

    Let $p \colon \bb{V}(\cl{O}_\std)_R \to \Spec(R)$ be the base change of $\bb{V}(\cl{O}_\std) \to \Par_{\bb{G}_m, \triv}$. Then, $P = \Sym^\bullet \cl{O}_{\std, R}^\vee$ is the ring of global section on $\bb{V}(\cl{O}_{\std})_R$. Let $i \colon \bb{V}(\cl{E}_\triv)_R \hookrightarrow \bb{V}(\cl{O}_\std)_R$ be the regular closed immersion. Let $I = (T-1)\cdot \Sym^{\geq 1}\cl{O}_{\std, R}^\vee \subset P$ be an invertible ideal. Then, $S = P / I$ is the ring of global sections on $\bb{V}(\cl{E}_\triv)_R$. Since $R \to P$ is smooth of dimension $1$, $q^!\cl{O}_{\Spec(R)}$ is represented by an $S$-module
    \[
        R\Hom_P(S, \Omega_{P/R}[1])
    \]
    and this is canonically isomorphic to $\Omega_{P/R} \otimes_P I^{-1}/P$. Since $\Omega_{P/R} \cong \cl{O}_{\std, R}^\vee \otimes P$ and $I^{-1}/P \cong (T-1)^{-1} \cdot \cl{O}_{\std, R} \otimes S$, $q^!\cl{O}_{\Spec(R)}$ is canonically isomorphic to $\cl{O}_{\Spec(S)}$. 
\end{proof}

\begin{rmk} \label{rmk:compatisom}
    Let $\iota \colon \omega_\triv \cong \cl{O}_{\bb{V}(\cl{E}_\triv)}$ be the above isomorphism. Since $\bb{V}(\cl{E}_\triv) \to \Par_{\bb{G}_m, \triv}$ is an isomorphism over $U_1 \times [\ast/\bb{G}_m]$, there is an another isomorphism $\iota'\colon \omega_{\triv}\vert_{U_1\times [\ast/\bb{G}_m]} \cong \cl{O}_{\bb{V}(\cl{E}_\triv)\vert_{U_1\times [\ast/\bb{G}_m]}}$. Then, $\iota\vert_{\bb{V}(\cl{E}_\triv)\vert_{U_1\times [\ast/\bb{G}_m]}} \circ \iota'^{-1}$ is the multiplication by $T-1$. This can be checked after the pullback along $\bb{G}_m \to \Par_{\bb{G}_m, \triv}$ and stems from the difference of the choice of a defining equation of $\bb{V}(\cl{E}_\triv) \times_{\Par_{\bb{G}_m, \triv}} \bb{G}_m \hookrightarrow \bb{A}^1 \times \bb{G}_m$. 
\end{rmk}

Let $\pi_q \colon E_q \to U_q \times [\ast/\bb{G}_m]$ be the projection. By \Cref{lem:dualtriv}, we have
\[
    \cl{L}_{\bb{A}^1, \triv}\vert_{U_q \times [\ast/\bb{G}_m]} \cong \pi_{q*} \cl{O}_{E_q} \cong \Sym^\bullet(\cl{E}_\triv^\vee)\vert_{U_q \times [\ast/\bb{G}_m]}. 
\]
We have the weight decomposition $\QCoh(U_q \times [\ast/\bb{G}_m]) \cong \prod_{\chi \in X^*(\bb{G}_m)} \QCoh(U_q \times [\ast/\bb{G}_m])_\chi$. Then, $\Sym^n(\cl{E}_{\triv}^\vee)\vert_{U_q \times [\ast/\bb{G}_m]} \in \QCoh(U_q \times [\ast/\bb{G}_m])_{\std^{-n}}$ for each $n \geq 0$. As a result, we get the following. 

\begin{prop} \label{prop:computeUq}
    We have a decomposition
    \[
        \cl{L}_{\bb{A}^1, \triv}\vert_{U_q \times [\ast/\bb{G}_m]} \cong \cl{O}_{U_q} \oplus \bigoplus_{n \geq 1} \left[ \cl{O}_{U_q} \xrightarrow{T - 1} \cl{O}_{U_q}\right][1]
    \]
    in $\QCoh(U_q \times [\ast/\bb{G}_m])$. Here, the term indexed by $n \geq 1$ (resp. the first term) on the right hand side is an object in $\QCoh(U_q \times [\ast/\bb{G}_m])_{\std^{-n}}$ \textup{(resp.\ $\QCoh(U_q \times [\ast/\bb{G}_m])_{\triv}$)}. 
\end{prop}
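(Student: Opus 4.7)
The plan is to trace the computation exactly as in the $U_1$ case, except that the roles of the two complexes $\cl{E}_\triv$ and $\cl{E}_\cyc$ are swapped, and the relevant derived vector bundle is of genuinely schematic rather than purely infinitesimal type. First I would invoke Lemma \ref{lem:gluerelPar} to identify $E_q = \Par_{\bb{G}_m, \triv}^{\bb{A}^1}\vert_{U_q \times [\ast/\bb{G}_m]}$ with $\bb{V}(\cl{E}_\triv)\vert_{U_q \times [\ast/\bb{G}_m]}$, and let $\pi_q \colon E_q \to U_q \times [\ast/\bb{G}_m]$ denote the projection. By definition $\cl{L}_{\bb{A}^1, \triv}\vert_{U_q \times [\ast/\bb{G}_m]} = \pi_{q*}\omega_{E_q}$, so the whole question reduces to identifying $\pi_{q*}\omega_{E_q}$.

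Next I would reduce $\omega_{E_q}$ to $\cl{O}_{E_q}$. Proposition \ref{prop:ParLGproperty} tells us that the dualizing complex of $\Par_{\bb{G}_m}$ is trivial, so in particular $\omega_{U_q \times [\ast/\bb{G}_m]} \cong \cl{O}$. Combined with Lemma \ref{lem:dualtriv}, which asserts $\omega_\triv \cong \cl{O}_{\bb{V}(\cl{E}_\triv)}$ for the relative dualizing sheaf of $\bb{V}(\cl{E}_\triv) \to \Par_{\bb{G}_m, \triv}$, we conclude by restricting and composing that $\omega_{E_q} \cong \cl{O}_{E_q}$. Therefore
\[
    \cl{L}_{\bb{A}^1, \triv}\vert_{U_q \times [\ast/\bb{G}_m]} \cong \pi_{q*} \cl{O}_{E_q}.
\]

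The third step is to identify $\pi_{q*}\cl{O}_{E_q}$ with a symmetric algebra. Because the derived vector bundle $\bb{V}(\cl{E}_\triv)$ is the relative spectrum $\underline{\Spec}(\Sym^\bullet \cl{E}_\triv^\vee)$ (and $\cl{E}_\triv$ has tor-amplitude in $[-1, 0]$, so the structure sheaf is honestly schematic rather than only formally defined around the zero section), one immediately gets
\[
    \pi_{q*}\cl{O}_{E_q} \cong \Sym^\bullet(\cl{E}_\triv^\vee)\vert_{U_q \times [\ast/\bb{G}_m]}.
\]
The scaling $\bb{G}_m$-action on $\cl{E}_\triv$ has weight $\std$, and correspondingly $\Sym^n(\cl{E}_\triv^\vee)$ lies in $\QCoh(U_q \times [\ast/\bb{G}_m])_{\std^{-n}}$, giving the desired decomposition by weights.

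Finally, I would evaluate each graded piece using Lemma \ref{lem:expSympow}: $\Sym^0 = \cl{O}$ contributes the trivial weight summand $\cl{O}_{U_q}$, while for $n \geq 1$ the same lemma identifies $\Sym^n(\cl{E}_\triv^\vee)$ with $[\cl{O}_{U_q} \xrightarrow{T-1} \cl{O}_{U_q}][1]$; assembling the pieces yields the displayed formula. The only subtle point I anticipate is tracking the degree conventions (which side of the two-term complex is homological versus cohomological, and the sign of the Tate twist in the identification with $\cl{O}_\std$), so that the weight $\std^{-n}$ placement and the $[1]$ shift both come out correctly; once the conventions established in Lemma \ref{lem:expEq} and Remark \ref{rmk:compatisom} are reconciled with the symmetric-power computation, the proof is a routine assembly.
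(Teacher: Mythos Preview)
Your proposal is correct and follows exactly the paper's argument: identify $E_q$ via Lemma~\ref{lem:gluerelPar}, use Lemma~\ref{lem:dualtriv} together with the triviality of $\omega_{\Par_{\bb{G}_m}}$ to reduce $\pi_{q*}\omega_{E_q}$ to $\pi_{q*}\cl{O}_{E_q}\cong \Sym^\bullet(\cl{E}_\triv^\vee)$, and then compute each graded piece by Lemma~\ref{lem:expSympow}. The only slip is the stated tor-amplitude of $\cl{E}_\triv$ (it is $[0,1]$, not $[-1,0]$, in the paper's conventions), but this is harmless since what you actually use is that $\cl{E}_\triv$ is coconnective so that $\bb{V}(\cl{E}_\triv)=\underline{\Spec}(\Sym^\bullet\cl{E}_\triv^\vee)$.
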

\begin{proof}
    The claim follows from the above discussion and the computation of $\Sym^n(\cl{E}_\triv^\vee)$ in \Cref{lem:expSympow}. 
\end{proof}

\subsubsection{Gluing}

By combining \Cref{prop:computeU1} and \Cref{prop:computeUq}, we get the following. 

\begin{prop} \label{prop:IwTateLsheaf}
    We have a decomposition
    \[
        \cl{L}_{\bb{A}^1} \cong \cl{O}_{\Par_{\bb{G}_m}} \oplus \bigoplus_{n \geq 1} \cl{O}_{\bb{G}_m}/(T-1) \oplus \bigoplus_{n \geq 1} \cl{O}_{\bb{G}_m}/(qT-1)[- 2n]
    \]
    Here, the first term lies in $\QCoh(\Par_{\bb{G}_m})_\triv$, and the $n$-th term of the second factor (resp.\ the third factor) lies in $\QCoh(\Par_{\bb{G}_m, \triv})_{\std^{-n}}$ \textup{(resp.\ $\QCoh(\Par_{\bb{G}_m, \triv})_{\std^{n}}$)}. 
\end{prop}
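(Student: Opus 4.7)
The plan is to glue the two local descriptions from \Cref{prop:computeU1} and \Cref{prop:computeUq} via the Zariski cover $\bb{G}_m = U_1 \cup U_q$, and then reassemble the trivial component with the non-trivial components using \Cref{lem:IwTatent}. Since the weight decomposition $\QCoh(\Par_{\bb{G}_m,\triv}) \cong \prod_{\chi \in X^*(\bb{G}_m)} \QCoh(\bb{G}_m)$ is preserved by restriction to each $U_i \times [\ast/\bb{G}_m]$, I would analyze $\cl{L}_{\bb{A}^1,\triv}$ weight-by-weight.

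First, I would simplify the complexes appearing in the two local descriptions. On $U_1 = \{T \ne 1\}$, the function $qT-1$ vanishes only at $T = 1/q$, so it is a non-zero-divisor on $U_1$, giving
\[
    \left[\cl{O}_{U_1} \xrightarrow{qT-1} \cl{O}_{U_1}\right][-2n+1] \;\cong\; \cl{O}_{U_1}/(qT-1)[-2n].
\]
Symmetrically, on $U_q = \{T \ne 1/q\}$ the function $T-1$ is a non-zero-divisor and $[\cl{O}_{U_q} \xrightarrow{T-1} \cl{O}_{U_q}][1] \cong \cl{O}_{U_q}/(T-1)$.

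Next, the key observation is that $\cl{O}_{\bb{G}_m}/(qT-1)$ and $\cl{O}_{\bb{G}_m}/(T-1)$ are supported at $T = 1/q \notin U_q$ and $T = 1 \notin U_1$ respectively, so each restricts to zero on the complementary open. In weight $\std^n$ with $n \ge 1$, the restrictions of $\cl{L}_{\bb{A}^1,\triv}$ to $U_q$ and $U_1$ are respectively $0$ and $\cl{O}_{U_1}/(qT-1)[-2n]$, so by Zariski descent this summand is the global sheaf $\cl{O}_{\bb{G}_m}/(qT-1)[-2n]$. The weight $\std^{-n}$ case is symmetric and yields $\cl{O}_{\bb{G}_m}/(T-1)$. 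The weight-$\triv$ piece restricts to $\cl{O}_{U_1}$ and $\cl{O}_{U_q}$ on the two opens, so it is a line bundle on $\Par_{\bb{G}_m,\triv}$; since $\Pic(\bb{G}_m) = 0$, it is $\cl{O}_{\Par_{\bb{G}_m,\triv}}$. Combined with the identification $\cl{L}_{\bb{A}^1,\chi} \cong \cl{O}_{\Par_{\bb{G}_m,\chi}}$ from \Cref{lem:IwTatent} for each nontrivial $\chi$, the weight-$\triv$ summands across all components of $\Par_{\bb{G}_m}$ assemble into $\cl{O}_{\Par_{\bb{G}_m}}$, yielding the stated decomposition.

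The main subtlety I would highlight is the compatibility of the two trivializations of the dualizing complex recorded in \Cref{rmk:compatisom}: on $U_1 \cap U_q$ they differ by the unit $T-1$. This discrepancy is only visible in the weight-$\triv$ piece, where it amounts to gluing $\cl{O}_{U_1}$ and $\cl{O}_{U_q}$ via a nowhere-vanishing unit, which still produces a trivial line bundle on $\bb{G}_m$; the higher-weight summands live on disjoint closed subschemes of the two opens and glue unambiguously, so no obstruction arises.
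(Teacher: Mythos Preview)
Your proof is correct and follows essentially the same approach as the paper: decompose by weight, glue each weight component over the Zariski cover $U_1 \cup U_q$, and assemble with the nontrivial components via \Cref{lem:IwTatent}. The only cosmetic difference is that for the weight-$\triv$ piece the paper writes down the explicit trivialization $\cl{O}_{\bb{G}_m}\vert_{U_1} \xrightarrow{(T-1)^{-1}} \cl{O}_{U_1}$, $\cl{O}_{\bb{G}_m}\vert_{U_q} \cong \cl{O}_{U_q}$, whereas you invoke $\Pic(\bb{G}_m)=0$; both are fine.
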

\begin{proof}
    The contribution outside the trivial component follows from \Cref{lem:IwTatent}. For a nontrivial character $\chi \in X^*(\bb{G}_m)$, the $\chi$-component of $\cl{L}_{\bb{A}^1, \triv}$ appears at most once in the descriptions of \Cref{prop:computeU1} and \Cref{prop:computeUq}. Thus, the claim for the second and third factors follows from loc. cit., and it is enough to identify the factor in $\QCoh(\Par_{\bb{G}_m, \triv})_{\triv} \cong \QCoh(\bb{G}_m)$. By \Cref{rmk:compatisom}, it is the gluing of $\cl{O}_{U_1}$ and $\cl{O}_{U_q}$ along $\cl{O}_{U_1}\vert_{U_1 \cap U_q} \xrightarrow{T-1} \cl{O}_{U_q}\vert_{U_1 \cap U_q}$. Thus, it is isomorphic to $\cl{O}_{\bb{G}_m}$ by the isomorphisms $\cl{O}_{\bb{G}_m}\vert_{U_1} \xrightarrow{(T-1)^{-1}} \cl{O}_{U_1}$ and $\cl{O}_{\bb{G}_m}\vert_{U_q} \cong \cl{O}_{U_q}$. 
\end{proof}

We can summarize the computation on $\Par_{\bb{G}_m, \triv}$ in the following table: 
\begin{table}[tbhp!]
    \centering
    \begin{tabular}{c|c|c|c}
        \text{central weight} & \text{support} &  \text{local on $U_1$}& \text{local on $U_q$} \\ 
        \hline
        $\triv$ & $\Par_{\bb{G}_m, \triv}$  &$\cl{O}_{U_1}$ &$\cl{O}_{U_q}$   \\ 
        \hline 
        $\std^{-n}, n \ge 1$ &$\crbr{T=1}$ &$0$& \begin{tabular}{c}
        $\sqbr{\cl{O}_{U_q} \xra{T-1} \cl{O}_{U_q}}[1] $ \\
            $\cong j_{1*}\cl{O}$
        \end{tabular}  
        \\ 
        \hline 
        $\std^n, n \ge 1$ & 
    $\crbr{T=q^{-1}} $  &
    \begin{tabular}{c}
          $\sqbr{\cl{O}_{U_1}\xra{qT-1} \cl{O}_{U_1}}[-2n+1]$  \\
        $ \cong j_{q^{-1}*} \cl{O}[-2n] $ 
    \end{tabular}
    & $0$ 
    \end{tabular}
    \label{tab:L_sheaf_components_on_grading}
\end{table}

Here, $j_x \colon \crbr{T=x} \hra \bb{G}_m$ denotes the inclusion of the closed point for $x \in \crbr{1,q^{-1}}$.

\subsection{The Hecke $L$-sheaf} \label{ssec:HeckeL}

In this section, we compute the unnormalized $L$-sheaf $\cl{L}_{\std}$ over $\Lambda = \Qla$ when $\LG = \GL_2$ and $\widehat{X} = \bb{A}^2$. Here, $\widehat{X} = \std$ as a $\GL_2$-variety where $\std$ denotes the standard representation of $\GL_2$. 

Let $Z \subset \GL_2$ be the diagonal central torus. We have the weight decomposition
\[
    \IndCoh(\Par_{\GL_2}) = \prod_{\chi \in X^*(Z)} \IndCoh(\Par_{\GL_2})_\chi. 
\]
with respect to the $Z$-gerbe $\Par_{\GL_2} \to [Z^1(\WD_F, \GL_2) / (\GL_2 / Z)]$. Fix a standard isomorphism $Z \cong \bb{G}_m$. We will give the description of each component of $\cl{L}_\std$ using the spectral Eisenstein series. Let $T \subset B \subset \GL_2$ be the standard Borel pair and let $\ov{B}$ denote the opposite of $B$. Consider the following diagram. 
\begin{center}
    \begin{tikzcd}
        & \Par_{B} \ar[ld, "p^\spec"'] \ar[rd, "q^\spec"] &  \\
        \Par_{\GL_2} & &  \Par_T \\
        & \Par_{\ov{B}} \ar[lu, "\ov{p}^\spec"] \ar[ru, "\ov{q}^\spec"'] &
    \end{tikzcd}
\end{center}

\begin{defi}(\cite[Proposition 3.3.2]{Zhu21})
    The spectral Eisenstein series functors associated to $B$ and $\ov{B}$ are defined as
    \[
        \Eis_{B}^\spec = p^{\spec}_* q^{\spec !} \colon \IndCoh(\Par_T) \to \IndCoh(\Par_{\GL_2}), 
    \]
    \[
        \Eis_{\ov{B}}^\spec = \ov{p}^{\spec}_* \ov{q}^{\spec !} \colon \IndCoh(\Par_T) \to \IndCoh(\Par_{\GL_2}). 
    \]
\end{defi}

Let $i_\triv \colon \Spec(\Qla) \to \Par_{\bb{G}_m}$ be the map associated to the trivial $L$-parameter. Since it is schematic, we may consider $i_{\triv *} \Qla$. Let us denote its weight decomposition
\[
    i_{\triv *} \Qla = \bigoplus_{\chi \in X^*(\bb{G}_m)} (i_{\triv *} \Qla)_{\chi}. 
\]
Here, $(i_{\triv *} \Qla)_{\chi} \in \IndCoh(\Par_{\bb{G}_m})_{\chi} \cong \QCoh(Z^1(W_F, \bb{G}_m))$ is one-dimensional over $\Qla$. 

Let us denote the weight decomposition
\[
    \cl{L}_\std = \bigoplus_{n \in \bb{Z}} \cl{L}_{\std, n}
\]
with $\cl{L}_{\std, n} \in \IndCoh(\Par_{\GL_2})_{\std^n}$. By using $\Par_T \cong \Par_{\bb{G}_m} \times \Par_{\bb{G}_m}$, we have the following description of $\cl{L}_{\std, n}$. 

\begin{prop} \label{prop:HeckeL}
    For $n < 0$, we have
    \[
        \cl{L}_{\std, n} = \Eis_{\ov{B}}^\spec(\cl{O}_{\Par_{\bb{G}_m}} \boxtimes (i_{\triv*} \Qla)_{\std^n}),
    \]
    for $n = 0$, we have a fiber sequence
    \[
        \cl{O}_{\Par_{\GL_2}} \to \cl{L}_{\std, 0} \to \Eis_{\ov{B}}^\spec(\cl{O}_{\Par_{\bb{G}_m}} \boxtimes  (i_{\triv*} \Qla)_{\triv}), 
    \]
    and for $n > 0$, we have a fiber sequence
    \[
        R\Gamma(\WD_F^n \rtimes \Sigma_n, \cl{O}_{\std}^{\boxtimes n}) \to \cl{L}_{\std, n} \to \Eis_{\ov{B}}^\spec( \cl{O}_{\Par_{\bb{G}_m}} \boxtimes (i_{\triv*} \Qla)_{\std^n}). 
    \]
    Here, $\cl{O}_{\std}$ denotes the pullback of $\cl{O}_{\std}\vert_{[\ast/\GL_2]}$ along $\Par_{\GL_2} \to [\ast/\GL_2]$, which is equipped with the natural $\WD_F$-action via the universal $L$-parameter. 
\end{prop}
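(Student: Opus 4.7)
The plan is to follow the strategy of \cite{FW25} in the geometric setting. Applied to $\widehat{X} = \std = \bb{A}^2$, the $\GL_2$-orbit stratification $\std = \{0\} \sqcup (\GL_2/\ov{\Mir}_2)$ together with \Cref{lem:orbitdec} produces a closed-open decomposition $\Par_{\GL_2} \stackrel{i}{\hookrightarrow} \Par_{\GL_2}^\std \stackrel{j}{\hookleftarrow} \Par_{\ov{\Mir}_2}$. Applying the $\IndCoh$-recollement triangle $i_* i^! \omega \to \omega \to j_* j^* \omega$ to the dualizing complex $\omega_{\Par_{\GL_2}^\std}$ and pushing forward along $\pi_\std$ yields a fiber sequence
\[
\cl{O}_{\Par_{\GL_2}} \to \cl{L}_\std \to (\pi_\std \circ j)_* \omega_{\Par_{\ov{\Mir}_2}},
\]
where we use $\pi_\std \circ i = \id$ and the triviality of the dualizing complex of $\Par_{\GL_2}$ from \Cref{prop:ParLGproperty}.

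The next step is to identify the cofiber with a spectral Eisenstein series. The short exact sequence $\ov{\Mir}_2 \hookrightarrow \ov{B} \twoheadrightarrow Z$ of $\Qla$-group schemes induces a derived Cartesian square $\Par_{\ov{\Mir}_2} \cong \Par_{\ov{B}} \times_{\Par_{\bb{G}_m}} \ast$, since $\Par_{(-)} = \und{\Map}_{[\ast/Q]}([\ast/\WD_F], [\ast/-])$ preserves finite limits; the right vertical map factors as $\pr_Z \circ \ov{q}^\spec$ and the bottom map is $i_\triv$. Base change in $\IndCoh$ together with the isomorphism $\pr_Z^!(i_{\triv *}\Qla) \cong \cl{O}_{\Par_{\bb{G}_m}} \boxtimes i_{\triv *}\Qla$ (which uses $\omega_{\Par_{\bb{G}_m}} \cong \cl{O}$ from \Cref{prop:ParLGproperty}) then produces
\[
(\pi_\std \circ j)_* \omega_{\Par_{\ov{\Mir}_2}} \cong \Eis_{\ov{B}}^\spec\bigl(\cl{O}_{\Par_{\bb{G}_m}} \boxtimes i_{\triv *}\Qla\bigr).
\]
Decomposing by $Z$-weights and using that $\cl{O}_{\Par_{\GL_2}}$ is concentrated in weight zero settles the cases $n \leq 0$.

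The case $n > 0$ is the main obstacle: the weight-$n$ truncation of the above sequence does not directly exhibit the stated $R\Gamma(\WD_F^n \rtimes \Sigma_n, \cl{O}_\std^{\boxtimes n})$ contribution. As indicated in the introduction, the plan is to invoke the functional equation $\cl{L}_{\std, n}^\norm \cong \cl{L}_{\std^\vee, -n}^\norm$ from \Cref{cor:FEstd}. Applying the strategy above to $\widehat{X} = \std^\vee$ (whose nontrivial orbit has stabilizer the upper-triangular mirabolic $\Mir_2$, so that $\Eis_B^\spec$ replaces $\Eis_{\ov{B}}^\spec$) produces a clean description of $\cl{L}_{\std^\vee, -n}$ for $-n < 0$, and transporting back through the shear/shift normalization of \Cref{cor:normLzX} yields the stated fiber sequence. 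The extra term $R\Gamma(\WD_F^n \rtimes \Sigma_n, \cl{O}_\std^{\boxtimes n})$ captures the derived symmetric-power cohomology produced by the unshearing. The principal technical difficulty is to maintain precise bookkeeping of the cohomological shifts, the Tate twists coming from the half-cyclotomic character $\sqrt{\cyc}$, and the derived symmetric powers introduced by the normalization.
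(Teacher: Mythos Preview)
Your triangle $i_*i^!\omega \to \omega \to j_*j^*\omega$ is not a fiber sequence in $\IndCoh(\Par_{\GL_2}^\std)$, and this is the essential gap. For a closed immersion $i$ with open complement $j$, the colocalization onto $\IndCoh$ supported on $|i|$ is \emph{not} $i_*i^!$ unless $i$ is a nil-isomorphism onto that locus; already for $\{0\}\hookrightarrow\bb{A}^1$ one has $i^!i_*k\cong k\oplus k[-1]$, so $i_*$ fails to be fully faithful. The zero section $\Par_{\GL_2}\hookrightarrow\Par_{\GL_2}^\std$ is certainly not a nil-isomorphism (the classical fibre over a point where $H^0(\cl{E})\neq 0$ is a genuine vector space). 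If your sequence were a fiber sequence, taking $Z$-weight $n>0$ would force $\cl{L}_{\std,n}\cong\Eis_{\ov{B}}^\spec(\cl{O}_{\Par_{\bb{G}_m}}\boxtimes(i_{\triv*}\Qla)_{\std^n})$ with no extra term, contradicting the statement you are trying to prove.

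The paper's proof replaces the zero section by its \emph{infinitesimal neighborhood} $\cl{Z}$, so that the fiber is $\widehat{\pi}_{\std*}\omega_{\cl{Z}}$; by \Cref{lem:FWlem} this is $\Sym^\bullet\cl{E}$ with $\cl{E}=R\Gamma(\WD_F,\cl{O}_\std)$, and $\Sym^n\cl{E}\cong R\Gamma(\WD_F^n\rtimes\Sigma_n,\cl{O}_\std^{\boxtimes n})$ sits in $Z$-weight $n$. This single fiber sequence then specialises to all three cases simultaneously. Your identification of the open contribution with $\Eis_{\ov{B}}^\spec(\cl{O}_{\Par_{\bb{G}_m}}\boxtimes i_{\triv*}\Qla)$ via base change is correct and agrees with the paper, but the functional-equation manoeuvre you sketch for $n>0$ belongs to the later comparison (\Cref{thm:Heckecomp}), where it produces an $\Eis_B^\spec$ description of $\cl{L}_{\std,n}^\norm$; it does not yield the fiber sequence with the $R\Gamma$ term asserted here.
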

\begin{proof}
    The same argument as in \cite[Section 7.2]{FW25} works here and provides a fiber sequence of $\cl{L}_\std$. Then, we will argue that the fiber sequence is simplified as in the statement. 

    First, let us recall the construction of a fiber sequence. We have a closed-open decomposition
    \begin{center}
        \begin{tikzcd}
            \Par_{\GL_2} \ar[r, "0", hook] \ar[rd, equal] & \Par_{\GL_2}^{\std} \ar[d, "\pi_\std"] & U \ar[l, "j"', hook'] \ar[ld]\\
            & \Par_{\GL_2}. & 
        \end{tikzcd}
    \end{center}
    Here, $0$ denotes the zero section of a derived vector bundle. Let $\cl{Z}$ denote the infinitesimal neighborhood of the zero section and let $\widehat{\pi}_\std \colon \cl{Z} \to \Par_{\GL_2}$ be the projection. Then, we have a fiber sequence
    \[
        \widehat{\pi}_{\std *} \omega_{\cl{Z}} \to \cl{L}_\std \to \pi_{\std *} j_* \omega_U. 
    \]
    By \Cref{lem:orbitdec}, $U$ is identified with the mapping stack
    \[
        \und{\Map}([\ast/\WD_F], [(\bb{A}^2 - \{0\})/\GL_2]) \cong \und{\Map}([\ast/\WD_F], [\ast/\ov{\Mir}_2])
    \]
    Here, $\ov{\Mir}_2$ is the stabilizer of ${\scriptsize \begin{pmatrix} 0 \\ 1 \end{pmatrix}}$ and written as
    \[
        \ov{\Mir}_2 = \left\{ \begin{pmatrix} \ast & 0 \\ \ast & 1 \end{pmatrix} \right\} \subset \GL_2. 
    \]
    Then, $\pi_\std \circ j$ factors through $\Par_{\ov{B}}$ as in the following diagram.  
    \begin{center}
        \begin{tikzcd}
            \Par_{\GL_2} & \Par_{\ov{B}} \ar[l, "\ov{p}^\spec"] \ar[d, "\ov{q}^\spec"] & U \ar[l] \ar[d] \\
            & \Par_T & \Par_{\bb{G}_m} \ar[l, "\id \times i_\triv"]
        \end{tikzcd}
    \end{center}
    Since the above square is Cartesian, the base change formula provides
    \[
        j_* \pi_{\std*} \omega_U \cong \ov{p}^\spec_* \ov{q}^{\spec !} (\id \times i_
        \triv)_* \omega_{\Par_{\bb{G}_m}} \cong \Eis_{\ov{B}}^\spec(\cl{O}_{\Par_{\bb{G}_m}} \boxtimes i_{\triv *} \Qla). 
    \]
    It can be easily checked that its weight decomposition is given by
    \[
        j_* \pi_{\std*} \omega_U = \bigoplus_{n\in \bb{Z}} \Eis_{\ov{B}}^\spec(\cl{O}_{\Par_{\bb{G}_m}} \boxtimes (i_{\triv *} \Qla)_{\std^n}) 
    \]
    with $\Eis_{\ov{B}}^\spec(\cl{O}_{\Par_{\bb{G}_m}} \boxtimes (i_{\triv *} \Qla)_{\std^n}) \in \IndCoh(\Par_{\GL_2})_{\std^n}$. 

    Next, we study $\widehat{\pi}_{\std *} \omega_{\cl{Z}}$. Consider the universal Weil-Deligne representation of rank $2$ over $\Par_{\LG}$ and let 
    \[
        \cl{E} = \left[\cl{O}_\std^{I_F} \xrightarrow{(\Fr-1, N)} \cl{O}_\std^{I_F} \oplus \cl{O}_\std^{I_F} \xrightarrow{(N, -q \Fr + 1)} \cl{O}_\std^{I_F} \right]
    \]
    denote its Weil-Deligne cohomology as a perfect complex on $\Par_{\LG}$ (see \Cref{prop:WD_cohomology}). 

    Since the dualizing complex of $\Par_{\LG}$ is trivial (see \Cref{prop:ParLGproperty}), we have 
    \[
        \widehat{\pi}_{\cl{Z}*} \omega_{\cl{Z}} \cong \Sym^\bullet \cl{E} \in \QCoh(\Par_{\LG}) \xhookrightarrow{\Xi} \IndCoh(\Par_{\LG})
    \]
    by \Cref{lem:FWlem}. Here, $\Sym^n \cl{E} \in \QCoh(\Par_{\LG})_{\std^n}$ for each $n \geq 0$. Since we work in characteristic $0$ and $\Sym^n \cl{E} = (\cl{E}^{\otimes n})_{\Sigma_n}$, it is identified with $R\Gamma(\WD_F^n \rtimes \Sigma_n, \cl{O}_{\std}^{\boxtimes n})$. Thus, it follows that the fiber sequence
    \[
        \widehat{\pi}_{\std *} \omega_{\cl{Z}} \to \cl{L}_\std \to \pi_{\std *} j_* \omega_U. 
    \]
    specializes to the claim as in the statement. 
\end{proof}

\begin{rmk}
    For $n > 0$, the above fiber sequence is inefficient to give the whole description of $\cl{L}_{\std, n}$. In the comparison with the corresponding period sheaf, we will instead use the \textit{functional equation} developed in \cite[Section 11.10.2]{BZSV}. See \Cref{sssec:FEL} for more details. 
\end{rmk}

\section{Normalization} \label{sec:normalization}

In this section, we introduce the normalization of period sheaves and $L$-sheaves, and translate the normalized period conjecture into the setting of the categorical local Langlands correspondence. 

\subsection{Normalization}

In this section, we introduce the normalization of period sheaves and $L$-sheaves.
From now on, we assume that $\Lambda$ is a $\bb{Q}_\ell[\sqrt{q}]$-algebra. Here, the choice of $\sqrt{q} \in \Lambda$ is an analogue of a square root of the canonical sheaf in the geometric setting \cite{BZSV}. 

\subsubsection{The $\cl{A}$-side}

First, we introduce the normalization of $\cl{P}_X$. From the choice of $\sqrt{q}$, we can take a square root $\Lambda_{\norm}^{1/2}$ of the norm character of $F^\times$ so that $\Lambda_\norm^{1/2}(x) = q^{-\ord(x)/2}$ for $x \in F^\times$. Then, we define the degree sheaf as follows. 

\begin{defi} \label{defi:degshf}
    Let $\underline{\deg} \in \cl{D}^\oc(\Bun_{\bb{G}_m}, \Lambda)$ be the invertible $\Lambda$-complex such that
    \[
        \underline{\deg}\vert_{\Bun_{\bb{G}_m, n}} = \Lambda^{- 1/2}_\norm[n]. 
    \]
\end{defi}

\begin{rmk}
    We can interpret the degree sheaf as the shearing of the half-volume factor, which is similar to the geometric setting (see \Cref{sec:classical_degree_normalization}, \cite[Remark 10.4.1]{BZSV}). 
\end{rmk}

Now, we will assume that $X$ is a smooth quasi-projective $G$-variety. The canonical bundle of $X$ is a $G$-equivariant line bundle, so it provides a map
\[
    \Omega_X^\tp \colon [X / G] \to [\ast / \bb{G}_m]. 
\]

\begin{defi}\label{defi:normP}
    Let $\omega_X \colon \Bun_G^X \to \Bun_{\bb{G}_m}$ be the composition map with $\Omega_X^\tp$. We define the normalized period sheaf associated to $X$ by
    \[
        \cl{P}_X^\norm = \pi_{X!} \omega_X^* \und{\deg}. 
    \]
\end{defi}

\begin{prop} \label{prop:normPetaX}
    Suppose that $X$ admits a nowhere vanishing eigen-volume form $\omega$. Let $\eta_X \colon G \to \bb{G}_m$ be the eigencharacter of $\omega$ such that $g^*\omega = \eta_X(g) \omega$. Then, we have 
    \[
        \cl{P}_X^\norm \cong \cl{P}_X \otimes \eta_X^* \und{\deg}. 
    \]
    In particular, if $X$ is unimodular, i.e. $X$ admits a $G$-equivariant volume form, $\cl{P}_X^\norm \cong \cl{P}_X$. 
\end{prop}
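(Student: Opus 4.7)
The plan is to reduce the claim to the projection formula by showing that $\omega_X$ factors through $\pi_X$.

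The crucial observation is that a nowhere vanishing eigen-volume form $\omega$ with eigencharacter $\eta_X$ amounts precisely to a $G$-equivariant isomorphism $\Omega_X \cong \cl{O}_X \otimes_{\Lambda} \eta_X$, where $\eta_X$ is regarded as a one-dimensional $G$-representation. Passing to quotient stacks, this yields a factorization
\[
\Omega_X^\tp \colon [X/G] \to [\ast/G] \xrightarrow{B\eta_X} [\ast/\bb{G}_m],
\]
in which the first arrow is the structural projection and the second is induced by $\eta_X$. Applying $\Map(X_S^\alg, -)$ for varying $S$ then yields a factorization $\omega_X = \eta_X \circ \pi_X$, where by abuse of notation $\eta_X$ also denotes the induced functoriality map $\Bun_G \to \Bun_{\bb{G}_m}$.

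Since $\pi_X$ is $!$-able by \Cref{prop:rel!able}, the projection formula gives
\[
\cl{P}_X^\norm = \pi_{X!}(\pi_X^* \eta_X^* \und{\deg}) \cong \pi_{X!}\und{\Lambda}_X \otimes \eta_X^* \und{\deg} = \cl{P}_X \otimes \eta_X^* \und{\deg},
\]
establishing the first claim. For the unimodular case, $\eta_X$ is trivial, so the induced map $\Bun_G \to \Bun_{\bb{G}_m}$ factors through the basepoint $\ast \to \Bun_{\bb{G}_m, 0}$ corresponding to the trivial line bundle. Since $\und{\deg}\vert_{\Bun_{\bb{G}_m, 0}} = \Lambda_\norm^{-1/2}$ pulls back to the constant sheaf $\und{\Lambda}$ on the point, we conclude that $\eta_X^* \und{\deg} \cong \und{\Lambda}_{\Bun_G}$ and hence $\cl{P}_X^\norm \cong \cl{P}_X$.

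There is no real obstacle. The only point worth verifying carefully is the translation of the equivariance $g^*\omega = \eta_X(g)\omega$ into the factorization of $\Omega_X^\tp$ through $B\eta_X$; everything else is a formal consequence of the projection formula and the definition of $\und{\deg}$.
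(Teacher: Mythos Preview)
Your proof is correct and follows essentially the same approach as the paper: factor $\Omega_X^\tp$ through $[\ast/G] \xrightarrow{\eta_X} [\ast/\bb{G}_m]$ using the trivialization of the canonical bundle by $\omega$, then apply the projection formula. You spell out slightly more detail than the paper (the equivariant isomorphism $\Omega_X \cong \cl{O}_X \otimes \eta_X$ and the unimodular case), but the argument is the same.
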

\begin{proof}
    The canonical bundle of $X$ is trivialized by $\omega$, and $\Omega_X^\tp$ factors as
    \[
        [X / G] \to [\ast / G] \xrightarrow{\eta_X} [\ast / \bb{G}_m]. 
    \]
    Then, $\omega_X^* \und{\deg} = \pi_X^* \eta_X^* \und{\deg}$, so the claim follows from the projection formula. 
\end{proof}
\begin{rmk}
    This description is similar to the normalization in the geometric setting (see \cite[Remark 10.4.1]{BZSV}). 
\end{rmk}

In general, $X$ does not admit such an eigen-volume form $\omega$ and $\omega_X^* \und{\deg}$ cannot be defined over $\Bun_G$, which is common especially when $X$ is proper. 

\begin{exa} \label{exa:flagvar}
    Let $P = MN \subset G$ be a parabolic subgroup, where $M$ is a Levi subgroup and $N$ is the unipotent radical. Let $\xi_P \colon M \to \bb{G}_m$ be the sum of roots in $N$. When $X = G / P$, the canonical bundle of $X$ is an anti-ample line bundle $\cl{O}(-\xi_P)$, so $\Omega_X^\tp$ factors as
    \[
        [X / G] = [\ast / P] \to [\ast / M] \xrightarrow{\xi_P^{-1}} [\ast / \bb{G}_m]. 
    \]
    It follows from the explicit description in \cite[Proposition 3.9]{HI24} that $\omega_X^* \und{\deg}$ equals the square root of the dualizing complex of $\Bun_P$, determined by the choice of $\sqrt{q} \in \Lambda$. 
\end{exa}


In the representation theory, a smooth $G(F)$-representation $\pi$ over $\Qla$ is \textit{$X$-distinguished} if $\Hom_{G(F)}(C_c^\infty(X(F), \Qla), \pi) \neq 0$. Here, one usually assumes that $X$ is quasi-affine and unimodular. Then, \Cref{cor:computetriv} suggests a geometrization of this definition. 

\begin{defi}
    We say that $A \in \cl{D}^\oc(\Bun_G, \Lambda)$ is $X$-distinguished if $\Map(\cl{P}_X^\norm, A) \neq 0$. 
\end{defi}

\begin{rmk}
    By \Cref{cor:computetriv} and \Cref{prop:normPetaX}, $\pi$ is $X$-distinguished if and only if $i^1_* \pi$ is $X$-distinguished when $X$ is quasi-affine and unimodular. 
\end{rmk}


\subsubsection{The $\cl{B}$-side}

Next, we introduce the normalization of $\cl{L}_{\widehat{X}}$. Let $\bb{G}_{gr}$ denote a copy of $\bb{G}_m$ and we endow $\widehat{X}$ with a grading, a left $\bb{G}_{gr}$-action commuting with the $\LG$-action. Here, we normalize $\Par_{\LG}^{\widehat{X}}$ itself, similarly to the $\cl{A}$-side normalization in \cite[Section 10.2]{BZSV}. 

From the choice of $\sqrt{q}$, we have an unramified $L$-parameter $\sqrt{\cyc} \colon W_F \to \bb{G}_{gr}$ sending $\Fr$ to $q^{-1/2}$. It defines a point $\sqrt{\cyc} \in \Par_{\bb{G}_{gr}}(\Lambda)$. 

\begin{defi} \label{defi:normParLGX}
    We define the normalized relative stack $\Par_{\LG}^{\widehat{X}, \norm}$ by the Cartesian diagram
    \begin{center}
        \begin{tikzcd}[column sep = large]
            \Par_{\LG}^{\widehat{X}, \norm} \ar[r] \ar[d, "\pi_{\widehat{X}}^{\norm}"] & \Par_{\LG \times \bb{G}_{gr}}^{\widehat{X}} \ar[d, "\pi_{\widehat{X}}"] \\
            \Par_{\LG} \ar[r, "\id \times \sqrt{\cyc}"] & \Par_{\LG\times \bb{G}_{gr}}. 
        \end{tikzcd}
    \end{center}
\end{defi}

\begin{prop}
    The map $\pi_{\widehat{X}}^\norm$ is schematic. In particular, $\Par_{\LG}^{\widehat{X}, \norm}$ has an affine diagonal and is a $1$-Artin stack locally of finite presentation. 
\end{prop}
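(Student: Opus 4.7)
The plan is to realize $\pi_{\widehat{X}}^\norm$ as a base change of the schematic morphism produced by \Cref{prop:ParLGXschematic} and then transfer geometric properties along this base change.

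First, I would view $\widehat{X}$ as a derived $\LG \times \bb{G}_{gr}$-scheme (with the given grading as the extra $\bb{G}_{gr}$-action commuting with the $\LG$-action) and check that the hypothesis of \Cref{prop:ParLGXschematic} is satisfied for this enlarged group. The only non-formal point is the $\widehat{G}\times \bb{G}_{gr}$-quasi-projectivity of $\clas \widehat{X}$: since $\clas \widehat{X}$ is normal and quasi-projective over $\Lambda$ by hypothesis and $\widehat{G}\times \bb{G}_{gr}$ is a connected reductive group (over an algebraically closed field, after base change), Sumihiro's theorem \cite[Theorem 2.5]{Sum75} furnishes a $\widehat{G}\times \bb{G}_{gr}$-equivariant ample line bundle, exactly as in the remark following \Cref{defi:relspec}. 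Hence by \Cref{prop:ParLGXschematic}, the map $\pi_{\widehat{X}}\colon \Par_{\LG\times \bb{G}_{gr}}^{\widehat{X}} \to \Par_{\LG \times \bb{G}_{gr}}$ is schematic, and $\Par_{\LG\times \bb{G}_{gr}}^{\widehat{X}}$ is a $1$-Artin stack locally of finite presentation with affine diagonal.

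Since $\pi_{\widehat{X}}^\norm$ is by definition the base change of $\pi_{\widehat{X}}$ along $\id \times \sqrt{\cyc}\colon \Par_{\LG} \to \Par_{\LG\times \bb{G}_{gr}}$, and schematic morphisms are stable under arbitrary base change, $\pi_{\widehat{X}}^\norm$ is schematic. For the remaining claims, $\Par_{\LG}^{\widehat{X},\norm}$ is the fiber product of $\Par_{\LG\times \bb{G}_{gr}}^{\widehat{X}}$ with the derived $1$-Artin stack $\Par_{\LG}$ (see \Cref{prop:ParLGArtin}) over the derived $1$-Artin stack $\Par_{\LG\times \bb{G}_{gr}}$, all three locally of finite presentation over $\Lambda$; fiber products in derived stacks preserve both the $1$-Artin property and being locally of finite presentation.

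Finally, for the affine diagonal, I would factor
\[
\Delta_{\Par_{\LG}^{\widehat{X},\norm}} \colon \Par_{\LG}^{\widehat{X},\norm} \xrightarrow{\Delta_{\pi_{\widehat{X}}^\norm}} \Par_{\LG}^{\widehat{X},\norm} \times_{\Par_{\LG}} \Par_{\LG}^{\widehat{X},\norm} \longrightarrow \Par_{\LG}^{\widehat{X},\norm} \times \Par_{\LG}^{\widehat{X},\norm}.
\]
The first arrow is a closed immersion (hence affine) because $\pi_{\widehat{X}}^\norm$ is schematic and in particular separated, and the second arrow is the base change of the diagonal $\Delta_{\Par_{\LG}}$, which is affine by \Cref{prop:ParLGArtin} (or rather its proof, since $\Par_{\LG}$ is a derived $1$-Artin stack with affine diagonal inherited from $[Z^1(\WD_F,\LG)/\widehat{G}]$). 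The composition of two affine morphisms is affine, yielding affine diagonal. The only mildly delicate step is the verification of the $\widehat{G}\times \bb{G}_{gr}$-quasi-projectivity needed to invoke \Cref{prop:ParLGXschematic}; everything else is formal closure under base change and fiber products.
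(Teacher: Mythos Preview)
Your approach is essentially the same as the paper's: apply \Cref{prop:ParLGXschematic} to the enlarged group $\LG \times \bb{G}_{gr}$ and then base change along $\id \times \sqrt{\cyc}$. One minor inaccuracy: you assert that $\clas\widehat{X}$ is normal ``by hypothesis,'' but the standing assumption in \Cref{defi:relspec} is only $\widehat{G}$-quasi-projectivity (normality is merely a sufficient condition via Sumihiro, as in the remark following that definition); the paper itself does not explicitly check the $\widehat{G}\times\bb{G}_{gr}$-quasi-projectivity hypothesis either, simply invoking \Cref{prop:ParLGXschematic} and noting that schematicness, affine diagonal, and local finite presentation are all stable under base change.
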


\begin{proof}
By \Cref{prop:ParLGXschematic}, 
the morphism
\[
  \pi_{\widehat{X}} :
    \Par_{\LG \times \bb{G}_{gr}}^{\widehat{X}} 
    \longrightarrow \Par_{\LG \times \bb{G}_{gr}}
\]
is schematic (and in particular has affine diagonal and is locally of finite presentation). Then, the claim follows since these properties are preserved under base change. 
\end{proof}

Let $\omega^\norm_{\widehat{X}}$ be the dualizing complex of $\Par_{\LG}^{\widehat{X}, \norm}$. Since the point $\sqrt{\cyc}$ factors as 
\[
    \sqrt{\cyc} \colon \ast \to [\ast / \bb{G}_{gr}] \hookrightarrow \Par_{\bb{G}_{gr}}, 
\]
the above diagram factors as
\begin{center}
    \begin{tikzcd}
        \Par_{\LG}^{\widehat{X}, \norm} \ar[r] \ar[d, "\pi_{\widehat{X}}^{\norm}"] & \lbrack \Par_{\LG}^{\widehat{X}, \norm} / \bb{G}_{gr} \rbrack \ar[d, "p_{\widehat{X}}"] &  \\
        \Par_{\LG} \ar[r, "q_{\widehat{X}}"] & \Par_{\LG} \times [\ast / \bb{G}_{gr}] \ar[r, hook] & \Par_{\LG} \times \Par_{\bb{G}_{gr}}. 
    \end{tikzcd}
\end{center}


Since $\omega_{\widehat{X}}^\norm$ is isomorphic to the relative dualizing complex of $\pi_{\widehat{X}}^{\norm}$, we have 
\[
    \pi_{\widehat{X}*}^\norm \omega_{\widehat{X}}^\norm \cong q_{\widehat{X}}^* p_{\widehat{X}*} \omega_{p_{\widehat{X}}}
\]
by the smooth base change along $q_{\widehat{X}}$. Here, $\omega_{p_{\widehat{X}}}$ is the relative dualizing complex of $p_{\widehat{X}}$. In particular, $\pi_{\widehat{X}*}^\norm \omega_{\widehat{X}}^\norm$ is naturally equipped with a grading, so we can take the weight decomposition and the shearing
\[
    \pi_{\widehat{X}*}^\norm \omega_{\widehat{X}}^\norm = \bigoplus_{n \in \bb{Z}} (\pi_{\widehat{X}*}^\norm \omega_{\widehat{X}}^\norm)_{\std^n}, \quad
    (\pi_{\widehat{X}*}^\norm \omega_{\widehat{X}}^\norm)^\shear = \bigoplus_{n \in \bb{Z}} (\pi_{\widehat{X}*}^\norm \omega_{\widehat{X}}^\norm)_{\std^n}[n]. 
\]
\begin{defi} \label{defi:normalized_Lsheaf}
    We define the normalized $L$-sheaf associated to $\widehat{X}$ by 
    \[
        \cl{L}_{\widehat{X}}^\norm = (\pi_{\widehat{X}*}^\norm \omega_{\widehat{X}}^\norm)^\shear. 
    \]
\end{defi}

Now, we will explain the relation to the unnormalized $L$-sheaf $\cl{L}_{\widehat{X}}$ when the grading on $\widehat{X}$ comes from a central cocharacter. 

\begin{prop} \label{prop:Parnormz}
    Assume that the grading on $\widehat{X}$ is induced by a central cocharacter
    \[
        z_{\widehat{X}} \colon \bb{G}_{gr} \to \LG
    \]
    so that $a \cdot x = z_{\widehat{X}}(a)\,x$ for $a \in \bb{G}_{gr}$ and $x \in \widehat{X}$.  
    Let
    \[
        \tau_{\sqrt{\cyc}} \colon \Par_{\LG} \longrightarrow \Par_{\LG}, \qquad
        \varphi \longmapsto (z_{\widehat{X}} \circ \sqrt{\cyc}) \cdot \varphi
    \]
    be the translation by the unramified $L$-parameter $\sqrt{\cyc} \in \Par_{\bb{G}_{gr}}(\Lambda)$.  
    Then, there exists an isomorphism
    \[
        \iota \colon \Par_{\LG}^{\widehat{X}, \norm} \xrightarrow{\;\sim\;} \Par_{\LG}^{\widehat{X}}
    \]
    such that
    \[
        \pi_{\widehat{X}}^\norm \circ \iota^{-1} \;=\; \tau_{\sqrt{\cyc}}^{-1} \circ \pi_{\widehat{X}}.
    \]
\end{prop}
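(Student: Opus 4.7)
The plan is to deduce the proposition by pasting two Cartesian squares, the only input being that centrality of $z_{\widehat{X}}$ lets us factor the combined $(\LG \times \bb{G}_{gr})$-action on $\widehat{X}$ through a single action of $\LG$.

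First, since the image of $z_{\widehat{X}}$ lies in $Z(\LG) = Z(\widehat{G})^Q$, the product map
\[
m \colon \LG \times \bb{G}_{gr} \longrightarrow \LG, \qquad (g,a) \longmapsto g \cdot z_{\widehat{X}}(a),
\]
is a group homomorphism over $Q$, and the hypothesis $a \cdot x = z_{\widehat{X}}(a)\,x$ says exactly that the combined $(\LG \times \bb{G}_{gr})$-action on $\widehat{X}$ is the pullback along $m$ of the given $\LG$-action. This yields a Cartesian square of stacks over $[\ast/Q]$
\[
\begin{tikzcd}
\lbrack \widehat{X}/(\LG \times \bb{G}_{gr}) \rbrack \ar[r] \ar[d] & \lbrack \widehat{X}/\LG \rbrack \ar[d] \\
\lbrack \ast/(\LG \times \bb{G}_{gr}) \rbrack \ar[r, "Bm"] & \lbrack \ast/\LG \rbrack.
\end{tikzcd}
\]

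Next, I would apply $\und{\Map}_{[\ast/Q]}([\ast/\WD_F], -)$, which preserves finite limits as a right adjoint to the product with $[\ast/\WD_F]$, to obtain a Cartesian diagram
\[
\begin{tikzcd}
\Par_{\LG \times \bb{G}_{gr}}^{\widehat{X}} \ar[r] \ar[d, "\pi_{\widehat{X}}"'] & \Par_{\LG}^{\widehat{X}} \ar[d, "\pi_{\widehat{X}}"] \\
\Par_{\LG \times \bb{G}_{gr}} \ar[r, "m_*"] & \Par_{\LG},
\end{tikzcd}
\]
where $m_*$ sends an $L$-parameter $(\varphi, \eta)$ to $\varphi \cdot (z_{\widehat{X}} \circ \eta)$. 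Pasting this square onto the defining Cartesian square of $\Par_{\LG}^{\widehat{X},\norm}$ from \Cref{defi:normParLGX}, the composite along the bottom becomes
\[
\Par_{\LG} \xrightarrow{\id \times \sqrt{\cyc}} \Par_{\LG \times \bb{G}_{gr}} \xrightarrow{m_*} \Par_{\LG}, \qquad \varphi \longmapsto \varphi \cdot (z_{\widehat{X}} \circ \sqrt{\cyc}) = \tau_{\sqrt{\cyc}}(\varphi),
\]
using that $z_{\widehat{X}} \circ \sqrt{\cyc}$ is central in $\LG$, so left and right translation agree.

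Thus the outer Cartesian rectangle realizes $\Par_{\LG}^{\widehat{X},\norm}$ as the base change of $\pi_{\widehat{X}}$ along the automorphism $\tau_{\sqrt{\cyc}}$ of $\Par_{\LG}$. Taking $\iota$ to be the top horizontal arrow of that rectangle, $\iota$ is an isomorphism (being the base change of an isomorphism), and commutativity of the outer square reads $\pi_{\widehat{X}} \circ \iota = \tau_{\sqrt{\cyc}} \circ \pi_{\widehat{X}}^\norm$, i.e.\ $\pi_{\widehat{X}}^\norm \circ \iota^{-1} = \tau_{\sqrt{\cyc}}^{-1} \circ \pi_{\widehat{X}}$, which is the desired identity. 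The only point requiring real care is verifying the very first Cartesian square as a square of $Q$-equivariant quotient stacks; this reduces to the $Q$-equivariance of $m$, which is automatic from the centrality of the image of $z_{\widehat{X}}$. Everything else is a formal manipulation of mapping stacks.
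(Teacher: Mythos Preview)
Your proof is correct and follows essentially the same approach as the paper: both observe that centrality of $z_{\widehat{X}}$ makes the combined $(\LG\times\bb{G}_{gr})$-action on $\widehat{X}$ factor through $\LG$, yielding a Cartesian square of quotient stacks which, after applying $\und{\Map}_{[\ast/Q]}([\ast/\WD_F],-)$ and then base-changing along $\id\times\sqrt{\cyc}$, produces the desired isomorphism $\iota$. The paper phrases the first step as the splitting $[\widehat{X}/(\LG\times\bb{G}_{gr})]\cong [\widehat{X}/\LG]\times[\ast/\bb{G}_{gr}]$ via the group automorphism $(g,a)\mapsto(z_{\widehat{X}}(a)g,a)$, while you phrase it as factoring through $m:(g,a)\mapsto g\,z_{\widehat{X}}(a)$; these are the same Cartesian square up to projecting away the trivial $[\ast/\bb{G}_{gr}]$ factor.
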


\begin{proof}
    We have an isomorphism
    \[
        [\widehat{X} / \LG \times \bb{G}_{gr}] \cong [\widehat{X} / \LG] \times [\ast / \bb{G}_{gr}] 
    \]
    given by an isomorphism $\LG \times \bb{G}_{gr} \cong \LG \times \bb{G}_{gr}$ sending $(g, a)$ to $(z_{\widehat{X}}(a)g, a)$. Applying the functor $\und{\Map}_{[\ast/Q]}([\ast/\WD_F], -)$, we obtain a
    Cartesian diagram
    \[
        \begin{tikzcd}
            \Par_{\LG \times \bb{G}_{gr}}^{\widehat{X}}
                \ar[r] \ar[d, "\pi_{\widehat{X},\LG \times \bb{G}_{gr}}"'] &
            \Par_{\LG}^{\widehat{X}}
                \ar[d, "\pi_{\widehat{X},\LG}"] \\
            \Par_{\LG \times \bb{G}_{gr}}
                \ar[r, "\id \times z_{\widehat{X}}"] &
            \Par_{\LG}.
        \end{tikzcd}
    \]

    Pulling back the Cartesian square 
    along $\id \times \sqrt{\cyc}$, we obtain another
    Cartesian diagram
    \[
        \begin{tikzcd}
            \Par_{\LG}^{\widehat{X},\norm}
                \ar[r, "\iota"] \ar[d, "\pi_{\widehat{X}}^\norm"'] &
            \Par_{\LG}^{\widehat{X}}
                \ar[d, "\pi_{\widehat{X}}"] \\
            \Par_{\LG}
                \ar[r, "\tau_{\sqrt{\cyc}}"] &
            \Par_{\LG}. 
        \end{tikzcd}
    \]
   Since $\tau_{\sqrt{\cyc}}$ is an isomorphism, $\iota$ is a desired isomorphism. 
\end{proof}
Let $Z(\widehat{G})$ be the center of $\widehat{G}$ and let $Z = Z(\widehat{G})^Q$. Then, we have a weight decomposition
\[
    \IndCoh(\Par_{\LG}) = \prod_{\chi \in X^*(Z)} \IndCoh(\Par_{\LG})_{\chi}
\]
with respect to the $Z$-gerbe $\Par_{\LG} \to [Z^1(\WD_F, \LG) / (\widehat{G}/Z)]$. When the grading is given by a central cocharacter, $\cl{L}_{\widehat{X}}^\norm$ can be described in terms of this decomposition. 

\begin{cor} \label{cor:normLzX}
    In the setting of \Cref{prop:Parnormz}, $\cl{L}_{\widehat{X}}$ admits a grading and we have $\cl{L}_{\widehat{X}}^\norm \cong  \tau_{\sqrt{\cyc}}^* \cl{L}_{\widehat{X}}^\shear$. Moreover, the shearing can be described in terms of $Z$-weights as 
    \[
        \cl{L}_{\widehat{X}}^\shear = \bigoplus_{\chi \in X^*(Z)} \cl{L}_{\widehat{X}, \chi}[\langle \chi, z_{\widehat{X}} \rangle]. 
    \]
\end{cor}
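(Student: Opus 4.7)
The plan is to transport the computation of $\cl{L}_{\widehat{X}}^\norm$ from $\Par_{\LG}^{\widehat{X},\norm}$ to $\Par_{\LG}^{\widehat{X}}$ via the isomorphism $\iota$ furnished by \Cref{prop:Parnormz}, and then to identify the ensuing $\bb{G}_{gr}$-grading on $\cl{L}_{\widehat{X}}$ with the pullback of the $Z$-weight decomposition along $z_{\widehat{X}}$.

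First, I would apply \Cref{prop:Parnormz} to obtain the isomorphism $\iota\colon \Par_{\LG}^{\widehat{X},\norm} \xrightarrow{\sim} \Par_{\LG}^{\widehat{X}}$ fitting into a Cartesian square with $\tau_{\sqrt{\cyc}}$ on the base. Since $\iota$ and $\tau_{\sqrt{\cyc}}$ are both isomorphisms and $\pi_{\widehat{X}}$ is schematic by \Cref{prop:ParLGXschematic}, the identification $\omega_{\widehat{X}}^\norm \cong \iota^! \omega_{\widehat{X}}$ together with ind-coherent base change yields the pre-shearing identity
\[
\pi_{\widehat{X}*}^\norm \omega_{\widehat{X}}^\norm \cong \tau_{\sqrt{\cyc}}^* \pi_{\widehat{X}*} \omega_{\widehat{X}} = \tau_{\sqrt{\cyc}}^* \cl{L}_{\widehat{X}}
\]
in $\IndCoh(\Par_{\LG})$.

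Next, I would unravel the $\bb{G}_{gr}$-grading. By \Cref{defi:normalized_Lsheaf}, the grading on $\pi_{\widehat{X}*}^\norm \omega_{\widehat{X}}^\norm$ is recorded by the factorization through $p_{\widehat{X}}\colon [\Par_{\LG}^{\widehat{X},\norm}/\bb{G}_{gr}] \to \Par_{\LG} \times [\ast/\bb{G}_{gr}]$; equivalently, it comes from the $\bb{G}_{gr}$-inertia at the point $\sqrt{\cyc} \in [\ast/\bb{G}_{gr}] \hookrightarrow \Par_{\bb{G}_{gr}}$. Feeding this through the isomorphism
\[
[\widehat{X}/\LG \times \bb{G}_{gr}] \cong [\widehat{X}/\LG] \times [\ast/\bb{G}_{gr}]
\]
constructed in the proof of \Cref{prop:Parnormz} (which relies precisely on the centrality of $z_{\widehat{X}}$), this inertia is transported under $\iota$ to the action of $z_{\widehat{X}}(\bb{G}_{gr}) \subset Z$ on the universal $\widehat{X}$-bundle through the $\LG$-structure. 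As $Z$ acts trivially on $L$-parameters modulo outer conjugation, this $\bb{G}_{gr}$-action is $\Par_{\LG}$-linear and endows $\cl{L}_{\widehat{X}}$ with the claimed grading.

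Having the grading in hand, the $Z$-gerbe $\Par_{\LG} \to [Z^1(\WD_F, \LG)/(\widehat{G}/Z)]$ dictates that an element $z \in Z$ acts on $\IndCoh(\Par_{\LG})_\chi$ by the scalar $\chi(z)$. Consequently the $\bb{G}_{gr}$-action above acts on $\cl{L}_{\widehat{X},\chi}$ via $\chi \circ z_{\widehat{X}}$, so the weight-$\chi$ summand has $\bb{G}_{gr}$-weight $\langle \chi, z_{\widehat{X}}\rangle$. Applying the shearing then yields
\[
\cl{L}_{\widehat{X}}^\shear = \bigoplus_{\chi \in X^*(Z)} \cl{L}_{\widehat{X},\chi}[\langle \chi, z_{\widehat{X}}\rangle],
\]
and since $\tau_{\sqrt{\cyc}}^*$ is pullback along an automorphism of the $Z$-gerbe it commutes with both the weight decomposition and the shearing, giving $\cl{L}_{\widehat{X}}^\norm \cong \tau_{\sqrt{\cyc}}^* \cl{L}_{\widehat{X}}^\shear$.

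The main obstacle lies in the second step: accurately transferring the $\bb{G}_{gr}$-gerbe structure from $\Par_{\LG}^{\widehat{X},\norm}$ to $\Par_{\LG}^{\widehat{X}}$ under $\iota$. Because the relevant $\bb{G}_{gr}$-action manifests as a $2$-automorphism of the mapping stack rather than a free $1$-action, the identification must be tracked at the level of automorphism groupoids of the universal torsor, and one must verify that it aligns with the twist by $z_{\widehat{X}} \circ \sqrt{\cyc}$ that produces $\tau_{\sqrt{\cyc}}$ on the base of the Cartesian square.
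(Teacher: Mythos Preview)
Your proposal is correct and follows essentially the same approach as the paper: obtain $\pi_{\widehat{X}*}^\norm\omega_{\widehat{X}}^\norm\cong\tau_{\sqrt{\cyc}}^*\cl{L}_{\widehat{X}}$ from the Cartesian square of \Cref{prop:Parnormz}, then identify the induced $\bb{G}_{gr}$-grading on $\cl{L}_{\widehat{X}}$ with the restriction along $z_{\widehat{X}}$ of the $Z$-weight decomposition. The paper handles your ``main obstacle'' exactly as you anticipate, but makes it concrete: rather than reasoning with $2$-automorphisms of the mapping stack, it passes to the smooth cover $[Z^1(\WD_F,\LG)/Z]\to\Par_{\LG}$ (conservativity of smooth pullback), writes down the right action of $(g,a)\in\LG\times\bb{G}_{gr}$ on points $(\varphi,x)\in\widehat{X}_{Z^1}^{\WD_F}$ as $(\Ad(g^{-1})\varphi,\,z_{\widehat{X}}(a)^{-1}g^{-1}x)$, and restricts to $Z\times\bb{G}_{gr}$ to see the factorization through $\id\times z_{\widehat{X}}\colon Z\times\bb{G}_{gr}\to Z$.
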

\begin{proof}
    The first claim is immediate from \Cref{prop:Parnormz}. For the second claim, we will compare the weight decompositions of $\cl{L}_{\widehat{X}}$ by $Z$ and $\bb{G}_{gr}$. Since the smooth pullback is conservative, we may take the pullback along $[Z^1(\WD_F, \LG) / Z] \to \Par_{\LG}$. 

    First, the action of $\bb{G}_{gr}$ on $\Par_{\LG}^{\widehat{X}}$ induced by $\iota$ is the composition
    \[
        \Par_{\LG}^{\widehat{X}} \times \bb{G}_{gr} \to \und{\Map}_{[\ast/Q]}([\ast/\WD_{F}], [\widehat{X}/\LG] \times \bb{G}_{gr}) \to \Par_{\LG}^{\widehat{X}}. 
    \]
    Let $\widehat{X}_{Z^1}^{\WD_F} = \Par_{\LG}^{\widehat{X}} \times_{\Par_{\LG}} Z^1(\WD_F, \LG)$. Look at the map
    \[
        [\widehat{X}_{Z^1}^{\WD_F} / (Z \times \bb{G}_{gr})] \to Z^1(\WD_F, \LG) \times [\ast / Z] \times [\ast/ \bb{G}_{gr}]. 
    \]
    It is enough to show that the action of $Z \times \bb{G}_{gr}$ on $\widehat{X}_{Z^1}^{\WD_F}$ factors through $Z \times \bb{G}_{gr} \xrightarrow{\id \times z_{\widehat{X}}} Z$. 

    For each derived scheme $S$ and $\varphi \in Z^1(\WD_F, \LG)(S)$, we have
    \[
        \widehat{X}_{Z^1}^{\WD_F}(S) = \Sec([\widehat{X}/\WD_{F, S}] \to [S / \WD_{F, S}]). 
    \]
    For each $x \in \widehat{X}_{Z^1}^{\WD_F}(S)$, $g \in \LG(S)$ and $a \in \bb{G}_{gr}(S)$, the right action of $(g, a)$ is given by
    \[
        (\varphi, x)\cdot (g, a) = (\Ad(g^{-1})(\varphi), z_{\widehat{X}}(a)^{-1} g^{-1} x). 
    \]
    Then, the claim follows by restricting to the action of $Z \times \bb{G}_{gr}$. 
\end{proof}

\begin{rmk}
    Compare the normalization of $\cl{L}_{\widehat{X}}$ with the description of $\underline{\deg}$. They are compatible with the local class field theory: twisting representations by a character corresponds to translating $L$-parameters by a central parameter. 
\end{rmk}


\subsection{Functional equation} \label{sssec:FEL}

In the paradigm of \cite{BZSV}, the boundary theory is attached to the hyperspherical $G$-variety $T^*X$, not only to the spherical variety $X$. This philosophy is reminiscent of functional equations. In this section, let $\Lambda = \Qla$ for simplicity. 

\begin{defi}
    Let $X$ and $X'$ be smooth quasi-projective $G$-varieties, and let $\widehat{X}$ and $\widehat{X}'$ be derived $\LG$-schemes locally of finite presentation such that $\clas \widehat{X}$ and $\clas \widehat{X}'$ are $\widehat{G}$-quasi-projective. 
    \begin{enumerate}
        \item When $T^*X \cong T^* X'$ as symplectic $G$-varieties, we say that a functional equation holds for $X$ and $X'$ if $\cl{P}_X^\norm \cong \cl{P}_{X'}^\norm$. 
        \item When $T^*\widehat{X} \cong T^* \widehat{X}'$ as symplectic $\LG$-schemes, we say that a functional equation holds for $\widehat{X}$ and $\widehat{X}'$ if $\cl{L}_{\widehat{X}}^\norm \cong \cl{L}_{\widehat{X}'}^\norm$.
    \end{enumerate}
\end{defi}

Here, we verify functional equations for $L$-sheaves in the vectorial case. It essentially follows from the following functional equation proved in \cite{BZSV} and local Tate duality. 

\begin{lem}\textup{(\cite[Section 11.10.2, Lemma 11.10.3]{BZSV})} \label{lem:FEforL}
    Let $R$ be a local complete intersection $\Qla$-algebra. Consider perfect complexes over $R$
    \[
        \cl{E} = [Q \xrightarrow{d_0} Y \xrightarrow{d_1} P], \quad
        \cl{E}' = [P^\vee \xrightarrow{d_1^\vee} Y^\vee \xrightarrow{d_0^\vee} Q^\vee]
    \]
    with tor-amplitudes in $[0,2]$. Let $E = \Tot_R(\cl{E})$ and $E' = \Tot_R(\cl{E}')$. Let $\pi_E\colon E \to \Spec(R)$ and $\pi_{E'} \colon E' \to \Spec(R)$ be the natural projections. Then, we have a natural isomorphism
    \[
        (\pi_{E*} \omega_{E/R})^\shear \otimes \det(\cl{E}) \cong (\pi_{E'*} \omega_{E'/R})^\shear. 
    \]
    Here, the shearing is taken with respect to the scaling action on each side. This isomorphism changes the weight with respect to $\bb{G}_{gr}$ by $-1$. 
\end{lem}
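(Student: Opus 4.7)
The plan is to translate \cite[Lemma 11.10.3]{BZSV} into the present derived-algebraic notation. The key observation is that under the given presentations, $\cl{E}'$ is canonically identified with $\cl{E}^\vee[2]$ (with $Q, Y, P$ placed in cohomological degrees $0, 1, 2$), so the passage from $E$ to $E'$ is the derived analog of taking the linear dual of a vector bundle. This places the statement squarely within the Koszul-duality-style functional equation proved in \cite[Section 11.10.2]{BZSV}.

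To carry this out, I would first apply \Cref{lem:FWlem} to both projections: since $\cl{E}$ and $\cl{E}^\vee[2]$ both have tor-amplitude in $[0,2] \subset [0,\infty)$, the formal pushforwards $\widehat{\pi}_{E*}\omega_{E/R}$ and $\widehat{\pi}_{E'*}\omega_{E'/R}$ are identified with $\Sym^\bullet(\cl{E})$ and $\Sym^\bullet(\cl{E}^\vee[2])$, respectively. The lci hypothesis on $R$ then ensures that the derived total spaces $E$ and $E'$ are themselves quasi-smooth, so that the genuine pushforwards $\pi_{E*}\omega_{E/R}$ and $\pi_{E'*}\omega_{E'/R}$ can be computed via explicit Koszul resolutions; the resulting complexes differ from the formal pushforwards by a twist by $\det(\cl{E}) = \det(Q) \otimes \det(Y)^{-1} \otimes \det(P)$, which is the relative Serre-duality correction between the two total spaces. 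Combining these computations with the symmetry interchanging $[Q \to Y \to P]$ and $[P^\vee \to Y^\vee \to Q^\vee]$ produces the desired isomorphism.

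The weight shift by $-1$ with respect to $\bb{G}_{gr}$ is built in: the scaling actions on $\cl{E}$ and on $\cl{E}^\vee[2]$ differ by a sign together with a cohomological shift by $2$, and the shearing $(-)^\shear$ converts this combined data into the stated shift. The main obstacle will be justifying the passage from the formal to the genuine pushforward in a way that interacts cleanly with the $\bb{G}_{gr}$-grading, and tracking the $\det(\cl{E})$ factor through the shearing so that the alternating contribution from the middle term $Y$ is correctly accounted for; for this, one restricts to each weight component with respect to $\bb{G}_{gr}$, where both sides are concentrated in a bounded cohomological range, and the verbatim BZSV argument applies.
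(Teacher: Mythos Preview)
Your approach and the paper's are fundamentally the same: both defer to \cite[Section 11.10.2, Lemma 11.10.3]{BZSV} for the main isomorphism. The paper's proof is in fact just the one-line statement ``All the claims except for the last claim on weights are proved in loc.\ cit.,'' so your additional unpacking via \Cref{lem:FWlem} and Koszul resolutions, while reasonable in spirit, is more detailed than what the paper provides and is not needed to match it.

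For the weight shift by $-1$, the paper does not argue conceptually as you do (via the sign change in the scaling action on $\cl{E}^\vee[2]$ and the effect of shearing). Instead, it simply reads off the shift from the explicit formulas \cite[(11.51), (11.52)]{BZSV}. Your explanation is plausible but would need more care to make precise; the paper's route via the explicit formulas is more direct and avoids having to track how the shearing interacts with duality and shift.
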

\begin{proof}
    All the claims except for the last claim on weights are proved in loc. cit. The claim on weights follows from the explicit description in \cite[(11.51), (11.52)]{BZSV}. 
\end{proof}


\begin{cor} \label{cor:FEstd}
    Suppose that $\widehat{X} = V$ and $\widehat{X}' = V^\vee$ for some $V \in \Rep(\LG)$ and the gradings on them are scalar. Then, we have 
    \[
        \cl{L}_V^\norm \cong \cl{L}_{V^\vee}^\norm. 
    \]
    This isomorphism changes the weight with respect to $\bb{G}_{gr}$ by $-1$. 
\end{cor}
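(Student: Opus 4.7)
The plan is to combine three ingredients --- the realization of $\Par_{\LG}^V$ as a derived vector bundle (\Cref{prop:explicitrel}), the abstract functional equation \Cref{lem:FEforL}, and local Tate duality \Cref{cor:Tatedual} --- and then translate the resulting symmetry through the normalization procedure of \Cref{cor:normLzX}.

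First, \Cref{prop:explicitrel} together with the explicit three-term presentation of the universal Weil-Deligne cohomology (\Cref{prop:WD_cohomology}) realizes $\Par_{\LG}^V = \Tot_{\Par_{\LG}}(\cl{E}_V)$ with $\cl{E}_V$ a perfect complex of tor-amplitude in $[0,2]$, and similarly for $V^\vee$. After smooth localization on $\Par_{\LG}$, \Cref{lem:FEforL} applies to $\cl{E}_V$ and yields
\[
    (\pi_{V*}\omega_{V/\Par_{\LG}})^\shear \otimes \det(\cl{E}_V) \cong (\pi_{\Tot(\cl{E}_V')*}\omega)^\shear,
\]
with a shift of the $\bb{G}_{gr}$-weight by $-1$, where $\cl{E}_V'$ is the BZSV dual complex.

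Next, \Cref{cor:Tatedual} identifies $\cl{E}_V' \cong \cl{E}_{V^\vee(1)}$, and the Tate twist $(1)$ corresponds, via the central cocharacter $z_{V^\vee}$, to a translation on $\Par_{\LG}$ by $\cyc$. Because the grading is scalar on both $V$ and $V^\vee$ (i.e., $\bb{G}_{gr}$ acts with weight $1$ on every vector), and since the $\LG$-action on $V^\vee$ is dual to that on $V$, the two central cocharacters can be taken to satisfy the crucial relation $z_{V^\vee} = z_V^{-1}$. Setting $\cyc_V := z_V(\cyc)$ and $\sqrt{\cyc}_V := z_V(\sqrt{\cyc})$, the base-change square gives $\cl{L}_{V^\vee(1)} \cong \tau_{\cyc_V^{-1}}^*\cl{L}_{V^\vee}$, so that the functional equation becomes
\[
    \cl{L}_V^\shear \otimes \det(\cl{E}_V) \cong \tau_{\cyc_V^{-1}}^*\cl{L}_{V^\vee}^\shear.
\]

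Finally, \Cref{cor:normLzX} gives $\cl{L}_V^\norm = \tau_{\sqrt{\cyc}_V}^*\cl{L}_V^\shear$ and $\cl{L}_{V^\vee}^\norm = \tau_{\sqrt{\cyc}_V^{-1}}^*\cl{L}_{V^\vee}^\shear$ (again using $z_{V^\vee} = z_V^{-1}$). Pulling back the above along $\tau_{\sqrt{\cyc}_V}$ and using $\tau_{\sqrt{\cyc}_V}^*\tau_{\cyc_V^{-1}}^* = \tau_{\sqrt{\cyc}_V^{-1}}^*$ produces $\cl{L}_V^\norm \cong \cl{L}_{V^\vee}^\norm$, with the weight shift by $-1$ inherited from \Cref{lem:FEforL}. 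This makes precise how the $\sqrt{\cyc}$-normalization is designed: it splits the $\tau_\cyc$ of the Tate-dual functional equation symmetrically between the two sides. The main technical obstacle is the bookkeeping of the determinant line $\det(\cl{E}_V)$ --- whose underlying bundle is trivial because $\operatorname{rk} Q - \operatorname{rk} Y + \operatorname{rk} P = d - 2d + d = 0$, and whose $\bb{G}_{gr}$-weight is also zero, so that it contributes no shift --- together with verifying that the weight-$(-1)$ shift in \Cref{lem:FEforL} matches precisely the weight shift recorded in the conclusion.
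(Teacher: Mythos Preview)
Your approach shares the same three ingredients as the paper's proof (the derived vector bundle realization, \Cref{lem:FEforL}, and Tate duality), but you apply them in a different order, and this introduces a genuine gap.

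The paper works directly with the \emph{normalized} stacks. Over each connected component $\Spec(R) \subset Z^1(\WD_F,\LG)$ it writes down the complex $\cl{E}$ for $\Par_{\LG}^{V,\norm}$ with the $q^{\pm 1/2}$ factors from the $\sqrt{\cyc}$-twist already built in. Tate duality then identifies $\cl{E}^\vee[2]$ with the normalized complex for $V^\vee$ on the nose: the half-twist on $V$ together with the half-twist on $V^\vee$ exactly absorbs the Tate twist~$(1)$. Thus \Cref{lem:FEforL} applies immediately, with no translations to unwind.

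By contrast, you apply \Cref{lem:FEforL} to the \emph{unnormalized} objects, land on a statement about $V^\vee(1)$, and then invoke \Cref{cor:normLzX} to pass to $\cl{L}^\norm$. But \Cref{cor:normLzX} requires the scalar grading on $V$ to arise from a central cocharacter $z_V \colon \bb{G}_{gr} \to \LG$, i.e.\ that some central element of $\LG$ acts as the identity scalar on $V$. This is not part of the hypothesis and fails in general: for $\LG$ with small center (e.g.\ $\SL_2$ with $V=\std$, or any adjoint group) no such cocharacter exists, and your translations $\tau_{\cyc_V}$, $\tau_{\sqrt{\cyc}_V}$ and the relation $z_{V^\vee}=z_V^{-1}$ are undefined. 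The paper's route avoids this entirely by never leaving the normalized picture.

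Separately, your argument that $\det(\cl{E}_V)$ is trivial is incomplete: Euler characteristic zero does not force a determinant line to be trivial. The correct reason is that the three terms of the Weil--Deligne complex are $W$, $W\oplus W$, $W$ for the \emph{same} module $W=(V\otimes R)^{\varphi_R(I_F)}$, so $\det(\cl{E}_V)\cong \det(W)\otimes\det(W)^{-2}\otimes\det(W)\cong\cl{O}$.
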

\begin{proof}
    Recall that each connected component of $Z^1(\WD_F, \LG)$ is affine and a local complete intersection over $\Qla$. Let $\Spec(R) \subset Z^1(\WD_F, \LG)$ be a connected component and let $(\varphi_R, N_R)$ be the associated Weil-Deligne $L$-parameter over $R$. Let $W = (V \otimes R)^{\varphi_R(I_F)}$ and let 
    \[
        \cl{E} = [W \xrightarrow{(q^{-1/2}\varphi_R(\Fr) - 1, N_R)} W \oplus W \xrightarrow{(N_R, q^{1/2}\varphi_R(\Fr) - 1)} W]
    \]
    be a perfect complex over $R$ with tor-amplitudes in $[0,2]$. Let $E = \Tot_R(\cl{E})$ and $E' = \Tot_R(\cl{E}^\vee[2])$. By \Cref{prop:explicitrel} and local Tate duality (see \Cref{cor:Tatedual}), we have
    \[
        E = \Par_{\LG}^{V, \norm} \times_{\Par_{\LG}} \Spec(R), \quad E' = \Par_{\LG}^{V^\vee, \norm} \times_{\Par_{\LG}} \Spec(R). 
    \]
    Since $\det(\cl{E})$ is trivial, it follows from \Cref{lem:FEforL} that
    \[
        (\pi_{E*} \omega_{E/R})^\shear \cong (\pi_{E'*} \omega_{E'/R})^\shear. 
    \]
    Then, we get an isomorphism of $\cl{L}^\norm_{V}$ and $\cl{L}^\norm_{V^\vee}$ over $Z^1(\WD_F, \LG)$. 
    By the naturality of \Cref{lem:FEforL}, it is $\widehat{G}$-equivariant, so 
    the isomorphism descends to $\Par_{\LG}$. 
\end{proof}

We expect that functional equations for periods also hold in the vectorial case. 

\begin{conj}
    For every $V \in \Rep(G)$, we have $\cl{P}_V^\norm \cong \cl{P}_{V^\vee}^\norm$. 
\end{conj}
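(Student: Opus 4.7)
The plan is to construct a geometric Fourier transform relating the relative Banach--Colmez spaces $\Bun_G^V$ and $\Bun_G^{V^\vee}$ over $\Bun_G$, and to verify that it interchanges the normalized period sheaves $\cl{P}_V^\norm$ and $\cl{P}_{V^\vee}^\norm$. The starting observation is that $[V/G] \to [\ast/G]$ is the vector bundle associated to $V$, so by functoriality $\Bun_G^V$ is identified with the Banach--Colmez space $\BC(\cl{V}^\univ)$ of the universal vector bundle over $\Bun_G$ (generalizing \Cref{exa:univvb}), and similarly for $V^\vee$.

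First, I would set up the Fourier diagram. Let $Z = \Bun_G^V \times_{\Bun_G} \Bun_G^{V^\vee}$ with projections $p_V$ and $p_{V^\vee}$. The canonical $G$-invariant pairing $V \otimes V^\vee \to \bb{A}^1$ induces an evaluation morphism $\ev \colon Z \to \Bun_G^{\bb{A}^1}$, whose fiber over a geometric point $\Spa(C,O_C) \to \Bun_G$ is the space of global sections of $\cl{O}_{\cl{X}_{(C,O_C)}}$, i.e.\ $\underline{F}$. Composing with a fixed nontrivial additive character $\psi\colon F \to \Qla^\times$ produces a Fourier kernel $K_\psi \in \cl{D}^\oc(Z,\Qla)$ which reduces fiberwise to the standard Fourier kernel on $V(F) \times V^\vee(F)$.

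Second, I would define the Fourier transform functor
\[
    \mathrm{FT}_\psi(-) \;=\; p_{V^\vee,!}\bigl(p_V^*(-) \otimes K_\psi \otimes K_{\mathrm{norm}}\bigr),
\]
where $K_{\mathrm{norm}}$ is a normalization factor pulled back from $\Bun_G$ and built from $\und{\deg}$ and the determinant characters $\det_V$ and $\det_{V^\vee}$; by \Cref{prop:normPetaX} this is precisely what converts $\cl{P}_V$ into $\cl{P}_V^\norm$ and should match the corresponding conversion on the $V^\vee$-side. Over the trivial locus $\Bun_G^1 \cong [\ast/\und{G(F)}]$ the proposed isomorphism $\mathrm{FT}_\psi(\cl{P}_V^\norm) \cong \cl{P}_{V^\vee}^\norm$ reduces, by \Cref{cor:computetriv}, to the classical local Fourier transform $C_c^\infty(V(F),\Qla) \cong C_c^\infty(V^\vee(F),\Qla)$, recovering the Iwasawa--Tate functional equation (\Cref{prop:FEIwTate}). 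The substantive content is that this isomorphism extends globally over every Newton stratum of $\Bun_G$. I would verify the global matching stratum by stratum, using that for any vector bundle $\cl{V}_C$ on $\cl{X}_{(C,O_C)}$ the Serre duality pairing identifies $H^1(\cl{X}_{(C,O_C)}, \cl{V}_C)$ with $H^0(\cl{X}_{(C,O_C)}, \cl{V}_C^\vee \otimes \omega_{\cl{X}/F})^\vee$. Decomposing $\cl{V}_C$ into line bundles $\cl{O}(\tfrac{r}{d})$ and combining the explicit cohomology computations of \Cref{lem:computBC} with the description of the Newton strata of $\Bun_G^V$ appearing in the proof of \Cref{prop:rel!able}, one should match the pieces of $\mathrm{FT}_\psi(\cl{P}_V^\norm)$ with those of $\cl{P}_{V^\vee}^\norm$ on each $\Bun_G^b$.

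The main obstacle is that the Fourier--Mukai formalism on Banach--Colmez spaces has not been developed in the $\cl{D}^\oc$-formalism to the generality needed here: the relative Plancherel identity $\mathrm{FT}_{\bar\psi} \circ \mathrm{FT}_\psi \cong \id$ (up to a Tate twist and shift) requires a cohomological smoothness and base change statement beyond \Cref{lem:computBC}, valid for arbitrary vector bundles $\cl{V}$ (not only $\cl{O}(n)$) and uniformly in families over $\Bun_G$. Controlling the interplay between the Newton stratification, the Serre-dual twist by $\omega_{\cl{X}/F}$, and the normalization $\omega_V^*\und{\deg}$ will be the technically most delicate step, and is the reason we have stated this only as a conjecture.
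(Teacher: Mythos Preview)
This statement is a \emph{conjecture} in the paper, not a theorem: the paper does not prove it. Your proposal is therefore not being compared against a proof but against the paper's brief discussion of what a proof would require. That discussion matches your strategy closely: the paper says that one would need to refine Fourier transforms on Banach--Colmez spaces (citing \cite{ALB25}), and verifies only the one-dimensional case by hand in \Cref{prop:FEIwTate}. You have correctly identified both the natural approach and the main obstacle, namely that the relevant Fourier--Mukai formalism is not yet available in the $\cl{D}^\oc$-setting in the required generality.

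One useful simplification you did not mention: the paper observes that by the projection formula it suffices to treat the universal case $G = \GL(V)$. This reduces the problem to a single family over $\Bun_{\GL_n}$ rather than working over $\Bun_G$ for arbitrary $G$, which may make the stratum-by-stratum matching you propose more tractable. Beyond this, your sketch is a reasonable outline of the expected argument, and your honest acknowledgment that the Plancherel identity and its uniform behavior in families are the missing ingredients is exactly the point.
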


For this, one needs to refine Fourier transforms on Banach-Colmez spaces developed in \cite{ALB25}. Note that it is enough to work with the universal case $G = \GL(V)$ by the projection formula. In \Cref{prop:FEIwTate}, we verify the one-dimensional case by hand.

\subsection{Normalized period conjecture} \label{ssec:NPC}

In this section, we state the normalized period conjecture in the categorical local Langlands correspondence (see \cite[Section 12.1]{BZSV} in the geometric setting) and its implication to distinction problems. Note that the statements of conjectures are conditional on the definition of dual pairs $(G, X) \leftrightarrow (\LG, \widehat{X})$. 

For simplicity, let us assume $\Lambda = \Qla$ and formulate with the $\cl{D}^\oc$-formalism instead of lisse-\'{e}tale sheaves. 

\begin{conj} \label{conj:normP=L}
    Let $G$ be a connected quasi-split reductive group over $F$ and fix a Whittaker datum of $G$. Suppose that there exists a categorical equivalence 
    \[
        \bb{L}_G \colon \cl{D}^\oc(\Bun_G, \Qla) \cong \IndCoh(\Par_{\LG})
    \]
    as formulated in \cite[Conjecture X.1.4]{FS24}. For every (conjecturally defined) dual pair $(G, X) \leftrightarrow (\LG, \widehat{X})$, we have 
    \[
        \bb{L}_G(\cl{P}_X^\norm) \cong \cl{L}_{\widehat{X}}^\norm. 
    \]
\end{conj}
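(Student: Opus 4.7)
The plan is to verify the conjecture not in full generality (which is out of reach: we lack both a systematic definition of dual pairs $(G,X) \leftrightarrow (\LG, \widehat{X})$ and, for most $G$, the functor $\bb{L}_G$) but case by case for the two explicit dual pairs considered in the paper, matching the explicit computations of Sections \ref{sec:Acomputation} and \ref{sec:Bcomputation} through either a known or a hypothetical $\bb{L}_G$. In both cases the strategy is to use a weight/degree decomposition that is preserved by $\bb{L}_G$ (the $\pi_1(G)_\Gamma$-decomposition on $\Bun_G$ matches the $Z(\widehat{G})^Q$-weight decomposition on $\Par_{\LG}$) to reduce $\cl{P}_X^\norm \cong \cl{L}_{\widehat{X}}^\norm$ to a collection of component-wise identifications, then identify each component by explicit description.

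For the Iwasawa-Tate pair $(G,X) = (\bb{G}_m, \std)$, both sides live on $\Bun_{\bb{G}_m} = \bigsqcup_{n\in\bb{Z}}[\ast/\underline{F^\times}]$ and $\Par_{\bb{G}_m} = \bigsqcup_\chi \bb{G}_m \times [\ast/\bb{G}_m]$ respectively. Proposition \ref{prop:IwTateperiod} gives $\cl{P}_X$ component-wise, and after twisting by $\eta_X^*\underline{\deg}$ (using \Cref{prop:normPetaX}) one obtains an explicit character of $F^\times$ in each degree $n$. On the spectral side, \Cref{prop:IwTateLsheaf} gives $\cl{L}_{\widehat{X}}$ and \Cref{cor:normLzX} rewrites $\cl{L}_{\widehat{X}}^\norm$ as the translation of the appropriate shearing by $\sqrt{\cyc}$; the resulting sheaves are supported on $\{T = q^{-n/2}\} \times [\ast/\bb{G}_m]$ in each degree $n$. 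Since $\bb{L}_{\bb{G}_m}$ exists by Zou and is essentially local class field theory, the matching reduces to checking that the character $q^{-n\ord(\cdot)/2}\Lambda_\triv$ or $q^{-n\ord(\cdot)/2}\Lambda_\norm$ of $F^\times$ corresponds under local class field theory to the closed point of $\bb{G}_m$ at $T = q^{-n/2}$. The functional equation $\cl{P}_\std^\norm \cong \cl{P}_{\std^\vee}^\norm$ needs a separate argument using the Fourier transform on $F$, mirroring the spectral functional equation \Cref{cor:FEstd}.

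For the Hecke pair $(G,X) = (\GL_2, \GL_2/A)$, assume $\bb{L}_{\GL_2}$ exists and is compatible with geometric/spectral Eisenstein series. The weight decomposition $\cl{P}_X = \bigoplus_n \cl{P}_{X,n}$ from $\Bun_A = \bigsqcup_n \Bun_{A,n}$ must match $\cl{L}_{\widehat{X}} = \bigoplus_n \cl{L}_{\widehat{X},n}$ by central character. For $n \neq 0$, \Cref{prop:HeckepEis} and \Cref{prop:HeckeL} exhibit both as Eisenstein series, so the Eisenstein compatibility reduces the matching to the Iwasawa-Tate case for $T$, which is already verified. For $n > 0$, the spectral Eisenstein description is complicated and it is cleaner to first apply the functional equation $\cl{L}_{\std,n}^\norm \cong \cl{L}_{\std^\vee,-n}^\norm$ of \Cref{cor:FEstd} (together with $\std \cong \std^\vee$ up to a central twist for $\GL_2$) to reduce to $n < 0$.

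The main obstacle is the critical case $n = 0$. Here neither $\cl{P}_{X,0}^\norm$ nor $\cl{L}_{\widehat{X},0}^\norm$ is purely Eisenstein: both sit in fiber sequences whose first term is a ``non-Eisenstein'' piece (the Whittaker sheaf $\cl{W}_\psi$ on the automorphic side, the structure sheaf $\cl{O}_{\Par_{\GL_2}}$ on the spectral side) and whose third term is Eisenstein. The hard part is to show that $\bb{L}_{\GL_2}$ carries the entire fiber sequence over — not just the third term, which follows from Eisenstein compatibility, but also the first term and, most subtly, the connecting map. The first-term identification $\bb{L}_{\GL_2}(\cl{W}_\psi) \cong \cl{O}_{\Par_{\GL_2}}$ must be extracted from the Whittaker normalization built into the conjecture of \cite{FS24}; the identification of the boundary map should be deduced from the unfolding construction used in the proof of \Cref{prop:HeckepEis} by a careful comparison with the closed-open decomposition along the zero section that produces the spectral fiber sequence of \Cref{prop:HeckeL}. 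Making this last comparison precise is expected to be the technical heart of the proof.
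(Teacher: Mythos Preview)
The statement is a \emph{conjecture}, so the paper does not prove it in general; it only verifies it in the two special cases (Iwasawa--Tate and Hecke) in \Cref{sec:comparison}. Your proposal is essentially the strategy the paper executes for those cases, and your outline for Iwasawa--Tate and for Hecke at $n \neq 0$ matches \Cref{thm:IwTatecomp} and \Cref{thm:Heckecomp} closely: weight decomposition, explicit identification via local class field theory, Eisenstein compatibility to reduce to the torus, and the functional equation \Cref{cor:FEstd} to flip $n > 0$ to $n < 0$ on the spectral side.

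The one substantive divergence is at $n = 0$ for the Hecke period. You correctly flag the connecting map as the crux, and you propose to identify it by comparing the unfolding construction of \Cref{prop:HeckepEis} with the closed--open decomposition of \Cref{prop:HeckeL}. The paper does \emph{not} do this: \Cref{thm:Heckecomp} for $n=0$ only asserts that $\bb{L}_{\GL_2}$ carries the first and last terms of the automorphic fiber sequence to the first and last terms of the spectral one, via Whittaker normalization and Eisenstein compatibility respectively. No claim is made about the boundary map, and hence no isomorphism $\bb{L}_{\GL_2}(\cl{P}_{X,0}^\norm) \cong \cl{L}_{\widehat{X},0}^\norm$ is established. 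So what you call ``the technical heart of the proof'' is precisely the piece the paper leaves open; your intuition about where the difficulty lies is right, but you should not expect to find a mechanism in the paper for matching the extension classes.
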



We explain a representation-theoretic consequence of the normalized period conjecture. First, we briefly recall the geometric properties of $L$-parameters necessary for our formulation, following \cite{Han24} and \cite{HL25}. Let $\Par_{\LG}^\sss$ be the coarse moduli space of $L$-parameters, which parametrizes $\widehat{G}$-conjugacy classes of semisimple $L$-parameters. The natural morphism
\[
    \Par_{\LG} \to \Par_{\LG}^\sss, \quad \varphi \mapsto \varphi^\sss
\]
sends an $L$-parameter to its semisimplification.

\begin{defi}(\cite[Definition 2.1.2]{Han24}) \label{def:generous}
    A semisimple $L$-parameter $\varphi \in \Par_{\LG}(\Qla)$ is \textit{of Langlands-Shahidi type} (or \textit{generous}) if we have an equivalence 
    \[
        [\ast / S_\varphi] \cong \Par_{\LG} \times_{\Par_{\LG}^\sss} \{ \varphi \}.
    \]
    Here, $S_\varphi \subset \widehat{G}$ is the centralizer of $\varphi$. 
\end{defi}
\begin{rmk}
    As explained in \cite[Remark 6.5]{HL25}, there is a cohomological criterion for being of Langlands-Shahidi type (see \cite[Definition 6.2]{HL25}). The terminology is also adopted from loc. cit. 
\end{rmk}

Let $\varphi \colon W_F \to \LG(\Qla)$ be a semisimple $L$-parameter of Langlands-Shahidi type. By \cite[Proposition 2.1.7]{Han24}, the map $[\ast / S_\varphi] \hookrightarrow \Par_{\LG}$ is a regular closed immersion, so 
\[
    i_\varphi \colon \Spec(\Qla) \to [\ast/S_\varphi] \hookrightarrow \Par_{\LG}
\]
is schematic and eventually coconnective. Now, $i_{\varphi*} \Qla$ should match a Hecke eigensheaf $\cl{F}_\varphi$ under $\bb{L}_G$ and it is expected to have the following form. For each $b \in B(G)$, let $i^{b, \ren}_!$ (resp.\ $i^{b, \ren}_*$) be the normalization of $i^b_!$ (resp.\ $i^b_*$) as in \cite[Definition 1.1.1]{Han24}.

\begin{conj}\textup{(\cite[Conjecture 2.1.8, 2.1.9]{Han24})}\label{conj:Heckeeigen}
    We have a decomposition 
    \[
        \cl{F}_\varphi = \bigoplus_{b \in B(G), \pi \in \Pi_\varphi(G_b)} i^{b, \ren}_{!} \pi^{\oplus m(\pi)}
    \]
    with finite nonzero multiplicity for each $\pi$, and each term satisfies $ i^{b, \ren}_{!} \pi \cong  i^{b, \ren}_{*} \pi$. 
\end{conj}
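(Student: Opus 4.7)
The strategy would be to transport the conjecture through the equivalence $\bb{L}_G$, exploiting the fact that the Langlands--Shahidi hypothesis makes the spectral side especially clean at $\varphi$. By \Cref{def:generous}, the fiber over $\varphi^\sss$ in the semisimplification map is $[\ast/S_\varphi]$, so $i_\varphi$ is a regular closed immersion by \cite[Proposition 2.1.7]{Han24}, and $i_{\varphi*}\Qla \in \IndCoh(\Par_{\LG})$ admits a canonical decomposition
\[
    i_{\varphi*}\Qla \;\cong\; \bigoplus_{\rho \in \Irr(\pi_0(S_\varphi))} \cl{O}_\rho \otimes \rho^\vee
\]
as a module over the group algebra of $\pi_0(S_\varphi)$. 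Applying $\bb{L}_G^{-1}$ and using the spectral action of $\Perf(\Par_{\LG})$ on $\cl{D}^\oc(\Bun_G, \Qla)$, one obtains a corresponding decomposition of the Hecke eigensheaf $\cl{F}_\varphi$ indexed by $\Irr(\pi_0(S_\varphi))$.

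The next step is to match this spectral indexing with the automorphic indexing by pairs $(b, \pi)$. First, the central subgroup $Z = Z(\widehat{G})^Q$ acts on each summand by a character, which via the Kottwitz isomorphism $\pi_0(Z) \cong \pi_1(G)_\Gamma \supset B(G)_\bas$ pins down a connected component of $\Bun_G$, hence a basic element $b$. Second, one would show that each summand is set-theoretically supported on the single stratum $\Bun_G^b \cong [\ast/\und{G_b(F)}]$ and recognize its restriction there as a smooth irreducible $G_b(F)$-representation $\pi$; membership of $\pi$ in $\Pi_\varphi(G_b)$ is then enforced by the Hecke eigensheaf condition together with the Fargues--Scholze construction of the parameter. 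Finiteness and non-vanishing of the multiplicities $m(\pi)$ would be read off from the representation theory of $\pi_0(S_\varphi)$.

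For the clean extension $i^{b, \ren}_! \pi \cong i^{b, \ren}_* \pi$, the idea is that at a Langlands--Shahidi parameter, $\Par_{\LG}$ is smooth along $[\ast/S_\varphi]$, so $i_{\varphi*}\Qla$ lies in the heart of $\Coh(\Par_{\LG})$ (up to shifts) and is, in particular, compact on both sides of the equivalence. The compactness plus the eigensheaf property should force vanishing of the relevant $\Ext$-groups between strata, which is precisely the cleanness statement after passage through the stratification argument in \cite[Section 1]{Han24}.

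The hard part is twofold, and explains why this remains conjectural. First, $\bb{L}_G$ itself is known only for tori \cite{Zou24}, so any proof is conditional and must assume not just the existence of $\bb{L}_G$ but its full compatibility with the spectral action, Kottwitz stratification, and parabolic induction. Second, even granting all this, the crucial geometric input---namely, that the spectral decomposition at a generous $\varphi$ is compatible stratum by stratum with the Newton stratification of $\Bun_G$, with summands that are single irreducible representations rather than more complicated complexes---lies beyond the formal eigensheaf axiom and requires genuinely new geometric input, most likely in the form of a cleanness result for the spectral action near generous points analogous to what is understood in the function-field setting.
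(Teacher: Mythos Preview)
The statement you were asked to prove is explicitly labeled a \emph{Conjecture} in the paper (attributed to \cite[Conjecture 2.1.8, 2.1.9]{Han24}), and the paper provides no proof of it whatsoever. It is invoked only as a standing hypothesis in later results such as \Cref{conj:Xdistrep} and \Cref{prop:multformula}. So there is no ``paper's own proof'' to compare against; your proposal is an attempt to prove something the authors deliberately leave open.

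That said, your write-up is internally honest about this: you correctly flag at the end that the argument is conditional on $\bb{L}_G$ and that the stratum-by-stratum irreducibility and cleanness would require genuinely new input. Your outline of the spectral decomposition of $i_{\varphi*}\Qla$ by $\Irr(\pi_0(S_\varphi))$ and the matching of $Z$-weights to connected components of $\Bun_G$ is the standard heuristic, and it is consistent with how the paper itself uses the conjecture downstream (e.g.\ in the proof of \Cref{prop:multpi}, where $\bb{L}_G(i^1_!\pi) \cong z_{\varphi*}\cl{O}_{\rho_\pi}$ is taken as a hypothesis). One small inaccuracy: the conjecture allows $b$ to range over all of $B(G)$, not just $B(G)_\bas$, so your reduction via $\pi_0(Z) \cong \pi_1(G)_\Gamma$ to basic $b$ only treats part of the indexing; for non-basic $b$ the strata $\Bun_G^b$ are not gerbes over a point and the story is more delicate. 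But since the statement is conjectural anyway, this is a refinement of the heuristic rather than a gap in a proof.
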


Here, $\Pi_\varphi(G_b)$ denotes the $L$-packet for the $B(G)$-parametrization of the local Langlands correspondence (see \cite[Theorem 3.8]{BMO25}). Under this expectation, the normalized period conjecture has the following consequence on distinction problems. 

\begin{prop} \label{conj:Xdistrep}
    Suppose that \Cref{conj:normP=L} holds for a dual pair $(G, X) \leftrightarrow (\LG, \widehat{X})$ such that $X$ is unimodular. If a smooth irreducible $G(F)$-representation $\pi$ is $X$-distinguished and its Fargues-Scholze parameter $\varphi_\pi^{\FS}$ is of Langlands-Shahidi type and satisfies \Cref{conj:Heckeeigen}, $\varphi_\pi^{\FS}$ lies in the image of $\pi_{\widehat{X}}^\norm$. 
\end{prop}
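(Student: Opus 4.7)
The plan is to transport $X$-distinguishedness of $\pi$ across $\bb{L}_G$ and then extract the desired support statement on $\Par_{\LG}$ via proper base change.

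Since $X$ is unimodular, \Cref{prop:normPetaX} gives $\cl{P}_X^\norm \cong \cl{P}_X$. The $X$-distinguishedness of $\pi$ yields a nonzero $G(F)$-equivariant map $C_c^\infty(X(F), \Qla) \to \pi$. By \Cref{cor:computetriv}, $C_c^\infty(X(F), \Qla)$ is a direct summand of $i^{1*}\cl{P}_X$, so this extends to a nonzero map $i^{1*}\cl{P}_X \to \pi$; under the $(i^{1*}, i^1_*)$-adjunction it corresponds to a nonzero morphism $\alpha \colon \cl{P}_X^\norm \to i^1_*\pi$ in $\cl{D}^\oc(\Bun_G, \Qla)$. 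Applying $\bb{L}_G$ and invoking \Cref{conj:normP=L}, we obtain a nonzero morphism
\[
    \beta \colon \cl{L}_{\widehat{X}}^\norm \longrightarrow \bb{L}_G(i^1_*\pi)
\]
in $\IndCoh(\Par_{\LG})$.

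The key step is to pin down the $\IndCoh$-support of $\bb{L}_G(i^1_*\pi)$. Set $\varphi = \varphi_\pi^\FS$. Since $\varphi$ is of Langlands--Shahidi type, the natural map $\iota_\varphi \colon [\ast/S_\varphi] \hookrightarrow \Par_{\LG}$ is a regular closed immersion with set-theoretic image the single point $\{\varphi\}$ (\Cref{def:generous} and \cite[Proposition 2.1.7]{Han24}). By \Cref{conj:Heckeeigen}, $i^{1,\ren}_!\pi$ is a direct summand of a Hecke eigensheaf $\cl{F}_\varphi$ and satisfies $i^{1,\ren}_!\pi \cong i^{1,\ren}_*\pi$. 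Since $\bb{L}_G(\cl{F}_\varphi)$ lies in the essential image of $\iota_{\varphi*}$, and since the renormalization functors $i^{1,\ren}_{!/*}$ differ from $i^1_{!/*}$ only by a shift--twist that is trivial on the basic locus $\Bun_G^1 \cong [\ast/\underline{G(F)}]$ (which has dimension $0$), it follows that $\bb{L}_G(i^1_*\pi)$ itself factors through $\iota_{\varphi*}$, i.e.\ is of the form $\iota_{\varphi*}\cl{E}$ for some $\cl{E} \in \IndCoh([\ast/S_\varphi])$.

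Finally, we conclude by base change. Since $\pi_{\widehat{X}}^\norm$ is schematic by \Cref{prop:ParLGXschematic}, we may form the derived fiber
\[
    (\Par_{\LG}^{\widehat{X},\norm})_\varphi := \Par_{\LG}^{\widehat{X},\norm} \times_{\Par_{\LG}} [\ast/S_\varphi],
\]
and base change for $\iota_\varphi^!$ along proper pushforward gives
\[
    \iota_\varphi^!\cl{L}_{\widehat{X}}^\norm \;\cong\; \bigl((\pi_{\widehat{X}}^\norm)_\varphi\bigr)_* \,\omega_{(\Par_{\LG}^{\widehat{X},\norm})_\varphi}.
\]
The nonvanishing of $\beta$, together with the support statement $\bb{L}_G(i^1_*\pi) \cong \iota_{\varphi*}\cl{E}$, forces $\iota_\varphi^!\cl{L}_{\widehat{X}}^\norm \neq 0$, so the fiber $(\Par_{\LG}^{\widehat{X},\norm})_\varphi$ is nonempty, i.e.\ $\varphi$ lies in the image of $\pi_{\widehat{X}}^\norm$. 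The hard part will be the penultimate step: carefully comparing the renormalized functors of \cite{Han24} with the plain $i^1_*$, and transferring the summand-decomposition of \Cref{conj:Heckeeigen} from $i^{1,\ren}_!\pi$ to $i^1_*\pi$. Secondary technical points to verify are that the shearing $(-)^\shear$ used in the definition of $\cl{L}_{\widehat{X}}^\norm$ preserves $\IndCoh$-support, and that $\iota_\varphi$ is sufficiently well-behaved (e.g.\ eventually coconnective) for the base-change formula above to apply directly.
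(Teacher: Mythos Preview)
Your approach is essentially the paper's: produce a nonzero map on the $\cl{A}$-side from unimodularity and \Cref{conj:Heckeeigen}, transport it across $\bb{L}_G$, and read off nonemptiness of the spectral fiber via base change. The paper organizes things slightly differently---it maps into the full eigensheaf $\cl{F}_\varphi$ rather than $i^1_*\pi$, so that the spectral target is $i_{\varphi*}\Qla$ for the point $i_\varphi\colon\Spec(\Qla)\to\Par_{\LG}$, and then uses the $(i_\varphi^*,i_{\varphi*})$-adjunction together with the $*$-pullback base change of \cite[Proposition 3.2.2]{GR17I}.

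One slip to correct: the adjunction that turns ``$\beta\neq 0$'' into ``pullback of $\cl{L}_{\widehat{X}}^\norm$ is nonzero'' is $(\iota_\varphi^*,\iota_{\varphi*})$, not the pair involving $\iota_\varphi^!$. Your displayed $!$-base-change identity is fine (and holds because $\pi_{\widehat{X}}^\norm$ is schematic---you do not need, and do not have, properness), but $\iota_\varphi^!\cl{L}_{\widehat{X}}^\norm\neq 0$ is not what $\beta\neq 0$ gives you directly. Since $\iota_\varphi$ is a regular closed immersion, $\iota_\varphi^!$ and $\iota_\varphi^*$ differ by an invertible shift-and-twist, so the nonvanishing statements are equivalent and your conclusion survives; alternatively, just follow the paper and use $i_\varphi^*$ throughout, which also makes the eventually-coconnective hypothesis you flag at the end transparent.
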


\begin{proof}
    Let $\varphi = \varphi_\pi^\FS$. Then, \Cref{conj:normP=L} implies 
    \begin{equation}
        \Hom(\cl{P}_X^\norm, \cl{F}_\varphi) \cong \Hom(\cl{L}_{\widehat{X}}^\norm, i_{\varphi*} \Qla). \label{eq:Homtoeigen}
    \end{equation} 
    Since $i^{1, \ren}_* = i^{1}_*$, it follows from \Cref{conj:Heckeeigen} that 
    \[
        \Hom(\cl{P}_X^\norm\vert_{\Bun_G^1}, \pi) \cong \Hom(\cl{P}_X^\norm, i^1_*\pi) \cong \Hom(\cl{P}_X^\norm, i^{1, \ren}_!\pi)
    \]
    is a direct summand of $\Hom(\cl{P}_X^\norm, \cl{F}_\varphi)$. Since $X$ is unimodular, we have $\cl{P}_X^\norm = \cl{P}_X$, so it follows from \Cref{cor:computetriv} that 
    \[
        \Hom_{G(F)}(C_c^\infty(X(F), \Qla), \pi)\hookrightarrow \Hom(\cl{P}_X^\norm, \cl{F}_\varphi)
    \]
    is a direct summand for each $\pi \in \Pi_{\varphi}(G)$. In particular, if there is an $X$-distinguished element $\pi \in \Pi_{\varphi}(G)$, $\Hom(\cl{P}_X^\norm, \cl{F}_\varphi)$ is nonzero. 

    On the other hand, let $\widehat{X}_\varphi$ be the fiber of $\pi_{\widehat{X}}^\norm$ at $\varphi$. 
    \begin{center}
        \begin{tikzcd}[column sep=large]
            \widehat{X}_\varphi \ar[r, "q_\varphi"] \ar[d, "p_\varphi"'] & \Par_{\LG}^{\widehat{X}, \norm} \ar[d, "\pi_{\widehat{X}}^\norm"] \\
            \Spec(\Qla) \ar[r, "i_\varphi"] & \Par_{\LG}. 
        \end{tikzcd}
    \end{center}
    Since $i_\varphi$ is eventually coconnective, we have
    \[
        \Hom(\cl{L}_{\widehat{X}}^\norm, i_{\varphi*} \Qla) \cong \Hom((i_{\varphi}^*\pi_{\widehat{X} *}^\norm \omega_{\widehat{X}}^\norm)^\shear, \Qla) \cong \Hom((p_{\varphi *} q_{\varphi}^* \omega_{\widehat{X}}^\norm)^{\shear}, \Qla). 
    \]
    by the base change in \cite[Proposition 3.2.2]{GR17I}. Thus, $\widehat{X}_{\varphi}$ is nonempty if $\Hom(\cl{L}_{\widehat{X}}^\norm, i_{\varphi*} \Qla)$ is nonzero.
    By \eqref{eq:Homtoeigen}, $\widehat{X}_{\varphi}$ is nonempty if there is an $X$-distinguished element $\pi \in \Pi_{\varphi}(G)$. 
\end{proof}

\begin{rmk}
    As suggested by Linus Hamann, the Langlands-Shahidi condition could be removed, and a similar analysis might work even for $A$-parameters by replacing $\cl{F}_\varphi$ with sheared eigensheaves developed in the unpublished work of Bertoloni-Meli and Koshikawa (see \cite{Kos24}). It would clarify the relation to the local conjecture of Sakellaridis-Venkatesh \cite[Conjecture 16.2.2]{SV17}. However, we need extra arguments even for discrete $L$-parameters due to the failure of \Cref{conj:Heckeeigen}. 
\end{rmk}

Suppose that $X$ is a spherical $G$-variety. We expect that under suitable conditions, $X$ should admit an $L$-group $\LGX \subset \LG$, and $\widehat{X}$ should take the form 
\[
    \widehat{X} = \LG \times^{\LGX} V
\]
for some $V \in \Rep(\LGX)$ with a grading induced from $V$, as in \cite[Section 4.1.1]{BZSV}. 

Then, $\pi_{\widehat{X}}$ factors as
\[
    \Par_{\LG \times \bb{G}_{gr}}^{\widehat{X}} = \Par_{\LGX \times \bb{G}_{gr}}^V \to \Par_{\LGX \times \bb{G}_{gr}} \to \Par_{\LG \times \bb{G}_{gr}}. 
\]
Thus, we expect that $\varphi_\pi^\FS$ should factor through a conjugate of $\LGX$ if $\pi$ is $X$-distinguished and $\varphi_\pi^\FS$ is of Langlands-Shahidi type. 

\subsection{Example: Galois symmetric varieties} \label{sssec:Galoissym}

Let $E$ be a separable quadratic extension of $F$. Let $H$ be a connected quasi-split reductive group over $F$ and let $G = \Res_{E/F}(H_E)$ be the Weil restriction. 
A choice $\sigma \in W_F \backslash W_E$ provides an identification
\[
    \LG \cong (\widehat{H} \times \widehat{H}) \rtimes Q, \quad
    \tau \cdot (h_1,h_2) =  \left\{ 
    \begin{alignedat}{4}
        &(\tau h_1, \sigma^{-1} \tau \sigma \cdot h_2) &\quad& (\tau \in W_E) \\
        &(\sigma^2 h_2, h_1) &\quad& (\tau = \sigma). 
    \end{alignedat}
    \right. 
\]

We will consider the case $X = G / H$. First, we briefly recall its $L$-group given in \cite[2.2.11]{BP25}. Let $c \in \Aut(\widehat{H})$ be the \textit{duality involution}, an inner twist of the Chevalley involution sending a pinning to its opposite. 

\begin{exa}
    When $H=\GL_n$, $G(F) = \GL_n(E)$ and $X(F) = \GL_n(E)/\GL_n(F)$. The duality involution $c$ on $\widehat{H} = \GL_n$ is given by 
    \[ 
        g \mapsto w {}^tg^{-1}w^{-1}
    \]
    where $w$ is the longest Weyl element (anti-diagonal permutation matrix). 
\end{exa}

The involution $c$ commutes with the action of $Q$, and 
\[
    {}^L \iota_X \colon \LGX = \widehat{H} \rtimes Q \xhookrightarrow{(1, c \sigma^{-1}) \times \id_Q} (\widehat{H} \times \widehat{H}) \rtimes Q = \LG
\]
is the $L$-group of $X$. 
As in the group case of \cite[Table 1.5.1]{BZSV}, 
\[
    (G, X = G / H) \leftrightarrow (\LG, \widehat{X} = \LG / \LGX)
\]
is expected to be a dual pair. 

\begin{rmk}
    Here, ${}^L \iota_X$ is a Galois equivariant extension of the morphism
    \[ 
        \iota_X\colon \widehat{G}_X  \to \widehat{G}
    \]
    attached to a spherical variety $X$ (see \cite{SV17} and \cite{KS17}). At present, a general construction of ${}^L \iota_X$ does not seem to be available (see \cite[2.2.10]{BP25}).
\end{rmk}

Let $\pi$ be a smooth irreducible $H(E)$-representation with a Fargues-Scholze parameter $\varphi \colon W_E \to \LH(\Qla)$, point in $\Par_{\LH_E}(\bar{\Qla})$. The corresponding point in $\Par_{\LG}(\bar{\Qla})$ is 
\[
    \Ind(\varphi) \colon W_F \to \LG(\Qla), \quad 
    \Ind(\varphi)(\tau) =  \left\{ 
    \begin{alignedat}{3}
        &(\varphi(\tau), \varphi(\sigma^{-1} \tau \sigma)) &\quad& (\tau \in W_E) \\
        &(\varphi(\tau \sigma), \varphi(\sigma^{-1} \tau)) &\quad& (\tau \in W_E \backslash W_F). 
    \end{alignedat}
    \right. 
\]
Then, \Cref{conj:Xdistrep} predicts that if $\pi$ is $H(F)$-distinguished and $\varphi$ is of Langlands-Shahidi type, $\Ind(\varphi)$ factors through ${}^L \iota_X$, and we have a conjugacy relation
\[
    \sigma \circ \varphi \circ \Ad(\sigma^{-1}) \sim c \circ \varphi. 
\]
Indeed, this is part of Prasad's conjecture on $L$-parameters; \cite[Conjecture 2]{Pra15}. To study its numerical part, we assume the following. 
\begin{ass} \label{ass:Galoissym}
    Let $\varphi$ be a supercuspidal $L$-parameter and the center of $H$ is anisotropic.
\end{ass}

Note that supercuspidal $L$-parameters are called elliptic in \cite[Definition X.2.1]{FS24}. This condition is preserved under the identification $\Par_{\LG} \cong \Par_{{}^LH_E}$ in \cite[Proposition IX.6.3]{FS24}.

\begin{exa}
    The second condition is satisfied when $H = \mrm{SL}_n, \Sp_{2n}, \mrm{U}(n)$ for $n \geq 1$. Here, $\mrm{U}(n)$ is a unitary group with $n$-variables associated to $E / F$. 
\end{exa}



By \Cref{cor:XFdecomposition}, we have
\begin{align*}
      \Hom_{G(F)}(C_c^\infty(X(F), \Qla), \pi) &\cong \bigoplus_{b\in \pi_X^{-1}(1)} \Hom_{G(F)}(\pi^\vee, \Ind_{H_b(F)}^{G(F)} \Qla)  \\ 
      & \cong \bigoplus_{b\in \pi_X^{-1}(1)} \Hom_{H_b(F)}(\pi^\vee\vert_{H_b(F)}, \Qla). 
\end{align*} 
Note that $\pi_X^{-1}(1) \subset H^1(F, H)$, so it is a finite set. Let $\Pi_\varphi(G) = \Pi_{\Ind(\varphi)}(G)$. For each $b \in \pi_X^{-1}(1)$, let
\[
    m_b(\Pi_\varphi) = \sum_{\pi \in \Pi_\varphi(G)} m(\pi) \dim \Hom_{H_b(F)}(\pi^\vee\vert_{H_b(F)}, \Qla). 
\]
Now, \Cref{conj:Heckeeigen} implies 
\[
    \cl{F}_{\Ind(\varphi)}\vert_{\Bun_G^{(1)}} = \bigoplus_{\pi \in \Pi_\varphi(G)} i^{1}_* \pi^{\oplus m(\pi)}
\]
since $\varphi$ is supercuspidal and $\cl{F}_{\Ind(\varphi)}$ is only supported on $B(G)_\bas$. Thus, we have
\begin{equation}
    \sum_{b \in \pi_X^{-1}(1)} m_b(\Pi_\varphi) = \dim \Hom(\cl{P}_X^\norm, \cl{F}_\varphi\vert_{\Bun_G^{(1)}}). \label{eq:multP}
\end{equation}
as $X = G / H$ is affine and unimodular. Now, we will compute the right hand side by passing to the $\cl{B}$-side. Let $Q_E \subset Q$ be the image of $W_E$.

\begin{lem} \label{lem:supercuspcoh}
    Let $\widehat{\mfr{h}} = \Lie(\widehat{H})$ and let $\widehat{\mfr{h}}_\varphi$ denote the adjoint representation on $\mfr{h}$ composed with $\varphi$. Let $\mfr{z}_E$ be the Lie algebra of $Z(\widehat{H})^{Q_E}$. When $\varphi$ is supercuspidal, we have 
    \[
        R\Gamma(\WD_{E}, \widehat{\mfr{h}}_\varphi) = \mfr{z}_E \oplus \mfr{z}_E[-1]. 
    \]
\end{lem}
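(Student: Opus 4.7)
The plan is to compute $R\Gamma(\WD_E, \widehat{\mfr{h}}_\varphi)$ directly from the explicit three-term complex of \Cref{prop:WD_cohomology}. Since $\varphi$ is supercuspidal, its monodromy operator satisfies $N = 0$; this forces that complex to split canonically as a direct sum of two two-term subcomplexes, one of the form $[\,\widehat{\mfr{h}}_\varphi^{I_E} \xrightarrow{\Fr - 1} \widehat{\mfr{h}}_\varphi^{I_E}\,]$ placed in degrees $(0,1)$, and the other of the form $[\,\widehat{\mfr{h}}_\varphi^{I_E} \xrightarrow{1 - q\Fr} \widehat{\mfr{h}}_\varphi^{I_E}\,]$ shifted into degrees $(1,2)$. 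It then suffices to compute the cohomology of each summand.

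For the first summand, the ellipticity built into the definition of supercuspidal (cf.\ the remark after \Cref{def:generous}) identifies $\ker(\Fr - 1 \mid \widehat{\mfr{h}}_\varphi^{I_E}) = \widehat{\mfr{h}}_\varphi^{W_E}$ with $\Lie(S_\varphi) = \mfr{z}_E$. Using Frobenius semisimplicity of supercuspidal parameters on the inertia invariants, the cokernel of $\Fr - 1$ is canonically identified with its kernel. This summand therefore contributes $\mfr{z}_E$ to $H^0$ and $\mfr{z}_E$ to $H^1$.

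For the second summand, I would argue that $1 - q\Fr$ is an automorphism of $\widehat{\mfr{h}}_\varphi^{I_E}$. A supercuspidal parameter is bounded in the sense that the image of $\varphi$ in $\widehat{H}/Z(\widehat{H})^{Q_E,\circ}$ has relatively compact closure, so every eigenvalue of $\Fr$ on $\widehat{\mfr{h}}_\varphi^{I_E}$ has complex absolute value $1$ under any embedding $\Qla \hookrightarrow \bb{C}$. In particular $q^{-1}$ is not an eigenvalue of $\Fr$, so $1 - q\Fr$ is bijective and the second summand is acyclic. Combining the two contributions gives $H^0 = \mfr{z}_E$, $H^1 = \mfr{z}_E$, $H^2 = 0$, and hence $R\Gamma(\WD_E, \widehat{\mfr{h}}_\varphi) \cong \mfr{z}_E \oplus \mfr{z}_E[-1]$ as claimed.

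The chief technical obstacle is pinning down the two auxiliary properties of "supercuspidal" invoked above, namely Frobenius semisimplicity on $\widehat{\mfr{h}}_\varphi^{I_E}$ and the boundedness statement that rules out the eigenvalue $q^{-1}$. Both are standard consequences of \cite[Definition X.2.1]{FS24} combined with the classical tempered/discrete-series picture, but precise referencing is needed to keep the argument unconditional. A more intrinsic route, if one prefers to avoid any appeal to a complex embedding, is to apply local Tate duality (\Cref{cor:Tatedual}) together with self-duality of the adjoint representation to rewrite $H^2(\WD_E, \widehat{\mfr{h}}_\varphi)$ as the dual of $\widehat{\mfr{h}}_\varphi^{I_E, \Fr = q}$ and argue vanishing in that formulation.
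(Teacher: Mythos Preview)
Your splitting of the three-term complex via $N = 0$ is correct, and the identification $H^0 = \Lie(S_\varphi) = \mfr{z}_E$ is fine. The gap is in your treatment of the second summand: the claim that a supercuspidal parameter has bounded image in $\widehat{H}/Z(\widehat{H})^{Q_E,\circ}$ is not a consequence of \cite[Definition X.2.1]{FS24}. Ellipticity there is a condition on the centralizer $S_\varphi$, and your appeal to ``the classical tempered/discrete-series picture'' is really an appeal to the expected local Langlands correspondence, not to anything provable from the Galois-side definition alone. The statement may be true, but it is not standard and you have not proved it; your own remark that ``precise referencing is needed'' concedes that the argument is incomplete at exactly the point where the content lies.

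The paper argues differently. Rather than splitting the complex, it splits the coefficient module $\widehat{\mfr{h}}$ along the isogeny $\widehat{H} \to (\widehat{H}/\widehat{H}^{\der}) \times \widehat{H}^{\ad}$. On the abelian piece the $W_E$-action factors through the finite quotient $Q_E$, so the computation is immediate and produces the $\mfr{z}_E \oplus \mfr{z}_E[-1]$. On the semisimple piece all three cohomology groups vanish: $H^0 = \Lie(S_\varphi) = 0$ since the center is now finite; for $H^2$ the paper uses exactly your Tate-duality reformulation (together with the Killing form) to reduce to showing $(\widehat{\mfr{h}}^{I_E})^{\Ad(\varphi(\Fr)) = q} = 0$, but then supplies the step you are missing via a Jacobson--Morozov argument (citing \cite[Theorem 2.1]{PY02} and \cite[Lemma 3.5]{Tak24}): a nonzero such $\xi$ would yield a nontrivial cocharacter $\mu \colon \bb{G}_m \to S_\varphi$, contradicting $S_\varphi^\circ = 1$. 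Finally $H^1 = 0$ follows from the vanishing Euler characteristic, which also sidesteps any appeal to Frobenius semisimplicity. So your Tate-duality alternative is the right setup, but the actual vanishing needs this structural input rather than a boundedness assertion.
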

\begin{proof}
    It is well-known to experts (see the footnote of \cite[Section X.2]{FS24}). For completeness, we provide a proof here. Since $\widehat{H} \to \widehat{H} / \widehat{H}^\der \times \widehat{H}^\ad$ induces an isomorphism on Lie algebras, we may assume that $\widehat{H}$ is commutative or $\widehat{H}$ is semisimple. The claim is immediate when $\widehat{H}$ is commutative. We will prove
    \[
        R\Gamma(\WD_{E}, \widehat{\mfr{h}}_\varphi) = 0
    \]
    when $\widehat{H}$ is semisimple. In the following, ${}^LH$, $\Fr$ and $q$ denote the counterparts for $E$. 
    
    Since $\varphi$ is supercuspidal, $S_\varphi^\circ = \{1\}$ as $\varphi$ is discrete and $\widehat{H}$ is semisimple (hence finite center), so 
    \[
        H^0(\WD_{E}, \widehat{\mfr{h}}_\varphi) = \widehat{\mfr{h}}^{\varphi(W_E) = \id} = \Lie(S_\varphi) = 0. 
    \]
    On the other hand, by local Tate duality (see \Cref{cor:Tatedual}), we have
    \begin{equation}
        H^2(\WD_{E}, \widehat{\mfr{h}}_\varphi) = H^0(\WD_{E}, \widehat{\mfr{h}}_\varphi^*(1)). \label{eq:H2Gal}
    \end{equation}
    Let $C_{\varphi(I_E)} \subset \LH$ be the centralizer of $\varphi(I_E)$. The Killing form induces an $\LH$-equivariant isomorphism $\widehat{\mfr{h}} \cong \widehat{\mfr{h}}^*$. Then, an element $\xi$ of \eqref{eq:H2Gal} is regarded as $\xi \in \Lie(C_{\varphi(I_F)})$ and satisfies $\Ad(\varphi(\Fr))(\xi) = q \xi$. Let $C_{\varphi} \subset \LH$ be the subgroup generated by $C_{\varphi(I_F)}$ and $\varphi(\Fr)$. By \cite[Theorem 2.1]{PY02}, $C_{\varphi(I_F)}^\circ = C_{\varphi}^\circ$ is reductive, so we may apply \cite[Lemma 3.5]{Tak24} to $C_\varphi$ by setting $s = \varphi(\Fr)$ and $u_1 = \xi$ to get a cocharacter $\mu \colon \bb{G}_m \to C_{\varphi(I_F)}^\circ$. Then, $\mu$ commutes with $\varphi(W_E)$, so it factors through $S_\varphi$. Then, $\mu$ is trivial since $S_\varphi^\circ = \{1\}$, so we get $\xi = 0$. 

    It remains to see $ H^1(\WD_{E}, \widehat{\mfr{h}}_\varphi) = 0$. It follows from the fact that the Euler characteristic of $ R\Gamma(\WD_{E}, \widehat{\mfr{h}}_\varphi)$ is zero (see \Cref{cor:Tatedual}). 
\end{proof}

\begin{prop} \label{prop:Xvarphifinet}
    Under \Cref{ass:Galoissym}, $\widehat{X}_\varphi$ is finite \'{e}tale. 
\end{prop}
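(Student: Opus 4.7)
The plan is to establish finiteness and \'{e}taleness of $\widehat{X}_\varphi$ separately, both being consequences of a careful analysis of Weil-Deligne cohomology at each lift of $\varphi$ (more precisely of $\Ind(\varphi)$) to $\LGX$.

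First, I would unwind the definition of $\widehat{X}_\varphi$. Since the grading on $\widehat{X} = \LG/\LGX$ plays no substantive role for Galois symmetric varieties (the normalization amounts to a translation by an unramified character on the spectral side), one identifies $\widehat{X}_\varphi$, at the level of $\bar{\Qla}$-points, with the set of $\widehat{H}$-conjugacy classes of lifts $\psi \colon W_F \to \LGX(\bar{\Qla})$ of $\Ind(\varphi)$ along ${}^L\iota_X$, endowed with stabilizer groups $S_\psi \subset \widehat{H}$. For the finiteness of the underlying set, I would argue that such a lift is determined by $\varphi = \psi\vert_{W_E}$ (prescribed up to $\widehat{H}$-conjugacy and supercuspidal by hypothesis, hence possessing finite centralizer modulo the $Q_F$-invariants of $Z(\widehat{H})$) together with a choice of Frobenius representative in $W_F \setminus W_E$; such choices form a finite set up to $\widehat{H}$-conjugacy, of cardinality bounded by $|H^1(Q_F/Q_E, S_\varphi)|$. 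For the finiteness of each stabilizer $S_\psi$, the anisotropy of $Z(H)$ over $F$ gives $\mathrm{Lie}(Z(\widehat{H})^{Q_F}) = 0$, so that the connected component of $S_\psi$, being central in $\widehat{H}$ and $Q_F$-fixed, is trivial.

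Second, for \'{e}taleness (in the appropriate derived sense), I would compute the relative cotangent complex of $\pi_{\widehat{X}}^\norm$ at each lift $\psi$ using \Cref{rmk:ParLGXrelcotangent}, which yields
\[
    L_{\widehat{X}_\varphi/\Spec \Lambda}\vert_\psi \cong R\Gamma(\WD_F, \psi^*L_{\widehat{X}}(1))[2] \cong R\Gamma(\WD_F, (\widehat{\mfr{g}}/\widehat{\mfr{h}})_\psi)^\vee
\]
after applying local Tate duality (\Cref{cor:Tatedual}) and the identification $L_{\widehat{X}} \cong (\widehat{\mfr{g}}/\widehat{\mfr{h}})^\vee$ stemming from $[\widehat{X}/\LG] \cong [\ast/\LGX]$. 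The short exact sequence $0 \to \widehat{\mfr{h}} \to \widehat{\mfr{g}} \to \widehat{\mfr{g}}/\widehat{\mfr{h}} \to 0$ of $\LGX$-representations, pulled back along $\psi$ and then pushed through $R\Gamma(\WD_F, -)$, furnishes a long exact sequence in which \Cref{lem:supercuspcoh} applied over $F$ gives $R\Gamma(\WD_F, \widehat{\mfr{h}}_\psi) = \mfr{z}_F \oplus \mfr{z}_F[-1] = 0$ (using supercuspidality of $\psi$ and anisotropy of $Z(H)$), and Shapiro combined with \Cref{lem:supercuspcoh} over $E$ gives $R\Gamma(\WD_F, \widehat{\mfr{g}}_\psi) \cong R\Gamma(\WD_E, \widehat{\mfr{h}}_\varphi) \cong \mfr{z}_E \oplus \mfr{z}_E[-1]$.

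The hard part will be handling the potentially nontrivial $\mfr{z}_E$, which can be nonzero even under our hypotheses (the standard example being unitary groups, where $Z(\mathrm{U}(n))$ is anisotropic over $F$ but split over $E$). The resolution I expect is to identify $(\widehat{\mfr{g}}/\widehat{\mfr{h}})_\psi$ more concretely as a Weil-Deligne representation by decomposing $\widehat{\mfr{g}} \cong \widehat{\mfr{h}} \oplus c^*\widehat{\mfr{h}}$ in terms of the duality involution $c$ and the outer $Q_F/Q_E$-twist, after which the $\mfr{z}_E$-contribution to $R\Gamma(\WD_F, \widehat{\mfr{g}}_\psi)$ becomes transparently the image of $R\Gamma(\WD_F, \widehat{\mfr{h}}_\psi) \to R\Gamma(\WD_F, \widehat{\mfr{g}}_\psi)$; equivalently, the normalization factor $\sqrt{\cyc}$ in \Cref{defi:normParLGX} ought to twist away the relevant cyclotomic contribution in the remaining piece. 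This should exhibit $R\Gamma(\WD_F, (\widehat{\mfr{g}}/\widehat{\mfr{h}})_\psi)$ as the cohomology of a twisted adjoint representation of a supercuspidal parameter with $F$-anisotropic center, which vanishes by another application of \Cref{lem:supercuspcoh}.
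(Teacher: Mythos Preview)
Your setup matches the paper's: reduce to showing the cotangent complex of $\widehat{X}_\varphi$ vanishes at each lift $\psi$, and via the fiber sequence for $\widehat{\mfr{h}} \hookrightarrow \widehat{\mfr{g}}$ this amounts to showing the restriction map $R\Gamma(\WD_F,\widehat{\mfr{h}}_\psi)\to R\Gamma(\WD_F,\widehat{\mfr{g}}_\psi)\cong R\Gamma(\WD_E,\widehat{\mfr{h}}_\varphi)$ is an isomorphism. The gap is precisely at the step you flag as hard. Your claim $R\Gamma(\WD_F,\widehat{\mfr{h}}_\psi)=\mfr{z}_F\oplus\mfr{z}_F[-1]=0$ is not correct: you have not established that $\psi$ is supercuspidal (only $\varphi$ is assumed so), and in any case $\LGX$ is not the $L$-group of $H$ over $F$ but carries a $Q$-action twisted by the duality involution $c$, so the ``center'' appearing in \Cref{lem:supercuspcoh} is not $\mfr{z}_F$. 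Indeed, if the source were zero while the target is $\mfr{z}_E\oplus\mfr{z}_E[-1]\neq 0$ (your unitary example), the cofiber could not vanish. Your speculation about the $\sqrt{\cyc}$-twist is a red herring: as you already noted, the grading on $\widehat{X}=\LG/\LGX$ is trivial here.

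The paper's resolution is a sign computation you are missing. Rather than invoking supercuspidality of $\psi$, it computes $R\Gamma(\WD_F,\widehat{\mfr{h}}_\psi)$ via the degree-$2$ extension as $R\Gamma(\bb{Z}/2,\,R\Gamma(\WD_E,\widehat{\mfr{h}}_\varphi))=R\Gamma(\bb{Z}/2,\,\mfr{z}_E\oplus\mfr{z}_E[-1])$, and then shows that the $\bb{Z}/2$-action on $\mfr{z}_E$ inside $\widehat{\mfr{h}}_\psi$ is \emph{trivial}: anisotropy of $Z(H)$ over $F$ means $X^*(H)=0$, forcing the natural $\sigma$-action on $X^*(H_E)$ (hence on $\mfr{z}_E$) to be $-1$; the duality involution $c$ built into ${}^L\iota_X$ also acts by $-1$ on $\mfr{z}_E$; and these cancel in the $\psi$-twist. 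Thus $R\Gamma(\WD_F,\widehat{\mfr{h}}_\psi)=\mfr{z}_E\oplus\mfr{z}_E[-1]$ and the restriction map is an isomorphism. Finally, your separate finiteness argument is unnecessary: $\clas\widehat{X}_\varphi$ is of finite type over $\Qla$ by \Cref{prop:ParLGXclassical}, so \'{e}taleness over a point already gives finiteness.
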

\begin{proof}
    Since $\clas \widehat{X}_\varphi$ is of finite type by \Cref{prop:ParLGXclassical}, it is enough to show that $\widehat{X}_\varphi$ is \'{e}tale over $\Spec(\Qla)$. Let $L_{\widehat{X}_\varphi}$ be the cotangent complex. For each $\psi \in \widehat{X}_\varphi(\Qla)$, we will show that $L_{\widehat{X}_\varphi}$ vanishes at $\psi$. Here, $\psi$ corresponds to a lift $\WD_{F, \Qla} \to \LGX$ of $\varphi$. 

    Since $\Par_\LG^{\widehat{X}} = \Par_{\LGX}$, the cotangent complex of $\Par_{\LGX}$ at $\psi$ equals
    \[
        R\Gamma(\WD_{F}, \widehat{\mfr{h}}^*_{\psi}(1)) [1]
    \]
    as in the proof of \Cref{prop:ParLGcotangent}. Since $L_{\widehat{X}_\varphi}\vert_\psi$ is the relative cotangent complex of $\psi$ over $\Ind(\varphi)$, $L_{\widehat{X}_\varphi}\vert_\psi[-2]$ equals the fiber of 
    \[
        R\Gamma(\WD_{E}, \widehat{\mfr{h}}^*_{\varphi}(1)) = 
        R\Gamma(\WD_{F}, (\widehat{\mfr{h}}^* \oplus \widehat{\mfr{h}}^*)_{\Ind(\varphi)}(1)) \to
        R\Gamma(\WD_{F}, \widehat{\mfr{h}}^*_{\psi}(1)). 
    \]
    By local Tate duality (see \Cref{cor:Tatedual}), $L_{\widehat{X}_\varphi}^\vee\vert_\psi$ equals the fiber of
    \[
        R\Gamma(\WD_{F}, \widehat{\mfr{h}}_{\psi}) \to 
        R\Gamma(\WD_{E}, \widehat{\mfr{h}}_{\varphi}).  
    \]
    By \Cref{lem:supercuspcoh}, the target equals $\mfr{z}_E \oplus \mfr{z}_E [-1]$, and the domain equals
    \[
        R\Gamma(\WD_{F}, \widehat{\mfr{h}}_{\psi}) = R\Gamma(\bb{Z} / 2, R\Gamma(\WD_{E}, \widehat{\mfr{h}}_{\varphi})) = R\Gamma(\bb{Z} / 2, \mfr{z}_E \oplus \mfr{z}_E [-1]). 
    \]
    Let $X^*(H_E)$ denote the character group of $H_E$. Then, the action of $\sigma$ on $X^*(H_E)$ is the multiplication by $-1$ since $X^*(H) = 0$. Since the action of $c$ on $\mfr{z}_E$ is also the multiplication by $-1$, the action of $\sigma$ on $\mfr{z}_E \subset \widehat{\mfr{h}}_\psi$ is trivial. Thus, we get $L_{\widehat{X}_\varphi}^\vee\vert_\psi = 0$. 
\end{proof}

For each lift $\psi \colon W_F \to \LGX$ of $\varphi$, let $S_\psi \subset \widehat{H}$ be the centralizer of $\psi$. Then, $S_\psi \subset S_\varphi$ and let 
\[
    j_\psi \colon [\ast / S_\psi] \to [\ast / S_\varphi]. 
\]
Then, we have a diagram
\begin{center}
    \begin{tikzcd}
        \Par_{\LGX} \ar[r] & \Par_{\LG} \ar[r] & \Par_{\LG}^\sss \\
        \coprod\limits_{\psi / \varphi} \lbrack \ast / S_\psi \rbrack \ar[u, hook] \ar[r, "(j_\psi)_{\psi / \varphi}"] & \lbrack \ast / S_\varphi \rbrack \ar[u, "z_\varphi", hook] \ar[r] & \Spec(\Qla). \ar[u, "\varphi", hook]
    \end{tikzcd}
\end{center}
Here, the squares are Cartesian. By \Cref{prop:Xvarphifinet}, each $j_\psi$ is finite \'{e}tale and the number of lifts $\psi$ is finite. Since $\varphi$ is supercuspidal, 
\[
    j_\varphi \colon [\ast / Z(\widehat{H})^{Q_E}] \to [\ast / S_\varphi]
\]
is also finite \'{e}tale. Let $Z=Z(\widehat{H})^{Q_E}$. The trivial $Z$-weight part of $i_{\varphi * } \Qla$ equals $z_{\varphi *} j_{\varphi *} \cl{O}$. Here, $\cl{O}$ denotes the structure sheaf of $[*/Z]$.

Due to the Hecke compatibility, $z_{\varphi *} j_{\varphi *} \cl{O}$ should match $\cl{F}_\varphi\vert_{\Bun_G^{(1)}}$ under $\bb{L}_G$. Then, since the grading on $\widehat{X}$ is trivial, the right hand side of \eqref{eq:multP} is computed by the vector space
\[
    \Hom(\cl{L}_{\widehat{X}}^\norm, z_{\varphi *} j_{\varphi *} \cl{O}) \cong 
    \Hom(z_\varphi^* \cl{L}_{\widehat{X}}, j_{\varphi *} \cl{O}) \cong \bigoplus_{\psi / \varphi} \Hom(j_{\psi *}\cl{O}, j_{\varphi *} \cl{O}).
\]
The second isomorphism follows from the base change and $j_\psi^! \cl{O} \cong \cl{O}$ since $j_\psi$ is \'{e}tale. Let $Z^\circ = S_\varphi^\circ$. Recall the weight decomposition (see \Cref{ssec:weight_decomposition})
\[
    \QCoh([\ast / S_\varphi]) \cong \prod_{\chi \in X^*(Z^\circ)} \QCoh([\ast / S_\varphi])_\chi, \quad
    \QCoh([\ast / S_\varphi])_\triv \cong \QCoh([\ast / \pi_0(S_\varphi)])
\]
where $\pi_0(S_\varphi) = S_\varphi /S^\circ_\varphi$. Since $j_\psi$ is finite \'{e}tale, $S_\psi^\circ = Z^\circ$, so we have 
\[
    j_{\psi *}\cl{O} \cong \Qla[\pi_0(S_\varphi) / \pi_0(S_\psi)] ,\quad 
    j_{\varphi *}\cl{O} \cong \Qla[\pi_0(S_\varphi) / \pi_0(Z)]
\] 
via the identification $\QCoh([\ast / S_\varphi])_\triv \cong \QCoh([\ast / \pi_0(S_\varphi)])$. Thus, we get the following numerical equation. 

\begin{prop} \label{prop:multformula}
    Suppose that \Cref{conj:normP=L} holds for the dual pair 
    \[
        (G, X = G / H) \leftrightarrow (\LG, \widehat{X} = \LG / \LGX)
    \]
    and \Cref{conj:Heckeeigen} holds for $\varphi$. When \Cref{ass:Galoissym} holds, we have
    \[
        \sum_{b \in \pi_X^{-1}(1)} m_b(\Pi_\varphi) = \sum_{\psi / \varphi} \#(\pi_0(Z) \backslash \pi_0(S_\varphi) / \pi_0(S_\psi)). 
    \]
    Here, $\psi \colon W_F \to \LGX$ runs through all lifts of $\varphi$. 
\end{prop}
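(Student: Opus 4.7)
The plan is to assemble the chain of reductions already worked out in the paragraphs preceding the statement and finish with a purely finite-group calculation. Most of the analytic work is already done: the normalized period conjecture together with the assumed form of the Hecke eigensheaf reduces the left-hand side of equation \eqref{eq:multP} to a spectral $\Hom$-computation, and the $\LGX$-description $\Par_{\LG}^{\widehat{X}} = \Par_{\LGX}$ combined with the Cartesian square involving $z_\varphi$ and the $j_\psi$ produces the decomposition
\[
    \dim \Hom(\cl{L}_{\widehat{X}}^\norm, z_{\varphi*} j_{\varphi*}\cl{O}) = \sum_{\psi/\varphi} \dim \Hom_{\QCoh([\ast/S_\varphi])}(j_{\psi*}\cl{O},\, j_{\varphi*}\cl{O}).
\]
So the main step that remains is to compute each summand in terms of double cosets.

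Next, I would use the weight decomposition of $\QCoh([\ast/S_\varphi])$ to extract the contribution from the trivial $Z^\circ$-weight, which is the only one that matters since (by the finite \'{e}taleness of both $j_\psi$ and $j_\varphi$ established in \Cref{prop:Xvarphifinet}) one has $S_\psi^\circ = S_\varphi^\circ = Z^\circ$. Under the equivalence $\QCoh([\ast/S_\varphi])_\triv \cong \QCoh([\ast/\pi_0(S_\varphi)])$, the identifications
\[
    j_{\psi*}\cl{O} \cong \Qla[\pi_0(S_\varphi)/\pi_0(S_\psi)],\quad
    j_{\varphi*}\cl{O} \cong \Qla[\pi_0(S_\varphi)/\pi_0(Z)]
\]
already recorded in the paper turn the Hom-space into a Hom between two permutation representations of the finite group $\pi_0(S_\varphi)$.

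Then I would invoke Frobenius reciprocity: Hom from an induced permutation module $\Ind_{\pi_0(S_\psi)}^{\pi_0(S_\varphi)} \Qla$ to $\Qla[\pi_0(S_\varphi)/\pi_0(Z)]$ equals the $\pi_0(S_\psi)$-invariants of the latter, whose dimension is the number of $\pi_0(S_\psi)$-orbits on $\pi_0(S_\varphi)/\pi_0(Z)$, i.e. $\#\bigl(\pi_0(S_\psi)\backslash \pi_0(S_\varphi)/\pi_0(Z)\bigr)$. Inversion on $\pi_0(S_\varphi)$ rewrites this cardinality as $\#\bigl(\pi_0(Z)\backslash \pi_0(S_\varphi)/\pi_0(S_\psi)\bigr)$. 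Summing over the finitely many lifts $\psi$ of $\varphi$ and comparing with \eqref{eq:multP} gives the claim.

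I do not expect a genuine obstacle here: once \Cref{prop:Xvarphifinet}, the Hecke-eigensheaf decomposition and the $\bb{L}_G$-matching of $\cl{P}_X^\norm$ with $\cl{L}_{\widehat{X}}^\norm$ are in hand, the argument is bookkeeping plus Frobenius reciprocity. The most delicate verification is simply that the Hom on the spectral side receives contributions only from the trivial $Z^\circ$-weight component, which is automatic from the weight decomposition $\QCoh([\ast/S_\varphi]) \cong \prod_\chi \QCoh([\ast/S_\varphi])_\chi$ and the fact that both $j_{\psi*}\cl{O}$ and $j_{\varphi*}\cl{O}$ lie in the $\triv$-component since $Z^\circ$ acts trivially on each.
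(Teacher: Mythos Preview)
Your proposal is correct and follows essentially the same route as the paper: both reduce via the preceding chain of identifications to computing $\Hom_{\pi_0(S_\varphi)}(\Qla[\pi_0(S_\varphi)/\pi_0(S_\psi)],\Qla[\pi_0(S_\varphi)/\pi_0(Z)])$ and identify its dimension with the number of double cosets. The paper states this last identification in one line (it is Mackey's formula for intertwiners of permutation modules), while you spell out the Frobenius reciprocity and orbit-counting explicitly, but the argument is the same.
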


\begin{proof}
    By the above description, we have
    \begin{align*}
        \Hom(j_{\psi *}\cl{O}, j_{\varphi *} \cl{O}) &\cong \Hom_{\pi_0(S_\varphi)}(\Qla[\pi_0(S_\varphi) / \pi_0(S_\psi)], \Qla[\pi_0(S_\varphi) / \pi_0(Z)]) \\
        &\cong \Qla[\pi_0(S_\psi) \backslash \pi_0(S_\varphi) / \pi_0(Z)].     
    \end{align*}
    Thus, the claim follows from the combination of the above arguments. 
\end{proof}
\begin{rmk}
    Compare with the description of \cite[Conjecture 1]{BP18} when the character $\chi$ in loc. cit. is trivial. Then, $\pi_X^{-1}(1)$ equals the kernel of $H^1(F, H) \to H^1(F, G)$. It seems to be suggested there that $m_b(\Pi_\varphi)$ is independent of $b$. 
\end{rmk}

We can also predict the multiplicity of each $\pi \in \Pi_{\varphi}(G)$. 

\begin{prop} \label{prop:multpi}
    Suppose that \Cref{conj:normP=L} holds for the dual pair 
    \[
        (G, X = G / H) \leftrightarrow (\LG, \widehat{X} = \LG / \LGX)
    \]
    and $\rho_\pi \in \Irr(\pi_0(S_\varphi) / \pi_0(Z))$ satisfies $\bb{L}_G(i^1_! \pi) \cong z_{\varphi*} \cl{O}_{\rho_\pi}$. When \Cref{ass:Galoissym} holds, 
    \[
        \dim \Hom_{G(F)}(C_c^\infty(X(F), \Qla), \pi) = \sum_{\psi / \varphi} \dim \rho_\pi^{\pi_0(S_\psi) = \id}. 
    \]
    Moreover, for every $i > 0$, we have
    \[
        \Ext^i_{G(F)}(C_c^\infty(X(F), \Qla), \pi) = 0. 
    \]
\end{prop}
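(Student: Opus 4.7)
The strategy refines the proof of \Cref{prop:multformula} so as to isolate an individual $\pi \in \Pi_\varphi(G)$. Since $X = G/H$ is affine and unimodular (the center of $H$ being anisotropic), \Cref{prop:normPetaX} gives $\cl{P}_X^\norm \cong \cl{P}_X$, and \Cref{cor:computetriv} identifies $i^{1*}\cl{P}_X \cong C_c^\infty(X(F), \Qla)$. By the adjunction $i^{1*} \dashv i^1_*$,
\[
  \Ext^i_{G(F)}(C_c^\infty(X(F), \Qla), \pi) \cong \Ext^i_{\Bun_G}(\cl{P}_X^\norm, i^1_*\pi).
\]
I would first verify the cleanness identification $i^1_!\pi \cong i^1_*\pi$ for supercuspidal $\varphi$, which should follow from the concentration of the Hecke eigensheaf $\cl{F}_\varphi$ on the basic locus together with the vanishing of Jacquet-module-type contributions along non-semistable strata. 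Granting this, \Cref{conj:normP=L} combined with the hypothesis $\bb{L}_G(i^1_!\pi) \cong z_{\varphi *}\cl{O}_{\rho_\pi}$ transports the computation to
\[
  \Ext^i_{\Par_{\LG}}(\cl{L}_{\widehat{X}}^\norm, z_{\varphi *}\cl{O}_{\rho_\pi}).
\]

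Since the grading on $\widehat{X} = \LG/\LGX$ is trivial, $\cl{L}_{\widehat{X}}^\norm = \cl{L}_{\widehat{X}}$. I then form the Cartesian square
\[
  \begin{tikzcd}
    Y \ar[r, "q"] \ar[d, "p"'] & \Par_{\LGX} \ar[d, "\pi_{\widehat{X}}"] \\
    \lbrack \ast/S_\varphi \rbrack \ar[r, "z_\varphi", hook] & \Par_{\LG}.
  \end{tikzcd}
\]
By \Cref{prop:Xvarphifinet}, the fibre $\widehat{X}_\varphi$ of $\pi_{\widehat{X}}$ over $\varphi$ is finite \'etale, so the $S_\varphi$-action gives $Y \cong \lbrack \widehat{X}_\varphi / S_\varphi \rbrack \cong \coprod_{\psi/\varphi}\lbrack \ast/S_\psi \rbrack$, with $p$ restricting on each component to a finite \'etale map $j_\psi \colon \lbrack \ast/S_\psi \rbrack \to \lbrack \ast/S_\varphi \rbrack$. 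Since $z_\varphi$ is a regular closed immersion (hence eventually coconnective), base change \cite[Proposition 3.2.2]{GR17I} together with the triviality $\omega_{\widehat{X}} \cong \cl{O}_{\Par_{\LGX}}$ (\Cref{prop:ParLGproperty}) yields
\[
  z_\varphi^*\cl{L}_{\widehat{X}} \cong p_* q^*\omega_{\widehat{X}} \cong \bigoplus_{\psi/\varphi} j_{\psi *}\cl{O}_{\lbrack \ast/S_\psi \rbrack}.
\]

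Finally, using the $(j_{\psi *}, j_\psi^!)$-adjunction and $j_\psi^! \cong j_\psi^*$ for finite \'etale $j_\psi$,
\[
  \Ext^i_{\lbrack \ast/S_\varphi \rbrack}(j_{\psi *}\cl{O}, \cl{O}_{\rho_\pi}) \cong \Ext^i_{\lbrack \ast/S_\psi \rbrack}(\cl{O}, \rho_\pi\vert_{S_\psi}) \cong H^i(BS_\psi, \rho_\pi\vert_{S_\psi}).
\]
Since $\rho_\pi$ is trivial on $S_\psi^\circ = Z^\circ$ and $\pi_0(S_\psi)$ is finite, this cohomology (over $\Qla$ in characteristic $0$) is concentrated in degree $0$ and equals $\rho_\pi^{\pi_0(S_\psi)}$. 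Summing over $\psi/\varphi$ yields the multiplicity formula in degree $0$ and the vanishing $\Ext^i = 0$ for $i > 0$. The principal obstacle is the cleanness identification $i^1_!\pi \cong i^1_*\pi$: it is consistent with \Cref{conj:Heckeeigen}, but a direct derivation from supercuspidality alone (without invoking the full form of that conjecture) should require a separate cohomological vanishing argument along the boundary $\Bun_G - \Bun_G^1$.
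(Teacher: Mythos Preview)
Your approach is essentially the same as the paper's, which simply writes that ``both claims follow from the same computation as \Cref{prop:multformula}'' and then records the isomorphism
\[
    \Map(\cl{L}_{\widehat{X}}^\norm, z_{\varphi *} \cl{O}_{\rho_\pi}) \cong \bigoplus_{\psi / \varphi} \Map(j_{\psi *} \cl{O}, \cl{O}_{\rho_\pi}),
\]
noting that $j_{\psi*}\cl{O}$ and $\cl{O}_{\rho_\pi}$ both lie in the heart $\Rep(\pi_0(S_\varphi))$, which is semisimple in characteristic~$0$; this gives the degree~$0$ formula and the $\Ext^i$ vanishing simultaneously. Your base-change argument and the final identification $\Ext^i(j_{\psi*}\cl{O},\cl{O}_{\rho_\pi}) \cong H^i(BS_\psi,\rho_\pi\vert_{S_\psi})$ recover exactly this.

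The one point on which you are more careful than the paper is the cleanness step $i^1_!\pi \cong i^1_*\pi$. The paper's proof does not address it explicitly either: the analogue in \Cref{prop:multformula} was drawn from \Cref{conj:Heckeeigen}, whereas here the hypothesis has been replaced by $\bb{L}_G(i^1_!\pi)\cong z_{\varphi*}\cl{O}_{\rho_\pi}$ alone. Your instinct that this should follow from supercuspidality of $\varphi$ (via vanishing of Jacquet modules along non-basic strata) is the right one, and is the sort of input the paper is tacitly taking for granted; it is not a defect in your argument so much as a point the paper leaves implicit.
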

\begin{proof}
   Both claims follow from the same computation as \Cref{prop:multformula}. Note that the second claim follows from the vanishing of the higher cohomology groups of
    \[
        \Map(\cl{L}_{\widehat{X}}^\norm, z_{\varphi *} \cl{O}_{\rho_\pi}) \cong \bigoplus_{\psi / \varphi} \Map(j_{\psi *} \cl{O}, \cl{O}_{\rho_\pi})
    \]  
    since $j_{\psi *} \cl{O}$ and $\cl{O}_{\rho_\pi}$ lie in the heart $\Rep(\pi_0(S_\varphi)) \subset \QCoh([\ast / \pi_0(S_\varphi)])$, which is semisimple since $\Lambda$ is in characteristic $0$. 
\end{proof}


The vanishing of higher extension groups fails when the center of $H$ is not anisotropic. 

\begin{exa}
    Suppose that $H = \bb{G}_m$ and $E / F$ is an unramified quadratic extension. Then, $\Qla_\triv$ is a direct summand of $C_c^\infty(E^\times / F^\times, \Qla)$ and we have 
    \[
        \Ext^1_{E^\times}(\Qla_\triv, \Qla_\triv) \cong \Ext^1_{\Qla[T^{\pm}]}(\Qla[T^\pm]/(T-1), \Qla[T^\pm]/(T-1)) = \Qla. 
    \]
    In particular, $\Ext^1_{E^\times}(C_c^\infty(E^\times / F^\times, \Qla), \Qla_\triv) \neq 0$. 
\end{exa}


\subsection{Compatibility with Eisenstein series}

In this section, we introduce parabolic inductions of dual pairs and show that parabolic inductions of normalized period sheaves and $L$-sheaves are compatible under the Eisenstein compatibility. 

\subsubsection{The $\cl{A}$-side}

Let $P \subset G$ be a parabolic subgroup with a Levi subgroup $M$. 

\begin{defi}
    For a smooth quasi-projective $M$-variety $X$, let 
    \[
        \Ind_P^G(X) = G \times^P X
    \]
    be the parabolic induction of $X$. 
\end{defi}

\begin{lem}
    The parabolic induction $\Ind_P^G(X)$ is a smooth quasi-projective $G$-variety. 
\end{lem}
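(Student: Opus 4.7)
The plan is to verify smoothness and quasi-projectivity separately, with the latter being the only real content.

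For smoothness, I would note that the quotient map $G \times X \to G \times^P X = \Ind_P^G(X)$ is a $P$-torsor (in particular smooth and surjective) with smooth source, so $\Ind_P^G(X)$ is smooth. Equivalently, the projection $\Ind_P^G(X) \to G/P$ is Zariski-locally trivial with fiber $X$ (using a local section of $G \to G/P$), and both base and fiber are smooth.

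For the scheme structure and quasi-projectivity, I would reduce it to producing a $G$-equivariant ample line bundle on $\Ind_P^G(X)$, after which descent (as in \cite[Theorem 7]{BLR90}, cited in the remark following \Cref{conv:Gvar}) ensures that $\Ind_P^G(X)$ is a scheme and the ample line bundle makes it quasi-projective. Since $X$ is smooth and hence normal, Sumihiro's theorem (\Cref{lem:sumihiro}) supplies an $M$-equivariant, hence $P$-equivariant, ample line bundle $L$ on $X$. The associated line bundle $\wt{L} = G \times^P L$ on $\Ind_P^G(X)$ is $G$-equivariant, and it is relatively ample for the projection $q\colon \Ind_P^G(X) \to G/P$ because relative ampleness can be checked after the smooth cover $G \to G/P$, where the projection pulls back to $G \times X \to G$ with $L$ pulled back along $\pr_2$. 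Since $P$ is parabolic, $G/P$ is smooth projective and admits a $G$-equivariant ample line bundle $M_0$; then for $n$ sufficiently large, $\wt{L} \otimes q^*M_0^{\otimes n}$ is $G$-equivariantly ample on $\Ind_P^G(X)$ by the standard criterion that relatively ample plus sufficiently positive pullback from a projective base is ample.

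The main (and only) obstacle is the construction of the $G$-equivariant ample line bundle; once this is in place, the remaining statements are formal consequences of descent and Sumihiro's theorem applied to $X$. No further calculation is needed.
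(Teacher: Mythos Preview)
Your proposal is correct and follows essentially the same approach as the paper. The paper's proof is simply more terse: it invokes Sumihiro's theorem to get an $M$-equivariant ample line bundle on $X$, notes that this makes the natural map $\Ind_P^G(X) \to G/P$ smooth and quasi-projective, and concludes since $G/P$ is smooth projective; your version just spells out the construction of the relatively ample line bundle and the passage from relative to absolute ampleness in more detail.
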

\begin{proof}
    By Sumihiro's theorem, $X$ admits an $M$-equivariant ample line bundle. Then, the natural map $\Ind_P^G(X) \to G/P$ is smooth and quasi-projective. Thus, we get the claim. 
\end{proof}

Consider a sequence
\begin{center}
    \begin{tikzcd}
        \Bun_G & \Bun_P \ar[l, "\mfr{p}"'] \ar[r, "\mfr{q}"] & \Bun_M. 
    \end{tikzcd}
\end{center}
By \Cref{exa:flagvar}, $\omega_{G / P}^* \und{\deg}$ is the square root of the dualizing complex of $\Bun_P$, so the normalized geometric Eisenstein series (see \cite[Definition 2.1.7]{HHS24}) is given by 
\[
    \Eis_{P!} = \mfr{p}_!( \mfr{q}^*(-) \otimes \omega_{G / P}^* \und{\deg}) \colon \cl{D}^\oc(\Bun_M, \Qla) \to \cl{D}^\oc(\Bun_G, \Qla). 
\]

\begin{prop} \label{prop:EisPnorm}
    For every smooth quasi-projective $M$-variety $X$, we have
    \[
        \cl{P}_{\Ind_P^G(X)}^\norm \cong \Eis_{P!}(\cl{P}_X^\norm). 
    \]
\end{prop}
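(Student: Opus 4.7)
The plan is to reduce $\cl{P}_{\Ind_P^G(X)}^\norm$ to $\Eis_{P!}(\cl{P}_X^\norm)$ by combining a Cartesian-square identification of relative stacks with a decomposition of the canonical bundle of $\Ind_P^G(X)$, after which proper base change and the projection formula finish the computation.

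First, I would identify the relative stacks. The stacky equivalence $[\Ind_P^G(X)/G] = [(G \times^P X)/G] \cong [X/P]$ yields $\Bun_G^{\Ind_P^G(X)} \cong \Bun_P^X$, under which $\pi_{\Ind_P^G(X)}$ factors as $\Bun_P^X \xrightarrow{\pi_X^P} \Bun_P \xrightarrow{\mfr{p}} \Bun_G$. Since $X$ is an $M$-variety and $P$ acts on it through $P \twoheadrightarrow M$, the quotient stack further decomposes as $[X/P] \cong [\ast/P] \times_{[\ast/M]} [X/M]$, producing a Cartesian square of mapping stacks
\[
\begin{tikzcd}
\Bun_P^X \ar[r, "\pi_X^P"] \ar[d, "f"'] & \Bun_P \ar[d, "\mfr{q}"] \\
\Bun_M^X \ar[r, "\pi_X^M"] & \Bun_M.
\end{tikzcd}
\]

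Next, I would decompose the canonical bundle. The splitting $T(\Ind_P^G(X)) \cong \pi^* T(G/P) \oplus (G \times^P TX)$ as a $G$-equivariant bundle, where $\pi \colon \Ind_P^G(X) \to G/P$ is the projection, induces a $G$-equivariant isomorphism $\Omega^{\tp}_{\Ind_P^G(X)} \cong \pi^* \Omega^{\tp}_{G/P} \otimes (G \times^P \Omega^{\tp}_X)$. Passing to classifying maps and using that $\und{\deg}$ is multiplicative under the tensor product on $\Bun_{\bb{G}_m}$ (immediate from \Cref{defi:degshf}), this yields
\[
\omega_{\Ind_P^G(X)}^* \und{\deg} \cong f^* \omega_X^* \und{\deg} \otimes (\pi_X^P)^* \omega_{G/P}^* \und{\deg}.
\]

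Finally, applying proper base change $\mfr{q}^* \pi_{X!}^M \cong \pi_{X!}^P f^*$ to the Cartesian square above (valid since $\pi_X^M$ is $!$-able by \Cref{prop:rel!able}) and the projection formula gives
\begin{align*}
\cl{P}_{\Ind_P^G(X)}^\norm &= \mfr{p}_! \pi_{X!}^P \bigl( f^* \omega_X^* \und{\deg} \otimes (\pi_X^P)^* \omega_{G/P}^* \und{\deg} \bigr) \\
&\cong \mfr{p}_!\bigl( \pi_{X!}^P f^* \omega_X^* \und{\deg} \otimes \omega_{G/P}^* \und{\deg} \bigr) \\
&\cong \mfr{p}_!\bigl( \mfr{q}^* \cl{P}_X^\norm \otimes \omega_{G/P}^* \und{\deg} \bigr) = \Eis_{P!}(\cl{P}_X^\norm).
\end{align*}
The main technical obstacle I anticipate is in the second step, namely verifying that the scheme-level decomposition of $\Omega^{\tp}_{\Ind_P^G(X)}$ globalizes to the correct tensor identity of $\Bun_{\bb{G}_m}$-valued maps, in a way that is consistent with the sign conventions of \Cref{exa:flagvar}; once this equivariant bookkeeping is handled, the remainder of the argument is essentially formal.
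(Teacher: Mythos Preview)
Your proposal is correct and follows essentially the same route as the paper: the same Cartesian square arising from $[\Ind_P^G(X)/G]\cong[X/P]\cong[\ast/P]\times_{[\ast/M]}[X/M]$, the same multiplicativity of $\und{\deg}$ under tensor product, and the same base change plus projection formula computation. The only cosmetic difference is that the paper phrases the canonical-bundle decomposition via the relative canonical bundle of $\Ind_P^G(X)\to G/P$ and a Cartesian square over $[\ast/M]$, whereas you invoke the tangent-bundle splitting directly; both yield the same $G$-equivariant isomorphism $\Omega^{\tp}_{\Ind_P^G(X)}\cong\pi^*\Omega^{\tp}_{G/P}\otimes(G\times^P\Omega^{\tp}_X)$, and the technical obstacle you flag is exactly the point the paper spells out.
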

\begin{proof}
    Since $[\Ind_P^G(X)/G] \cong [X / P]$, we have a commutative diagram
    \begin{center}
        \begin{tikzcd}
            & \Bun_G^{\Ind_P^G(X)} \ar[r, "\mfr{q}_X"] \ar[ld, "\pi_{\Ind_P^G(X)}"'] \ar[d, "\mfr{p}_X"] & \Bun_M^X \ar[d, "\pi_X"] \\
            \Bun_G & \Bun_P \ar[l, "\mfr{p}"'] \ar[r, "\mfr{q}"] & \Bun_M. 
        \end{tikzcd}
    \end{center}
    Here, the square is Cartesian. Then, we have
    \[
        \Eis_{P!}(\cl{P}_X^\norm) = \mfr{p}_! (\mfr{q}^*\pi_{X!} \omega_X^* \und{\deg} \otimes \omega_{G / P}^* \und{\deg}) \cong \pi_{\Ind_P^G(X)!} (\mfr{q}_X^* \omega_X^* \und{\deg} \otimes \mfr{p}_X^* \omega_{G/P}^* \und{\deg}). 
    \]
    It is enough to show that $\mfr{q}_X^* \omega_X^* \und{\deg} \otimes \mfr{p}_X^* \omega_{G/P}^* \und{\deg} \cong \omega_{\Ind_P^G(X)}^* \und{\deg}$. Let $m \colon \Bun_{\bb{G}_m} \times \Bun_{\bb{G}_m} \to \Bun_{\bb{G}_m}$ be the map sending a pair $(\cl{L}_1, \cl{L}_2)$ to $\cl{L}_1 \otimes \cl{L}_2$. By the description of $\und{\deg}$, we have
    \[
        \und{\deg} \boxtimes \und{\deg} \cong m^* \und{\deg}.  
    \]
    Thus, it is enough to show that 
    \[
        [\Ind_P^G(X) / G]  = [X / P] \to [X / M] \times [\ast / P] \xrightarrow{(\Omega_X^\tp, \Omega_{G / P}^\tp)} [\ast / \bb{G}_{m}] \times [\ast / \bb{G}_m] \to [\ast / \bb{G}_m]
    \]
    equals $\Omega_{\Ind_P^G(X)}^\tp$. Now, let $\Omega_{(-)}^\tp$ also denote the canonical bundle by abuse of notation. Then, $\Omega_{\Ind_P^G(X)}^\tp \otimes p_X^* \Omega_{G / P}^{\tp, \vee}$ is the relative canonical bundle of $\Ind_P^G(X) \to G / P$. Moreover, 
    \begin{center}
        \begin{tikzcd}
            \Ind_P^G(X) \ar[r, "q_X"] \ar[d] & \lbrack X / M \rbrack \ar[d] \\
            G / P \ar[r] & \lbrack \ast / M \rbrack
        \end{tikzcd}
    \end{center}
    is Cartesian, so we get $\Omega_{\Ind_P^G(X)}^\tp \cong  p_X^* \Omega_{G / P}^\tp \otimes q_X^* \Omega_X^\tp$. This proves the claim. 
\end{proof}

\subsubsection{The $\cl{B}$-side}

Let $\LP \subset \LG$ be a parabolic subgroup with a Levi subgroup $\LM$. 

\begin{defi}
    For a derived $\LM$-scheme $\widehat{X}$ locally of finite presentation such that $\clas\widehat{X}$ is $\widehat{M}$-quasi-projective with a grading, let
    \[
        \Ind_{\widehat{P}}^{\widehat{G}}(\widehat{X}) = \LG \times^{\LP} \widehat{X}
    \]
    be the parabolic induction of $\widehat{X}$ equipped with a grading induced from $\widehat{X}$, so that $a\cdot (g, x) = (g, ax)$ for $a \in \bb{G}_{gr}$, $g \in \LG$ and $x \in \widehat{X}$.   
\end{defi}
\begin{lem}
    The parabolic induction $\Ind_{\widehat{P}}^{\widehat{G}}(\widehat{X})$ is a derived $\LG$-scheme locally of finite presentation such that $\clas\Ind_{\widehat{P}}^{\widehat{G}}(\widehat{X})$ is $\widehat{G}$-quasi-projective. 
\end{lem}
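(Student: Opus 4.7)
The plan is to parallel the $\cl{A}$-side argument for $\Ind_{P}^{G}(X)$, replacing Sumihiro with the $\widehat{G}$-quasi-projectivity already built into the hypothesis on $\clas\widehat{X}$. First I would observe that since $\LP$ acts smoothly (in particular flat) on $\widehat{X}$ via $\LP \twoheadrightarrow \LM$, and the quotient is a balanced product along the free $\LP$-torsor $\LG \to \LG/\LP$, the formation of $\LG \times^{\LP} \widehat{X}$ commutes with classical truncation:
\[
    \clas\bigl(\LG \times^{\LP} \widehat{X}\bigr) \;\cong\; \LG \times^{\LP} \clas\widehat{X}.
\]
This reduces the $\widehat{G}$-quasi-projectivity claim to a statement about the (classical) induced variety $\LG \times^{\LP} \clas\widehat{X}$.

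Next I would construct a $\widehat{G}$-equivariant ample line bundle on $\LG \times^{\LP} \clas\widehat{X}$. Let $\cl{L}$ be an $\widehat{M}$-equivariant ample line bundle on $\clas\widehat{X}$, which exists by hypothesis. Pulling back along $\LP \twoheadrightarrow \LM$ we view $\cl{L}$ as an $\LP$-equivariant ample line bundle on $\clas\widehat{X}$, and inducing gives an $\LG$-equivariant line bundle $\wt{\cl{L}}$ on $\LG \times^{\LP} \clas\widehat{X}$, relatively ample over the flag scheme $\LG/\LP$. Since $\LG/\LP$ is projective, fix a $\widehat{G}$-equivariant ample line bundle $\cl{M}$ on it; by the standard fact that relative ampleness plus ampleness on a projective base yields absolute ampleness after suitable twist, $\wt{\cl{L}} \otimes \pi^*\cl{M}^{\otimes n}$ is a $\widehat{G}$-equivariant ample line bundle on $\LG \times^{\LP} \clas\widehat{X}$ for $n \gg 0$. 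In particular the target is a scheme (and quasi-projective over $\Lambda$).

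Finally I would verify that $\Ind_{\LP}^{\LG}(\widehat{X})$ is a derived $\LG$-scheme locally of finite presentation over $\Lambda$. The canonical map $\LG \times \widehat{X} \twoheadrightarrow \LG \times^{\LP} \widehat{X}$ is an $\LP$-torsor, hence a smooth surjection; since being a derived scheme and being locally of finite presentation are smooth-local properties (cf.\ \cite[Theorem B.2.14]{AG15} and \cite[Proposition 3.2.14]{DAG}, together with the connectivity of the cotangent complex inherited from $\widehat{X}$), and since $\LG \times \widehat{X}$ obviously has these properties as $\widehat{X}$ does, the descent gives the claim for $\Ind_{\LP}^{\LG}(\widehat{X})$. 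The $\LG$-action on $\Ind_{\LP}^{\LG}(\widehat{X})$ is manifest by left translation on the first factor, and the grading is well-defined since the $\bb{G}_{gr}$-action on the second factor commutes with the $\LP$-action.

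The only real subtlety is the standard ampleness argument in the second paragraph (that a relatively ample line bundle plus an ample pullback from a projective base is ample after a twist); otherwise the argument is a formal transcription of the $\cl{A}$-side proof, since the derived enhancement only affects the structure sheaf and not the underlying quotient construction.
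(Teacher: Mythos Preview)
Your proposal is correct and follows essentially the same strategy as the paper: reduce the $\widehat{G}$-quasi-projectivity to the classical induced variety $\LG \times^{\LP} \clas\widehat{X}$, and deduce the derived scheme property from the bundle structure over $\widehat{G}/\widehat{P}$. The only minor difference is in the last step: where you descend the derived-scheme property along the $\LP$-torsor $\LG \times \widehat{X} \twoheadrightarrow \LG \times^{\LP} \widehat{X}$, the paper instead observes that the projection $\Ind_{\widehat{P}}^{\widehat{G}}(\widehat{X}) \to \widehat{G}/\widehat{P}$ is schematic (being smooth-locally $\widehat{X} \to \ast$), and concludes directly since $\widehat{G}/\widehat{P}$ is already a scheme locally of finite presentation. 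Your ampleness argument is more explicit than the paper's, which simply asserts $\widehat{G}$-quasi-projectivity without spelling out the twist.
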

\begin{proof}
    Since $\clas\widehat{X}$ is $\widehat{M}$-quasi-projective, 
    \[
        \clas\Ind_{\widehat{P}}^{\widehat{G}}(\widehat{X}) = \LG \times^{\LP} \clas \widehat{X}
    \]
    is a scheme and $\widehat{G}$-quasi-projective. Thus, $\Ind_{\widehat{P}}^{\widehat{G}}(\widehat{X}) \to \widehat{G} / \widehat{P}$ is schematic since it is smooth locally on target. Since $\widehat{X}$ and $\widehat{G} / \widehat{P}$ are locally of finite presentation, $\Ind_{\widehat{P}}^{\widehat{G}}(\widehat{X})$ is a derived scheme locally of finite presentation. 
\end{proof}

Consider a sequence
\begin{center}
    \begin{tikzcd}
        \Par_{\LG} & \Par_{\LP} \ar[l, "p^{\spec}"'] \ar[r, "q^{\spec}"] & \Par_{\LM}. 
    \end{tikzcd}
\end{center}
The spectral Eisenstein series (see \cite[Proposition 3.3.2]{Zhu21}) is given by 
\[
    \Eis_{P}^{\spec} = p^\spec_* q^{\spec!} \colon \IndCoh(\Par_{\LM}) \to \IndCoh(\Par_{\LG}). 
\]

\begin{prop} \label{prop:EisLnorm}
    For every derived $\LM$-scheme $\widehat{X}$ locally of finite presentation such that $\clas\widehat{X}$ is $\widehat{M}$-quasi-projective, we have
    \[
        \cl{L}_{\Ind_{\widehat{P}}^{\widehat{G}}(\widehat{X})}^\norm \cong \Eis_{P}^\spec(\cl{L}_{\widehat{X}}^\norm). 
    \]
\end{prop}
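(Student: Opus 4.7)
The plan is to reduce to a Cartesian square relating $\Par_{\LG}^{\widehat{X}', \norm}$ and $\Par_{\LM}^{\widehat{X}, \norm}$, and then apply $\IndCoh$ base change together with the triviality of the dualizing complexes of $\Par_{\LG}$ and $\Par_{\LM}$.

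First, I observe that $[\widehat{X}'/\LG] \cong [\widehat{X}/\LP]$, and since the $\LP$-action on $\widehat{X}$ factors through $\LP \to \LM$ (because $\widehat{X}$ is an $\LM$-scheme), we have
\[
    [\widehat{X}/\LP] \cong [\widehat{X}/\LM] \times_{[\ast/\LM]} [\ast/\LP].
\]
Applying $\und{\Map}_{[\ast/Q]}([\ast/\WD_F], -)$, which preserves limits, yields a Cartesian square
\begin{center}
\begin{tikzcd}
\Par_{\LG}^{\widehat{X}'} \ar[r, "\mfr{q}_{\widehat{X}}"] \ar[d, "\mfr{p}_{\widehat{X}}"'] & \Par_{\LM}^{\widehat{X}} \ar[d, "\pi_{\widehat{X}}"] \\
\Par_{\LP} \ar[r, "q^\spec"'] & \Par_{\LM}
\end{tikzcd}
\end{center}
with $\pi_{\widehat{X}'} = p^\spec \circ \mfr{p}_{\widehat{X}}$. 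Running the same argument with $\LG, \LP, \LM$ replaced by $\LG \times \bb{G}_{gr}, \LP \times \bb{G}_{gr}, \LM \times \bb{G}_{gr}$ and pulling back along $\id \times \sqrt{\cyc}$, using $\Par_{H \times \bb{G}_{gr}} \cong \Par_H \times \Par_{\bb{G}_{gr}}$ for $H \in \{\LG, \LP, \LM\}$, produces the normalized analogue of the same Cartesian square, involving maps $\mfr{p}^\norm, \mfr{q}^\norm, \pi_{\widehat{X}}^\norm$ with $\pi_{\widehat{X}'}^\norm = p^\spec \circ \mfr{p}^\norm$.

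Next, I compute $\cl{L}_{\widehat{X}'}^\norm$ via $\IndCoh$ base change along this normalized square. The map $\pi_{\widehat{X}}^\norm$ is the base change of $\pi_{\widehat{X}, \LM \times \bb{G}_{gr}}$, which is schematic by \Cref{prop:ParLGXschematic}, hence is itself schematic; base change in $\IndCoh$ then gives $q^{\spec !} \pi_{\widehat{X}*}^\norm \cong \mfr{p}^\norm_* \mfr{q}^{\norm !}$. Moreover, \Cref{prop:ParLGproperty} says that the dualizing complexes of $\Par_{\LG}$ and $\Par_{\LM}$ are trivial, so $\omega_{\widehat{X}}^\norm \cong (\pi_{\widehat{X}}^\norm)^! \cl{O}_{\Par_{\LM}}$ and $\omega_{\widehat{X}'}^\norm \cong (\pi_{\widehat{X}'}^\norm)^! \cl{O}_{\Par_{\LG}}$. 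Using $\pi_{\widehat{X}'}^\norm = p^\spec \mfr{p}^\norm$ and $\pi_{\widehat{X}}^\norm \mfr{q}^\norm = q^\spec \mfr{p}^\norm$ together with the fact that $(p^\spec)^! \cl{O}_{\Par_{\LG}} \cong (q^\spec)^! \cl{O}_{\Par_{\LM}}$ (both being the absolute dualizing complex $\omega_{\Par_{\LP}}$), I obtain $\omega_{\widehat{X}'}^\norm \cong \mfr{q}^{\norm !} \omega_{\widehat{X}}^\norm$. Combining,
\[
    \pi_{\widehat{X}'*}^\norm \omega_{\widehat{X}'}^\norm \cong p^\spec_* \mfr{p}^\norm_* \mfr{q}^{\norm !} \omega_{\widehat{X}}^\norm \cong p^\spec_* q^{\spec !} \pi_{\widehat{X}*}^\norm \omega_{\widehat{X}}^\norm = \Eis_P^\spec(\pi_{\widehat{X}*}^\norm \omega_{\widehat{X}}^\norm).
\]

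Finally, I commute the shearing past $\Eis_P^\spec$. The grading on $\widehat{X}$ induces $\bb{G}_{gr}$-actions on the normalized relative stacks over $\Par_{\LM}$ and $\Par_{\LP}$ respectively, and all the base change and unit isomorphisms above are $\bb{G}_{gr}$-equivariant; hence the preceding isomorphism intertwines the $\bb{G}_{gr}$-gradings on $\pi_{\widehat{X}'*}^\norm \omega_{\widehat{X}'}^\norm$ and $\Eis_P^\spec(\pi_{\widehat{X}*}^\norm \omega_{\widehat{X}}^\norm)$. Since $\Eis_P^\spec$ is $\Qla$-linear and applies to each weight independently, it commutes with the shearing operation, yielding $\cl{L}_{\widehat{X}'}^\norm \cong \Eis_P^\spec(\cl{L}_{\widehat{X}}^\norm)$. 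The main potential obstacle is the careful invocation of $\IndCoh$ base change on these derived $1$-Artin stacks, which is handled by the schematicity of $\pi_{\widehat{X}}^\norm$; a secondary point is the $\bb{G}_{gr}$-equivariance bookkeeping across the normalization, which follows from the functoriality of all the constructions involved.
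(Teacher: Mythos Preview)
Your proposal is correct and follows essentially the same approach as the paper: both identify $[\widehat{X}'/\LG]\cong[\widehat{X}/\LP]$, obtain the Cartesian square over $\Par_{\LP}\to\Par_{\LM}$ (and its normalized/$\bb{G}_{gr}$-equivariant version), apply $\IndCoh$ base change, and commute shearing past $\Eis_P^\spec$. One small simplification: the identification $\omega_{\widehat{X}'}^\norm\cong\mfr{q}^{\norm!}\omega_{\widehat{X}}^\norm$ holds because upper-shriek always carries absolute dualizing complexes to absolute dualizing complexes, so you do not need to invoke the triviality of $\omega_{\Par_{\LG}}$ and $\omega_{\Par_{\LM}}$ from \Cref{prop:ParLGproperty} at that step.
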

\begin{proof}
    Let $\widehat{Y} = \Ind_{\widehat{P}}^{\widehat{G}}(\widehat{X})$. Since $[\widehat{Y} / \LG \times \bb{G}_{gr}] \cong [\widehat{X} / \LP \times \bb{G}_{gr}]$, we have a $\bb{G}_{gr}$-equivariant commutative diagram
    \begin{center}
        \begin{tikzcd}
            & \Par_{\LG}^{\widehat{Y}, \norm} \ar[r, "q_{\widehat{X}}"] \ar[ld, "\pi^\norm_{\widehat{Y}}"'] \ar[d, "p_{\widehat{X}}"] & \Par_{\LG}^{\widehat{X}, \norm} \ar[d, "\pi_{\widehat{X}}^\norm"] \\
            \Par_{\LG} & \Par_\LP \ar[l, "p^\spec"'] \ar[r, "q^\spec"] & \Par_\LM. 
        \end{tikzcd}
    \end{center}
    Here, the square is Cartesian. Thus, we have
    \[
        \Eis_{P}^\spec(\cl{L}_{\widehat{X}}^\norm) \cong p^\spec_*(q^{\spec!}\pi_{\widehat{X}*}^\norm \omega_{\widehat{X}}^\norm)^\shear \cong p^{\spec}_*(p_{\widehat{X}*} \omega_{\widehat{Y}}^\norm)^\shear = \cl{L}_{\widehat{Y}}^\norm. 
    \]
\end{proof}

\subsubsection{Eisenstein compatibility}

Let $P \subset G$ be a parabolic subgroup with a Levi subgroup $M$. Let $\ov{P}$ be the opposite of $P$. 
In \cite[Conjecture 1.4.7]{Han24}, the categorical local Langlands correspondence is expected to satisfy the Eisenstein compatibility, that is, the commutativity of the following diagram: 
\begin{center}
    \begin{tikzcd}
        \cl{D}^\oc(\Bun_M, \Qla) \ar[r, "\Eis_{P!}"] \ar[d, "\bb{L}_M"] & \cl{D}^\oc(\Bun_{G}, \Qla) \ar[d, "\bb{L}_{G}"] \\
        \IndCoh(\Par_{\LM}) \ar[r, "\Eis_{\ov{P}}^\spec"] & \IndCoh(\Par_{\LG}).
    \end{tikzcd}
\end{center}

\begin{defi}
    We define the parabolic induction of a dual pair $(M, X) \leftrightarrow (\LM, \widehat{X})$ along $P$ by
    \[
        (G, \Ind_P^G(X)) \leftrightarrow (\LG, \Ind_{\widehat{\ov{P}}}^{\widehat{G}}(\widehat{X})). 
    \]
\end{defi}

By \Cref{prop:EisPnorm} and \Cref{prop:EisLnorm}, the Eisenstein compatibility predicts that the normalized period conjecture is preserved under parabolic inductions. 

\begin{exa}
    Let $T \subset B \subset G$ be a Borel pair. Since $(T, T) \leftrightarrow (\LT, \ast)$ and $(T, \ast) \leftrightarrow (\LT, \widehat{T})$ are dual pairs, it is expected that 
    \[
        (G, G / U) \leftrightarrow (\LG, \widehat{G} / \widehat{\ov{B}}) ,\quad
        (G, G / B) \leftrightarrow (\LG, \widehat{G} / \widehat{\ov{U}})
    \]
    are dual pairs. Here, $U$ (resp. $\widehat{\ov{U}}$) is the unipotent radical of $B$ (resp.\ $\widehat{\ov{B}}$). 
\end{exa}

\section{Comparisons of period and $L$-sheaves}
\label{sec:comparison}

In this section, we verify the normalized period conjecture for the Iwasawa-Tate and Hecke periods, assuming the existence of the categorical local Langlands correspondence for $\GL_2$ with the Eisenstein compatibility.

\subsection{Comparison for the Iwasawa-Tate period} \label{ssec:compIwTate}

In this section, we compare the normalization of the period sheaf and the $L$-sheaf for the Iwasawa-Tate period. In this case, the dual pair and the normalization factors are as follows (see \cite[Section 6.1.3, 6.5.1]{FW25}). 
\begin{center}
    \begin{tabular}{cccc|cc}
        $G$ & $X$ & $\LG$ & $\widehat{X}$ & $\eta_X$ & $z_{\widehat{X}}$ \\ \hline
        $\bb{G}_m$ & $\std$ & $\bb{G}_m$ & $\std$ & $\std$ & $\std$
    \end{tabular}
\end{center}
Here, $\eta_X$ is an eigencharacter of $X$ and the grading on $\widehat{X}$ is given by a central cocharacter $z_{\widehat{X}}$. In particular, $\cl{P}_X^\norm$ and $\cl{L}_{\widehat{X}}^\norm$ can be computed as in \Cref{prop:normPetaX} and \Cref{cor:normLzX}. 

Let $\Lambda = \Qla$. For $\bb{G}_m$, the categorical local Langlands correspondence 
\[
    \bb{L}_{\bb{G}_m} \colon \cl{D}^\oc(\Bun_{\bb{G}_m}, \Qla) \cong \IndCoh(\Par_{\bb{G}_m})
\]
is proved by \cite{Zou24}. Let us recall the basic properties of $\bb{L}_{\bb{G}_m}$. 

\begin{enumerate}
    \item The functor $\bb{L}_{\bb{G}_m}$ restricts to the equivalence
    \[
        \bb{L}_{\bb{G}_m, n} \colon \cl{D}^\oc(\Bun_{\bb{G}_m, n}, \Qla) \cong \IndCoh(\Par_{\bb{G}_m})_{\std^n} \cong \QCoh(Z^1(W_F, \bb{G}_m)). 
    \]
    For $n = 0$, the Whittaker sheaf represented by $C_c^\infty(F^\times, \Qla)$ maps to $\cl{O}_{\Par_{\bb{G}_m}}$. 
    \item The category $\cl{D}^\oc(\Bun_{\bb{G}_m, n}, \Qla)$ is equivalent to 
    \[
        \colim_{K \subset F^\times} \cl{D}(\Qla[F^\times/K]). 
    \]
    For each compact open subgroup $K \subset F^\times$, we have a compact open subgroup $K_F \subset W_F$ such that $W_F / K_F \cong F^\times/K$ via the local class field theory. Then, 
    \[
        Z^1(W_F/K_F, \bb{G}_m) \subset Z^1(W_F, \bb{G}_m)
    \]
    is closed and open, and represented by $\Qla[F^\times/K]$. Then, we have
    \begin{center}
        \begin{tikzcd}
        \cl{D}^{\oc}(\Bun_{\bb{G}_{m,n}} )\rar["\sim"]  &\QCoh(Z^1(W_F, \bb{G}_m)) \\ 
            \cl{D}(\Qla[F^\times/K])\ar[r,"\sim"] \ar[u, hook] & \QCoh(Z^1(W_F/K_F, \bb{G}_m)).  \ar[u, hook]
        \end{tikzcd} 
    \end{center}
\end{enumerate}

By combining \Cref{prop:IwTateperiod} and \Cref{prop:IwTateLsheaf}, we get the following description. 

\begin{thm} \label{thm:IwTatecomp}
    For the Iwasawa-Tate period, the normalized period sheaf $\cl{P}_X^\norm$ is given by 
    \[
        \cl{P}^{\norm}_{X,n} = \Qla_\norm^{-1/2}[n] (n < 0), \cl{P}^\norm_{X,0} = C_c^\infty(F, \Qla) \otimes \Qla_\norm^{-1/2}, \cl{P}^{\norm}_{X,n} = \Qla^{1/2}_\norm [-n] (n > 0), 
    \]
    and the normalized $L$-sheaf $\cl{L}_{\widehat{X}}^\norm$ is given by
    \[
         \cl{L}_{\widehat{X}}^\norm \cong \cl{O}_{\Par_{\bb{G}_m}} \oplus \bigoplus_{n \geq 1} \cl{O}_{\bb{G}_m}/(T-q^{1/2})[-n] \oplus \bigoplus_{n \geq 1} \cl{O}_{\bb{G}_m}/(T-q^{-1/2})[- n]. 
    \]
    Here, 
    \begin{itemize}
        \item the $n$-th term of the second factor (resp.\ the third factor) lies in $\QCoh(\Par_{\bb{G}_m, \triv})_{\std^{-n}}$ \textup{(resp.\ $\QCoh(\Par_{\bb{G}_m, \triv})_{\std^{n}}$)}, and
        \item $T$ is the coordinate of $\bb{G}_m$ in the component $ \Par_{\bb{G}_m, \triv} \cong \bb{G}_m \times [*/\bb{G}_m]$. 
    \end{itemize} In particular, $\bb{L}_{\bb{G}_m}$ sends $\cl{P}_X^\norm$ to $\cl{L}_{\widehat{X}}^\norm$. 
\end{thm}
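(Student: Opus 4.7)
The plan is to assemble the theorem from the two explicit computations already established (Proposition \ref{prop:IwTateperiod} for $\cl{P}_X$ and Proposition \ref{prop:IwTateLsheaf} for $\cl{L}_{\widehat{X}}$), apply the two normalization procedures, and then match the outcomes under $\bb{L}_{\bb{G}_m}$ weight-by-weight using the recalled properties of the equivalence.

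First I compute $\cl{P}_X^\norm$. Since $X = \bb{A}^1$ is smooth, quasi-affine, and admits the eigen-volume form $dx$ with eigencharacter $\eta_X = \std$, Proposition \ref{prop:normPetaX} gives $\cl{P}_X^\norm \cong \cl{P}_X \otimes \eta_X^* \und{\deg}$; on $\Bun_{\bb{G}_m, n}$, the pullback of $\und{\deg}$ along $\std$ equals $\Qla_\norm^{-1/2}[n]$, and combining with Proposition \ref{prop:IwTateperiod} reproduces the stated formulas. Next I compute $\cl{L}_{\widehat{X}}^\norm$. The grading on $\widehat{X} = \std$ is the central cocharacter $z_{\widehat{X}} = \std$, so Corollary \ref{cor:normLzX} gives $\cl{L}_{\widehat{X}}^\norm \cong \tau_{\sqrt{\cyc}}^* \cl{L}_{\widehat{X}}^\shear$. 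The shearing shifts the summand of central weight $\std^m$ by $m$; applied to Proposition \ref{prop:IwTateLsheaf}, this turns the $n$-th summand of the second factor (weight $\std^{-n}$) into $\cl{O}_{\bb{G}_m}/(T-1)[-n]$ and the $n$-th summand of the third factor (weight $\std^{n}$) into $\cl{O}_{\bb{G}_m}/(qT-1)[-n]$. Pulling back along $\tau_{\sqrt{\cyc}}$, the substitution $T \mapsto q^{-1/2} T$ on the coordinate of $\bb{G}_m \subset \Par_{\bb{G}_m, \triv}$ converts $(T-1)$ to $(T - q^{1/2})$ and $(qT-1)$ to $(T - q^{-1/2})$ up to units, producing the stated description.

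The matching under $\bb{L}_{\bb{G}_m}$ is checked weight-by-weight via local class field theory (property (2) of the recollections). For $n \neq 0$, $\cl{P}_{X, n}^\norm$ is an unramified character of $F^\times$ whose value on the uniformizer is $q^{1/2}$ (for $n < 0$) or $q^{-1/2}$ (for $n > 0$), and so maps under $\bb{L}_{\bb{G}_m}$ to the skyscraper at $T = q^{1/2}$ or $T = q^{-1/2}$ in $\Par_{\bb{G}_m, \triv}$; the cohomological shifts $[n]$ and $[-n]$ then agree after reindexing by $|n|$. For $n = 0$, excision gives the short exact sequence
\[
    0 \to C_c^\infty(F^\times, \Qla) \otimes \Qla_\norm^{-1/2} \to \cl{P}_{X, 0}^\norm \to \Qla_\norm^{-1/2} \to 0,
\]
which is non-split, because an $F^\times$-invariant lift of the constant function $1$ on $\{0\}$ would be a nonzero constant on $F^\times$ of compact support. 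Applying $\bb{L}_{\bb{G}_m}$, the twist $C_c^\infty(F^\times) \otimes \chi \cong C_c^\infty(F^\times)$ maps to $\cl{O}_{\Par_{\bb{G}_m}}$ in trivial weight (property (1)), while $\Qla_\norm^{-1/2}$ maps to the skyscraper $\cl{O}_{\bb{G}_m}/(T - q^{1/2})$ on $\Par_{\bb{G}_m, \triv}$, producing a non-split extension of this skyscraper by $\cl{O}_{\Par_{\bb{G}_m}}$ on the trivial-character component. Since $\Ext^1(\cl{O}_{\bb{G}_m}/(T-q^{1/2}), \cl{O}_{\bb{G}_m}) \cong \cl{O}_{\bb{G}_m}/(T-q^{1/2})$ is one-dimensional, with unique non-trivial class represented by the multiplication-by-$(T-q^{1/2})$ sequence with middle term $\cl{O}_{\bb{G}_m}$, this pins down $\bb{L}_{\bb{G}_m}(\cl{P}_{X, 0}^\norm)\vert_{\Par_{\bb{G}_m, \triv}} \cong \cl{O}_{\bb{G}_m}$. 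Combined with the components at non-trivial characters of $O_F^\times$ (where the extension sequence reduces to an isomorphism), this yields $\bb{L}_{\bb{G}_m}(\cl{P}_{X, 0}^\norm) \cong \cl{O}_{\Par_{\bb{G}_m}}$ in trivial weight.

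The main obstacle is the $n = 0$ piece, where one must identify the specific non-trivial Ext class on the spectral side produced by the excision, rather than merely establishing non-vanishing; for the other weights the argument reduces to tracking skyscraper positions and cohomological shifts through local class field theory, together with bookkeeping of central-weight and sign conventions between the two sides.
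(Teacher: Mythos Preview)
Your proof is correct and largely parallel to the paper's: the normalizations of $\cl{P}_X$ and $\cl{L}_{\widehat{X}}$ are handled identically (via Proposition~\ref{prop:normPetaX} and Corollary~\ref{cor:normLzX} applied to Propositions~\ref{prop:IwTateperiod} and~\ref{prop:IwTateLsheaf}), and the matching for $n\neq 0$ is the same skyscraper bookkeeping.

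The only substantive divergence is at $n=0$. The paper argues directly that $C_c^\infty(F,\Qla)\cong C_c^\infty(F^\times,\Qla)$ as smooth $F^\times$-representations: on each nontrivial Bernstein component the excision sequence is already an isomorphism, while on the trivial component the explicit map $f\mapsto f(\pi\,\cdot)-f(\cdot)$ from $C_c^\infty(F)_\triv$ to $C_c^\infty(F^\times)_\triv$ does the job. This immediately gives $\bb{L}_{\bb{G}_m,0}(\cl{P}^\norm_{X,0})\cong\cl{O}_{\Par_{\bb{G}_m}}$ from the Whittaker normalization. Your route instead transports the non-split excision extension to the spectral side and pins down the middle term by the one-dimensionality of $\Ext^1_{\cl{O}_{\bb{G}_m}}(\cl{O}_{\bb{G}_m}/(T-q^{1/2}),\cl{O}_{\bb{G}_m})$. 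This is perfectly valid; the paper's argument is more elementary and constructive (it never leaves the automorphic side and produces an explicit isomorphism), while yours has the mild advantage of illustrating how the spectral geometry forces the answer without writing down any formula.
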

\begin{proof}
    Since $\eta_X = \std$, the description of $\cl{P}_X^\norm$ follows from \Cref{prop:IwTateperiod}. Moreover, since $z_{\widehat{X}} = \std$, the grading of $\bb{G}_{gr}$ and that of $\bb{G}_m$ on $\cl{L}_{V}$ are equal, so the description of $\cl{L}_{\widehat{X}}^\norm$ follows from \Cref{prop:IwTateLsheaf}: $\tau_{\sqrt{\cyc}}\vert_{\Par_{\bb{G}_m, \triv}}$ sends $T$ to $q^{-1/2}T$, so 
    \[ 
        \tau^{*}_{\sqrt{\cyc}} \smbr{\cl{O}_{\bb{G}_m}/(T-1)} \cong \cl{O}_{\bb{G}_m}/(T-q^{1/2}), 
    \]
    \[ 
        \tau^{*}_{\sqrt{\cyc}} \smbr{\cl{O}_{\bb{G}_m}/(qT-1)} \cong \cl{O}_{\bb{G}_m}/(T-q^{-1/2}). 
    \]
    Now, we will show that $\bb{L}_{\bb{G}_m}$ sends $\cl{P}_X^\norm$ to $\cl{L}_{\widehat{X}}^\norm$. 

    For $n < 0$, it is easy to see that $\bb{L}_{\bb{G}_m, n}$ sends $\Qla^{-1/2}_\norm[n]$ to $\cl{O}_{\bb{G}_m}/(T-q^{1/2})[n]$. For $n > 0$, it is easy to see that $\bb{L}_{\bb{G}_m, n}$ sends $\Qla^{1/2}_\norm[-n]$ to $\cl{O}_{\bb{G}_m}/(T-q^{-1/2})[-n]$. For $n = 0$, it is enough to show that $\bb{L}_{\bb{G}_m, 0}$ sends $C_c^\infty(F, \Qla)$ to $\cl{O}_{\Par_{\bb{G}_m}}$. Since $\cl{O}_{\Par_{\bb{G}_m}}$ corresponds to $C_c^\infty(F^\times, \Qla)$ under $\bb{L}_{\bb{G}_m, 0}$, it is enough to show $C_c^\infty(F^\times, \Qla) \cong C_c^\infty(F, \Qla)$.  

    Recall that we have a Bernstein decomposition
    \[
        \Rep^\sm(F^\times) = \prod_{\chi \colon O_F^\times \to \Qla} \Rep^\sm(F^\times)_\chi. 
    \]
    Here, each smooth $F^\times$-representation $V$ has a decomposition
    \[
        V = \bigoplus_\chi V_\chi ,\quad O_F^\times\vert_{V_\chi} = \chi \cdot \id_{V_\chi}
    \]
    and the above decomposition sends $V$ to $\bigoplus_{\chi} V_\chi$. We will show 
    \[
        C_c^\infty(F^\times, \Qla)_\chi \cong C_c^\infty(F, \Qla)_\chi
    \]
    for each $\chi$. 

    First, we have a short exact sequence
    \[
        C_c^\infty(F^\times, \Qla) \hookrightarrow C_c^\infty(F, \Qla) \twoheadrightarrow \Qla_\triv, 
    \]
    so $C_c^\infty(F^\times, \Qla)_\chi \hookrightarrow C_c^\infty(F, \Qla)_\chi$ is an isomorphism for a nontrivial character $\chi$. Moreover, $C_c^\infty(F, \Qla)_\triv$ is isomorphic to $C_c^\infty(F^\times, \Qla)_\triv$ by sending a function $f$ to a function $x \mapsto f(\pi x) - f(x)$. Thus, we get the claim. 
\end{proof}

We can verify the functional equation for the Iwasawa-Tate period by hand. 


\begin{prop} \label{prop:FEIwTate}
    There is an isomorphism $\cl{P}_{\std}^\norm \cong \cl{P}_{\std^\vee}^\norm$. 
\end{prop}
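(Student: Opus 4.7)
The plan is to reduce the functional equation to the classical Fourier transform on $F$, after a bookkeeping step involving the inversion on $\Bun_{\bb{G}_m}$. First, I would observe that dualization of line bundles gives a natural isomorphism $\Bun_{\bb{G}_m}^{\std^\vee} \cong \iota^*\Bun_{\bb{G}_m}^{\std}$ covering the involution $\iota\colon \Bun_{\bb{G}_m} \to \Bun_{\bb{G}_m}$, $\cl{L} \mapsto \cl{L}^{-1}$: indeed, a pair $(\cl{L}, s)$ with $s \in \Gamma(\cl{L}^{-1})$ is the same as a pair $(\cl{L}^{-1}, s)$ with $s$ a section of the tautological line bundle. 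This yields $\cl{P}_{\std^\vee} \cong \iota^* \cl{P}_{\std}$. Since the eigencharacter of $\std^\vee$ is $\eta_{\std^\vee} = \std^{-1}$, the associated map $\Bun_{\bb{G}_m} \to \Bun_{\bb{G}_m}$ is exactly $\iota$, so \Cref{prop:normPetaX} gives
\[
\cl{P}_{\std^\vee}^\norm \;\cong\; \iota^*\cl{P}_\std \otimes \iota^*\und{\deg}.
\]

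Next, I would compare both sides component by component on $\Bun_{\bb{G}_m, n}$, using \Cref{prop:IwTateperiod} together with the formula $\und{\deg}|_{\Bun_{\bb{G}_m, n}} \cong \Qla_\norm^{-1/2}[n]$ and the fact that pulling this back along the inversion of $F^\times$ on $\Bun_{\bb{G}_m, n} \cong [\ast/\underline{F^\times}]$ yields $\Qla_\norm^{1/2}[-n]$. For $n<0$ one checks $\cl{P}_{\std,n}^\norm \cong \Qla_\norm^{-1/2}[n]$ on both sides, and symmetrically for $n>0$ one gets $\Qla_\norm^{1/2}[-n]$; these matches are immediate combinatorial verifications.

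The only nontrivial step is the case $n=0$. Here, unwinding the definitions and using \Cref{cor:computetriv}, the desired isomorphism reduces to producing an $F^\times$-equivariant isomorphism
\[
C_c^\infty(F, \Qla) \;\cong\; \iota^*C_c^\infty(F, \Qla) \otimes \Qla_\norm,
\]
where $\iota^*$ denotes the pullback of the left-translation $F^\times$-action along inversion on $F^\times$. This is precisely the content of the classical local Fourier transform on $F$: fixing a nontrivial additive character $\psi\colon F \to \Qla$, the Fourier transform $\cl{F}\colon C_c^\infty(F,\Qla) \to C_c^\infty(F,\Qla)$ satisfies $\cl{F}(t\cdot f)(\xi) = |t|\, \hat f(t\xi)$, so intertwines the standard $F^\times$-action on the source with the inverse action twisted by $\Qla_\norm$ on the target.

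The expected main obstacle is the $n=0$ case, but the hard analytic input is already available as the classical local Fourier transform; the rest of the argument is a matter of carefully tracking characters and shifts across the inversion $\iota$ and the normalization twist $\und{\deg}$.
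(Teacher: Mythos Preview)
Your proposal is correct and follows essentially the same approach as the paper's proof: both identify $\cl{P}_{\std^\vee}^\norm$ with the pullback of $\cl{P}_{\std}^\norm$ along the inversion $\iota$ on $\Bun_{\bb{G}_m}$ via the Cartesian diagram and the eigencharacter $\eta_{\std^\vee}=\std^{-1}$, then check the identification component by component using the explicit formulas, with the only nontrivial case $n=0$ handled by the classical local Fourier transform on $C_c^\infty(F,\Qla)$. The paper phrases the $n=0$ step as interchanging left and right translation (twisted by $\Qla_\norm$), which is exactly your intertwining relation $\cl{F}(t\cdot f)(\xi)=|t|\,\hat f(t\xi)$.
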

\begin{proof}
    We have the following Cartesian diagram. 
    \begin{center}
        \begin{tikzcd}
            \Bun_{\bb{G}_m}^{\std^\vee} \ar[r] \ar[d] & \Bun_{\bb{G}_m}^{\std} \ar[d] \\
            \Bun_{\bb{G}_m} \ar[r, "\std^{-1}"] & \Bun_{\bb{G}_m}. 
        \end{tikzcd}
    \end{center}
    It is easy to see that $\eta_{\std^\vee} = \std^{-1}$, so $\cl{P}_{\std^\vee}^\norm$ is the pullback of $\cl{P}_{\std}^\norm$ along $\std^{-1}$. 
    
    Now, the pullback along $\std^{-1}$ sends a character on $\Bun_{\bb{G}_m, n}$ to the inverse on $\Bun_{\bb{G}_m, -n}$. Thus, $\cl{P}_{\std, n}^\norm \cong \cl{P}_{\std^\vee, n}^\norm$ for $n \neq 0$ by the description in \Cref{thm:IwTatecomp}. For $n = 0$, $\cl{P}_{\std^\vee, 0}^\norm$ equals $C_c^\infty(F, \Qla) \otimes \Qla_{\norm}^{1/2}$ on which $F^\times$ acts by the \textit{right translation}. Fix a nontrivial additive character $\psi \colon F \to \Qla$. The Fourier transform
    \[
        C_c^\infty(F, \Qla) \ni f(x) \mapsto \widehat{f}(y) = \int_{F} f(x) \psi(-xy) dx \in C_c^\infty(F, \Qla)
    \]
    provides an isomorphism $C_c^\infty(F, \Qla) \cong C_c^\infty(F, \Qla) \otimes \Qla_{\norm}$, where $F^\times$ acts on the first (resp. second) $C_c^\infty(F, \Qla)$ by the left (resp. right) translation. Thus, this provides an isomorphism $\cl{P}_{\std, 0}^\norm \cong \cl{P}_{\std^\vee, 0}^\norm$. 
\end{proof}

\subsection{Comparison for the Hecke period} \label{ssec:compHecke}

In this section, we compare the normalization of the period sheaf and the $L$-sheaf for the Hecke period. In this case, the dual pair and the normalization factors are as follows (see \cite[Section 7.1.3, 7.1.6]{FW25}). 
\begin{center}
    \begin{tabular}{cccc|cc}
        $G$ & $X$ & $\LG$ & $\widehat{X}$ & $\eta_X$ & $z_{\widehat{X}}$ \\ \hline
        $\GL_2$ & $\GL_2/A$ & $\GL_2$ & $\std$ & $\triv$ & $\textrm{scaling}$
    \end{tabular}
\end{center}
Here, $\eta_X$ is an eigencharacter of $X$ and the grading on $\widehat{X}$ is given by a central cocharacter $z_{\widehat{X}}$ sending $x$ to ${\scriptsize \begin{pmatrix} x & \\ & x \end{pmatrix}}$. In particular, $\cl{P}_X^\norm$ and $\cl{L}_{\widehat{X}}^\norm$ can be computed as in \Cref{prop:normPetaX} and \Cref{cor:normLzX}. 

Let $\Lambda = \Qla$. For $\GL_2$, a proof of the categorical local Langlands correspondence does not appear in the literature yet. Nevertheless, it predicts a categorical equivalence
\[
    \bb{L}_{\GL_2} \colon \cl{D}^\oc(\Bun_{\GL_2}, \Qla) \cong \IndCoh(\Par_{\GL_2})
\]
satisfying the Whittaker and Eisenstein compatibility.
We get the following compatibility from the descriptions in \Cref{prop:HeckepEis} and \Cref{prop:HeckeL}. 



\begin{thm} \label{thm:Heckecomp}
    Suppose that there is a categorical equivalence $\bb{L}_{\GL_2}$ with the Whittaker and Eisenstein compatibility. For $n \neq 0$, we have
    \[
        \bb{L}_{\GL_2}(\cl{P}_{X,n}^\norm) \cong \cl{L}_{\widehat{X}, n}^\norm. 
    \]
    For $n = 0$, the first (resp.\ last) term of the fiber sequence
    \[
        \cl{W}_\psi \hookrightarrow \cl{P}^\norm_{X, 0} \to \Eis_{B!}(i^1_! \cInd_{A(F)}^{T(F)} \Qla_\norm^{-1/2})
    \]
    maps under $\bb{L}_{\GL_2}$ to the first (resp.\ last) term of the fiber sequence
    \[
        \cl{O}_{\Par_{\GL_2}} \to \cl{L}^\norm_{\widehat{X}, 0} \to \Eis_{\ov{B}}^\spec(\cl{O}_{\Par_{\bb{G}_m}} \boxtimes  (i_{\cyc^{-1/2}*} \Qla)_{\triv}). 
    \]
\end{thm}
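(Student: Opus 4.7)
The proof plan combines four ingredients: (i) the assumed Eisenstein compatibility $\bb{L}_{\GL_2}\circ\Eis_{P!}\cong\Eis_{\ov{P}}^\spec\circ\bb{L}_M$ for parabolics $P\subset\GL_2$; (ii) the Whittaker compatibility $\bb{L}_{\GL_2}(\cl{W}_\psi)\cong\cl{O}_{\Par_{\GL_2}}$ and its $\bb{G}_m$-analog (verified in the proof of \Cref{thm:IwTatecomp}); (iii) the Iwasawa--Tate comparison of \Cref{thm:IwTatecomp}; (iv) the functional equation $\cl{L}^\norm_{\std,n}\cong\cl{L}^\norm_{\std^\vee,-n}$ of \Cref{cor:FEstd}. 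Since $\eta_X=\triv$, \Cref{prop:normPetaX} gives $\cl{P}^\norm_X\cong\cl{P}_X$; since $z_{\widehat{X}}$ is scaling, \Cref{cor:normLzX} gives $\cl{L}^\norm_{\widehat{X},n}\cong\tau^*_{\sqrt\cyc}\cl{L}_{\widehat{X},n}[n]$.

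For $n<0$, I would rewrite $\cl{P}^\norm_{X,n}\cong\Eis_{B!}(i^{b_n}_!\cInd_{A(F)}^{T(F)}\Qla_\norm^{-1/2}[n])$ via \Cref{prop:HeckepEis}, then apply Eisenstein compatibility to convert $\bb{L}_{\GL_2}(\cl{P}^\norm_{X,n})$ into $\Eis_{\ov B}^\spec\circ\bb{L}_T$ of the same input. Under $T\cong\bb{G}_m\times\bb{G}_m$ with $A=1\times\bb{G}_m$, one decomposes $\cInd_{A(F)}^{T(F)}\Qla_\norm^{-1/2}\cong C_c^\infty(F^\times,\Qla)\boxtimes\Qla_\norm^{-1/2}$, so that $i^{b_n}_!(\cdots)$ becomes an external product on $\Bun_{\bb{G}_m,0}\times\Bun_{\bb{G}_m,n}$. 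Applying $\bb{L}_T\cong\bb{L}_{\bb{G}_m}\boxtimes\bb{L}_{\bb{G}_m}$: the first factor (the $\bb{G}_m$-Whittaker sheaf) maps to $\cl{O}_{\Par_{\bb{G}_m}}$, while the second factor (the Iwasawa--Tate sheaf $\cl{P}^\norm_{\std,n}$) maps by \Cref{thm:IwTatecomp} to $\cl{O}_{\bb{G}_m}/(T-q^{1/2})[n]$ in the weight-$\std^n$ piece of $\Par_{\bb{G}_m,\triv}$. This yields $\bb{L}_{\GL_2}(\cl{P}^\norm_{X,n})\cong\Eis_{\ov B}^\spec(\cl{O}_{\Par_{\bb{G}_m}}\boxtimes\cl{O}_{\bb{G}_m}/(T-q^{1/2})[n])$. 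On the $\cl{B}$-side, \Cref{prop:HeckeL} gives $\cl{L}_{\widehat{X},n}\cong\Eis_{\ov B}^\spec(\cl{O}_{\Par_{\bb{G}_m}}\boxtimes(i_{\triv*}\Qla)_{\std^n})$ with $(i_{\triv*}\Qla)_{\std^n}\cong\cl{O}_{\bb{G}_m}/(T-1)$. Since $\tau_{\sqrt\cyc}$ on $\Par_{\GL_2}$ pulls back to diagonal $\sqrt\cyc$-translation on $\Par_{\bb{G}_m}\times\Par_{\bb{G}_m}$, which commutes with $\Eis_{\ov B}^\spec$, fixes $\cl{O}_{\Par_{\bb{G}_m}}$, and sends $\cl{O}_{\bb{G}_m}/(T-1)$ to $\cl{O}_{\bb{G}_m}/(T-q^{1/2})$, the normalization $\tau^*_{\sqrt\cyc}(\cdot)[n]$ produces exactly the match.

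For $n>0$, the analogous computation on the $\cl{A}$-side uses $\Eis_{\ov{B}!}$ and Eisenstein compatibility for $\ov{B}$, yielding $\bb{L}_{\GL_2}(\cl{P}^\norm_{X,n})\cong\Eis_B^\spec(\cl{O}_{\Par_{\bb{G}_m}}\boxtimes\cl{O}_{\bb{G}_m}/(T-q^{-1/2})[-n])$. Since the direct spectral description in \Cref{prop:HeckeL} is inefficient at positive $n$, I would invoke the functional equation \Cref{cor:FEstd} and compute $\cl{L}^\norm_{\std^\vee,-n}$ via the analog of \Cref{prop:HeckeL} for $\std^\vee$. The stabilizer of a point in $\bb{A}^2\setminus\{0\}$ under the $\std^\vee$-action is the upper mirabolic $\Mir_2\subset B$, so the relevant spectral Eisenstein becomes $\Eis_B^\spec$, and running the translation-and-shift analysis from the $n<0$ case produces the desired match.

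For $n=0$, I would match the two fiber sequences term by term. The right-hand terms agree by specializing the $n<0$ computation to $n=0$, together with the identification $(i_{\cyc^{-1/2}*}\Qla)_{\triv}\cong\cl{O}_{\bb{G}_m}/(T-q^{1/2})$; the left-hand terms agree by Whittaker compatibility. The remaining step is to verify that the connecting morphism of the fiber sequences is preserved by $\bb{L}_{\GL_2}$. I would approach this by computing $\Hom(\cl{W}_\psi,\Eis_{B!}(i^1_!\cInd_{A(F)}^{T(F)}\Qla_\norm^{-1/2}))$ and its spectral counterpart, expecting both to be one-dimensional so that the middle term is determined up to scalar by its outer terms; matching a single nonzero representative on each side then fixes the scalar. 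The main obstacle will be this connecting-morphism compatibility at $n=0$, together with the careful bookkeeping of the $\bb{G}_{gr}$-weight shift produced by \Cref{cor:FEstd} when specializing to $n>0$.
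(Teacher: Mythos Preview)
Your approach is essentially the same as the paper's: for $n<0$ you reduce to $\bb{L}_T$ via Eisenstein compatibility and match the two $\bb{G}_m$-factors exactly as the paper does; for $n>0$ you invoke the functional equation \Cref{cor:FEstd} and rerun the spectral argument of \Cref{prop:HeckeL} for $\std^\vee$, picking up $\Mir_2\subset B$ and hence $\Eis_B^\spec$, which is again what the paper does.

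The only place you diverge is at $n=0$, where you over-read the statement. The theorem asserts \emph{only} that the outer terms of the two fiber sequences match under $\bb{L}_{\GL_2}$; it does \emph{not} claim $\bb{L}_{\GL_2}(\cl{P}^\norm_{X,0})\cong\cl{L}^\norm_{\widehat{X},0}$, and the paper does not attempt to match the connecting morphisms. So your ``remaining step'' about the extension class, the one-dimensionality of the relevant $\Hom$-space, and fixing a scalar is not required for this theorem. Once you drop that, the $n=0$ case is immediate from the Whittaker compatibility (first terms) and the $n<0$ computation specialized to $n=0$ (last terms), exactly as the paper records in one line.
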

\begin{proof}
    For $n < 0$, we have 
    \[
        \cl{L}_{\widehat{X}, n}^\norm = \Eis_{\ov{B}}^{\spec}(\cl{O}_{\Par_{\bb{G}_m}} \boxtimes (i_{\cyc^{-1/2} *}\Qla)_{\std^n}[n]). 
    \]
    Then, the claim follows because $\bb{L}_T = \bb{L}_{\bb{G}_m} \boxtimes \bb{L}_{\bb{G}_m}$ sends
    \[
        i^{b_n}_! \cInd_{A(F)}^{T(F)} \Qla_\norm^{-1/2}[n] = i^1_! C_c^\infty(F^\times, \Qla) \boxtimes i^{\pi^{-n}}_! \Qla_\norm^{-1/2}[n]
    \]
    to 
    \[
        \cl{O}_{\Par_{\bb{G}_m}} \boxtimes (i_{\cyc^{-1/2} *}\Qla)_{\std^n}[n]. 
    \]
    For $n = 0$, the claim follows from the Whittaker compatibility and the previous argument. 
    
    For $n > 0$, we have $\cl{L}_{\std, n}^\norm \cong \cl{L}_{\std^\vee, -n}^\norm$ by \Cref{cor:FEstd}. For its computation, the same argument as in \Cref{prop:HeckeL} works by using the following closed-open decomposition. 
    \begin{center}
        \begin{tikzcd}
            \lbrack \{0\}/\GL_2 \rbrack \ar[r, hook] & \lbrack \std^\vee/\GL_2 \rbrack & \lbrack (\bb{A}^2-\{0\})/\GL_2 \rbrack \cong \lbrack \ast/\Mir_2 \rbrack. \ar[l, hook']
        \end{tikzcd}
    \end{center}
    Here, $\Mir_2$ is the stabilizer of ${\scriptsize \begin{pmatrix} 0 \\ 1 \end{pmatrix}}$ under $\std^\vee$ and written as
    \[
        \Mir_2 = \left\{ \begin{pmatrix} \ast & \ast \\ 0 & 1 \end{pmatrix} \right\} \subset \GL_2. 
    \]
    The same argument as in \Cref{prop:HeckeL} shows that
    \[
        \cl{L}_{\std^\vee, -n} \cong \Eis_{B}^\spec(\cl{O}_{\Par_{\bb{G}_m}} \boxtimes (i_{\triv*} \Qla)_{\std^{n}}). 
    \]
    Since the action of $\bb{G}_{gr}$ on $\std^\vee$ is scaling, we have $z_{\std^\vee} = z_{\std}^{-1}$ and we get 
    \[
        \cl{L}_{\std, n}^\norm \cong \cl{L}_{\std^\vee, -n}^\norm \cong \Eis_{B}^\spec(\cl{O}_{\Par_{\bb{G}_m}} \boxtimes (i_{\sqrt{\cyc}*} \Qla)_{\std^{n}}[-n]),
    \]
    The Eisenstein compatibility ensures that $\bb{L}_{\GL_2}(\cl{P}_{X, n}^\norm) \cong \cl{L}_{\std, n}^\norm$ by \Cref{prop:HeckepEis}. 
\end{proof}


\appendix
\addtocontents{toc}{\protect\setcounter{tocdepth}{1}}

\section{Recollections of derived algebraic geometry} \label{app:DAG}

In this section, we review the theory of derived stacks and ind-coherent sheaves developed by \cite{GR17I}. Let $\Lambda$ be a classical $\bb{Q}$-algebra. We assume that $\Lambda$ is a $G$-ring for the use of the Artin-Lurie representability. Let $\CAlg_{\Lambda}$ be the $\infty$-category of animated $\Lambda$-algebras and let $\CAlg_{\Lambda}^\heartsuit \subset \CAlg_{\Lambda}$ be the subcategory of classical $\Lambda$-algebras. 

\subsection{Basic definitions and properties}

\subsubsection{Topology}

A morphism of animated algebras $A \ra B$ is flat (resp.\ \'{e}tale) if and only if $\pi_0(A) \to \pi_0(B)$ is flat (resp.\ \'{e}tale) and $\pi_i(A) \otimes_{\pi_0(A)} \pi_0(B) \cong \pi_i(B)$. Following the reason explained in \cite[Section 4.1]{GR17I}, $\CAlg_\Lambda^\op$ is equipped with the \'{e}tale topology unless otherwise stated. An \'{e}tale stack on $\CAlg_\Lambda^\op$ is called a derived stack. The category of derived stacks is denoted by $\DStk_{\Lambda}$. Its final object $\Spec(\Lambda)$ is denoted by $\ast$. 

\subsubsection{Classical stacks}

The classical truncation of a derived stack $S$ is 
\[
    \clas S = S\vert_{\CAlg_{\Lambda}^\heartsuit}, 
\]
the restriction to $\CAlg_{\Lambda}^\heartsuit$. Let $\clas \DStk_{\Lambda}$ denote the category of \'{e}tale stacks over $\CAlg_{\Lambda}^\heartsuit$. There is a fully faithful embedding
\[
    \LLKE\colon \clas \DStk \hookrightarrow \DStk
\]
given by the sheafification of the left Kan extension along $(\CAlg^\heartsuit_\Lambda)^\op \hookrightarrow \CAlg_{\Lambda}^\op$. A derived stack is called classical if it lies in the essential image of $\LLKE$. In other words, a derived stack $S$ is classical if and only if the natural map $\clas S \to S$ is an isomorphism. 

\subsubsection{Artin stacks} 

For each $n \geq 0$, the notion of $n$-Artin stacks is defined inductively as in \cite[Section 4.1]{GR17I}. Here, we just review $1$-Artin stacks, which are all we see in application. 

Following \cite[Definition 5.1.3]{DAG}, we say that a derived stack $S$ is a derived $1$-stack if $S$ admits a smooth cover from a derived scheme. Moreover, if $S$ has representable diagonal, we say that $S$ is a $1$-Artin stack.  

We say that a derived $1$-stack is locally (resp.\ almost) of finite presentation if it has a smooth cover from a derived scheme locally (resp.\ almost) of finite presentation. For the latter notion, see \cite[Proposition 3.2.14, Definition 5.3.1]{DAG}.

\subsubsection{Quasi-coherent and ind-coherent sheaves}

Let $\QCoh(S)$ (resp.\ $\IndCoh(S)$) denote the category of quasi-coherent (resp.\ ind-coherent) sheaves on a derived stack $S$. The dualizing complex of $S$ is 
\[
    \omega_S = \pi^! \cl{O}_{\Spec(\Lambda)} \in \IndCoh(S)
\]
with $\pi \colon S \to \Spec(\Lambda)$ the structure morphism. 

When an Artin stack $S$ admits a smooth cover from an $n$-connective derived scheme for some $n$, $S$ is said to be eventually coconnective (see \cite[Proposition 4.6.4]{GaiStack}). In this case, we have a fully faithful embedding \cite[11.7.3]{Gai13}
\[
    \Xi_S \colon \QCoh(S) \hookrightarrow \IndCoh(S). 
\]
When $S$ is an eventually coconnective derived scheme, $\Xi_S$ is obtained by the ind-extension of $\Perf(S) \hookrightarrow \Coh(S)$. When $\Lambda$ is a field and $S$ is smooth, $\Xi_S$ is an equivalence. 

When $S$ is QCA \cite[Definition 1.1.8]{DG13}, $\IndCoh(S)$ is compactly generated and we have $\IndCoh(S) = \Ind(\Coh(S))$ (see \cite[Theorem 3.3.5]{DG13}). A typical example of QCA stacks is the quotient of separated derived schemes almost of finite presentation by a smooth affine group scheme. 

A derived stack $S$ is said to be perfect when $S$ has an affine diagonal and $\QCoh(S) \cong \Ind(\Perf(S))$. Examples of perfect stacks are studied in \cite{BZFN10}. For example, see \cite[Proposition 3.21]{BZFN10}. 

\subsection{Deformation theory}

For $A \in \CAlg_\Lambda$ and a connective $A$-module $M\in\QCoh(A)^{\le 0}$, let
\[
    A \oplus M
\]
denote the trivial square-zero extension of $A$ by $M$ (see \cite[Chapter 25.3.1]{Lurie2018SAG}). When $A$ and $M$ are static, the multiplication law is written as
\[
    (a,m)\cdot(a',m')=(aa',am'+a'm) \quad (a \in A ,\quad m \in M). 
\]

\begin{defi}\label{defi:relative_cotangent_complex}
    Let $f\colon X \to Y$ be a morphism in $\DStk_\Lambda$ and let $A \in \CAlg_{\Lambda}$. For $x \in X(A)$ and $M\in \QCoh(A)^{\le 0}$, the \textit{space of relative derivations of $f$ at $x$ with values in $M$} is the homotopy fiber
    \[
        \Der(X/Y,x,M)
        =
        \fib\!\left(
            X(A \oplus M)
            \longrightarrow
            Y(A \oplus M)\times_{Y(A)} X(A)
        \right)_{(y,x)},
    \]
    where $y\in Y(A \oplus M)$ is the pullback of $f(x)\in Y(A)$ along the zero section $A\to A\oplus M$. A \textit{relative cotangent complex} of $f$ is a complex $L_{X/Y}\in \QCoh(X)$ such that for every $(A,x)$ as above, there is a natural equivalence
    \[
        \Der(X/Y,x,M) \cong \Map(x^*L_{X/Y}, M) \quad (M\in\QCoh(A)^{\le 0}).
    \]
    If it exists, $L_{X/Y}$ is unique up to a contractible choice.
\end{defi}

\begin{exa}\label{exa:cotangent_BG}
    Let $G$ be a smooth algebraic group over $\Lambda$, with Lie algebra
    $\mfr{g}=\Lie(G)$. The classifying stack $BG=[*/G]$ is a classical Artin
    $1$-stack, and one has
    \[
        L_{BG/\Lambda} \cong \mfr{g}^\vee[-1] \in \QCoh(BG). 
    \]
    Here, the $G$-action on $\mfr{g}^\vee$ is the coadjoint action. For a map $x\colon S\to BG$ corresponding to a $G$-torsor $\cl{P}$ on $S$, let
    \[
        \mfr{g}_{\cl{P}} = \cl{P}\times^G \frak g.
    \]
    Then, for any $M\in\QCoh(S)^{\le 0}$, we have
    \[
        \Der(BG,x,M) \cong \Map(\mfr{g}_{\cl{P}}^\vee[-1],M). 
    \]
\end{exa}

\begin{thm}\textup{(Artin-Lurie, \cite[Theorem 7.1.6]{DAG})} \label{thm:Artin_Lurie_representability}
    Let $F$ be a derived stack satisfying the following conditions. 
    \begin{enumerate}
        \item[\textup{(1)}] \textup{(Almost finite presentation)} For each $n\ge 0$, the restriction of $F$ to $n$-truncated connective $\Lambda$-algebras preserves filtered colimits.

        \item[\textup{(2)}] \textup{(Classical part)} The classical truncation $\clas F$ is a classical Artin stack locally of finite presentation over $\Lambda$. In particular, $\clas F$ takes values in groupoids. 

        \item[\textup{(3)}] \textup{(Cotangent complex)} The functor $F$ admits a cotangent complex. 

        \item[\textup{(4)}] \textup{(Nilcompleteness)} The functor $F$ is nilcomplete (or convergent), i.e.\ for every $R\in\CAlg_\Lambda$, the canonical map
        \[
            F(R) \to \varprojlim_n F(\tau_{\le n}R)
        \]
        is an equivalence.

        \item[\textup{(5)}] \textup{(Infinitesimal cohesiveness)} The functor $F$ is infinitesimally cohesive in the sense of \cite[Definition 3.4.1]{DAG}. 
    \end{enumerate}
    Then, $F$ is a derived Artin $1$-stack locally almost of finite presentation over $\Lambda$.
\end{thm}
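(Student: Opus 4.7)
The stated result is a verbatim recollection of the Artin--Lurie representability theorem \cite[Theorem 7.1.6]{DAG}, so the plan is simply to invoke Lurie's proof; in the body of the paper this appendix is included only to fix notation and record the precise form of the hypotheses we will apply. Rather than reprove anything, I would cite \cite{DAG} and, in a short remark, sketch Lurie's strategy so the reader can see how the five conditions enter.

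The sketch I would write goes as follows. By nilcompleteness (4), for $R \in \CAlg_\Lambda$ one has $F(R) \cong \lim_n F(\tau_{\le n} R)$, so it suffices to understand $F$ on $n$-truncated algebras. The base of the induction, $n = 0$, is supplied by condition (2): $\clas F$ is a classical Artin stack locally of finite presentation. For the inductive step, one writes $\tau_{\le n} R$ as a square-zero extension of $\tau_{\le n-1} R$ by $\pi_n(R)[n]$, which is realized as the pullback of $\tau_{\le n-1} R \to \tau_{\le n-1} R \oplus \pi_n(R)[n+1]$ along the zero section. Infinitesimal cohesiveness (5) then forces $F$ to send this square to a pullback, and the cotangent complex (3) identifies $F(\tau_{\le n} R) \to F(\tau_{\le n-1}R)$ with the torsor of lifts classified by $\Map(x^*L_F, \pi_n(R)[n])$ in the sense of \Cref{defi:relative_cotangent_complex}. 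This is exactly the standard deformation-theoretic control one uses in Schlessinger-style criteria, now in the derived setting.

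To upgrade from \emph{derived formal moduli} to \emph{derived Artin stack}, one picks a smooth atlas $U_0 \to \clas F$; the pullback of $L_F$ to $U_0$ is perfect in nonpositive degrees (since $L_{\clas F/\Lambda}$ is and the relative cotangent complex is connective), and Lurie's representability for affine schemes lets one realize $U_0$ as the classical truncation of a unique derived affine scheme $U \to F$ whose cotangent complex matches $L_F|_U$; smoothness of $U \to F$ is preserved because it can be checked on cotangent complexes. Almost finite presentation (1) ensures that the construction is compatible with filtered colimits on truncated inputs, giving the ``locally almost of finite presentation'' conclusion.

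The one place where genuine work hides is verifying that the atlas $U \to F$ produced by this procedure is actually smooth and surjective after the derived enhancement, which amounts to showing the relative cotangent complex $L_{U/F}$ vanishes and that $\clas U \to \clas F$ remains a smooth cover; both reduce via (1)--(5) to the classical statement already given by (2). I would emphasize in the remark that in all our applications (Propositions~\ref{prop:ParLGArtin} and~\ref{prop:ParLGXArtin}) the nontrivial verifications are (2) and (3), while (1), (4), (5) follow formally because the relevant stacks are totalizations of mapping stacks into the smooth target $[\ast/\LG]$ and $[\widehat X/\LG]$.
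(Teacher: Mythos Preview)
Your approach is essentially correct: the paper also treats this theorem as a citation to \cite[Theorem 7.1.6]{DAG} and does not reprove it. The paper's only commentary is a single sentence noting that condition~(2), as stated here, is \emph{not} literally Lurie's hypothesis---Lurie imposes deformation-theoretic conditions on the classical truncation, and one replaces them by (2) via the classical Artin representability theorem for stacks in groupoids \cite[Theorem 7.1.1]{DAG}. Your proposal overlooks this reformulation (you call the statement a ``verbatim recollection'', which it is not quite), while your detailed sketch of Lurie's strategy is additional exposition the paper does not provide.
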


In Lurie's original formulation, the condition on $\clas F$ is imposed in
deformation-theoretic terms; using the classical Artin representability
theorem for stacks in groupoids (see \cite[Theorem 7.1.1]{DAG}), one can replace the condition by \textup{(2)} as above. 

\subsection{Derived vector bundles}
For the definition of derived vector bundles, we adopt the convention in \cite[Section 6.1.1]{FYZ23}. Let $S$ be a derived stack. We define the totalization functor by
\[ 
    \Tot_{S}(-) \colon \QCoh(S)\ra \DStk_S, \quad
    \cl{E}\mapsto \smbr{T / S \mapsto \Map\smbr{\cl{O}_T , \cl{E}\vert_T}}. 
\]
When $\cl{E} \in \Perf(S)$, $\Tot_S(\cl{E})$ is referred to as the \textit{derived vector bundle} associated to $\cl{E}$. We sometimes denote this simply as $\bb{V}(\cl{E})$.

\begin{exa}\label{ex:affine_test_schemes}
    Suppose $T=\Spec(A)$ is affine. Then,
    \[ 
        \Tot_S(\cl{E})(A) = \Map_{A}\smbr{A, \cl{E}_A} \simeq \Omega^\infty (\tau^{\le 0}(\cl{E}_A))
    \]
    In the main body of the paper, we will omit $\Omega^\infty(-)$ by abuse of notation. 
\end{exa}

\begin{lem}
    If $\cl{E}$ is coconnective, $\Tot_S(\cl{E})$ is represented by the relative spectrum
    \[
        \underline{\Spec}_S(\Sym^\bullet \cl{E}^\vee). 
    \]
\end{lem}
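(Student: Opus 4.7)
The plan is to show that the two derived $S$-stacks corepresent the same functor on affine test schemes $T = \Spec(A) \to S$; by étale descent, this is enough.

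On the totalization side, the example preceding the lemma identifies
\[
\Tot_S(\cl{E})(T) \cong \Map_{\cl{O}_T}(\cl{O}_T,\cl{E}|_T) \cong \Map_{\cl{O}_S}(\cl{O}_S,\cl{E}\otimes_{\cl{O}_S}A) \cong \Map_{\cl{O}_S}(\cl{E}^\vee,A),
\]
where the first step is the definition, the second is the base-change adjunction, and the last uses that $\cl{E}$ is perfect, hence dualizable. On the relative spectrum side, the defining universal property of $\underline{\Spec}_S$ gives
\[
\Map_S\bigl(T,\underline{\Spec}_S(\Sym^\bullet\cl{E}^\vee)\bigr) \cong \Map_{\CAlg_{\cl{O}_S}}(\Sym^\bullet\cl{E}^\vee, A).
\]
Applying the free/forgetful adjunction between animated commutative $\cl{O}_S$-algebras and connective $\cl{O}_S$-modules—with $\Sym^\bullet$ the left adjoint to the forgetful functor—this becomes $\Map_{\QCoh(S)^{\le 0}}(\cl{E}^\vee,A) \cong \Map_{\cl{O}_S}(\cl{E}^\vee,A)$, agreeing with the left-hand side naturally in $T$.

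The only real thing to check is that these manipulations are legitimate: $\Sym^\bullet\cl{E}^\vee$ must be a connective (animated) commutative $\cl{O}_S$-algebra for both the relative spectrum and the adjunction to make sense. This is precisely the content of the coconnectivity hypothesis on $\cl{E}$: since $\cl{E}$ is coconnective and perfect, the dual $\cl{E}^\vee$ is connective, so $\Sym^\bullet \cl{E}^\vee$ lies in $\CAlg_{\cl{O}_S}$ and $\underline{\Spec}_S(\Sym^\bullet \cl{E}^\vee)$ is a derived $S$-scheme. Beyond this bookkeeping point there is no obstacle; the proof is a formal chain of adjunctions.
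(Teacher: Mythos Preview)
Your proof is correct and takes essentially the same approach as the paper: both identify the functor of points on affine test schemes via the free--forgetful adjunction $\Sym^\bullet \dashv \mathrm{forget}$ together with dualizability of the perfect complex $\cl{E}$. The paper's version is only cosmetically different in that it works directly over the test ring $A$ (writing $\Map_{\CAlg_A}(\Sym^\bullet \cl{E}_A^\vee, A) \cong \Map_A(\cl{E}_A^\vee, A) \cong \Tot_S(\cl{E})(A)$) rather than globally over $\cl{O}_S$, which sidesteps any bookkeeping about $f_*\cl{O}_T$ as an $\cl{O}_S$-module.
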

\begin{proof}
    The claim follows since for every $T = \Spec(A)$, we have
    \[
        \Map_{\CAlg_A}(\Sym^\bullet \cl{E}_A^\vee, A) \cong \Map_A(\cl{E}_A^\vee, A) \cong \Tot_S(\cl{E})(A). 
    \]
\end{proof}

\begin{defi} \label{ex:derived_vector_bundle_G_representation}
    Let $G$ be a group scheme over $\Lambda$. For each $V \in \Rep(G)$, let
    \[
        \cl{O}_V \in \Perf([\ast / G])
    \]
    be the vector bundle such that $\Tot_{[\ast/G]}(\cl{O}_V) \cong [V / G]$ over $[\ast / G]$. 
\end{defi}

\subsection{The weight decomposition}
\label{ssec:weight_decomposition}
Let $\Delta$ be a diagonalizable group scheme over $\bb{Z}$ with a character group $X^*(\Delta)$. Let $S$ be a derived stack and let $X \to S$ be a $\Delta$-gerbe. It is known that we have a weight decomposition
\[
    \QCoh(X) \cong \prod_{\chi \in X^*(\Delta)} \QCoh(X)_{\chi}. 
\]
This is proved in \cite[Theorem A]{BS21} when $S$ is classical, and in \cite[Theorem 5.25]{BP23} when $S$ is derived and $\Delta = \bb{G}_m$. The pullback along $X \to S$ provides $\QCoh(X)_{\triv} \cong \QCoh(S)$. 

We take the index of the weight decomposition so that $\cl{O}_{\chi} \in \QCoh([\ast/\Delta])_{\chi}$ for each $\chi \in X^*(\Delta)$. When $X = S \times [\ast/\Delta]$, the tensor product with $\cl{O}_\chi^{-1}$ provides 
\[
    \QCoh(X)_{\chi} \cong \QCoh(X)_\triv \cong \QCoh(S). 
\]

Suppose that $X$ is a QCA stack and the weight decomposition for $\QCoh$ is available. Then, the weight decomposition restricts to an equivalence
\[
    \Coh(X) \cong \bigoplus_{\chi \in X^*(\Delta)} \Coh(X)_{\chi}. 
\]
The ind-extension provides the one for $\IndCoh$: 
\[
    \IndCoh(X) \cong \prod_{\chi \in X^*(\Delta)} \IndCoh(X)_{\chi}. 
\]

\subsection{Computation of symmetric powers}
\label{computation_symmetric_powers}

Let $S$ be an Artin stack. The symmetric power product $\Sym^\bullet_{\cl{O}_S}(-)$ is the left adjoint to the forgetful functor 
\[
    \CAlg(\QCoh(S)) \to \QCoh(S). 
\]
We omit the subscript when the context is clear. Concretely, for $\cl{E} \in \QCoh(S)$, we have
\[
    \Sym^\bullet(\cl{E}) = \bigoplus_{n\in \bb{Z}} \Sym^n(\cl{E}), \quad \Sym^n(\cl{E}) = (\cl{E}^{\otimes n})_{\Sigma_n}. 
\]
Here, $\Sigma_n$ denotes the symmetric group of degree $n$. In this section, we recall an explicit description of symmetric powers in characteristic $0$. 

Suppose that $\cl{E}$ is represented by a bounded complex $E = (E^\bullet, d^\bullet)$ of vector bundles on $S$ with amplitudes in $[a,b]$. For each $n \geq 0$, $E^{\otimes n}$ is represented by a complex of vector bundles given as follows: the $d$-th term is given by the direct sum
\[
    \bigoplus_{i_1 + i_2 + \cdots + i_n = d} E^{i_1} \otimes E^{i_2} \otimes \cdots \otimes E^{i_n}
\]
where the index runs over $n$-tuples $(i_1,\ldots,i_n)$ with $a \leq i_k \leq b$ for each $1\leq k \leq n$, and the differential 
\[
    E^{i_1} \otimes E^{i_2} \otimes \cdots \otimes E^{i_n} \to E^{j_1} \otimes E^{j_2} \otimes \cdots \otimes E^{j_n}
\]
is nonzero only if $i_k \neq j_k$ for exactly one index $k$ and $j_k = i_k + 1$ for such $k$, in which case the differential is given by $(-1)^{i_1 + \cdots + i_{k-1}} d^{i_k}$. The action of $\Sigma_n$ on the $d$-th term 
\[
    \bigoplus_{i_1 + i_2 + \cdots + i_n = d} E^{i_1} \otimes E^{i_2} \otimes \cdots \otimes E^{i_n}
\]
is given as follows: $\sigma \in \Sigma_n$ sends $a_1 \otimes a_2 \otimes \cdots \otimes a_n \in E^{i_1} \otimes E^{i_2} \otimes \cdots \otimes E^{i_n}$ to
\[
    \prod_{\substack{k < \ell \\ \sigma^{-1}(k) > \sigma^{-1}(\ell)}} (-1)^{i_k i_\ell} \cdot a_{\sigma^{-1}(1)} \otimes a_{\sigma^{-1}(2)} \otimes \cdots a_{\sigma^{-1}(n)}
\]
in $E^{i_{\sigma^{-1}(1)}} \otimes E^{i_{\sigma^{-1}(2)}} \otimes \cdots \otimes E^{i_{\sigma^{-1}(n)}}$. Here, the sign, known as the Koszul sign rule, is needed to make the action compatible with differentials. In the main body of this paper, we need the following computation. 

\begin{lem} \label{lem:expSympow}
    Suppose that $E$ is concentrated in two degrees $[a,a+1]$ and both $E^a$ and $E^{a+1}$ are of rank $1$. For each $n \geq 1$, $\Sym^n(E)$ is given as follows. 
    \begin{enumerate}
        \item When $a$ is even, $\Sym^n(E) = \lbrack (E^a)^{\otimes n} \xrightarrow{d^a \otimes \id} E^{a+1} \otimes (E^{a})^{\otimes (n-1)} \rbrack [-na]$. 
        \item When $a$ is odd, $\Sym^n(E) = \lbrack E^a \otimes (E^{a+1})^{\otimes (n-1)} \xrightarrow{d^a \otimes \id} (E^{a+1})^{\otimes n} \rbrack [-n(a+1) + 1]$. 
    \end{enumerate}
\end{lem}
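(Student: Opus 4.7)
The plan is to unpack $\Sym^n(E) = (E^{\otimes n})_{\Sigma_n}$ using the explicit description of $E^{\otimes n}$ given in the discussion preceding the lemma, and then take coinvariants by hand, exploiting characteristic~$0$ to reduce to a finite computation. Because each $E^{i_k}$ must lie in $\{E^a, E^{a+1}\}$ for the summand to be nonzero, one groups tuples $(i_1,\dots,i_n)$ by the number $k$ of entries equal to $a+1$. The $\Sigma_n$-action permutes these tuples transitively within each $k$, so
\[
(E^{\otimes n})^{na+k} \;\cong\; \Ind_{\Sigma_k\times\Sigma_{n-k}}^{\Sigma_n}\!\Bigl((E^{a+1})^{\otimes k}\otimes (E^a)^{\otimes(n-k)}\Bigr)
\]
as a $\Sigma_n$-module, where the $\Sigma_k\times\Sigma_{n-k}$-action is given by the Koszul signs restricted to the stabilizer of the standard tuple. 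By Shapiro's lemma for coinvariants, $\Sym^n(E)^{na+k}$ is then the $\Sigma_k\times\Sigma_{n-k}$-coinvariants of that tensor product.

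The heart of the argument is reading off those Koszul signs. A transposition of positions both labelled $i$ contributes the sign $(-1)^{i^2}$, which equals $-1$ precisely when $i$ is odd. Thus if $a$ is even, $\Sigma_k$ acts on $(E^{a+1})^{\otimes k}$ by the sign character and $\Sigma_{n-k}$ acts trivially on $(E^a)^{\otimes (n-k)}$; conversely, if $a$ is odd, $\Sigma_k$ acts trivially and $\Sigma_{n-k}$ acts by the sign character. Since each $E^{a}, E^{a+1}$ is a line bundle, these coinvariants vanish in characteristic~$0$ as soon as the sign representation acts on a factor with at least two symbols. This collapses the complex to only two possibly nonzero bidegrees:
\[
  (k,n-k) \in \{(0,n),(1,n-1)\} \text{ if } a \text{ is even}, \qquad \{(n-1,1),(n,0)\} \text{ if } a \text{ is odd},
\]
in total degrees $\{na,na+1\}$ and $\{na+n-1,na+n\}$ respectively.

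Once the shape is established, one reads off the differential. The only differentials between the two surviving terms come from replacing a single $a$-entry by an $a+1$-entry; in the induced module there are $n$ (respectively $n$) such summand-wise maps, each equal to $d^a$ tensored with identity on the line-bundle factors, and they descend under coinvariants to a single map proportional to $d^a \otimes \id$. After rescaling the chosen generators (which is harmless since the terms are line bundles), the resulting complex is
\[
\bigl[(E^a)^{\otimes n}\xrightarrow{d^a\otimes\id} E^{a+1}\otimes(E^a)^{\otimes(n-1)}\bigr]
\]
in amplitudes $[na,na+1]$ if $a$ is even, and
\[
\bigl[E^a\otimes(E^{a+1})^{\otimes(n-1)}\xrightarrow{d^a\otimes\id}(E^{a+1})^{\otimes n}\bigr]
\]
in amplitudes $[n(a+1)-1,n(a+1)]$ if $a$ is odd, matching the two cases of the lemma after the indicated cohomological shift.

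The only real subtlety is bookkeeping: one must verify that the Koszul signs are distributed exactly as above, and that the transposition inside $\Sigma_k$ (or $\Sigma_{n-k}$) acts with a single sign rather than a product of position-dependent signs. This reduces to the observation that the Koszul sign $(-1)^{i_k i_\ell}$ depends only on the parities of $i_k,i_\ell$ and not on the global position, so the stabilizer action factors as an exterior tensor product of a sign and a trivial representation. All the remaining steps are standard representation-theoretic manipulations in characteristic~$0$.
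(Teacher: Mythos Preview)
Your proof is correct and follows essentially the same approach as the paper: both exploit that in characteristic~$0$ the Koszul sign $(-1)^{i_k i_\ell}$ of a transposition of two entries with odd degree forces the corresponding (co)invariant to vanish, collapsing the complex to two terms. The paper phrases this slightly more directly by passing to $\Sigma_n$-invariants and checking that an invariant element must vanish on any summand with two odd-degree entries (using that such entries are necessarily equal, so the intermediate Koszul factors cancel), whereas you package the same computation via induced representations and Shapiro's lemma; the underlying sign computation and the identification of the surviving differential are identical.
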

\begin{proof}
    Since $\Lambda$ is in characteristic $0$, the $\Sigma_n$-coinvariant is canonically isomorphic to the $\Sigma_n$-invariant. We will compute the $\Sigma_n$-invariant of the $d$-th term 
    \[
        \bigoplus_{i_1 + i_2 + \cdots + i_n = d} E^{i_1} \otimes E^{i_2} \otimes \cdots \otimes E^{i_n}
    \]
    of $E^{\otimes n}$ for each $d$. Let $a = (a_{i_\bullet})$ be an $\Sigma_n$-invariant element of the $d$-th term. By assumption, $E^{i_1} \otimes E^{i_2} \otimes \cdots \otimes E^{i_n}$ is of rank $1$ for every $i_\bullet$. If there are two distinct indices $1 \leq k < \ell \leq n$ such that $i_k$ and $i_\ell$ are odd, we have $a_{i_\bullet} = 0$ by using $\sigma_{(k,l)} a = a$ for the transposition $\sigma_{(k,l)}$ between $k$ and $\ell$ since $\Lambda$ is in characteristic $0$ and $i_k = i_\ell$. 

    When $a$ is even (resp.\ odd), the above argument shows that $\Sym^n(E)$ is concentrated in two degrees $[na, na + 1]$ (resp.\ $[n(a+1) - 1, n(a+1)]$). The term for each degree can be computed explicitly as in the statement. 
\end{proof}

\section{Computation of the Weil-Deligne group cohomology}
\label{app:WeilDeligne_cohomology}

In this section, we explain a computation of the Weil-Deligne group cohomology. Recall
\[
    \WD_{F} = \bb{G}_a \rtimes \und{W_F} ,\quad 
    \ID_F = \bb{G}_a \times \und{I_F} \subset \WD_F. 
\]
Here, the conjugation action of $\sigma \in W_F$ on $\bb{G}_a$ is the multiplication by $\lvert \sigma \rvert$. For a derived $\Lambda$-scheme $S$, we write
\[
    \WD_{F,S} = \WD_F \times S,\qquad
    \ID_{F,S} = \ID_F \times S,\qquad
    I_{F,S} = \und{I_F} \times S
\]
and similarly for $\bb{G}_{a,S}$ and $\und{\bb{Z}}_S$. These group schemes are all flat over the base, but one needs to be careful about the fact that $\WD_F$ is not affine, of finite type, nor quasi-compact. 

For a flat group scheme $G$ over $\Lambda$ and a derived $\Lambda$-scheme $S$, we set
\[
    [S / G] = [\ast / G] \times S, 
\]
and for $\cl{M} \in \QCoh([S / G])$, let 
\[
    R\Gamma(G, \cl{M}) = R\Gamma([S/G], \cl{M}) \in \QCoh(S)
\]
be the derived pushforward of $\cl{M}$ along $[S / G] \to S$. 

Since $G$ is flat, $S \to [S / G]$ is a flat cover. Then, $\QCoh([S / G])$ is naturally equipped with a $t$-structure so that the pullback along $S \to [S / G]$ is $t$-exact. Let 
\[
    \QCoh([S / G])^+ \subset \QCoh([S / G])
\]
be the subcategory consisting of bounded below objects. 

We begin with a general formalism for extensions of group schemes.

\begin{lem}\label{prop:group_exact_sequence_WD}
    Consider a semiproduct of flat group schemes over $\Lambda$
    \[
        G = N \rtimes Q
    \]
    with $N$ quasi-compact and quasi-separated. Let $S$ be a derived $\Lambda$-scheme and let 
    \[
        \pi_S \colon [S / G] \longrightarrow [S / Q]. 
    \]
    For every $\cl{M} \in \QCoh([S / G])^+$, there is an isomorphism
    \[
        R\pi_{S*}\cl{M}\vert_{S} \cong R\Gamma(N, \cl{M})
    \]
    so that we have an isomorphism 
    \[
        R\Gamma(G, \cl{M}) \cong R\Gamma(Q, R\Gamma(N, \cl{M})). 
    \]
\end{lem}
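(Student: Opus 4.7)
The plan is to exhibit a Cartesian square trivializing the $Q$-gerbe underneath $[S/G]$ and then apply flat base change, composed with the further pushforward $[S/Q] \to S$. Using the splitting $G = N \rtimes Q$, I would first identify
\[
    [S/N] \cong [S/G] \times_{[S/Q]} S
\]
as follows: a point of the right-hand side is a $G$-torsor $P$ together with a trivialization of its induced $Q$-torsor $P \times^G Q$, and because the sequence $1 \to N \to G \to Q \to 1$ is split, such a trivialization is the same datum as a reduction of $P$ to an $N$-torsor. This produces a Cartesian diagram
\begin{center}
    \begin{tikzcd}
        \lbrack S/N \rbrack \ar[r, "f"] \ar[d, "g"'] & \lbrack S/G \rbrack \ar[d, "\pi_S"] \\
        S \ar[r, "p"] & \lbrack S/Q \rbrack
    \end{tikzcd}
\end{center}
where $p$ is the canonical $Q$-torsor and $f$ is induced by $N \hookrightarrow G$.

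Next, I would apply flat base change to this square. The map $p$ is flat, being a torsor under the flat group scheme $Q$. The map $\pi_S$ is qcqs because its base change $g$ is qcqs (as $N$ is qcqs by hypothesis). For $\cl{M} \in \QCoh([S/G])^+$, flat base change then yields
\[
    (R\pi_{S*}\cl{M})\vert_{S} \cong Rg_*(f^*\cl{M}) \cong R\Gamma(N, \cl{M}),
\]
since $f^*\cl{M}$ is by construction $\cl{M}$ viewed with only its residual $N$-action. For the second isomorphism in the lemma, I would factor the structure morphism of $[S/G] \to S$ as $[S/G] \xrightarrow{\pi_S} [S/Q] \to S$ and compose derived pushforwards:
\[
    R\Gamma(G, \cl{M}) \cong R\Gamma(Q, R\pi_{S*}\cl{M}).
\]
The restriction computed above, together with flat descent along $p$, identifies $R\pi_{S*}\cl{M} \in \QCoh([S/Q])^+$ with $R\Gamma(N, \cl{M})$ equipped with its residual $Q$-action.

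The main obstacle will be ensuring that flat base change applies in this derived generality: $p$ is a torsor under a group scheme $Q$ that may fail to be quasi-compact (as happens for $Q = \und{W_F}$ or $\und{I_F}$), so one cannot appeal to the usual statement for qcqs morphisms of derived stacks on both sides. The qcqs hypothesis on $N$ and the bounded-below assumption on $\cl{M}$ enter precisely here: they bound the cohomological amplitude of $\pi_S$ so that the base-change morphism can be checked locally in the flat topology on $[S/Q]$, where it reduces to the case of the trivial $Q$-torsor. Once this is in place, the rest is the composition of pushforwards along $[S/G] \to [S/Q] \to S$, which is formal.
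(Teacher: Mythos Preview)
Your overall structure matches the paper's: the same Cartesian square
\[
\begin{tikzcd}
    \lbrack S/N \rbrack \ar[r] \ar[d] & \lbrack S/G \rbrack \ar[d, "\pi_S"] \\
    S \ar[r] & \lbrack S/Q \rbrack
\end{tikzcd}
\]
and the same reduction to a base-change statement for $R\pi_{S*}$ on bounded-below objects. You also correctly identify the genuine obstacle, namely that $Q$ may fail to be qcqs (as it does for $Q = \underline{W_F}$ or $\underline{I_F}$).

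Where your argument diverges from the paper is in how this obstacle is resolved. Your proposed fix --- bounding the cohomological amplitude of $\pi_S$ and then ``checking locally in the flat topology on $[S/Q]$, where it reduces to the trivial $Q$-torsor'' --- is not quite coherent as stated: the map $p\colon S \to [S/Q]$ is already the universal $Q$-torsor, so localizing further on $[S/Q]$ does not trivialize it, and it is unclear how an amplitude bound alone yields the base-change isomorphism. The paper instead uses the splitting a second time: the section $Q \hookrightarrow G$ induces a map $[S/Q] \to [S/G]$ that is an $N$-torsor, hence a \emph{qcqs flat cover of the source} $[S/G]$, and moreover a section of $\pi_S$. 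One then computes $R\pi_{S*}$ by descent along this cover (this is where bounded-belowness is used), and the argument is exactly that of \cite[Proposition 2.3.2]{GR17I}. So the splitting is exploited not only to identify the fiber product but also to furnish the cover that makes base change go through; your proposal uses it only for the former.
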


\begin{proof}
    Since we have the Cartesian diagram
    \begin{center}
        \begin{tikzcd}
            \lbrack S / N \rbrack \ar[r] \ar[d] & \lbrack S / G \rbrack \ar[d] \\
            S \ar[r] & \lbrack S / Q \rbrack, 
        \end{tikzcd}
    \end{center}
    it is enough to prove the base change along this diagram for bounded below quasi-coherent sheaves. The key property here is that horizontal arrows are flat and $[S / G]$ admits a quasi-compact and quasi-separated flat cover
    \[
        [S / Q] \to [S / G]
    \]
    that is a section of $[S / G] \to [S / Q]$. Then, the same proof as in \cite[Proposition 2.3.2]{GR17I} works for the desired base change. 
\end{proof}

We now specialize to the cases $\ID_F = \bb{G}_a \times \und{I_F}$ and $\WD_F = \ID_F \rtimes \und{\bb{Z}}$. Then, we get
\begin{equation} \label{eq:WD_cohomology}
     R\Gamma(\WD_F, \cl{M}) \cong R\Gamma(\bb{Z}, R\Gamma(\bb{G}_a, R\Gamma(I_F, \cl{M}))) \quad (\cl{M} \in \QCoh([S / G_S])^+). 
\end{equation}
We first treat the cohomology of the inertia group. 

\begin{prop}\label{lem:IF_exact}
    Let $S$ be a derived $\Lambda$-scheme and let $\cl{M} \in \QCoh([S/\und{I_F}])$.
    \begin{enumerate}
        \item The functor $\cl{M} \mapsto R\Gamma(I_F, \cl{M})$ is $t$-exact and we have
            \[
                \bb{H}^iR\Gamma(I_F, \cl{M}) \cong \bb{H}^i(\cl{M}_S)^{I_F}
            \]
            in $\QCoh(S)^{\heart}$ for every $i$. 
        \item For each open subgroup $K \subset I_F$, let 
            \[
                \cl{M}_S^{K} = R\Gamma(K, \cl{M}) \in \QCoh(S). 
            \]
            Then, $\cl{M}_S \cong \colim_K \cl{M}_S^K$ and each $\cl{M}_S^K$ is a direct summand of $\cl{M}_S$. 
    \end{enumerate}
\end{prop}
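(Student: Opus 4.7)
The plan is to reduce the computation to the case of finite quotients of $I_F$, exploiting the profinite structure of the inertia group together with the fact that $\Lambda$ is in characteristic zero.

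First I would set up the colimit description of $\QCoh([S/\und{I_F}])$. Writing $I_F = \lim_K I_F/K$ where $K$ runs over the filtered system of open normal subgroups of $I_F$, the surjections $\und{I_F} \twoheadrightarrow \und{I_F/K}$ induce pullback functors $\QCoh([S/\und{I_F/K}]) \to \QCoh([S/\und{I_F}])$ which I claim exhibit the target as the filtered colimit (in $\Pres^L$) of the source. Concretely, this expresses the fact that a quasi-coherent sheaf on the classifying stack of the profinite group scheme $\und{I_F}$ is ``smooth'', i.e.\ its underlying $\QCoh(S)$-object is a filtered colimit of $\cl{M}_S^K$. This identification immediately gives the colimit formula in part (2).

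Next I would carry out the finite-level computation. For each open normal subgroup $K$, the classifying stack $[S/\und{I_F/K}]$ involves the finite constant group scheme $\und{I_F/K}$. Since $\Lambda$ is a $\bb{Q}$-algebra, Maschke's theorem (averaging by $\tfrac{1}{|I_F/K|}\sum_{g\in I_F/K} g$) shows that the functor of $I_F/K$-invariants on $\QCoh(S)$-valued representations is $t$-exact and identifies with $R\Gamma(\und{I_F/K},-)$; moreover the averaging projection furnishes a canonical retraction $\cl{M}_S \twoheadrightarrow \cl{M}_S^K$, proving the direct summand claim in (2). Passing to the filtered colimit over $K$, combined with the commutation of $R\Gamma(\und{I_F},-)$ with filtered colimits on smooth sheaves (from the colimit description above), yields
\[
    R\Gamma(\und{I_F},\cl{M}) \;\simeq\; \colim_K R\Gamma(\und{I_F/K},\cl{M}_S^K) \;\simeq\; \colim_K \cl{M}_S^K \;\simeq\; \cl{M}_S^{I_F},
\]
and the $t$-exactness in (1) follows since $t$-exactness is preserved under filtered colimits. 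The identification $\bb{H}^i R\Gamma(\und{I_F},\cl{M}) \cong \bb{H}^i(\cl{M}_S)^{I_F}$ in the heart is then a formal consequence.

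The main obstacle I anticipate is the rigorous justification of the colimit description $\QCoh([S/\und{I_F}]) \simeq \colim_K \QCoh([S/\und{I_F/K}])$, since the transition maps $[S/\und{I_F/K}] \to [S/\und{I_F/K'}]$ (for $K \subset K'$) are torsors under the finite étale group $K'/K$ rather than affine morphisms, so the usual $\QCoh$-of-limit formula does not apply directly. One must interpret $\und{I_F}$ as a profinite group scheme and verify that the ``continuous'' or ``smooth'' structure on its representations is precisely what the filtered colimit of representations of finite quotients captures; this is essentially the classical statement that smooth representations of a profinite group are colimits of representations of finite quotients, lifted to the setting of quasi-coherent sheaves on $S$.
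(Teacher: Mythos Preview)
Your approach and the paper's share the same core inputs---the profinite structure of $I_F$ and Maschke in characteristic zero---but the paper organizes the argument so as to avoid the categorical colimit $\QCoh([S/\und{I_F}]) \simeq \colim_K \QCoh([S/\und{I_F/K}])$ that you flag as the main obstacle. Instead, the paper first reduces to $\cl{M} \in \QCoh([S/\und{I_F}])^+$ using left $t$-completeness of both $\QCoh(S)$ and $\QCoh([S/\und{I_F}])$ together with the fact that $R\Gamma(K,-)$ commutes with limits (Postnikov towers). Then, since the pullback along $[S/\und{I_F}] \to S$ is $t$-exact, $R\Gamma(I_F,-)$ is automatically left $t$-exact, so for part (1) it suffices to check $R\Gamma(I_F,\cl{M}) \cong \cl{M}_S^{I_F}$ for $\cl{M}$ in the heart. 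This is done by a direct \v{C}ech-nerve computation using $\Gamma(\und{I_F},\cl{O}) = C^\infty(I_F,\Lambda) = \colim_K C^\infty(I_F/K,\Lambda)$, which exhibits the smoothness of the $I_F$-action at the level of comodules without any appeal to a colimit of categories.

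For part (2), note that your phrase ``the averaging projection furnishes a canonical retraction $\cl{M}_S \twoheadrightarrow \cl{M}_S^K$'' is not quite right as stated: one cannot average over the infinite group $K$. The paper instead produces, for each inclusion $K' \subset K$ of normal open subgroups, a canonical retraction of $\cl{M}_S^K \to \cl{M}_S^{K'}$ via the averaged trace along the finite \'etale cover $[S/K'] \to [S/K]$ (i.e.\ averaging over $K/K'$). Combined with $\cl{M}_S \cong \colim_{K'} \cl{M}_S^{K'}$, this exhibits each $\cl{M}_S^K$ as a retract of $\cl{M}_S$. Your route would work if the colimit-of-categories statement were established, and would then be slightly cleaner; the paper's route is more elementary and self-contained, at the cost of an explicit Postnikov/heart reduction.
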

\begin{proof}
    Since $\QCoh(S)$ and $\QCoh([S / \und{I_F}])$ are left $t$-complete and $R\Gamma(K, -)$ commutes with limits, all assertions can be checked after passing to the Postnikov limit. In particular, we may assume $\cl{M} \in \QCoh([S / \und{I_F}])^+$. 

    Since the pullback along $[S / \und{I_F}] \to S$ is $t$-exact, the functor $\cl{M} \mapsto R\Gamma(I_F, \cl{M})$ is left $t$-exact. Thus, for (1), it is enough to show 
    \[
        R\Gamma(I_F, \cl{M}) \cong \cl{M}_S^{I_F} \quad (\cl{M} \in \QCoh([S/\und{I_F}])^\heart). 
    \]
    We may assume that $S = \Spec(R)$ is affine and $M = \cl{M}_S$ is a static $\pi_0(R)$-module. Since $\Gamma(\und{I_F}, \cl{O}) = C^\infty(I_F, \Lambda)$, the $I_F$-action on $M$ is given by the comultiplication $M \to M \otimes C^\infty(I_F, \Lambda)$, and by taking the \v{C}ech nerve along $S \to [S / \und{I_F}]$, we get
    \[
        R\Gamma(I_F, \cl{M}) = [M \to M \otimes C^\infty(I_F, \Lambda) \to M \otimes C^\infty(I_F, \Lambda) \otimes C^\infty(I_F, \Lambda) \to \cdots]. 
    \]
    In particular, the $I_F$-action on $M$ is smooth and we have 
    \[
        R\Gamma(I_F, M) \cong \colim_{K \subset I_F} R\Gamma(I_F / K, M^K)
    \]
    since $C^\infty(I_F, \Lambda) = \colim_{K} C^\infty(I_F / K, \Lambda)$, where $K$ runs through normal open subgroups of $I_F$. Since $\Lambda$ is of characteristic $0$, $R\Gamma(I_F / K, M^K) \cong M^{I_F}$. Thus, we get (1). 

    For (2), we already see that the $I_F$-action on $\bb{H}^i(\cl{M}_S)$ is smooth and $\bb{H}^iR\Gamma(K, \cl{M}) \cong \bb{H}^i(\cl{M}_S)^{K}$. For each open subgroup $K \subset I_F$, let 
    \[
        \cl{M}_K = \cl{M}\vert_{[S / K]},
    \]
    and for a normal open subgroup $K' \subset K$, let 
    \[
        q_{K' \subset K} \colon [S / K'] \to [S / K]
    \]
    be a finite \'{e}tale cover. The unit $\cl{M}_K \to q_{K'\subset K*} \cl{M}_{K'}$ induces 
    \[
        \cl{M}_S^K = R\Gamma(K, \cl{M}_K) \to R\Gamma(K, q_{K'\subset K*} \cl{M}_{K'}) \cong \cl{M}_S^{K'}. 
    \]
    It induces $\bb{H}^i(\cl{M}_S)^{K} \subset \bb{H}^i(\cl{M}_S)^{K'}$ on the $i$-th cohomology. Thus,  
    \[
        \colim_K \cl{M}_S^K \to \cl{M}_S
    \]
    is a quasi-isomorphism by the smoothness of the $I_F$-action on $\bb{H}^i(\cl{M}_S)$. For the second claim of (2), it is enough to show that $\cl{M}_S^K \to \cl{M}_S^{K'}$ splits canonically. Since $\Lambda$ is in characteristic $0$ and $q_{K' \subset K}$ is finite \'{e}tale, the averaged trace provides a section $q_{K'\subset K*} \cl{M}_{K'} \to \cl{M}_K$ of the unit. By taking $R\Gamma(K, -)$, we get the claim. 
\end{proof}

Next, we treat the well-known description of the $\bb{G}_a$-cohomology in
characteristic $0$.

\begin{prop}\label{lem:Ga_cohomology}
    Let $S$ be a derived $\Lambda$-scheme and let $\cl{M} \in \QCoh([S/\bb{G}_{a}])$. Let 
    \[
        \nabla \colon \cl{M}_S \to \cl{M}_S \otimes \Lambda[T]
    \]
    be the comultiplication of $\cl{M}$ and let $N \in \End(\cl{M}_S)$ be the endomorphism such that
    \[
        \nabla \equiv 1 + NT \pmod{T^2}. 
    \]
    Then, there is a natural quasi-isomorphism
    \[
        R\Gamma(\bb{G}_{a},\cl{M}) \cong \bigl[\cl{M}_S \xrightarrow{N} \cl{M}_S\bigr]. 
    \]
    In particular, the functor $\cl{M} \mapsto R\Gamma(\bb{G}_a, \cl{M})$ has cohomological dimension $1$. 
\end{prop}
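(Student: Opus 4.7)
The strategy is to reduce to $\cl{M}$ in the heart of $\QCoh([S/\bb{G}_a])$, identify such objects with $\cl{O}_S$-modules equipped with a locally nilpotent endomorphism, and then compute the cohomology via the Chevalley--Eilenberg complex for $\Lie(\bb{G}_a)$.

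First I would reduce to the heart. Since both $\QCoh(S)$ and $\QCoh([S/\bb{G}_a])$ are left $t$-complete and $R\Gamma(\bb{G}_a,-)$ commutes with limits (being a right adjoint to pullback), it suffices to treat bounded-below $\cl{M}$. A Postnikov tower argument, together with the naturality and exactness in fiber sequences of the assignment $(M,N) \mapsto [M \xrightarrow{N} M]$, then reduces to the case $\cl{M} \in \QCoh([S/\bb{G}_a])^\heart$.

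In the heart, coassociativity of $\nabla \colon \cl{M}_S \to \cl{M}_S \otimes \Lambda[T]$ combined with the comultiplication $T \mapsto T \otimes 1 + 1 \otimes T$ on $\Lambda[T]$ forces
\[
    \nabla(m) = \sum_{i \geq 0} \frac{N^{i}(m)}{i!}\, T^{i}
\]
with $N$ as defined in the statement; since $\nabla$ takes values in polynomials, $N$ is locally nilpotent on $\cl{M}_S$. This identifies $\QCoh([S/\bb{G}_a])^\heart$ with the category of pairs $(\cl{M}_S, N)$ consisting of an $\cl{O}_S$-module and a locally nilpotent endomorphism.

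For the computation of $R\Gamma(\bb{G}_a, \cl{M})$, one can argue in two equivalent ways. Directly, the \v{C}ech nerve of the flat cover $S \to [S/\bb{G}_a]$ produces a cosimplicial object $\cl{M}_S \otimes \Lambda[T_1, \ldots, T_\bullet]$ whose totalization computes $R\Gamma(\bb{G}_a, \cl{M})$, and in characteristic $0$ the filtration by polynomial degree yields an explicit deformation retraction onto the $2$-term complex $[\cl{M}_S \xrightarrow{N} \cl{M}_S]$. More conceptually, since $\bb{G}_a$ is a unipotent algebraic group over a field of characteristic $0$, its cohomology with values in a locally finite representation coincides with the Chevalley--Eilenberg cohomology of the one-dimensional abelian Lie algebra $\Lie(\bb{G}_a) = \Lambda$ acting via $N$, which is exactly $[\cl{M}_S \xrightarrow{N} \cl{M}_S]$; the cohomological dimension bound is immediate from the $2$-term description. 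The main obstacle is the explicit verification that the induced differential on $\cl{M}_S$ is precisely $N$ (not $-N$ or $N$ up to a unit), which amounts to a careful sign check in the \v{C}ech or Chevalley--Eilenberg differential, and to confirming that the Postnikov reduction is compatible with the $(M,N)$ identification.
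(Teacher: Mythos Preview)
Your proposal is correct and follows essentially the same route as the paper: reduce to bounded-below $\cl{M}$ via left $t$-completeness and the limit-preservation of $R\Gamma(\bb{G}_a,-)$, pass to the \v{C}ech nerve of $S \to [S/\bb{G}_a]$, reduce to the heart, and there invoke the classical identification of $\bb{G}_a$-cohomology with the two-term complex $[\cl{M}_S \xrightarrow{N} \cl{M}_S]$. The only cosmetic difference is that the paper first writes down the comparison map (via stupid truncation of the \v{C}ech complex and projection to the $T$-coefficient) and then reduces to the heart to verify it is a quasi-isomorphism, citing Hochschild's theorem for the final step; your Chevalley--Eilenberg phrasing is exactly the content of that citation.
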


\begin{proof}
    Since $\QCoh(S)$ and $\QCoh([S / \bb{G}_a])$ are left $t$-complete and $R\Gamma(\bb{G}_a, -)$ commutes with limits, all assertions can be checked after passing to the Postnikov limit. In particular, we may assume $\cl{M} \in \QCoh([S / \bb{G}_a])^+$. 

    By taking the \v{C}ech nerve along $S \to [S / \bb{G}_a]$, we have 
    \begin{equation} \label{eq:Ga_cohomology}
        \begin{tikzcd}
             R\Gamma(\bb{G}_a,\cl{M}) \cong \Tot \bigl(\cl{M}_S \ar[r, yshift = -0.5ex, shorten = 8pt] \ar[r, yshift = 0.5ex, shorten = 8pt] \hspace{-8pt} & \hspace{-8pt} \cl{M}_S \otimes \Lambda[T] \ar[r, yshift = -0.8ex, shorten = 8pt] \ar[r, shorten = 8pt] \ar[r, yshift = 0.8ex, shorten = 8pt] \hspace{-8pt} & \hspace{-8pt} \cdots \bigr) 
        \end{tikzcd}
    \end{equation}
    The stupid truncation and the projection to the coordinate of $T$ provide a map
    \[
        R\Gamma(\bb{G}_a, \cl{M}) \to \bigl[\cl{M}_S \xrightarrow{N} \cl{M}_S\bigr]. 
    \]
    We would like to show that it is a quasi-isomorphism. Both functors are left $t$-exact since the pullback along $[S / \bb{G}_a] \to S$ is $t$-exact. Thus, we may assume that $S = \Spec(R)$ is affine and $M = \cl{M}_S$ is a static $\pi_0(R)$-module. Since we work in characteristic $0$, $N \in \End(M)$ is locally nilpotent with $\nabla = \exp(NT)$, and the computation of \eqref{eq:Ga_cohomology} is well-known (e.g. \cite[Theorem 5.1]{Hoc61}). 
\end{proof}

\begin{cor}\label{cor:IDF_cohomology}
    Let $S$ be a derived $\Lambda$-scheme and let $\cl{M} \in \QCoh([S/\ID_{F}])$. Then, we have
    \[
        R\Gamma(\ID_{F},\cl{M}) \cong \bigl[\cl{M}_S^{I_F} \xrightarrow{N} \cl{M}_S^{I_F}\bigr],
    \]
    where $N$ is the locally nilpotent endomorphism induced by the $\bb{G}_{a}$-action.
\end{cor}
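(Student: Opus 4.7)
The plan is to iterate the two cohomology computations of \Cref{lem:IF_exact} and \Cref{lem:Ga_cohomology} using the direct product decomposition $\ID_F = \bb{G}_a \times \und{I_F}$. Note that this is a genuine direct product because the conjugation action of $W_F$ on $\bb{G}_a$ is through the norm character $|\cdot|$, which is trivial on $I_F$. After reducing via left $t$-completeness to the case $\cl{M}\in\QCoh([S/\ID_F])^+$, we apply \Cref{prop:group_exact_sequence_WD} to the (trivially) semidirect product $\ID_F = \bb{G}_a \rtimes \und{I_F}$ with the normal factor $\bb{G}_a$ (which is quasi-compact and quasi-separated). This yields
\[
    R\Gamma(\ID_F,\cl{M})\cong R\Gamma(\und{I_F},R\Gamma(\bb{G}_a,\cl{M})).
\]

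Next, I would apply \Cref{lem:Ga_cohomology} to identify
\[
    R\Gamma(\bb{G}_a,\cl{M})\cong \bigl[\cl{M}_S\xrightarrow{N}\cl{M}_S\bigr]
\]
as an object of $\QCoh([S/\und{I_F}])$, where $N$ is the locally nilpotent endomorphism induced by the $\bb{G}_a$-action. Because the two factors of $\ID_F$ commute, the endomorphism $N$ is automatically $I_F$-equivariant, so the above two-term complex upgrades naturally to an object of $\QCoh([S/\und{I_F}])$. The point that deserves care here, and which I expect to be the main technical obstacle, is verifying that the quasi-isomorphism of \Cref{lem:Ga_cohomology} is natural in the base and hence $I_F$-equivariant; but this follows from the fact that both sides are defined in a way that is functorial in the datum of a sheaf on the classifying stack, and can be checked directly from the \v{C}ech description used in the proof of \Cref{lem:Ga_cohomology}.

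Finally, \Cref{lem:IF_exact}(1) says that $R\Gamma(\und{I_F},-)$ is $t$-exact and computes $I_F$-invariants on cohomology sheaves. Being $t$-exact, it commutes with the formation of the two-term complex, so we obtain
\[
    R\Gamma(\und{I_F},[\cl{M}_S\xrightarrow{N}\cl{M}_S]) \cong \bigl[\cl{M}_S^{I_F}\xrightarrow{N}\cl{M}_S^{I_F}\bigr],
\]
where the right-hand $N$ denotes the restriction of $N$ to the $I_F$-invariants (well-defined by the equivariance established in the previous step). Combining the three displays yields the claimed identification, completing the proof.
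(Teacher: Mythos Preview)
Your proof is correct. The paper does not give an explicit proof of this corollary, but the intended argument is visible from equation \eqref{eq:WD_cohomology}, which factors in the order $R\Gamma(\bb{G}_a, R\Gamma(I_F, -))$: one first applies \Cref{lem:IF_exact} to obtain $\cl{M}_S^{I_F}$ as an object of $\QCoh([S/\bb{G}_a])$, and then applies \Cref{lem:Ga_cohomology} directly to produce the two-term complex.

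You instead factor in the opposite order, $R\Gamma(\und{I_F}, R\Gamma(\bb{G}_a, -))$, which is equally valid since $\ID_F$ is a genuine direct product and both factors are qcqs. The cost is the equivariance check you flag: you must know that the quasi-isomorphism of \Cref{lem:Ga_cohomology} upgrades to one in $\QCoh([S/\und{I_F}])$. The paper's order sidesteps this entirely, since after taking $I_F$-invariants one lands in $\QCoh([S/\bb{G}_a])$ and \Cref{lem:Ga_cohomology} applies on the nose. Your route works, but the paper's is slightly cleaner for exactly the reason you identified as the ``main technical obstacle.''
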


Finally, we treat the group cohomology of $\bb{Z}$. In the following, a generator of $\bb{Z} \subset W_F$ is denoted by $\Fr$ for consistency of notation. 

\begin{prop} \label{prop:Z_cohomology}
    Let $S$ be a derived $\Lambda$-scheme and let $\cl{M} \in \QCoh([S/\und{\bb{Z}}])$. Let $\Fr \in \Aut(\cl{M}_S)$ denote the action of $\Fr \in \bb{Z}$. Then, there is a natural quasi-isomorphism
    \[
        R\Gamma(\bb{Z},\cl{M}) \cong \bigl[\cl{M}_S \xrightarrow{\Fr - 1} \cl{M}_S\bigr]. 
    \]
    In particular, the functor $\cl{M} \mapsto R\Gamma(\bb{Z}, \cl{M})$ has cohomological dimension $1$. 
\end{prop}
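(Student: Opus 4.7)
The plan is to mirror the strategy of \Cref{lem:Ga_cohomology}. First, using the left $t$-completeness of $\QCoh(S)$ and $\QCoh([S/\und{\bb{Z}}])$ together with the fact that $R\Gamma(\bb{Z}, -)$ commutes with limits, I reduce to the case $\cl{M} \in \QCoh([S/\und{\bb{Z}}])^+$.

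Next, via the Čech nerve of the flat cover $S \to [S/\und{\bb{Z}}]$, I express $R\Gamma(\bb{Z}, \cl{M})$ as the totalization of the cobar complex
\[
    R\Gamma(\bb{Z}, \cl{M}) \cong \Tot\bigl( \cl{M}_S \rightrightarrows \cl{M}_S \otimes C(\bb{Z}, \Lambda) \to \cdots \bigr).
\]
A natural map from $R\Gamma(\bb{Z}, \cl{M})$ to $[\cl{M}_S \xrightarrow{\Fr - 1} \cl{M}_S]$ then arises from the standard comparison with the two-term free resolution
\[
    0 \to \Lambda[\bb{Z}] \xrightarrow{\Fr - 1} \Lambda[\bb{Z}] \to \Lambda \to 0
\]
of the trivial $\bb{Z}$-module $\Lambda$.

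To check that this comparison map is a quasi-isomorphism, I observe that both functors are left $t$-exact (the source because the flat pullback along $S \to [S/\und{\bb{Z}}]$ is $t$-exact, the target tautologically). This allows me to reduce to the case $S = \Spec(R)$ affine with $M = \cl{M}_S$ a static $\pi_0(R)$-module equipped with a $\pi_0(R)$-linear automorphism $\Fr$. In this setting, $R\Gamma(\bb{Z}, M)$ is the classical group cohomology $R\Hom_{\pi_0(R)[\bb{Z}]}(\pi_0(R), M)$, which by the resolution above is computed by $[M \xrightarrow{\Fr - 1} M]$. The cohomological dimension bound is then immediate from the two-term description.

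The main delicacy is that $\und{\bb{Z}}$ is not quasi-compact, so $\Gamma(\und{\bb{Z}}, \cl{O}) = \prod_{n \in \bb{Z}} \Lambda$ is an infinite product and the terms of the Čech complex are correspondingly large. Nevertheless, once reduced to the affine static case, the comparison is purely algebraic and works for any resolution, so the computation is unaffected by the size of the bar terms. The construction of the comparison map in the unreduced situation uses the standard naturality of $R\Hom$ with respect to quasi-isomorphisms of resolutions in the derived $\infty$-category of $\pi_0(R)[\bb{Z}]$-modules.
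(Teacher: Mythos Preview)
Your approach is correct and closely mirrors the paper's proof of \Cref{lem:Ga_cohomology}, but the paper itself takes a slightly different, more direct route for $\bb{Z}$. Rather than constructing a comparison map and then reducing to the affine static case to verify it, the paper recognizes the \v{C}ech totalization \emph{globally} as $R\Hom_{\Lambda[\bb{Z}]}(\Lambda, \cl{M}_S)$: the cosimplicial object $\Map(\bb{Z}^\bullet, \cl{M}_S)$ is exactly $R\Hom_{\Lambda[\bb{Z}]}$ applied to the bar resolution $\Lambda[\bb{Z}^{\bullet+1}]$ of $\Lambda$, so one may immediately swap in the Koszul resolution $[\Lambda[\bb{Z}] \xrightarrow{\Fr-1} \Lambda[\bb{Z}]]$ and read off the two-term complex. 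This bypasses the left-$t$-exactness reduction entirely. Your route is perfectly valid and has the advantage of being uniform with the $\bb{G}_a$ argument; the paper's route is shorter and avoids having to justify the reduction-to-heart step for a functor whose cohomological dimension bound is itself part of the conclusion.

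One small correction: the terms of the \v{C}ech complex are $\Map(\bb{Z}^n, \cl{M}_S) = \prod_{\bb{Z}^n} \cl{M}_S$, not $\cl{M}_S \otimes C(\bb{Z}, \Lambda)$. Since $\bb{Z}$ is infinite these differ in general (tensor product does not commute with infinite products), and the paper writes the former. You already flag the non-quasi-compactness of $\und{\bb{Z}}$ as the delicate point, so this is just a matter of getting the notation to reflect it.
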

\begin{proof}
    Since $\QCoh(S)$ and $\QCoh([S / \bb{Z}])$ are left $t$-complete and $R\Gamma(\bb{Z}, -)$ commutes with limits, all assertions can be checked after passing to the Postnikov limit. In particular, we may assume $\cl{M} \in \QCoh([S / \bb{Z}])^+$. 

    By taking the \v{C}ech nerve along $S \to [S / \bb{Z}]$, we have 
    \begin{equation} \label{eq:Z_cohomology}
        \begin{tikzcd}
             R\Gamma(\bb{Z},\cl{M}) \cong \Tot \bigl(\cl{M}_S \ar[r, yshift = -0.5ex, shorten = 8pt] \ar[r, yshift = 0.5ex, shorten = 8pt] \hspace{-8pt} & \hspace{-8pt}  \Map(\bb{Z}, \cl{M}_S) \ar[r, yshift = -0.8ex, shorten = 8pt] \ar[r, shorten = 8pt] \ar[r, yshift = 0.8ex, shorten = 8pt] \hspace{-8pt} & \hspace{-8pt} \cdots \bigr)
        \end{tikzcd}
    \end{equation}
    Now, $\Fr \in \Aut(\cl{M}_S)$ equips $\cl{M}_S$ with the $\Lambda[\bb{Z}]$-module structure. Then, \eqref{eq:Z_cohomology} implies
    \begin{equation*}
        \begin{tikzcd}
            R\Gamma(\bb{Z},\cl{M}) \cong R\Hom_{\Lambda[\bb{Z}]}\bigl(\Tot(\cdots \ar[r, yshift = -0.8ex, shorten = 8pt] \ar[r, shorten = 8pt] \ar[r, yshift = 0.8ex, shorten = 8pt] \hspace{-8pt} & \hspace{-8pt} \Lambda[\bb{Z} \oplus \bb{Z}] \ar[r, yshift = -0.5ex, shorten = 8pt] \ar[r, yshift = 0.5ex, shorten = 8pt] \hspace{-8pt} & \hspace{-8pt} \Lambda[\bb{Z}]), \cl{M}_S \bigr) \cong R\Hom_{\Lambda[\bb{Z}]}(\Lambda, \cl{M}_S). 
        \end{tikzcd}
    \end{equation*}
    By the resolution $\Lambda \cong [\Lambda[\bb{Z}] \xrightarrow{\Fr - 1} \Lambda[\bb{Z}]][1]$ as a $\Lambda[\bb{Z}]$-module, we get 
    \[
        R\Gamma(\bb{Z},\cl{M}) \cong R\Hom_{\Lambda[\bb{Z}]}(\Lambda, \cl{M}_S) \cong [\cl{M}_S \xrightarrow{\Fr - 1} \cl{M}_S]. 
    \]
\end{proof}

We can now assemble the previous lemmas to compute the cohomology of $\WD_{F}$. Let $\cl{M} \in \QCoh([S/\WD_{F}])$ and let $\cl{M}_S$ denote the underlying quasi-coherent sheaf on $S$. Then, $\cl{M}_S$ carries a smooth $W_F$-action, and a $\bb{G}_{a}$-action, which is given by a locally nilpotent endomorphism (see \Cref{lem:Ga_cohomology})
\[
    N \in \End(\cl{M}_S).
\]
\begin{prop}\label{prop:WD_cohomology}
    Let $S$ be a derived $\Lambda$-scheme and let $\cl{M} \in \QCoh([S/\WD_{F}])$. Then, there is a natural quasi-isomorphism
    \[
        R\Gamma(\WD_{F},\cl{M}) \cong
        \Bigl[
            \cl{M}_S^{I_F}
                \xrightarrow{(\Fr-1,\;N)}
            \cl{M}_S^{I_F} \oplus \cl{M}_S^{I_F}
                \xrightarrow{(N,\;1-q\Fr)}
            \cl{M}_S^{I_F}
        \Bigr],
    \]
    where
    \begin{itemize}
        \item $\Fr$ is a Frobenius element in $W_F$, acting on
        $\cl{M}_S^{I_F}$ via the $W_F$-action on $\cl{M}_S$, and
        \item $q$ is the cardinality of the residue field of $F$. 
    \end{itemize}
    In particular, $R\Gamma(\WD_{F},-)$ has cohomological dimension
    $2$, commutes with small colimits, and satisfies the base change and the projection formula along $S$.
\end{prop}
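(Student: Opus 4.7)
The plan is to iterate Lemma \ref{prop:group_exact_sequence_WD} together with the explicit cohomology formulas already established for $I_F$, $\bb{G}_a$, and $\bb{Z}$. Since $\ID_F = \bb{G}_a \times \und{I_F}$ is quasi-compact and quasi-separated and $\WD_F \cong \ID_F \rtimes \und{\bb{Z}}$, the decomposition \eqref{eq:WD_cohomology} combined with Corollary \ref{cor:IDF_cohomology} reduces the problem to computing
\[
R\Gamma\bigl(\bb{Z},\, [\cl{M}_S^{I_F} \xrightarrow{N} \cl{M}_S^{I_F}]\bigr)
\]
for an appropriate $\bb{Z}$-equivariant structure on the two-term complex. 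Applying Proposition \ref{prop:Z_cohomology} termwise will yield a double complex, and its totalization will produce the three-term formula. Unbounded $\cl{M}$ are reduced to the bounded-below case via left $t$-completeness of $\QCoh([S/\WD_F])$ and $\QCoh(S)$ together with the fact that the right-hand formula commutes with limits in $\cl{M}$.

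The technical heart of the argument is identifying the $\bb{Z}$-action on the two terms of $R\Gamma(\ID_F, \cl{M}) \cong [\cl{M}_S^{I_F} \xrightarrow{N} \cl{M}_S^{I_F}]$. The degree-$0$ term carries the restriction of the $W_F$-action on $\cl{M}_S$ to the $I_F$-invariants. The degree-$1$ term, however, inherits a Tate twist from the fact that $\bb{G}_a$ carries the $W_F$-action by $|\cdot|$: differentiating the semidirect product relation $\sigma t \sigma^{-1} = |\sigma| t$ gives $\sigma N \sigma^{-1} = |\sigma| N$, equivalently $N \Fr = q\Fr N$, so $\bb{Z}$-equivariance of $N$ forces $\Fr$ to act as $q \Fr$ on the target. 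This twist can also be read off intrinsically from the identification of the degree-$1$ term of the Koszul-type complex in Proposition \ref{lem:Ga_cohomology} with $\cl{M}_S^{I_F} \otimes \Lie(\bb{G}_a)^\vee$, on which $\Fr$ acts via $|\Fr|^{-1} = q$. This is the main obstacle: once the twist is settled, the rest is bookkeeping.

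Applying Proposition \ref{prop:Z_cohomology} entrywise then yields the double complex
\[
\begin{tikzcd}
\cl{M}_S^{I_F} \ar[r, "N"] \ar[d, "\Fr - 1"'] & \cl{M}_S^{I_F} \ar[d, "q\Fr - 1"] \\
\cl{M}_S^{I_F} \ar[r, "N"'] & \cl{M}_S^{I_F},
\end{tikzcd}
\]
and its totalization with the standard Koszul sign convention produces the desired complex $[\cl{M}_S^{I_F} \xrightarrow{(\Fr - 1,\, N)} (\cl{M}_S^{I_F})^{\oplus 2} \xrightarrow{(N,\; 1 - q\Fr)} \cl{M}_S^{I_F}]$. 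A direct check shows $(N,\, 1 - q\Fr) \circ (\Fr - 1,\, N) = N\Fr - q\Fr N = 0$, as required. Naturality in $\cl{M}$ is inherited from the naturality of the cited formulas.

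For the auxiliary assertions, the cohomological dimension bound of $2$ is immediate from the three-term shape. The formula realizes $R\Gamma(\WD_F, -)$ as a finite chain complex built from the functor $\cl{M} \mapsto \cl{M}_S^{I_F}$ and scalar-matrix operators, and Proposition \ref{lem:IF_exact}(2) shows that $(-)_S^{I_F}$ commutes with small colimits and with base change along $S$ (since $\cl{M}_S^K$ is a direct summand of $\cl{M}_S$ for each open $K \subset I_F$), so the same properties pass through to $R\Gamma(\WD_F, -)$; the projection formula follows analogously from the direct-summand description.
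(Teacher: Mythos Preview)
Your proposal is correct and follows essentially the same route as the paper: reduce to bounded-below $\cl{M}$ by left $t$-completeness, apply \eqref{eq:WD_cohomology} and Corollary~\ref{cor:IDF_cohomology}, identify the $\bb{Z}$-action on the two-term complex (with the $q$-twist in degree~$1$), and then apply Proposition~\ref{prop:Z_cohomology}. The only minor difference is in the auxiliary assertions: the paper derives commutation with colimits from Proposition~\ref{lem:IF_exact}(1) and proves the projection formula by reducing to dualizable $\cl{E}$, whereas you invoke the direct-summand description of Proposition~\ref{lem:IF_exact}(2) for all three properties; both arguments work since the averaging idempotent is natural in $\cl{M}$.
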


\begin{proof}
    Since $\QCoh(S)$ and $\QCoh([S / \WD_F])$ are left $t$-complete and $R\Gamma(\WD_F, -)$ commutes with limits, the first assertion can be checked after passing to the Postnikov limit. In particular, we may assume $\cl{M} \in \QCoh([S / \WD_F])^+$. By \eqref{eq:WD_cohomology} and \Cref{cor:IDF_cohomology}, 
    \[
        R\Gamma(\WD_F, \cl{M}) \cong R\Gamma(\bb{Z}, [\cl{M}_S^{I_F} \xrightarrow{N} \cl{M}_S^{I_F}]). 
    \]
    The action of $\Fr$ is given by 
    \begin{center}
        \begin{tikzcd}
            \cl{M}_S^{I_F} \ar[r, "N"] \ar[d, "\Fr"] & \cl{M}_S^{I_F} \ar[d, "q\Fr"] \\
            \cl{M}_S^{I_F} \ar[r, "N"] & \cl{M}_S^{I_F}. 
        \end{tikzcd}
    \end{center}
    Thus, the desired description follows from \Cref{prop:Z_cohomology}. Then, the cohomological dimension bound follows from this description. 

    By the description of $\bb{H}^iR\Gamma(I_F, -)$ in \Cref{lem:IF_exact} (1), the functor $\cl{M} \mapsto \cl{M}_S^{I_F}$ is exact and commutes with filtered colimits, so it commutes with small colimits. Thus, $R\Gamma(\WD_F, -)$ also commutes with small colimits. 

    For every $\cl{E} \in \QCoh(S)$, we have a natural map
    \[
        \cl{M}_S^{I_F} \otimes \cl{E} \to (\cl{M}_S \otimes \cl{E})^{I_F}. 
    \]  
    To show that it is an isomorphism, we may assume that $S$ is affine, so that $\QCoh(S) = \Ind(\Perf(S))$. Both sides commutes with small colimits in $\cl{E}$, so we may assume that $\cl{E} \in \Perf(S)$. Then, $\cl{E}$ is dualizable, so we have $(\cl{M}_S \otimes \cl{E})^{I_F} \cong \cl{M}_S^{I_F} \otimes \cl{E}$. Then, it implies the base change and the projection formula for the functor $\cl{M} \mapsto \cl{M}_S^{I_F}$, so the same holds for $R\Gamma(\WD_F, -)$.   
\end{proof}

\section{Classical degree normalization on algebraic curves}
\label{sec:classical_degree_normalization}

Let $\Sigma$ be a smooth projective curve over a finite field $\bb{F}_q$ with genus $g$ and fix an isomorphism $\Qla \cong \bb{C}$. In this section, we review the Iwasawa-Tate period and its normalization in the geometric setting of \cite{BZSV}. 

Let us first recast Tate's thesis in our context. Take $G = \Gm$ over a global field $F = K(\Sigma)$, and let $\chi \colon \bb{A}_F^\times/F^\times \to \bb{C}^\times$ be an id\`{e}le class character. Consider the Iwasawa-Tate dual pair
\[
    \bigl( X = \bb{A}^1 \circlearrowleft \Gm = G,\;\; \widehat{X} = \bb{A}^1 \circlearrowleft \Gm = \widehat{G} \bigr)
\]
with the standard scaling actions. On the $\cl{A}$-side, the unnormalized $X$-period of $\chi$ is the classical Iwasawa-Tate zeta integral
\begin{equation}\label{eq:period_integral_tate_case}
    P_X(\chi):=\anbr{P_X,\chi}_{[\bb{G}_m]}
    \;=\; \int_{\bb{A}_F^\times} \Phi(x)\,\chi(x)\,d^\times x,
\end{equation}
for a suitable Schwartz-Bruhat function $\Phi$ on $\bb{A}_F$ (for instance the characteristic function of the integral ad\`eles $\bb{O}_{\bb{A}_F}$) as in Tate's thesis \cite{Tate1967}. 

The normalized period sheaf $P_X^{\norm}$ of \cite{BZSV} (and of this paper) corresponds analytically to inserting the half-norm character in the integral. More precisely, after choosing a square root $\sqrt{q}$ and the resulting half-norm character $\lvert \cdot \rvert^{1/2}$, the normalized $X$-period is
\[
    P_X^{\norm}(\chi)
    \;=\; c_0 \int_{\bb{A}_F^\times} |x|^{1/2}\,\Phi(x)\,\chi(x)\,d^\times x,
\]
where $c_0$ is a nonzero constant depending only on the global normalization (the $\beta_X$-twist in \cite[Eq.\ (10.9)]{BZSV}). This extra factor $|x|^{1/2}$ reflects exactly the half-volume factor. 

Period sheaves are equipped with Frobenius actions, and via the function-sheaf dictionary, $P_X$ is regarded as a period function on $\Bun_G(\bb{F}_q)$: 
\[ 
    P_X \colon \Bun_G(\bb{F}_q) \ra \bb{C}, \quad
    L\mapsto q^{h^0(\Sigma, L \otimes K^{1/2})}, 
\]
where $h^i$ is the dimension of the $i$-th cohomology and $K^{1/2}$ is a square-root of the canonical bundle of $\Sigma$. Then, the normalized period function is
\[ 
    P_X^\norm \colon \Bun_G(\bb{F}_q)\ra \bb{C} ,\quad
    L\mapsto q^{h^0(\Sigma, L \otimes K^{1/2}) - \frac{1}{2}\deg (L\otimes K^{1/2})}
\]
(see \cite[10.6.2]{BZSV}). The Riemann-Roch theorem implies 
\[ 
    h^0(\Sigma, L\otimes K^{1/2}) - h^0(\Sigma, L^{-1}\otimes K^{1/2}) = \deg L. 
\]
Hence this shows $P_X^\norm(L) = P_X^\norm(L^{-1})$, which is what yields the classical functional equation once identifying terms with special $L$-values.  

\begin{exa}
    Everything can be made extremely explicit when $\Sigma=\bb{P}^1$. Any line bundle $L$ can be written as $L\cong \cl{O}(d)$ for some $d \in \bb{Z}$. Since $K^{1/2} \cong \cl{O}(-1)$, we have 
    \[ 
        P_X \colon \cl{O}(d) \mapsto q^{\max(0, d)}, \quad 
        P_X^\norm \colon \cl{O}(d) \mapsto q^{\frac{\lvert d \rvert + 1}{2}}. 
    \]
\end{exa}

Working with $\Sigma=\bb{P}^1$ is not a big loss, since it captures the generic behavior of these period functions. In general, the Riemann-Roch theorem and Serre duality give
\[
    h^0(\Sigma, L)- h^1(\Sigma,L) = \deg L - g + 1, 
\]
\[
    h^0(\Sigma, L)=0 \quad (\deg L< 0) ,\quad h^1(\Sigma, L)=0 \quad (\deg L \ge 2g -1). 
\]
Thus, generically (outside the region $1 - g \leq \deg L \leq g - 1$), we have
\[ 
    P_X^\norm \sim q^{\frac{\lvert \deg L \rvert +1-g}{2}}, 
\]
which is the asymptotic behavior discussed in \cite[Example 14.8.3]{BZSV}. 

\addtocontents{toc}{\protect\setcounter{tocdepth}{2}}

\renewcommand\bibfont{\footnotesize}
\printbibliography

\end{document}